\newtheorem{thm}{Theorem}[section]
\newtheorem{cor}[thm]{Corollary}
\newtheorem{lem}[thm]{Lemma}
\newtheorem{prop}[thm]{Proposition}
\newtheorem{exam}{Example}[section]
\theoremstyle{definition}
\newtheorem{defi}{Definition}[section]
\newtheorem{rema}[thm]{Remark}
\numberwithin{equation}{section}
\newcommand{\B}{{\mathcal B}}
\newcommand{\Real}{\mathbb{R}}
\newcommand{\R}{\Real}
\newcommand{\Z}{{\mathbb Z}}
\newcommand{\N}{{\mathbb N}}
\def\supp{{\textup{\textrm{supp}}}}
\newcommand{\Rmnum}[1]{\expandafter\@slowromancap\romannumeral #1@}
\begin{document}
	\baselineskip=17pt
	\setcounter{figure}{0}
	
	\title[Kre\u{\i}n-Feller operators on Riemannian manifolds]
	{Kre\u{\i}n-Feller operators on Riemannian manifolds: Compactness of embedding and Hodge's theorem}
	\author[S.-M. Ngai]{Sze-Man Ngai}

	\address{Key Laboratory of High Performance Computing and Stochastic Information
		Processing (HPCSIP) (Ministry of Education of China), College of Mathematics and Statistics, Hunan Normal University,
		Changsha, Hunan 410081, China, and Department of Mathematical Sciences\\ Georgia Southern
		University\\ Statesboro, GA 30460-8093, USA.
		{\it Current address:} Beijing Institute of Mathematical Sciences and Applications, Huairou District, 101400, Beijing,  China.}
	
	\email{ngai@bimsa.cn}
	
	\author[L. Ouyang]{Lei Ouyang}
	\address{Key Laboratory of High Performance Computing and Stochastic Information
		Processing (HPCSIP) (Ministry of Education of China, College of Mathematics and Statistics, Hunan Normal University, Changsha, Hunan 410081, China.}
	\email{ouyanglei@hunnu.edu.cn}

	\subjclass[2010]{Primary: 28A80, 35P10, 35J05; Secondary: 58C40, 46T12, 58A14, 58A10}
	\keywords{Riemannian manifold; Laplacian; Kre\u{\i}n-Feller operators; compact embedding; Hodge theorem; self-conformal measure.}
	
	
	\thanks{The authors are supported in part by the National Natural Science Foundation of China, grants 12271156, and Construct Program of Key Discipline in Hunan Province.
		}

	\begin{abstract}
		For a bounded open set $\Omega$ in a complete oriented Riemannian $n$-manifold  and a positive finite Borel measure $\mu$ with support contained in $\overline{\Omega}$, we define an associated Kre\u{\i}n-Feller operators (or Laplacian) $\Delta_\mu$ by assuming the Poincar\'e inequalities for the measure $\mu$. We obtain sufficient conditions for the operator to have compact resolvent and in this case, we prove the Hodge theorem for functions, which states that there exists an orthonormal basis of $L^2(\Omega,\mu)$ consisting of eigenfunctions of $\Delta_\mu$, the eigenspaces are finite-dimensional, and the eigenvalues of $-\Delta_\mu$  are real, countable, and increasing to infinity. One of these sufficient conditions is that the lower $L^\infty$-dimension $\underline{\dim}_{\infty}(\mu)$ of $\mu$ is greater than $n-2$. 
		We prove that the compactness of embedding for functions also hold for measures without compact support, provided the manifold is of bounded geometry.  The main idea of our proof  is to use Toponogov's and Rauch's comparison theorems to extend a classical compact embedding theorem of Maz'ja to Riemannian manifolds. For a compact Riemannian manifold, using the above results, we also obtain sufficient conditions for Hodge Laplacian on $k$-forms, to have compact resolvent. Our result extends the classical Hodge theorem to Kre\u{\i}n-Feller operators. We study the condition $\underline{\dim}_{\infty}(\mu)>n-2$ for self-similar and self-conformal measures.  Results in this paper extend analogous ones by Hu \textit{et al.} in [J. Funct. Anal. \textbf{239} (2006), 542--565], which are established for measures on $\R^n$. 
		
	\end{abstract}
	
	\maketitle
	
	\tableofcontents

	\section{Introduction}\label{S:IN}
	\setcounter{equation}{0}
The Kre\u{\i}n-Feller operator plays an important role in studying analytic properties of measures, especially fractal measures, defined on an open subset $\Omega$ of $\R^n$ or a Riemannian  manifold. If the support of a measure is equal to $\overline{\Omega}$, then the Kre\u{\i}n-Feller operator is a natural generalization of the Laplace-Beltrami operator on $\Omega$ and encodes information of both the measure and the manifold. On  $\R^n$, suppose $\mu$ is a finite positive Borel measure on a bounded open subset $\Omega \subseteq\R^n$ with support contained in $\overline{\Omega}$.  It is well known that if  $\mu$ satisfies the Poincar\'e inequality on $\Omega$, then there exists a Dirichlet Laplacian $\Delta_\mu$ defined by $\mu$. $\Delta_\mu$ is also called a Kre\u{\i}n-Feller operator. Feller \cite{Feller_1957} studied such Laplacians in connection with diffusion processes. Kre\u{\i}n studied the eigenvalue problem for the vibrating string with finite mass (see \cite{Kac-Krein_1958, Krein_1952}). Spectral asymptotics of the Laplacian defined by the Cantor measure was studied by Mckean and Ray \cite{McKean-Ray_1962}. These Laplacians, as well as their generalizations, have been studied extensively in connection with fractal geometry, such as existence of an orthonormal basis of eigenfunctions, spectral dimension and spectral asymptotics, eigenvalues and eigenfunctions, eigenvalue estimates, differential equations,  nodal inverse problems, wave equations and wave speed, heat equation and heat kernel estimates, etc. (see \cite{Hu-Lau-Ngai_2006, Freiberg_2003, Freiberg_2005, Freiberg_2011, Freiberg-Zahle_2002, Fujita_1987, Gu-Hu-Ngai_2020, Naimark-Solomyak_1994, Naimark-Solomyak_1995, Ngai_2011, Ngai-Tang-Xie_2018, Ngai-Tang-Xie_2020, Ngai-Xie_2020, Ngai-Xie_2021, Bird-Ngai-Teplyaev_2003, Chen-Ngai_2010, Deng-Ngai_2015, Pinasco-Scarola_2019, Chan-Ngai-Teplyaev_2015, Pinasco-Scarola_2021, Kessebohmer-Niemann_2022-2,Tang-Ngai_2022,Kessebohmer-Niemann_2022-3} and references therein).  We remark
that Freiberg and her coauthors defined a class of Laplacians that are more general in that Lebesgue measure is replaced by a more general measure (see, e.g., \cite{Freiberg_2005,Freiberg-Zahle_2002}).  We also remark that Kigami \cite{Kigami_1993} defined Laplacians on a class of so-called post-critically finite fractals. These Laplacians are different from the Laplacians mentioned above. Kigami and Lapidus \cite{Kigami-Lapidus_1993} established an analogue of Weyl's classical theorem for the asymptotics of eigenvalues of Laplacians on finitely ramified self-similar fractals.

This paper is motivated by Yau's  conjecture on the nodal sets of eigenfunctions of the Laplacian on a compact Riemannian $n$-manifold $M$, which states that the nodal set of a $\lambda$-eigenfunction $u_\lambda$ on $M$ satisfy the following inequality:
	$$
	c_1\lambda^{1/2}\leq \mathcal{H}^{n-1}\big(\{x\in M:u_\lambda(x)=0\}\big)\leq c_2\lambda^{1/2},
	$$ 
	where $c_1$ and $c_2$ are positive constants and $\mathcal{H}^{n-1}$ is the $(n-1)$-dimensional Hausdorff measure. This conjecture has been studied by many authors including Donnelly and Fefferman \cite{Donnelly-Fefferman_1988,Donnelly-Fefferman_1990}, Hardt and Simon \cite{Hardt-Simon_1989}, Sogge and Zeldich \cite{Sogge-Zelditch_2012}, Logunov \cite{Logunov_2018_1,Logunov_2018_2}, and Logunov and Malinnikova \cite{Logunov-Malinnikova_2018}. 
	The lower estimate for Yau's conjecture was solved in 2018 by Logunov, using novel techniques developed by himself and Malinnikova (see \cite{Logunov_2018_1,Logunov_2018_2,Logunov-Malinnikova_2018}). 
	It turns out that for a Kre\u{\i}n-Feller operator on an interval $[a,b]$ defined by a measure $\mu$ with  $\supp(\mu)=[a,b]$,  Yau's conjecture takes the form
	\begin{align}\label{eq:Yau}
		c_1\lambda^{d_s/2}\leq \mathcal{H}^{0}(\{x\in [a,b]:u_\lambda(x)=0\})\leq c_2\lambda^{d_s/2},
	\end{align}
	where $d_s$ is the spectral dimension of the Kre\u{\i}n-Feller operator (see \cite{Bird-Ngai-Teplyaev_2003}).
The first author conjectured that for a Kre\u{\i}n-Feller operator on a compact Riemannian $n$-manifold $M$, if there exists a complete orthonormal basis consisting of eigenfunctions of $\Delta_\mu$ and if the eigenfunctions are continuous, then 
\begin{equation}
		\label{eq:Yau_general}
		c_1\lambda^{d_s/(2n)}\le\mathcal{H}^{n-1}(\{x\in M:u_\lambda(x)=0\})\le c_2\lambda^{d_s/(2n)}.
	\end{equation}
 To study this problem, we need to first make sure that the spectral dimension is well defined; this is the main motivation and objective of this paper. This problem is equivalent to whether Hodge's theorem for functions holds. Using Hodge's theorem in the present paper and assuming the existence of Green's function, Ngai and Zhao proved the nodal domain theorem and continuity of eigenfunctions for Kre\u{\i}n-Feller operators on a complete Riemannian manifold (see \cite{Ngai-Zhao_2024}).

Fractals on manifolds have been studied for over three decades. Strichartz\cite{Strichartz_1992} studied the spectral asymptotics of self-similar measures, the properties of Hausdorff measures and other fractal measures on the nilpotent Lie groups. 
Balogh and Tyson\cite{Balogh-Tyson_2005} studied  the Hausdorff dimensions of invariant sets associated to self-similar and self-affine iterated function systems in the Heisenberg group.  Patzschke\cite{Patzschke_1997} proved the multifractal formalism for self-conformal measures on Riemannian manifolds by assuming the open set condition. Ngai and Xu\cite{Ngai-Xu_2021,Ngai-Xu_2023} studied the existence of $L^q$-dimension and entropy dimension of self-conformal measures on Riemannian manifolds; they also obtained a formula for the Hausdorff dimension of a graph self-similar set generated by a graph-directed iterated function system satisfying the graph finite type condition.  Physicists Benedetti and Henson \cite{Benedetti-Henson_2009} observed that spacetime has fractal properties in that the spectral dimension function, which is defined by the Laplacian decreases from 4 to 2 near Planck scale. Calcagni studied fractal properties of spacetime by replacing Lebesgue measure by singular measures, and described physical phenomena with such measures (see, e.g., \cite{Calcagni_2010, Calcagni_2011}). These motivated our work to study Laplacians related to measures on Riemannian manifolds. General measures on manifolds have also been studied extensively. Chang {\em et al.}\cite{Chang-Gursky-Yang_2006} studied some conformal invariants of a Riemannian
manifold equipped with a smooth measure. Bueler\cite{Bueler_1999} studied the Hodge Laplacian on a weighted manifold with the heat-kernel being the weight. Grigor'yan~\cite{Grigor'yan_2006} studied the heat kernel on a weighted manifold. These weighted manifolds are endowed with a measure $\mu$ that has a smooth positive density with respect to Riemannian measure. However, as many fractal measures are singular with respect to  Riemannian measure, it is necessary to extend the framework of weight manifolds to allow more general measures; this is one of the objectives of this paper.

For the classical Laplacian $\Delta$ defined on Riemannian manifolds,  eigenfunctions, eigenvalues, eigenvalue estimates, and heat kernel estimates have been studied extensively by many authors and have played important roles in geometric analysis (see, e.g., \cite{Cheng-Peter-Yau_1981,Yau_1978,Minakshisundaram-Pleijel_1949,Schoen-Yau_1994,Berger-Gauduchon-Mazet_1971,Grigor'yan_2009,Peter_2012,Chavel_1984} and references therein). Hodge's theorem plays an important role in the spectral theory on manifolds and motivated this work. 

The classical Laplacian $\Delta$ extends from functions (i.e., $0$-forms) to differential forms of arbitrary degree, namely, the Laplace-Beltrami operator $\Delta^k:=dd^*+d^*d:\Gamma^\infty(\bigwedge^k M)\rightarrow \Gamma^\infty(\bigwedge^k M)$, where the definitions of $d$, $d^*$, and $\Gamma^\infty(\bigwedge^k M)$ are given in Section \ref{S:Hodge}. The spectrum of the Laplace-Beltrami operator $\Delta^k$ on differential forms has been investigated by many authors (see \cite{Chanillo-Treves_1997,Mantuano_2008,Gordon-Rossetti_2003} and references therein). In 1931, de Rham proved that de Rham cohomology groups are isomorphic to singular cohomology groups. This theorem gives the relationship between topology and smooth structures on manifolds. In 1933, Hodge established the Hodge theory while studying algebraic geometry. Hodge's theory took de Rham's work on de Rham's cohomology one step further. Hodge proved that there is a unique harmonic form in every de Rham cohomology class (see \cite{de_1955,Hodge_1941}). Since harmonic forms are solutions to elliptic partial differential equations on manifolds, this theorem establishes fundamental connections between analysis, geometry, and topology on manifolds. In 1964, Atiyah and Bott defined elliptic complex as a generalization of de Rham complex. The fundamental theorem of elliptic complexes is also a generalization of the Hodge theorem (see, e.g., \cite[Theorem 5.2]{Wells_2008}). The validity  of the Hodge theorem for forms is related to the Hodge theorem and the fundamental theorem concerning elliptic complexes. The Hodge theorem for forms (resp. functions) can be stated as follows.  Let $(M,g)$ be a compact connected oriented Riemannian manifold. Then there exists an orthonormal basis of $L^2(\bigwedge^kT^*M)$ (resp. $L^2(M)$) consisting of eigenforms (resp. eigenfunctions) of the Laplacian on $k$-forms $\Delta^k$ (resp. functions $\Delta$). All the eigenvalues are nonnegative, each eigenvalue has finite multiplicity, and the eigenvalues accumulate only at infinity (see, e.g., \cite[Theorem 1.30]{Rosenberg_1997}).

In this paper, we let $M$ be a complete oriented smooth Riemannian $n$-manifold supporting a measure $\mu$ with ${\rm supp}(\mu)\subseteq\Omega\subseteq\overline{\Omega}\subseteq M$,  where $\Omega$ is open in $M$. Our first goal is to define Kre\u{\i}n-Feller operators $\Delta_{\mu}^D$ and $\Delta_{\mu}^E$ on a Riemannian manifold related to a compactly supported measure $\mu$, in an effort to lay a foundation for studying spectral theory, spectral dimension function, heat kernel, etc., on Riemannian manifolds.  Our second goal is to prove an analog of Hodge's theorem for $\Delta_{\mu}^D$ (resp. $\Delta_{\mu}^E$) acting on functions, which states that under the assumption that $\underline{\dim}_{\infty}(\mu)>n-2$, there exists an orthonormal basis of $L^2(\Omega,\mu)$ (resp. $L^2(M,\mu)$) consisting of eigenfunctions of $\Delta_\mu^D$ (resp. $\Delta_{\mu}^E$), the eigenspaces are finite dimensional, and the eigenvalues of  $\Delta_{\mu}^D$ (resp. $\Delta_{\mu}^E$) are real, countable, and increasing to infinity. A main difficulty in the proof lies in  generalizing a compact embedding theorem of Maz'ja \cite{Maz'ja_1985} to Riemannian manifolds. To obtain such a generalization, we apply Toponogov's theorem and the Rauch comparison theorem. In addition, the proof also involves the continuity of the sectional curvature, the convergence of Maclaurin series, and properties of the exponential map. 	Our third goal is to generalize the results from the first two goals to measures without compact support.  A main difficulty in the proof lies again in  generalizing a compact embedding theorem of Maz'ja \cite{Maz'ja_1985} to non-compactly supported measures on Riemannian manifolds. We need to assume that the manifold is of bounded geometry. Our fourth goal is to prove an analog of Hodge's theorem for  $\Delta_{\mu}^{D,k}$ and $\Delta_{\mu}^{E,k}$ acting on $k$-forms. Moreover, when $\Omega=M$ and   $\mu$ is absolutely continuous with a positive density, we generalize the classical Hodge theorem. Our fifth goal is to prove several conditions equivalent to $\underline{\dim}_{\infty}(\mu)>n-2$, where $\mu$ is an invariant measure defined by an iterated function system of contractions on Riemannian manifolds.  We also remark that Kesseb\"ohmer and Niemann \cite{Kessebohmer-Niemann_2022-3} have recently shown that the condition $\underline{\dim}_{\infty}(\mu)>n-2$ is necessary for the validity of Hodge's theorem for functions.

This paper is organized as follows. In Section \ref{S:Pre*}, we summarize the construction of the Laplacian  defined by a measure $\mu$ with compact support and state some definitions that will be needed throughout the paper. Moreover, we state the main results in the paper. In Section \ref{S:L}, we define the Laplacian on a Riemannian manifold by assuming the Poincar\'e inequalities. We generalize the compact embedding theorem of Maz'ja\cite{Maz'ja_1985}. Moreover, we  prove Theorems \ref{thm:1.1} and \ref{thm:1.2}. 
In Section \ref{S:comp}, we prove Theorems \ref{thm:8.11}--\ref{thm:8.14}. In Section \ref{S:Hodge}, we define the Laplacian on space of $k$-forms. Moreover, we prove Theorems \ref{thm:1.1*} --\ref{thm:1.3*}. In Section \ref{S:self}, we prove Theorems \ref{thm:4.9} and \ref{thm:4.8}. 
	
	\section{Preliminaries and statement of main results}\label{S:Pre*}
	\setcounter{equation}{0}
	Throughout this paper, we assume that a Riemannian manifold is smooth, oriented, connected, and without boundary.  Let $(M,g)$ be a Riemannian $n$-manifold with Riemannian metric $g$. Let $\nu$ be the Riemannian volume on $M$, i.e.,
	$$\,d\nu=\sqrt{\det g_{ij}}\, dx,$$
	where the $g_{ij}$ are the components of $g$ in a coordinate chart, and $dx$ is the Lebesgue measure on $\R^n$. For any $F\subseteq M$, $\overline{F}$, $\partial F$, and $F^{\circ}$ denote, respectively, the closure, boundary, and interior of $F$. For an open set $\Omega\subseteq M$, $C_c(\Omega)$, $C^{\infty}(\Omega)$, and $C_c^{\infty}(\Omega)$ denote, respectively, the following spaces of functions on $\Omega$: continuous functions with compact support,  $C^{\infty}$ functions, and $C^{\infty}$ functions with compact support. For $u\in C^{\infty}(\Omega)$, the components of $\nabla u$ in local coordinates are given by $(\nabla u)_i=\partial_i u$ and let $|\nabla u|^2:=g^{ij}(\nabla u)_i(\nabla u)_j$, where $g^{ij}=(g_{ij})^{-1}$. Let $W^{1,2}(\Omega)$ be the Hilbert space equipped with the norm
	\begin{align}\label{eq:H}
		\|u\|_{W^{1,2}(\Omega)}=\Big(\int_{\Omega}|u|^2\,d\nu+\int_{\Omega}|\nabla u|^2\,d\nu\Big)^{\frac{1}{2}}.
	\end{align}
	The scalar product $\langle\cdot,\cdot\rangle_{W^{1,2}(\Omega)}$ associated to $\|\cdot\|_{W^{1,2}(\Omega)}$ is defined as
	$$ \left\langle u, v\right\rangle_{W^{1,2}(\Omega)} =\int_{\Omega}uv \,d\nu +\int_{\Omega}\langle\nabla u, \nabla v\rangle \,d\nu,$$
	where $\langle\cdot,\cdot\rangle= g(\cdot,\cdot)$.  Let $W^{1,2}_0(\Omega)$ denote the closure of $C^\infty_c(\Omega)$ in the $W^{1,2}(\Omega)$ norm. Note that if $\partial\Omega=\emptyset$, then  $\Omega=M$, as $M$ is assumed to be connected.
	
Now assume in addition that $M$ is complete. Let $\Omega\subseteq M$ be a bounded open set. Then $\overline{\Omega}$ is compact. If, in addition, $\partial\Omega=\emptyset$, then according to above, $\Omega=M$ is a compact set and thus $W_0^{1,2}(M)=W^{1,2}(M)$. 

	Let $\mu$ be a positive bounded regular Borel measure on $M$ with ${\rm supp}(\mu)\subseteq\overline{\Omega}$. Let $L^2(\Omega,\mu)$ be the space of all measurable functions $u$ with respect to $\mu$ on $\Omega$ satisfying $\int_{\Omega}|u|^2\,d\mu<\infty$, with the inner product $\langle\cdot,\cdot\rangle_{L^2(\Omega,\mu)}$ and associated norm $\|\cdot\|_{L^2(\Omega,\mu)}$ defined respectively as
		\begin{align*}
		\langle u, v\rangle_{L^2(\Omega,\mu)}:=\int_\Omega  uv\,d\mu\quad\text{and}\quad \|u\|_{L^2(\Omega,\mu)}:=\Big(\int_{\Omega}|u|^2\,d\mu\Big)^{1/2}.
	\end{align*}
Define
	\begin{align*}
		&\mathcal{C}(M):=\Big\{u\in C^{\infty}_c(M):\int_M u\,d\nu=0\Big\},\quad W(M):=\Big\{u\in W^{1,2}_0(M):\int_M u\,d\nu=0\Big\},\,\,\text{and}\\
			&\mathcal{C}:=\{c\cdot 1_M:c\in\R\},\,\,\text{where $1_M$ denotes the function identically equal to 1 on $M$}.
	\end{align*}
Note that $W(M)$ is a closed subspace of $ W^{1,2}_0(M)$.
	 Our method of defining  Kre\u{\i}n-Feller operators $\Delta_{\mu}^D$ and $\Delta_{\mu}^E$ on $L^2(\Omega, \mu)$ is similar to that in \cite{Hu-Lau-Ngai_2006}. First, we introduce the following {\em Poincar\'e inequalities for the measure $\mu$}, depending on the boundary conditions:
	 	
\noindent($\partial\Omega\neq\emptyset$)   If there exists a constant $C>0$ such that for all $u\in C^{\infty}_c(\Omega)$,
		\begin{align}\label{eq:PI}
			\int_\Omega |u|^2\,d\mu\leq C\int_\Omega |\nabla u|^2\,d\nu,
		\end{align}
	then we say that the {\em Poincar\'e inequality holds for the measure $\mu$ and the case $\partial\Omega\neq\emptyset$ (MPID)}.
	
\noindent ($\partial\Omega=\emptyset$) Note that in this case, $M$ is compact. If there exists a constant $C>0$ such that for all $u\in \mathcal{C}(M)$,
	\begin{align}\label{eq:PIN}
		\int_M |u|^2\,d\mu\leq C\int_M |\nabla u|^2\,d\nu,
	\end{align}
	then we say that the {\em Poincar\'e inequality holds for the measure $\mu$ and the case $\partial\Omega=\emptyset$ (MPIE)}.

Let $u\in W^{1,2}_0(\Omega)$. Then there exists a sequence $\{u_m\}$ in $C^{\infty}_c(\Omega)$ converging to $u$ in the $W^{1,2}_0(\Omega)$-norm. Hence $\{u_m\}$ has a subsequence $\{u_{m_k}\}$ that converges $\nu$-a.e. to $u$. Let $E\subseteq \Omega$ such that $\nu(E)=\nu(\Omega)$ and $\lim_{k\rightarrow\infty}u_{m_k}(x)=u(x)$ for all $x\in E$.
By $(\rm MPID)$,  $\{u_{m_k}\}$ is a Cauchy sequence in $L^2(\Omega, \mu)$ and thus converges to some $\widetilde{u}$ in the $L^2(\Omega, \mu)$-norm.  $\{u_{m_k}\}$ in turn has a subsequence $\{u_{m_{k_j}}\}$ that converges pointwise $\mu$-a.e. to $\widetilde{u}$. Hence there exists $\widetilde{E}\subseteq \Omega$, with $\mu (\widetilde{E})=\mu(\Omega)$, such that
$\lim_{j\rightarrow\infty}u_{m_{k_j}}(x)=\widetilde{u}(x)$ for all $x\in \widetilde{E}$.
Define $\overline{u}$ on $\Omega$ as
\[
\overline{u}(x):=\left\{
\begin{array}{ll}
	u(x)=\widetilde{u}(x),\quad &x\in E\bigcap \widetilde{E};\\
	u(x),\quad\quad\quad\quad &x\in E\backslash \widetilde{E};\\
	\widetilde{u}(x),\quad\quad\quad\quad &x\in\widetilde{E}\backslash E;\\
	0,\quad\quad\quad\quad\quad &x\in\Omega\backslash (E\bigcup \widetilde{E}) .
\end{array}
\right.
\]
Hence $\overline{u}=u$ $\nu$-a.e. and $\overline{u}=\widetilde{u}$ $\mu$-a.e. Thus $\overline{u}\in W^{1,2}_0(\Omega)\cap L^2(\Omega, \mu)$ exists. In other words,
(MPID) (resp. (MPIE)) implies that each equivalence class $u \in W^{1,2}_0(\Omega)$ (resp. $u \in W(M)$) contains a unique (in $L^{2}(\Omega, \mu)$ sense) member $\overline{u}$ that belongs to $L^{2}(\Omega, \mu)$ (resp. $L^{2}(M, \mu)$) and satisfies both conditions below:
	\begin{enumerate}
		\item[(1)] There exists a sequence $\left\{u_{k}\right\}$ in $C_{c}^{\infty}(\Omega)$ (resp. $\mathcal{C}(M)$) such that $u_{k} \rightarrow \overline{u}$ in $W^{1,2}_0(\Omega)$ (resp. $W(M)$) and $u_{k} \rightarrow \overline{u}$ in $L^{2}(\Omega, \mu)$ (resp. $L^{2}(M, \mu)$);
		\item[(2)] $\overline{u}$ satisfies the inequality in \eqref{eq:PI} (resp. \eqref{eq:PIN}).
	\end{enumerate}
	We call $\overline{u}$ the {\em $L^{2}(\Omega, \mu)$-representative} of $u$ (resp. {\em $L^{2}(M, \mu)$-representative} of $u$). Assume  $\mu$ satisfies (MPID) (resp. (MPIE)) and define a mapping $I_D: W^{1,2}_0(\Omega) \rightarrow L^{2}(\Omega, \mu)$ (resp. $I_W: W(M) \rightarrow L^{2}(M, \mu)$) by
	$$
	I_D(u)=\overline{u}\qquad(\text{resp.}\,\,I_W(u)=\overline{u}).
	$$
	Next, we notice that $I_D$ and $I_W$ are bounded linear operators but are not necessarily injective. Hence we consider a subspace $\mathcal{N}_D$ of $W^{1,2}_0(\Omega)$  defined as
	$$
	\mathcal{N}_D:=\left\{u \in W^{1,2}_0(\Omega):\|I_D(u)\|_{L^{2}(\Omega,\mu)}=0\right\}.$$
Similarly, we can define 
	$$	\mathcal{N}_W:=\left\{u \in W(M):\|I_W(u)\|_{L^{2}(M,\mu)}=0\right\}.$$
	Since $\mu$ satisfies (MPID) (resp. (MPIE)), $\mathcal{N}_D$ (resp. $\mathcal{N}_W$) is a closed subspace of $W^{1,2}_0(\Omega)$ (resp. $W(M)$). Let $(\mathcal{N}_D)^{\perp}$ (resp. $(\mathcal{N}_N)^{\perp}$) be the orthogonal complement of $\mathcal{N}_D$ in $W^{1,2}_0(\Omega)$ (resp. $\mathcal{N}_N$ in $W(M)$). Then $I_D: (\mathcal{N}_D)^{\perp} \rightarrow L^{2}(\Omega, \mu)$ (resp. $I_N: (\mathcal{N}_N)^{\perp} \rightarrow L^{2}(M, \mu)$) is injective. Throughout the rest of this paper, we will denote $\overline{u}$ simply by $u$.
	
	Now, we consider a nonnegative bilinear form $\mathcal{E}_D(\cdot, \cdot)$ (resp. $\mathcal{E}_W(\cdot, \cdot)$) on $L^{2}(\Omega, \mu)$ (resp. $L^{2}(M, \mu)$) defined as
	\begin{align}\label{eq(1.1)}
     \mathcal{E}_D(u, v):=\int_{\Omega} \langle\nabla u, \nabla v\rangle\,  d \nu\qquad\Big(\text{resp.} \,\,\mathcal{E}_W(u, v):=\int_{M} \langle\nabla u, \nabla v\rangle\,  d \nu\Big)	
\end{align}
	with $\operatorname{dom}(\mathcal{E}_D)=(\mathcal{N}_D)^{\perp}$ (resp. $\operatorname{dom}(\mathcal{E}_W)=(\mathcal{N}_W)^{\perp}$). Let $\mathcal{E}_E(\cdot, \cdot)$ be a nonnegative bilinear form on $L^{2}(M, \mu)$  defined as
		\begin{align}\label{eq:new}
		\mathcal{E}_E(u, v):=\int_{M} \langle\nabla u, \nabla v\rangle\,  d \nu
\end{align}	
	and let $\operatorname{dom}(\mathcal{E}_E):=\mathcal{C}\oplus \operatorname{dom}(\mathcal{E}_W)$. In Section \ref{S:L}, we show that $(\mathcal{E}_D, \operatorname{dom}(\mathcal{E}_D))$ (resp. $(\mathcal{E}_E, \operatorname{dom}(\mathcal{E}_E)))$ is closed quadratic form on $L^{2}(\Omega,\mu)$ (resp. $L^{2}(M,\mu)$) (see Proposition \ref{prop:2.3}). Hence there exists a nonnegative definite self-adjoint operator $H$ on $L^{2}(\Omega,\mu)$ such that $
\operatorname{dom}(\mathcal{E}_D)=\operatorname{dom}(H^{1/2})$ and
	$$\mathcal{E}_D(u,v)=\langle H^{1/2}u, H^{1/2}v \rangle_{L^{2}(\Omega,\mu)} \quad \text{for all}\,\, u, v\in \operatorname{dom}(\mathcal{E}_D).$$
	Finally, we write $\Delta_{\mu}^D=-H$, and call it the \textit{Dirichlet Laplacian (or Kre\u{\i}n-Feller operator) defined by $\mu$}. Similarly, we can define $\Delta_\mu^E$.  This completes the construction of the Laplacian. If $M=\R^n$ and $d\nu=dx$, then $\Delta_{\mu}^D$  is just the standard Laplacian on $\Omega\subseteq\R^n$ (see \cite{Hu-Lau-Ngai_2006}). 
	
	Let $(\mathfrak{X},d)$ be a metric space. We denote the {\em Euclidean distance} by $d_{E}(\cdot, \cdot)$. For a connected Riemannian $n$-manifold $M$, we denote the {\em Riemannian distance} by $d_M(\cdot,\cdot)$. Fix an arbitrary point $q_0\in M$. For $p\in M$, define $|p|_M:=d_M(q_0,p)$. For any $p\in M$, we let $T_pM$ be the {\em tangent space} of $M$ at $p$ and let  $TM:=\bigcup_{p\in M}T_p M$.   For $\epsilon>0$, let
	\begin{align*}
		&B^{\mathfrak{X}}(x,\epsilon):=\{y\in \mathfrak{X}:d(x,y)<\epsilon\}, \quad x\in \mathfrak{X},\\
		&B(x,\epsilon):=\{y\in \R^n: d_E(x,y)<\epsilon\}, \quad x\in \R^n,\\
		&B^M(p,\epsilon):=\{q\in M: d_M(p,q)<\epsilon\},\quad p\in M,\\
		&B^{T_pM}(0,\epsilon):=\{\xi\in T_pM: |\xi|<\epsilon\},\\
		&S^{T_pM}(0,\epsilon):=\{\xi\in T_pM: |\xi|=\epsilon\}.
	\end{align*}
 Let $\mu$ be a positive bounded regular Borel measure on $\mathfrak{X}$. Recall that the  {\em lower $L^{\infty}$-dimension of $\mu$}
	is defined as
	\begin{align*}
		\underline{\dim}_\infty(\mu):=\displaystyle{\varliminf_{\delta\to 0^+}}\frac{\ln (\sup_x \mu(B^{\mathfrak{X}}(x,\delta)))}{\ln \delta},
	\end{align*}
	where the supremum is taken over all $x\in{\rm supp}(\mu)$. Similarly, one can define $\overline{\dim}_\infty(\mu)$.
	If the limit exists, we denote the common value by $\dim_\infty(\mu)$. Let
	\begin{align*}
		\underline{\dim}_{\mu}(x):=\displaystyle{\varliminf_{\delta\to 0^+}}\frac{\ln\mu(B^{\mathfrak{X}}(x,\delta))}{\ln \delta}
	\end{align*}
	be the {\em lower local dimension} of $\mu$ at $x$.
	
 We have the following main theorems.
	\begin{thm}\label{thm:1.1}
	Let $n\geq 1$, $M$ be a complete smooth connected oriented Riemannian $n$-manifold, and $\Omega \subseteq M$ be a bounded open set. Let $\mu$ be a positive finite Borel measure on $M$ such that $\operatorname{supp}(\mu) \subseteq \overline{\Omega}$ and $\mu(\Omega)>0$. Assume that $\underline{\operatorname{dim}}_{\infty}(\mu)>n-2$. 
\begin{enumerate}
	\item[(a)]Assume  $\partial\Omega \neq \emptyset$. Then  {\rm (MPID)} holds, and the embedding $\operatorname{dom}(\mathcal{E}_D) \hookrightarrow L^{2}(\Omega,\mu)$ is compact.
	\item[(b)] Assume $\partial\Omega=\emptyset$. Then  {\rm (MPIE)} holds, and the embedding $\operatorname{dom}(\mathcal{E}_E) \hookrightarrow L^{2}(M,\mu)$ is compact, where $M$ is compact.
		\end{enumerate}
	\end{thm}
	
 Hu {\em et al.} \cite{Hu-Lau-Ngai_2006} proved that for a positive finite Borel measure $\mu$ defined on $\R^n$, Theorem \ref{thm:1.1}(a) holds. The main ingredient of their proof is a compact embedding theorem of Maz'ja (see Theorem \ref{thm:maz}). However, this compact embedding theorem cannot be applied directly to manifolds; we need to first generalize it to Riemannian manifolds.  By using the Toponogov theorem and the Rauch comparison theorem, along with  the continuity of the sectional curvature, the convergence of Maclaurin series, and properties of the exponential map, we can prove this compact embedding theorem (see Theorem \ref{thm:3.11}).  As a result, we have the following theorem.
	
	\begin{thm}\label{thm:1.2}
	Let $n$, $M$, $\Omega$, and $\mu$ be as in Theorem \ref{thm:1.1}.   Assume that $\underline{\operatorname{dim}}_{\infty}(\mu)>n-2$. 
	\begin{enumerate}
				\item[(a)] 	Assume $\partial\Omega \neq \emptyset$.  Then there exists an
				orthonormal basis $\left\{u_{k}\right\}_{k=1}^{\infty}$ of $L^{2}(\Omega, \mu)$ consisting of  eigenfunctions of $-\Delta_{\mu}^D.$ The eigenvalues $\left\{\lambda_{k}\right\}_{k=1}^{\infty}$ satisfy $0<\lambda_{1} \leq \lambda_{2} \leq \cdots$. 
				\item[(b)] 	Assume $\partial\Omega =\emptyset$, and hence $M$ is compact and $\Omega=M$. Then there exists an
				orthonormal basis $\left\{u_{k}\right\}_{k=1}^{\infty}$ of $L^{2}(M, \mu)$ consisting of  eigenfunctions of $-\Delta_{\mu}^E$. The eigenvalues $\left\{\lambda_{k}\right\}_{k=1}^{\infty}$ satisfy $0\leq\lambda_{1} \leq \lambda_{2} \leq \cdots$.
		\end{enumerate}
	Moreover, if $\dim(\operatorname{dom}(\mathcal{E}_D)) =\infty$ in (a) or $\dim(\operatorname{dom}(\mathcal{E}_E)) =\infty$ in (b), then  $\lim _{k \rightarrow \infty} \lambda_{k}=\infty$ and each eigenspace is finite-dimensional.
	\end{thm}

Let $M$ be a complete smooth connected Riemannian $n$-manifold. We now consider measures $\mu$ without compact support. The proof of Theorem \ref{thm:8.11}, analog of Theorem \ref{thm:3.11}, is more difficult in this case. In the proof of Theorem \ref{thm:3.11}, since the support of the measure $\mu$ is compact, there exists a finite family of normal coordinate charts covering ${\rm\supp}(\mu)$, allowing us to use a diagonal argument to extract a convergent subsequence in finitely many steps. We describe two main difficulties in the non-compact case and our ideas to overcome them. First, the number of coordinate charts covering  ${\rm\supp}(\mu)$ could be infinite. In proving the sufficiency of Theorem \ref{thm:8.11},  the method we use in the compact case to extract a convergent subsequence in finitely many steps no longer works.
To overcome this difficulty, we use the fact that for manifolds of bounded geometry, there exists a uniformly locally finite geodesic atlas on $M$ (see Lemma \ref{lem:8.2}). We use such an atlas, a partition of unity, and a second equivalent definition of Sobolev norm to extract a convergent subsequence. Second, in order to use the compact embedding theorem on $\R^n$ (see Theorem \ref{thm:maz}) to prove the necessity of Theorem \ref{thm:8.11}, our idea is to perform a partition of unity on each member of a sequence of functions in the unit ball of the space of the Sobolev of functions on $\R^n$, then use normal coordinate charts to pull the components to the manifold, and finally extract a convergent subsequence on $M$ and use normal coordinate charts to map it to a convergent subsequence of $\R^n$. However, there are countably many normal coordinate charts on $M$ with overlaps, making it impossible to use a similar argument as in the compact case. We deal with this problem by dividing the uniformly locally finite cover of $M$ into a finite number of layers so that the geodesic balls in each layer are disjoint, which allows us to construct a    convergent subsequence after a finite number of steps.

	Next we state some definitions. Let $M$ be a Riemannian $n$-manifold and let $X,Y$ be any basis for a $2$-plane $\Pi\in T_pM$. The {\em sectional curvature} of $M$ associated with $\Pi$ is defined as
		$$K_M:=K(X,Y)=\frac{{\rm Rm}(X,Y,Y,X)}{|X|^2|Y|^2-\langle X,Y\rangle^2},$$	
		where ${\rm Rm}:\Gamma(M)\times\Gamma(M)\times\Gamma(M)\times\Gamma(M)\rightarrow \R$ is the {\em Riemann curvature tensor} defined as  $${\rm Rm}(X,Y,Z,W)=\langle\nabla_X\nabla_YZ-\nabla_Y\nabla_XZ-\nabla_{[X,Y]}Z,W\rangle$$
		with $[X,Y]=XY-YX$ being the Poisson bracket. The {\em Ricci curvature} Ric is the trace of Rm.
		
	For the proof of Theorem \ref{thm:8.11}, because of the lack of compactness, we cannot use the method in Section \ref{S:L} to control the volume of a ball in $\R^n$ and that of its image in a manifold under a normal coordinate mapping. Hence we assume that the manifold is of bounded geometry (see Definition \ref{defi:2.3}), which ensures that the injective radius is positive and the sectional curvature is bounded, so we can use the techniques of Theorem \ref{thm:1.1} to show that the local density property of the $\widetilde{\mu}$ in $\R^n$ is equivalent to the local density property of the $\mu$ on the manifold, where $\widetilde{\mu}$ is the measure induced on $\R^n$ by the local normal coordinates. The following definition can be found in, for example, \cite{Shubin_1992}.
		\begin{defi}\label{defi:2.3}
			Let $M$ be a Riemannian $n$-manifold. We say that $M$ is of {\em bounded geometry} if the following two conditions are satisfied.
			\begin{enumerate}
				\item[(i)] The injectivity radius of $M$ is positive.
				\item[(ii)] Every covariant derivative of the Riemann curvature tensor ${\rm Rm}$ of $M$ is bounded, i.e., for all $k\in \N_0$, there exists a constant $C_k>0$ such that 
				$$|\nabla^k{\rm Rm}|\leq C_k.$$
			\end{enumerate}	
		\end{defi}
		
For examples of manifolds of bounded geometry, such as compact manifolds, Lie groups, homogeneous spaces, covering manifolds of compact manifolds, etc., we refer the reader to \cite{Cheeger-Gromov-Taylor_1982,Roe_1988, Shubin_1992}.
If the injectivity radius of $M$ is positive, then $M$ is complete (see, e.g.,\cite[Proposition 1.2a]{Eichhorn_2007}).

		\begin{thm}\label{thm:8.11}
		Let $n\geq 2$, $M$ be a smooth connected oriented Riemannian $n$-manifold of bounded geometry. Let $\mu$ be a positive finite Borel measure on $M$ (need not have compact support).  For $q>2$, the unit ball $B:=\{u \in C_{c}^{\infty}\left(M\right):\|u\|_{W^{1,2}_0\left(M\right)} \leq 1\}$
			is relatively compact in $L^{q}\left(M, \mu\right)$ if and only if
		\begin{align}
				&\lim _{\delta \rightarrow 0^{+}} \sup _{w \in M ; r \in(0, \delta)} r^{1-n/2} \mu(B^M(w,r))^{1/q}=0,  \label{eq:m1}\\
				&\lim _{|w|_M \rightarrow \infty} \sup _{r\in (0,1)} r^{1-n/2} \mu\left(B^M(w,r)\right)^{1 / q}=0   \qquad \text { \rm for } n>2, \label{eq:m2}
					\end{align}
				and
				\begin{align}
				&\lim _{\delta \rightarrow 0^{+}} \sup _{w \in M ; r \in(0, \delta)}|\ln r|^{1/2} \mu(B^M(w,r))^{1/q}=0, \label{eq:m3} \\ 
				&\lim _{|w|_M \rightarrow \infty} \sup _{r\in (0,1)} |\ln r|^{1/2} \mu(B^M(w,r))^{1/q}=0\qquad \text { \rm for } n=2. \label{eq:m4}
			\end{align}
		\end{thm}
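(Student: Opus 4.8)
The plan is to transfer the problem chart by chart to bounded domains in $\mathbb{R}^n$, apply Maz'ja's compact embedding theorem (Theorem~\ref{thm:maz}) on each chart, and reassemble the global statement using the uniform geometric control afforded by bounded geometry. By Lemma~\ref{lem:8.2} (Hebey), bounded geometry provides a uniformly locally finite geodesic atlas $\{(U_i,\psi_i)\}$ with $U_i=B^M(x_i,r_0)$ for a fixed $r_0$ below the injectivity radius and $\psi_i=\exp_{x_i}^{-1}$, together with a subordinate partition of unity $\{\phi_i\}$ whose derivatives are bounded uniformly in $i$. Using the Rauch comparison theorem together with the bounds $|\nabla^kRm|\le C_k$, I would obtain, uniformly in $i$ on $\psi_i(U_i)$, two-sided bounds $c^{-1}\delta_{kl}\le g_{kl}\le c\,\delta_{kl}$, a bound $|\partial g_{kl}|\le c$, comparability of $d_M$ with the Euclidean distance read off in coordinates, and comparability of $d\nu$ with Lebesgue measure. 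Hence the $W^{1,2}$-norm over each $U_i$ is uniformly equivalent to the flat norm over $\psi_i(U_i)$, and Triebel's equivalent norm (Definition~\ref{def:8.2}) gives $\|u\|_{W^{1,2}_0(M)}^2\asymp\sum_i\|(\phi_i u)\circ\psi_i^{-1}\|_{W^{1,2}(\psi_i(U_i))}^2$. Writing $\mu_i:=(\psi_i)_\ast(\mu|_{U_i})$ on $\psi_i(U_i)\subseteq\mathbb{R}^n$, one gets $\mu_i(B(y,s))\asymp\mu(B^M(\psi_i^{-1}(y),s'))$ with $s\asymp s'$ uniformly in $i$, so that \eqref{eq:m1}--\eqref{eq:m4} translate into uniform-in-$i$ hypotheses on the $\mu_i$ of exactly the type appearing in Maz'ja's theorem on each bounded domain $\psi_i(U_i)$ — the logarithmic weight in \eqref{eq:m3}--\eqref{eq:m4} reflecting the critical Sobolev exponent when $n=2$.

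For sufficiency, given $\epsilon>0$, condition \eqref{eq:m2} (resp.\ \eqref{eq:m4}) lets me pick $R>0$ so that every chart with $|x_i|_M\ge R$ has its Maz'ja quantity for $\mu_i$ on scales in $(0,1)$ at most $\epsilon'$; the quantitative (norm-bound) form of Maz'ja's theorem then gives $\|(\phi_i u)\circ\psi_i^{-1}\|_{L^q(\mu_i)}\le C\epsilon'\,\|(\phi_i u)\circ\psi_i^{-1}\|_{W^{1,2}}$ for those $i$, and summing via uniform local finiteness and Triebel's norm yields $\int_{M\setminus B^M(q_0,R)}|u|^q\,d\mu\le C\epsilon$ for every $u\in B$; here \eqref{eq:m1}/\eqref{eq:m3} control the small radii uniformly and $\mu(M)<\infty$ disposes of the transitional region. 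Since $\overline{B^M(q_0,R)}$ meets only finitely many $U_i$, and each such $\mu_i$ satisfies Maz'ja's compactness criterion, the flat unit ball is relatively compact in $L^q(\mu_i)$; a finite diagonal extraction then gives, from any $\{u_k\}\subseteq B$, a subsequence for which each $(\phi_i u_k)\circ\psi_i^{-1}$ converges in $L^q(\mu_i)$, and combined with the tail estimate this subsequence is Cauchy in $L^q(M,\mu)$. Hence $B$ is relatively compact.

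For necessity, assume $B$ is relatively compact in $L^q(M,\mu)$ and suppose, say, \eqref{eq:m1} fails: there are $\epsilon_0>0$, points $w_k\in M$, radii $r_k\to 0^+$ with $r_k^{1-n/2}\mu(B^M(w_k,r_k))^{1/q}\ge\epsilon_0$. Fixing $\eta\in C_c^\infty(B(0,1))$ with $\eta\equiv1$ near $0$ and a chart $U_i\ni w_k$, I would set $u_k=a_k\,\eta\big((\psi_i(\cdot)-\psi_i(w_k))/r_k\big)$ with $a_k$ chosen so that $\|u_k\|_{W^{1,2}_0(M)}=1$, which forces $a_k\asymp r_k^{1-n/2}$; then, using the uniform comparison of balls, $\|u_k\|_{L^q(\mu)}\gtrsim a_k\,\mu(B^M(w_k,c r_k))^{1/q}$ stays bounded below, while the shrinking supports force $u_k\rightharpoonup 0$ in $W^{1,2}_0(M)$, so compactness of the embedding — which sends weakly null sequences to norm-null ones — gives $\|u_k\|_{L^q(\mu)}\to 0$, a contradiction. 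If instead \eqref{eq:m2} fails, one takes $|w_k|_M\to\infty$ with $r_k\in(0,1)$ and the same bumps escape every compact set, yielding the same contradiction. For $n=2$ one replaces $\eta(\cdot/r_k)$ by a logarithmic cutoff (supported in a fixed ball, equal to a constant multiple of $|\ln r_k|^{1/2}$ on $B^M(w_k,r_k)$, normalized in $W^{1,2}_0(M)$), reflecting the criticality of the exponent, and argues as before using \eqref{eq:m3}, \eqref{eq:m4}.

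The main obstacle I anticipate is the localization step: obtaining geodesic charts in which the metric, its first derivatives, the volume form, and the comparison between $d_M$ and $d_E$ are all controlled by constants \emph{independent of the chart}, which is precisely what makes the local Maz'ja estimates summable. This is where the positive injectivity radius, the Rauch comparison theorem, and the higher-order curvature bounds $|\nabla^kRm|\le C_k$ (needed beyond $k=0$ to control the derivatives of the $g_{ij}$, hence Triebel's norm equivalence) are essential. A secondary difficulty is the bookkeeping between the two scales in each pair of conditions — ``$r\in(0,\delta)$ uniformly in $w$'' versus ``$r\in(0,1)$ as $|w|_M\to\infty$'' — which must be combined, using $\mu(M)<\infty$, to handle the region where $w$ is neither near $q_0$ nor far away and $r$ is not small.
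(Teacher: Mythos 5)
Your argument is essentially correct, but it is a genuinely different route from the paper's.

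For sufficiency, the paper does not argue chart-by-chart with a uniform Maz'ja norm bound and a tail estimate. Instead it constructs a single measure $\widetilde{\mu}=\sum_{i}\mu\circ\varphi_i^{-1}$ on $\mathbb{R}^n$ whose support is a disjoint union of Euclidean balls (Remark~\ref{rema:8.2}), transfers the conditions \eqref{eq:m1}--\eqref{eq:m4} to the conditions \eqref{maz1}--\eqref{maz4} for $\widetilde{\mu}$ via Lemma~\ref{lem:9.3}, pulls the whole sequence $(u_n)$ to $\mathbb{R}^n$ as $\widetilde{u}_n=\sum_i(\rho_i u_n)\circ\varphi_i^{-1}$, applies Maz'ja's theorem once to extract a convergent subsequence in $L^q(\mathbb{R}^n,\widetilde{\mu})$, and pushes the limit back. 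Your localization plus diagonal extraction plus quantitative tail estimate reaches the same conclusion but avoids the global pushforward; either organization is fine. One detail you elide: in summing the chart-wise Maz'ja estimates, the exponent $q>2$ matters — from $\sum_i\|\phi_i u\|_{W^{1,2}}^2\lesssim\|u\|_{W^{1,2}(M)}^2$ and bounded overlap one gets $\sum_i\|\phi_i u\|_{W^{1,2}}^q\lesssim\|u\|_{W^{1,2}(M)}^q$ via $\ell^2\hookrightarrow\ell^q$, and only then can the tail bound close.

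For necessity the divergence is substantial. The paper's converse argument is long and combinatorial: it starts from a bounded sequence in the flat unit ball $\widetilde B$ of $C_c^\infty(\mathbb{R}^n)$, builds an open cover of $\mathbb{R}^n$ from images of geodesic balls plus auxiliary sets $\widetilde U_{N_j}$ and $\widetilde W$, partitions the geodesic ball atlas into $N+1$ layers of mutually disjoint balls, and extracts convergent subsequences layer by layer to show $\widetilde B$ is relatively compact in $L^q(\mathbb{R}^n,\widetilde\mu)$; only then does it invoke Maz'ja's theorem and Lemma~\ref{lem:9.3}. Your contradiction argument with explicitly constructed concentrating functions is the classical direct proof and is considerably simpler and cleaner. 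The one point you should make precise: for both the $n>2$ scaling bump and the $n=2$ logarithmic cutoff, what forces $u_k\rightharpoonup 0$ in $W^{1,2}_0(M)$ is not merely ``shrinking supports'' — for $n=2$ the outer radius of the cutoff stays of fixed size $R\le\mathrm{inj}(M)$, so the supports do not shrink. Rather, one shows $\|u_k\|_{L^2(M,\nu)}\to0$ directly (it is $O(r_k)$ for $n>2$ and $O(|\ln r_k|^{-1/2})$ for $n=2$), and since $(u_k)$ is bounded in $W^{1,2}_0(M)$ and converges to $0$ strongly in $L^2$ along a subsequence, any weak subsequential limit must vanish. Also be explicit that for the bump to sit inside a single geodesic chart with uniform metric distortion, you should center the normal coordinate chart at $w_k$ itself, using the uniform positive injectivity radius from bounded geometry; this is where the bounded geometry assumption actually enters your necessity argument.

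Overall: same statement, same flavor of tools (Maz'ja, Hebey's uniformly locally finite atlas, Triebel norm), but a different decomposition for sufficiency and a much more elementary necessity proof than the paper's. The paper's necessity route has the side benefit of establishing relative compactness for the associated $\widetilde\mu$ in $\mathbb{R}^n$, which it needs to cite Maz'ja's converse; your route avoids invoking Maz'ja's converse entirely, at the cost of having to construct the explicit test functions in both regimes $n>2$ and $n=2$.
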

			Theorem \ref{thm:8.11} generalizes a compact embedding theorem of Maz'ja (see Theorem \ref{thm:maz}) and plays a major role in proving Theorems \ref{thm:8.13} and \ref{thm:8.14}.

			The definitions of (PID), (PIE), $\operatorname{dom}(\mathcal{E}_D)$, $\operatorname{dom}(\mathcal{E}_E)$, $\Delta_{\mu}^D$, and $\Delta_{\mu}^E$ in the following theorems are given in Section \ref{S:comp}. (MPID) and (MPIE) in the following theorem are given in \eqref{eq:PI} and \eqref{eq:PIN}, respectively, where $\Omega\subseteq M$ is an open subset and $M$ is a complete Riemannian $n$-manifold.
			
	\begin{thm}\label{thm:8.13}	Let $n\geq 2$, $M$ be a smooth connected oriented Riemannian $n$-manifold of bounded geometry and let $\Omega\subseteq M$ be an open subset.  Let $\mu$ be a positive finite Borel measure on $M$ (need not have compact support) and assume that  $\supp(\mu)\subseteq \overline{\Omega}$. Assume $\underline{\operatorname{dim}}_{\infty}(\mu)>n-2$.
				\begin{enumerate}
			\item[(a)]	Assume that $\partial\Omega \neq \emptyset$ and {\rm (PID)} holds. Then {\rm (MPID)} holds, and the embedding $\operatorname{dom}(\mathcal{E}_D) \hookrightarrow L^{2}(\Omega,\mu)$ is compact.
			\item[(b)] Assume that $\partial\Omega =\emptyset$  and {\rm (PIE)} holds. Then {\rm (MPIE)} holds, and the embedding $\operatorname{dom}(\mathcal{E}_E) \hookrightarrow L^{2}(M,\mu)$ is compact.	
		\end{enumerate}
		\end{thm}
		
		Using the results of Theorem \ref{thm:8.13}, we obtain the following theorem.
\begin{thm}\label{thm:8.14}
				Let $n$, $M$, $\Omega$, and $\mu$ be as in Theorem \ref{thm:8.13}. Assume that $\underline{\operatorname{dim}}_{\infty}(\mu)>n-2$.
		\begin{enumerate}
		\item[(a)]	Assume that $\partial\Omega \neq \emptyset$ and {\rm (PID)} holds. Then there exists an
		orthonormal basis $\left\{u_{k}\right\}_{k=1}^{\infty}$ of $L^{2}(\Omega, \mu)$ consisting of  eigenfunctions of $-\Delta_{\mu}^D.$ The eigenvalues $\left\{\lambda_{k}\right\}_{k=1}^{\infty}$ satisfy $0<\lambda_{1} \leq \lambda_{2} \leq \cdots$.
		\item[(b)]	Assume that $\partial\Omega =\emptyset$  and {\rm (PIE)} holds.  Then there exists an
		orthonormal basis $\left\{u_{k}\right\}_{k=1}^{\infty}$ of $L^{2}(M, \mu)$ consisting of  eigenfunctions of $-\Delta_{\mu}^E.$ The eigenvalues $\left\{\lambda_{k}\right\}_{k=1}^{\infty}$ satisfy $0\leq\lambda_{1} \leq \lambda_{2} \leq \cdots$.
	\end{enumerate}	
Moreover, if $\dim(\operatorname{dom}(\mathcal{E}_D)) =\infty$ in (a) or $\dim(\operatorname{dom}(\mathcal{E}_E)) =\infty$ in (b), then  $\lim _{k \rightarrow \infty} \lambda_{k}=\infty$ and each eigenspace is finite-dimensional.				
		\end{thm}
	

	The definitions of (MPID*), (MPIE*), $\operatorname{dom}(\mathcal{E}_D^k)$,  $\operatorname{dom}(\mathcal{E}_E^k)$, $\Delta_{\mu}^{D,k}$, $\Delta_{\mu}^{E,k}$, and $\operatorname{dom}(\Delta_{\mu}^{E,k})$ in the following two theorems are given in Section \ref{S:Hodge}.
	\begin{thm}\label{thm:1.1*}
		Let $n\geq 1$, $M$ be a compact connected Riemannian $n$-manifold, and $\Omega \subseteq M$ be an open set. Let $\mu$ be a positive finite Borel measure on $M$ such that $\operatorname{supp}(\mu) \subseteq \overline{\Omega}$ and $\mu(\Omega)>0$. Assume that $\underline{\operatorname{dim}}_{\infty}(\mu)>n-2$. 
		\begin{enumerate}
			\item[(a)] Assume that  $\partial\Omega \neq \emptyset$. Then  {\rm (MPID*)} holds, and the embedding $\operatorname{dom}(\mathcal{E}_D^k) \hookrightarrow L^2\big(\bigwedge^kT^*\Omega,\mu\big)$ is compact.
			\item[(b)] Assume that $\partial\Omega=\emptyset$. Then  {\rm (MPIE*)} holds, and the embedding $\operatorname{dom}(\mathcal{E}_E^k) \hookrightarrow L^2\big(\bigwedge^kT^*M,\mu\big)$ is compact.
	\end{enumerate}
	\end{thm}
	As a result, we have the following theorem.
	\begin{thm}\label{thm:1.2*}
		Assume the same hypotheses of Theorem \ref{thm:1.1*}. 
	\begin{enumerate}
				\item[(a)]	Assume  $\partial\Omega \neq \emptyset$. Then there exists an
				orthonormal basis of $L^2\big(\bigwedge^kT^*\Omega,\mu\big)$ consisting of  eigenforms of $-\Delta_{\mu}^{D,k}$, where $0\leq k\leq n$. The eigenvalues $\left\{\lambda_{m}\right\}_{m=1}^{\infty}$ satisfy $0<\lambda_{1} \leq \lambda_{2} \leq \cdots$. 
				
				\item[(b)] Assume $\partial\Omega=\emptyset$. Then there exists an
				orthonormal basis of $L^2\big(\bigwedge^kT^*M,\mu\big)$ consisting of  eigenforms of $-\Delta_{\mu}^{E,k}$, where $0\leq k\leq n$. The eigenvalues $\left\{\lambda_{m}\right\}_{m=1}^{\infty}$ satisfy $0\leq\lambda_{1} \leq \lambda_{2} \leq \cdots$.
		\end{enumerate}
	Moreover, if $\dim(\operatorname{dom}(\mathcal{E}_D^k)) =\infty$ in (a) or $\dim(\operatorname{dom}(\mathcal{E}_E^k)) =\infty$ in (b), then  $\lim _{m \rightarrow \infty} \lambda_{m}=\infty$ and each eigenspace is finite-dimensional.
	\end{thm}

	\begin{defi}
		Let $M$ be a smooth Riemannian $n$-manifold and let $\mu$ be a positive finite Borel measure on $M$ such that $\operatorname{supp}(\mu) \subseteq M$. The {\em space of harmonic $k$-fields} is defined as
		\begin{align*}
			\widetilde{\mathcal{H}}^k(M):=\Big\{\omega\in W^{1,2}\big({\bigwedge}^kT^*M\big):d\omega=d^*\omega=0\Big\},
		\end{align*}
		where $d$ and $d^*$ are defined in \eqref{eq:d*} and \eqref{eq:ad*}, respectively. Let $(\widetilde{\mathcal{H}}^k(M))^{\perp}$ be the orthogonal complement space of $\widetilde{\mathcal{H}}^k(M)$.  Note that $	\widetilde{\mathcal{H}}^k(M)\subseteq \Gamma^\infty(\Lambda^kT^*M)$. The {\em space of harmonic $k$-forms}, denoted $\mathcal{H}^k(M)$,  is defined as
		\begin{align*}
			\mathcal{H}^k(M):=\Big\{\omega\in \operatorname{dom}(\Delta^k):\Delta^k\omega=0\Big\}.
		\end{align*}
		Let
		\begin{align*}
			\mathcal{H}^k_{\mu}(M):=\big\{\omega\in \operatorname{dom}(\Delta_{\mu}^{E,k}):\Delta_{\mu}^{E,k}\omega=0\big\},
		\end{align*}
		where $\Delta_{\mu}^{E,k}$ is defined in Section \ref{S:Hodge}. The element of $\mathcal{H}^k_{\mu}(M)$ are called {\em $\mu$-harmonic $k$-forms}.
		
	\end{defi}
	Assume that $\Omega=M$ and   $\mu$ is absolutely continuous with a positive density. The following theorem generalizes the classical Hodge theorem.

	\begin{thm}\label{thm:1.3*}
		Let $M$ be a compact Riemannian $n$-manifold. Let $\mu$ be a positive finite measure that is absolutely continuous with respect to the Riemannian volume form and has a positive and bounded  density. Then each cohomology class of  forms in $W^{1,2}(\bigwedge^kT^*M)$ contains a unique $\mu$-harmonic $k$-form.
	\end{thm}
	In Example \ref{exam:1*}, we show that the conclusion of Theorem \ref{thm:1.3*} need not hold if $\mu$ is not absolutely continuous with respect to the Riemannian volume form.	
	
	In Section \ref{S:self}, we consider two types of measures, self-similar measures and self-conformal measures. Let $\{S_i\}_{i=1}^{m}$ be a finite set of {\em contraction maps} on $M$, i.e., for each $i$, there exists $r_i$ with $0<r_i<1$ such that
	\begin{align}\label{eq:j1.1}
		d_M(S_i(x),S_j(y))\leq r_i d_M(x,y)\quad \text{for all}\,\, x, y\in M.
	\end{align}
	Then there exists a unique nonempty compact set $K$ such that $K=\bigcup^m_{i=1}S_i(K)$ (see Hutchinson\cite{Hutchinson_1981}). We call a family of contractions $\{S_i\}_{i=1}^{m}$ on $M$ an {\em iterated function system} (IFS), and call $K$ the {\em invariant set} or {\em attractor} of the IFS. If equality in (\ref{eq:j1.1}) holds,  then $S_i$ is called a {\em contractive similitude}. It is known that if a complete Riemannian manifold admits a proper similitude (i.e., a similitude with similarity ratio $r_i\neq1$), then it is locally isometric to Euclidean space (see, e.g., \cite[Lemma 2 of Theorem 3.6]{Kobayashi-Nomizu_1963}). Let $(p_1,\ldots,p_m)$ be a {\em probability vector}, i.e., $p_i>0$ and $\sum_{i=1}^mp_i=1$. Then there is a unique Borel probability measure $\mu$ with $\supp (\mu)=K$, called the {\em invariant measure}, that satisfies
	\begin{align*}
		\mu=\sum_{i=1}^m p_i\mu\circ S_i^{-1}.
	\end{align*}
	If $\{S_i\}_{i=1}^{m}$ is an IFS of contractive similitudes on $M$, then the attractor $K$ is called a {\em self-similar set} and $\mu$ a {\em self-similar measure}.
	
	Let $\{(U_{\alpha},\varphi_{\alpha})\}_{\alpha\in \mathcal{A}}$ be an atlas of $M$. We say that a map $S:M\rightarrow M$ is a {\em $C^{1+\gamma}$ diffeomorphism} at $x\in M$ if there exist $\alpha,\beta\in \mathcal{A}$ such that  $x\in U_{\alpha}$, $S(x)\in U_{\beta}$, and
	\begin{align*}
		f:=\varphi_{\beta}\circ S\circ \varphi_{\alpha}^{-1}: \R^n\rightarrow \R^n
	\end{align*}
	is $C^{1+\gamma}$, where $0<\gamma<1$.
	
	Let $W\subseteq M$ be open and connected, and let $F\subseteq W$ be a compact set with $\overline{F^{\circ}}=F$. Recall that a map $S: W \rightarrow W$ is {\em conformal} on $W$ if for each $x\in W$, $S'(x)$ is a similarity matrix, i.e., a scalar multiple of an orthogonal matrix.
	
	According to \cite{Ngai-Xu_2021} and \cite{Patzschke_1997}, given a positive integer $m\geq 2$, $S_1,\ldots,S_m:W \rightarrow W$,  we say that $\{S_i\}_{i=1}^m$ is a {\em conformal iterated function system} (CIFS) if conditions (a)--(c) below are satisfied:
	\begin{enumerate}
		\item[(a)] for all $i\in\{1,\ldots, m\}$, $S_i:W \rightarrow S_i(W)\subseteq W$ is a conformal $C^{1+\gamma}$ diffeomorphism, with $0<\gamma<1$ ;
		\item[(b)] $S_i(F^{\circ})\subseteq F^{\circ}$ for all $i\in\{1,\ldots, m\}$;
		\item[(c)] $0<|\det S'_{i}(x)|<1$ for all $i\in\{1,\ldots, m\}$ and all $x\in W$.
	\end{enumerate}
	If $S_1,\ldots,S_m$ are conformal mappings, the attractor $K$ is called the {\em self-conformal set} and the invariant measure $\mu$ is called the {\em self-conformal measure}.	If $S$ is a $C^{1+\gamma}$ diffeomorphisms, then $|\det S'(x)|$ satisfies Dini's condition.
Let
\begin{align*}
	\Sigma_{k}:=\{1, \ldots, m\}^{k}\quad \text { and } \quad \Sigma_{*}:=\bigcup_{k \geq 0} \Sigma_{k}
\end{align*}
(with $\left.\Sigma_{0}:=\{\emptyset\}\right)$. For $\tau=\left(i_{1}, \ldots, i_{k}\right) \in \Sigma_{k}$, let $\tau^-:=(i_1,\ldots,i_{k-1})$, $S_{\tau}:=S_{i_{1}} \circ \cdots \circ S_{i_{k}}$, and  $p_{\tau}:=p_{i_{1}}  \cdots  p_{i_{k}}.$
	As shown in \cite{Ngai-Xu_2021}, we can find an open and connected set $\Omega$ such that
	\begin{align}\label{eq:j1.7}
		\overline{\Omega} \,\,\text{is compact},\quad F\subseteq \Omega\subseteq\overline{\Omega}\subseteq W,
	\end{align}
and $\Omega$ satisfies the {\em bounded distortion property} (BDP), i.e., there exists a
constant $c \geq 1$ such that for any $x,y\in\Omega$ and all $\tau\in \Sigma_{*}$,
$$|{\rm det}S_\tau'(x)|\leq c\,|{\rm det}S_\tau'(y)|.$$
Define
\begin{align*}
	\|S'_{\tau}\|:=\sup_{x\in \Omega}|\det S'_{\tau}(x)|.
\end{align*}

	Recall that an IFS $\{S_i\}_{i=1}^{m}$ satisfies the {\em open set condition} (OSC) if there exists a nonempty bounded open set $U$ such that $\bigcup_{i=1}^mS_i(U)\subseteq U$ and $S_i(U)\cap S_j(U)=\emptyset$ for any $i\neq j$. Such a $U$ is called an {\em OSC-set}. If an OSC-set $U$ can be chosen so that $U\cap K\neq \emptyset$, we say that $\{S_i\}_{i=1}^{m}$ satisfies the {\em strong open set condition} (SOSC).
	
	Let $M$ be a complete Riemannian $n$-manifold. We state the following conditions:
	\begin{enumerate}
		\item[(C1)] (MPID) holds, and the embedding $\operatorname{dom}(\mathcal{E}_D) \hookrightarrow L^{2}(\Omega, \mu)$ is compact;
		
		\item[(C2)] $\overline{A}:=\max _{1 \leq i \leq m}\left\{p_{i} \|S'_{i}\|^{-(n-2)}\right\}<1$. If $\left\{S_{i}\right\}_{i=1}^{m}$ is an IFS of contractive similitudes on $M$, we use $r_i$ instead of $\|S'_{i}\|$;
		
		\item[(C3)] $\underline{\operatorname{dim}}_{\infty}(\mu)>n-2$;
		
		\item[(C4)] $\mu$ is upper $s$-regular for some $s>n-2$, where upper $s$-regularity will be defined in Section \ref{S:L}.
	\end{enumerate}

	\begin{thm}\label{thm:4.9}
	Let $n\geq 1$ and $M$ be a complete Riemannian $n$-manifold with  ${\rm Ric}\geq (n-1)\xi$ for some $\xi\in\mathbb{R}$. Let $\left\{S_{i}\right\}_{i=1}^{m}$ be a {\rm CIFS} on $M$ satisfying {\rm (OSC)}, and let $\Omega$ be defined as in \eqref{eq:j1.7}. Let $\mu$ be an associated self-conformal measure such that ${\rm supp}(\mu)\subseteq \overline{\Omega}$  and $\mu(\Omega)>0$.  Then
	\begin{enumerate}
		\item[(a)]  {\rm (C3)} and {\rm (C4)} are equivalent;
		\item[(b)]   {\rm (C2)} implies {\rm (C3)};
		\item[(c)]  {\rm(C3)} implies {\rm(C1)}.
	\end{enumerate}
\end{thm}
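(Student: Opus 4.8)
The plan is to dispatch the three implications separately: (c) follows immediately from Theorem \ref{thm:1.1}, (a) is pure manipulation of the definition of $\underline{\dim}_{\infty}$, and (b), the substantive part, I would route through (C4).

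\emph{Parts (c) and (a).} For (c): under the standing hypotheses $M$ is complete, $\Omega$ is bounded open with $\overline\Omega$ compact by \eqref{eq:j1.7}, $\mu$ is a positive finite Borel measure with $\supp(\mu)\subseteq\overline\Omega$ and $\mu(\Omega)>0$, and (C3) is exactly $\underline{\dim}_{\infty}(\mu)>n-2$; these are precisely the hypotheses of Theorem \ref{thm:1.1}, whose conclusion---(PI) on $\Omega$ and compactness of $\operatorname{dom}(\mathcal{E})\hookrightarrow L^2(\Omega,\mu)$---is (C1). For (a): if $\mu$ is upper $s$-regular with $s>n-2$, dividing $\ln\big(\sup_x\mu(B^M(x,\delta))\big)\le\ln C+s\ln\delta$ by $\ln\delta\to-\infty$ gives $\underline{\dim}_{\infty}(\mu)\ge s>n-2$, so (C4)$\Rightarrow$(C3); conversely, given (C3) I would pick $s\in(n-2,\underline{\dim}_{\infty}(\mu))$, obtain $\delta_0$ with $\sup_x\mu(B^M(x,\delta))\le\delta^s$ for $\delta<\delta_0$ from the lower limit, and use $\mu(B^M(x,r))\le\mu(M)\le\mu(M)\delta_0^{-s}r^s$ for $r\ge\delta_0$ to conclude upper $s$-regularity, giving (C3)$\Rightarrow$(C4). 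Since $\overline\Omega\subseteq W$ is compact, $d_E$ and $d_M$ are comparable on it, so it is harmless that the CIFS is phrased in Euclidean charts while $\underline{\dim}_{\infty}$ and upper $s$-regularity may be taken with respect to $d_M$.

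\emph{Part (b).} By (a) it suffices to show (C2) forces $\mu$ to be upper $s$-regular for some $s>n-2$, and I would do this inside the chart(s) covering $\overline\Omega$ via the standard conformal-IFS machinery, in three steps. (i) \emph{Bounded distortion:} since each $S_i$ is a conformal $C^{1+\gamma}$ diffeomorphism and $|\det S_i'|$ satisfies Dini's condition, there is $D\ge1$ with $D^{-1}\le|\det S_\tau'(x)|/|\det S_\tau'(y)|\le D$ for all $\tau\in\Sigma_*$, $x,y\in F$, whence $\|S_{\tau\sigma}'\|\asymp\|S_\tau'\|\,\|S_\sigma'\|$ and $\operatorname{diam}(S_\tau(F))\asymp\|S_\tau'\|^{1/n}$ uniformly in $\tau$. (ii) \emph{Moran cover:} for small $r>0$ put $\Lambda_r:=\{\tau=(i_1,\dots,i_k):\operatorname{diam}(S_\tau(F))\le r<\operatorname{diam}(S_{(i_1,\dots,i_{k-1})}(F))\}$, a finite antichain with $\bigcup_{\tau\in\Lambda_r}S_\tau(F)\supseteq K\supseteq\supp\mu$ and $\operatorname{diam}(S_\tau(F))\asymp r$ on it. (iii) \emph{Bounded overlap from (OSC):} with an OSC-set $U$, the sets $S_\tau(U)$, $\tau\in\Lambda_r$, are pairwise disjoint with Lebesgue volumes $\asymp r^n$, so a packing argument gives $N\in\N$, independent of $r$ and $x$, with $B(x,r)$ meeting at most $N$ of the cylinders $S_\tau(F)$, $\tau\in\Lambda_r$, and more generally makes the family $\{S_\tau(F):\tau\in\Lambda_r\}$ boundedly overlapping. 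These yield the standard estimate $\mu(B(x,r))\le C\max\{p_\tau:\tau\in\Lambda_r\}$ (the bound underlying the computation of $\underline{\dim}_{\infty}$ for self-conformal measures; cf.\ \cite{Ngai-Xu_2021,Patzschke_1997}). Finally, a short logarithmic computation shows that (C2), i.e.\ $\max_ip_i\|S_i'\|^{-(n-2)}<1$ (with $r_i$ in place of $\|S_i'\|$ in the similitude case), together with the submultiplicativity of $\tau\mapsto\|S_\tau'\|$, produces $s>n-2$ with $p_\tau\le C'\operatorname{diam}(S_\tau(F))^s$ for every $\tau$; since $\operatorname{diam}(S_\tau(F))\asymp r$ on $\Lambda_r$, we get $\mu(B(x,r))\le C''r^s$ with $s>n-2$, i.e.\ (C4), hence (C3) by (a).

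\emph{Main obstacle.} The real difficulty is carrying out (b) honestly on a manifold rather than on $\mathbb{R}^n$: the bounded-distortion, Moran-cover and (OSC)-packing arguments must be run inside coordinate charts, patched over the finitely many charts needed to cover the compact set $\overline\Omega\subseteq W$, and the distortion between $d_E$ and $d_M$ controlled uniformly there---this is where \eqref{eq:j1.7}, the $C^{1+\gamma}$/Dini hypotheses, and the CIFS formalism of \cite{Ngai-Xu_2021,Patzschke_1997} come in. A secondary point is the bookkeeping converting the margin in (C2) into an exponent $s>n-2$; here the similitude case is cleaner, as $\|S_\tau'\|$ is replaced by a product of similarity ratios and the computation is exact, making (C2) and (C3) equivalent in that case even though only (C2)$\Rightarrow$(C3) is required.
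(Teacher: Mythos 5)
Your proof takes essentially the same route as the paper's: (c) is Theorem \ref{thm:1.1} verbatim, (a) is Lemma \ref{lem:3.1} (which you reprove inline), and (b) reproduces, in sketch form, the paper's Proposition \ref{prop:4.4} (itself proved by the Moran-cover/bounded-overlap/Patzschke argument you outline, using Theorem \ref{thm:4.3} for $\mu(K_\tau)=p_\tau$) followed by the same logarithmic computation from $\overline A<1$. One small caution in (b): you write $\operatorname{diam}(S_\tau(F))\asymp\|S_\tau'\|^{1/n}$, consistent with the literal definition $\|S_\tau'\|=\sup_F|\det S_\tau'|$, whereas the paper's display \eqref{eq:j4.5} uses $\operatorname{diam}(K_\tau)\asymp\|S_\tau'\|$ (treating $\|S'_\tau\|$ as a Lipschitz/operator norm, which also matches the substitution $\|S_i'\|\rightsquigarrow r_i$ in (C2) for similitudes); your convention still yields an exponent $s>n-2$, so the implication $({\rm C2})\Rightarrow({\rm C3})$ survives, but it is worth aligning the convention with the one used in (C2) and Proposition \ref{prop:4.4} to avoid an off-by-$n$ exponent.
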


	In Theorem \ref{thm:4.9}, the implication $({\rm C3})\Rightarrow({\rm C1})$ and the equivalence  $({\rm C3}) \Leftrightarrow ({\rm C4})$  do not  require the Ricci curvature of $M$ to have a lower bound.

	\begin{thm}\label{thm:4.8}
		Let $n\geq 1$ and $M$ be a complete Riemannian $n$-manifold with  ${\rm Ric}\geq (n-1)\xi$ for some $\xi\in\mathbb{R}$. Let $\left\{S_{i}\right\}_{i=1}^{m}$ be an {\rm IFS} of contractive similitudes on $M$ satisfying {\rm(OSC)}, and let $\Omega\subseteq M$ be a bounded open set. Let $\mu$ be an associated self-similar measure such that ${\rm supp}(\mu)\subseteq \overline{\Omega}$  and $\mu(\Omega)>0$.  Then conditions {\rm(C1) }--{\rm (C4)} are equivalent.
	\end{thm}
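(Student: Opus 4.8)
The plan is to establish the chain $(\mathrm{C1})\Rightarrow(\mathrm{C2})\Rightarrow(\mathrm{C3})\Rightarrow(\mathrm{C4})\Rightarrow(\mathrm{C1})$, supplemented by the direct implications $(\mathrm{C3})\Rightarrow(\mathrm{C1})$ and $(\mathrm{C3})\Rightarrow(\mathrm{C2})$ that streamline the bookkeeping. First I would dispose of $n=1$: then $n-2=-1$, so $\underline{\dim}_\infty(\mu)\ge 0>-1$ always, $\max_i p_ir_i^{-(n-2)}=\max_i p_ir_i<1$ always, $\mu$ is trivially upper $0$-regular, and Theorem~\ref{thm:1.1} gives (C1); hence (C1)--(C4) all hold unconditionally and the theorem is vacuous. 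So assume $n\ge2$. Because $\{S_i\}$ consists of proper contractive similitudes, $M$ is flat, and in particular a neighbourhood of $\operatorname{supp}(\mu)=K$ is isometric to an open subset of $\mathbb{R}^n$ (the Kobayashi--Nomizu fact quoted above); this local Euclidean structure is what makes the scaling computations below go through, and I would use it freely.

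For the easy implications: $(\mathrm{C3})\Rightarrow(\mathrm{C1})$ is immediate from Theorem~\ref{thm:1.1}, which applies verbatim to an arbitrary bounded open $\Omega$ with $\operatorname{supp}(\mu)\subseteq\overline\Omega$, $\mu(\Omega)>0$. $(\mathrm{C3})\Leftrightarrow(\mathrm{C4})$ follows from the definitions of $\underline{\dim}_\infty(\mu)$ and upper $s$-regularity (for ``$\Rightarrow$'' pick $n-2<s<\underline{\dim}_\infty(\mu)$ and absorb large radii into the constant), and is anyway contained in Theorem~\ref{thm:4.9}(a). For $(\mathrm{C3})\Rightarrow(\mathrm{C2})$: testing on the cylinders $S_{i^k}(K)$, which have diameter $r_i^k\operatorname{diam}(K)$ and $\mu$-mass $p_i^k$ under (OSC), gives $\sup_x\mu\bigl(B^M(x,2r_i^k\operatorname{diam}(K))\bigr)\ge p_i^k$, whence $\underline{\dim}_\infty(\mu)\le \ln p_i/\ln r_i$ for every $i$; so $\underline{\dim}_\infty(\mu)>n-2$ forces $p_i<r_i^{\,n-2}$ for each $i$, i.e.\ $\overline A<1$. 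For $(\mathrm{C2})\Rightarrow(\mathrm{C3})$: a packing argument (legitimate since $M$ is Euclidean near $K$) shows $B^M(x,\delta)$ meets at most $N$ cylinders $S_\tau(K)$ with $r_\tau\asymp\delta$, $N$ independent of $x,\delta$, so $\mu(B^M(x,\delta))\le N\max\{p_\tau:r_\tau\asymp\delta\}$; and $\overline A<1$ yields $p_\tau\le \overline A^{\,|\tau|}r_\tau^{\,n-2}\le r_\tau^{\,n-2+\beta}$ with $\beta:=\ln\overline A/\ln r_{\min}>0$ (using $|\tau|\ge\ln r_\tau/\ln r_{\min}$), hence $\sup_x\mu(B^M(x,\delta))\lesssim\delta^{\,n-2+\beta}$ and $\underline{\dim}_\infty(\mu)\ge n-2+\beta>n-2$. (One may instead cite Theorem~\ref{thm:4.9}(b).)

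The core of the proof is $(\mathrm{C1})\Rightarrow(\mathrm{C2})$, which I would prove by contraposition. Suppose $\overline A\ge1$ and fix $i_0$ with $p_{i_0}\ge r_{i_0}^{\,n-2}$; let $x_{i_0}$ be the fixed point of $S_{i_0}$. If (PI) fails on $\Omega$ then (C1) fails, so assume (PI) holds. Using $\mu(\Omega)>0$, choose a word $\tau_0$ with $S_{\tau_0}(K)\subseteq\Omega$ and $|\tau_0|$ large enough that $S_{\tau_0}(K)$ lies in a single chart in which $\nu$ is Lebesgue measure and $S_{i_0},S_{\tau_0}$ are Euclidean similitudes. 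Set $z:=S_{\tau_0}(x_{i_0})\in S_{\tau_0}(K)\subseteq\Omega$, fix $g\in C_c^\infty$ with $g\equiv1$ near $K$ and $\operatorname{supp}(g)$ small, and let $u_k\in C_c^\infty(\Omega)$ be $g$ dilated by the factor $r_{i_0}^k$ about $x_{i_0}$ and then transported by $S_{\tau_0}$; thus $u_k\equiv1$ on a neighbourhood of $S_{\tau_0 i_0^k}(K)$ and $\operatorname{supp}(u_k)\subseteq B^M(z,Cr_{i_0}^k)$. The scaling law $\mathcal{E}(w\circ T^{-1})=(\text{ratio of }T)^{\,n-2}\,\mathcal{E}(w)$ (valid in the flat chart) gives $\mathcal{E}(u_k)=r_{\tau_0}^{\,n-2}r_{i_0}^{\,k(n-2)}\mathcal{E}(g)$, while $\|u_k\|_{L^2(\Omega,\mu)}^2\ge \mu(S_{\tau_0 i_0^k}(K))=p_{\tau_0}p_{i_0}^k$. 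Normalising $v_k:=u_k/\|u_k\|_{L^2(\Omega,\mu)}$, we have $\|v_k\|_{L^2(\Omega,\mu)}=1$ and
\[
\mathcal{E}(v_k)\le \frac{r_{\tau_0}^{\,n-2}\mathcal{E}(g)}{p_{\tau_0}}\Bigl(\frac{r_{i_0}^{\,n-2}}{p_{i_0}}\Bigr)^{\!k}\le \frac{r_{\tau_0}^{\,n-2}\mathcal{E}(g)}{p_{\tau_0}},
\]
since $r_{i_0}^{\,n-2}/p_{i_0}\le1$ by the choice of $i_0$. So $\{v_k\}$ is bounded in $\operatorname{dom}(\mathcal{E})$ with its form norm and normalised in $L^2(\Omega,\mu)$; but $\operatorname{supp}(v_k)$ shrinks to $\{z\}$ and $\mu(\{z\})\le p_{\tau_0}p_{i_0}^k\to0$ shows $\mu(\{z\})=0$, so every $L^2(\Omega,\mu)$-limit of a subsequence of $\{v_k\}$ is $0$ $\mu$-a.e., contradicting $\|v_k\|_{L^2(\Omega,\mu)}=1$. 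Hence $\operatorname{dom}(\mathcal{E})\hookrightarrow L^2(\Omega,\mu)$ is not compact, so (C1) fails.

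Combining $(\mathrm{C1})\Rightarrow(\mathrm{C2})\Rightarrow(\mathrm{C3})\Rightarrow(\mathrm{C4})$ with $(\mathrm{C4})\Rightarrow(\mathrm{C3})\Rightarrow(\mathrm{C1})$ yields the equivalence of (C1)--(C4). The hardest step is the construction in $(\mathrm{C1})\Rightarrow(\mathrm{C2})$: one must place a deep cylinder $S_{\tau_0}(K)$ simultaneously inside $\Omega$ and inside one chart where $M$ is Euclidean, check that the dilated-and-transported test functions remain in $C_c^\infty(\Omega)$ and obey the Euclidean energy scaling, and verify that the limiting point is $\mu$-null --- all of which hinge on the flatness of $M$ near $K$ and on the identity $\mu(S_\tau(K))=p_\tau$ under (OSC). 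Alternatively, one could obtain $(\mathrm{C1})\Rightarrow(\mathrm{C3})$ from the recently established necessity of $\underline{\dim}_\infty(\mu)>n-2$ for Hodge's theorem, transferred from $\mathbb{R}^n$ to $M$ via the local isometry near $K$.
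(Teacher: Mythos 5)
Your proof is correct and follows essentially the same logical architecture as the paper's: the paper establishes the cycle (C1)$\Rightarrow$(C2)$\Rightarrow$(C3)$\Rightarrow$(C1) together with (C3)$\Leftrightarrow$(C4), where (C2)$\Rightarrow$(C3), (C3)$\Rightarrow$(C1), and (C3)$\Leftrightarrow$(C4) are inherited from Theorem~\ref{thm:4.9} with $r_i$ in place of $\|S_i'\|$, and (C1)$\Rightarrow$(C2) is the substantive implication the paper defers to the methods of Naimark--Solomyak and Hu--Lau--Ngai. Your concentrating test-function construction for (C1)$\Rightarrow$(C2) --- dilating a bump about the fixed point of $S_{i_0}$, transporting by a deep cylinder map $S_{\tau_0}$, balancing $\mathcal{E}(u_k)\asymp(r_{\tau_0}r_{i_0}^{k})^{n-2}$ against $\|u_k\|^2_{L^2(\mu)}\ge p_{\tau_0}p_{i_0}^{k}$ via $\mu(S_\tau(K))=p_\tau$ from Theorem~\ref{thm:4.3}, and noting the supports collapse to a $\mu$-null point --- is precisely what those references carry out on $\mathbb{R}^n$, transferred to $M$ via the local flatness near $K$ (the Kobayashi--Nomizu fact), which is exactly what the paper's citation implicitly calls for; the extra direct implications (C3)$\Rightarrow$(C2) and the $n=1$ triviality you record are harmless additions not needed for the cycle.
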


	In Theorem \ref{thm:4.8}, the implications $({\rm C3})\Rightarrow({\rm C1})\Rightarrow({\rm C2})$, as well as the equivalence  $({\rm C3}) \Leftrightarrow ({\rm C4})$,  do not require the Ricci curvature of $M$ to have a lower bound.

\section{Hodge theorem for functions on complete Riemannian manifolds} \label{S:L}
	\setcounter{equation}{0}
	
	\subsection{Kre\u{\i}n-Feller operators for functions} \label{S:frac}
	Let $M$ be a complete Riemannian $n$-manifold. Let $\Omega\subseteq M$ be a bounded open set and let $\mu$ be a positive finite Borel measure on $M$ such that ${\rm supp}(\mu)\subseteq \overline{\Omega}$. In this section, we assume $\mu$ satisfies  (MPID) and (MPIE). Under this condition, we will define the Kre\u{\i}n-Feller operators $\Delta_{\mu}^D$ and $\Delta_{\mu}^E$, and study its basic properties.
	
	 Let 
	\begin{align}\label{eq(2.1)}
		\mathcal{E}_D^*(u, v):=&\int_{\Omega} \langle\nabla u, \nabla v\rangle  \,d\nu+\int_{\Omega} u v \,d \mu\quad\text{and}\nonumber\\
\mathcal{E}_E^*(u, v):=&\int_{M} \langle\nabla u, \nabla v\rangle  \,d\nu+\int_{M} u v \,d \mu
	\end{align}
 be  nonnegative bilinear forms on $L^2(\Omega,\mu)$ and $L^2(M,\mu)$, respectively.
	Then $\mathcal{E}_D^*(\cdot, \cdot)$ (resp. $\mathcal{E}_E^*(\cdot, \cdot)$) is a inner product on $\operatorname{dom}(\mathcal{E}_D)$ (resp. 	$\operatorname{dom}(\mathcal{E}_E)$).

	To prove Proposition \ref{prop:2.3}, we need the following lemmas. The following lemma is well-known.
	
	\begin{lem}\label{lem:2.2p} Let $M$ be a complete Riemannian $n$-manifold and let $\Omega\subseteq M$ be a bounded open set. Then there exists a positive constant $C$ such that for any $u\in C_c^{\infty}(\Omega)$,
			\begin{align*}
				\int_\Omega|u|^2d\nu\leq C\int_\Omega|\nabla u|^2d\nu.
			\end{align*}
	\end{lem}
\begin{proof}It can be proved by using coordinate charts, the fact that the results hold for $\Omega\subseteq \R^n$, and partition of unity. We omit the proof.
\end{proof}

	\begin{lem}\label{lem:2.2} Let $M$ be a complete Riemannian $n$-manifold. Then $C^{\infty}_c(M)$ is dense in $C_c(M)$ in the supremum norm.
	\end{lem}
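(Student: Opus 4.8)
\emph{Proof proposal.} The plan is to reduce to the known Euclidean statement by a standard localization-and-patching argument. First I would fix $f\in C_c(M)$ and set $K:=\supp(f)$, which is compact. Cover $K$ by finitely many coordinate charts $(U_1,\varphi_1),\dots,(U_N,\varphi_N)$, chosen so that each $\overline{U_i}$ is compact and contained in a slightly larger chart domain. Using smooth bump functions on $M$, pick $\rho_i\in C^\infty_c(U_i)$ with $0\le\rho_i\le1$ and $\sum_{i=1}^N\rho_i\equiv1$ on a neighborhood of $K$. Writing $f=\sum_{i=1}^N f_i$ with $f_i:=\rho_i f$, each $f_i$ is continuous and $\supp(f_i)$ is a compact subset of $U_i$, so it suffices to approximate each $f_i$ uniformly by an element of $C^\infty_c(U_i)$.

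For the local approximation I would push $f_i$ forward to $\mathbb{R}^n$: since $\varphi_i$ is a homeomorphism onto the open set $\varphi_i(U_i)$, the function $g_i:=f_i\circ\varphi_i^{-1}$ lies in $C_c(\varphi_i(U_i))\subseteq C_c(\mathbb{R}^n)$, and $\supp(g_i)$ is at a positive distance from $\partial\varphi_i(U_i)$. The Euclidean case (convolve $g_i$ with a nonnegative smooth compactly supported mollifier of small radius $\varrho$) produces $\tilde g_i\in C^\infty_c(\mathbb{R}^n)$ with $\|g_i-\tilde g_i\|_\infty$ arbitrarily small; since mollification by a kernel of radius $\varrho$ enlarges the support by at most $\varrho$, taking $\varrho$ less than the distance from $\supp(g_i)$ to $\partial\varphi_i(U_i)$ keeps $\supp(\tilde g_i)\subseteq\varphi_i(U_i)$. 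Pulling back, $h_i:=\tilde g_i\circ\varphi_i$ extends by $0$ outside $U_i$ to an element of $C^\infty_c(M)$, and since $\varphi_i$ is a bijection onto its image, $\|f_i-h_i\|_\infty=\|g_i-\tilde g_i\|_\infty$.

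Finally I would set $h:=\sum_{i=1}^N h_i\in C^\infty_c(M)$ and estimate $\|f-h\|_\infty\le\sum_{i=1}^N\|f_i-h_i\|_\infty$; choosing the mollification radii so that each summand is below $\varepsilon/N$ gives $\|f-h\|_\infty<\varepsilon$, which proves the density. The only delicate point worth spelling out is the support bookkeeping in the mollification step: one must ensure that each smoothed local piece keeps its support compactly contained in the chart domain so that its pullback extends smoothly by zero across all of $M$; everything else is routine once the finite atlas over $K$ and the subordinate partition of unity are fixed (and completeness of $M$ plays no role here).
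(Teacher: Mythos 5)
Your argument is correct and implements exactly the strategy the paper sketches (the paper omits the proof but states it follows from coordinate charts, the Euclidean case, and a partition of unity). The localization, the mollification with a radius smaller than $\operatorname{dist}(\supp(g_i),\partial\varphi_i(U_i))$, and the triangle-inequality patch-up are all sound, and your remark that completeness is not needed is also correct.
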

	\begin{proof}It can be proved by using coordinate charts, the fact that the results hold for $M=\R^n$, and partition of unity. We omit the proof.
	\end{proof}
	\begin{prop}\label{prop:2.3}
		Let $M$ be a complete Riemannian $n$-manifold and let $\Omega\subseteq M$ be a bounded open set. Let $\mu$ be a positive finite Borel measure on $M$ such that ${\rm supp}(\mu) \subseteq \overline{\Omega}$ and $\mu(\Omega)>0$. Let $\mathcal{E}_D$, $\mathcal{E}_E$, $\mathcal{E}_D^*$, and $\mathcal{E}_E^*$ be the quadratic forms defined as in (\ref{eq(1.1)}), \eqref{eq:new}, and (\ref{eq(2.1)}).
		\begin{enumerate}
			\item[(a)] Assume  that $\partial\Omega\neq \emptyset$ and $\mu$ satisfies (MPID). Then $\operatorname{dom}(\mathcal{E}_D)$ is dense in $L^{2}(\Omega, \mu)$. Moreover,  $\left(\mathcal{E}_D^*, \operatorname{dom}(\mathcal{E}_D)\right)$ is a Hilbert space.
			\item[(b)]  Assume that  $\partial\Omega= \emptyset$ and $\mu$ satisfies (MPIE). Then $\operatorname{dom}(\mathcal{E}_E)$ is dense in $L^{2}(M, \mu)$. Moreover,  $\left(\mathcal{E}_E^*, \operatorname{dom}(\mathcal{E}_E)\right)$ is a Hilbert space.
		\end{enumerate}
	\end{prop}
	\begin{proof}(a) This follows by combining Lemmas \ref{lem:2.2p} and  \ref{lem:2.2} . The proof is similar to that of \cite[Proposition 2.1]{Hu-Lau-Ngai_2006}.
		
		(b)	By using Lemma \ref{lem:2.2} and the definition of $\operatorname{dom}(\mathcal{E}_E)$, we can show that $\operatorname{dom}(\mathcal{E}_E)$ is dense in $L^{2}(M, \mu)$. Let $\{u_m\}\subseteq \operatorname{dom}(\mathcal{E}_E)$ be a Cauchy sequence.  By the definition of $\operatorname{dom}(\mathcal{E}_E)$, we write $u_m=u_m^c+u_m^w$, where $u_m^c\in \mathcal{C}$ and  $u_m^w\in \operatorname{dom}(\mathcal{E}_W)$.
		By using the Poincar\'e inequality for $d\nu$ (\cite[Lemma 3.8]{Hebey_1996}) and (MPIE), we obtain that  $\{u_m^w\}$ is a Cauchy sequence in $\operatorname{dom}(\mathcal{E}_W)$ and thus converges to some $u^w$ in the norm induced by $\mathcal{E}_E^*$. Moreover, we show that $\{u_m^c\}$ is a Cauchy sequence in $\R$ and thus converges to some $u^c$ in the norm induced by $\mathcal{E}_E^*$. Therefore,  $\{u_m\}$ converges to  $u^w+u^c$ in the norm induced by $\mathcal{E}_E^*$. Thus, $\left(\mathcal{E}_E^*, \operatorname{dom}(\mathcal{E}_E)\right)$ is a Hilbert space.
	\end{proof}
	
	Proposition \ref{prop:2.3} implies that if $\mu$ satisfies (MPID), then the quadratic form $\left(\mathcal{E}^{*}_D, \operatorname{dom}(\mathcal{E}_D)\right)$ is closed on $L^{2}(\Omega, \mu)$.  Hence it follows from standard theory that there exists a nonnegative self-adjoint operator $H$ on $L^{2}(\Omega, \mu)$ such that  $\operatorname{dom}(H) \subseteq \operatorname{dom}\left(H^{1 / 2}\right)=\operatorname{dom}(\mathcal{E}_D)$ and
	$$
	\mathcal{E}_D(u,v)=\left\langle H^{1 / 2} u, H^{1 / 2} v\right\rangle_{L^{2}(\Omega, \mu)} \quad \text { for all}\,\,  u, v \in \operatorname{dom}(\mathcal{E}_D).
	$$
	Moreover, $u \in \operatorname{dom}(H)$ if and only if $u \in \operatorname{dom}(\mathcal{E}_D)$ and there exists $f \in L^{2}(\Omega, \mu)$ such that $\mathcal{E}_D(u, v)=\langle f, v\rangle_{L^{2}(\Omega, \mu)}$ for all $v \in \operatorname{dom}(\mathcal{E}_D)$ (see, e.g., \cite{Davies_1995,Kigami_2001,Reed-Simon_1972}). Note that for all $u \in \operatorname{dom}(H)$ and $v \in \operatorname{dom}(\mathcal{E}_D)$,
	\begin{align}\label{eq(2.2)}
		\int_{\Omega} \langle\nabla u, \nabla v\rangle \, d \nu=\mathcal{E}_D(u, v)=\langle H u, v\rangle_{L^{2}(\Omega, \mu)}.
	\end{align}
Similarly, $\mathcal{E}_E(u,v)$ satisfies the above properties. We let $\Delta_{\mu}^D:=-H$ and call it the  \textit{Dirichlet Laplacian (or Kre\u{\i}n-Feller operator) with respect to} $\mu$. Similarly, we can define $\Delta_\mu^E$. If no confusion is possible, we will denote  $\Delta_{\mu}^D$ and $\Delta_{\mu}^E$ simply by $\Delta_{\mu}$.

	Let $\mathcal{D}(\Omega)$ denote the space of {\em test functions} consisting of $C_{c}^{\infty}(\Omega)$ equipped with the following topology: a sequence $\left\{u_{n}\right\}$ converges to a function $u$ in $\mathcal{D}(\Omega)$ if there exists a compact $K \subseteq \Omega$ such that $\operatorname{supp}\left(u_{n}\right) \subseteq K$ for all $n$, and for any covariant derivative $\nabla^{s}$ of order $s$, the sequence $\left\{\nabla^{s} u_{n}\right\}$ converges to $\nabla^{s} u$ uniformly on $K$ (see, e.g., \cite[Section 2.1]{Hebey_1996}). Denote by $\mathcal{D}'(\Omega)$ the space of distributions, the dual space of $\mathcal{D}(\Omega)$.

	The proofs of Proposition \ref{prop:2.4} and Theorem \ref{thm:2.5} are similar to those of \cite[Proposition 2.2]{Hu-Lau-Ngai_2006} and \cite[Theorem 2.3]{Hu-Lau-Ngai_2006}, respectively, and are omitted.
	
\begin{prop}\label{prop:2.4}
		Let $n$, $M$, $\Omega$, and $\mu$ be defined as in Proposition \ref{prop:2.3}.
		\begin{enumerate}
			\item[(a)] 	Assume that  $\partial\Omega\neq \emptyset$ and $\mu$ satisfies {\rm(MPID)}. For $u \in \operatorname{dom}(\mathcal{E}_D)$ and $f \in L^{2}(\Omega, \mu)$, the following conditions are equivalent:
			\begin{enumerate}
				\item[(i)] $u \in \operatorname{dom}(-\Delta_\mu^D)$ and $-\Delta_\mu^D u=f$;
				\item[(ii)] $-\Delta u=f d \mu$ in the sense of distribution; that is, for any $v \in \mathcal{D}(\Omega)$,
				\begin{align*}
					\int_{\Omega} \langle\nabla u, \nabla v\rangle  \,d\nu=\int_{\Omega} v f\, d \mu.
				\end{align*}
			\end{enumerate}
			\item[(b)]	Assume that  $\partial\Omega= \emptyset$ and  $\mu$ satisfies {\rm(MPIE)}. For $u \in \operatorname{dom}(\mathcal{E}_E)$ and $f \in L^{2}(M, \mu)$, the following conditions are equivalent:
			\begin{enumerate}
				\item[(i)] $u \in \operatorname{dom}(-\Delta_\mu^E)$ and $-\Delta_\mu^E u=f$;
				\item[(ii)] $-\Delta u=f d \mu$ in the sense of distribution; that is, for any $v \in \mathcal{D}(M)$,
				\begin{align*}
					\int_{M} \langle\nabla u, \nabla v\rangle  \,d\nu=\int_{M} v f\, d \mu.
				\end{align*}
			\end{enumerate}
		\end{enumerate}
	\end{prop}
 For any $u \in \operatorname{dom}\left(\Delta_{\mu}^D\right)$, we have by Proposition \ref{prop:2.3} that $\Delta u=\Delta_{\mu}^D u\, d \mu$ in the sense of distribution. We rewrite (\ref{eq(2.2)}) as
	$$
	\int_{\Omega} \langle\nabla u, \nabla v\rangle \, d \nu=\mathcal{E}_D(u, v)=\left\langle-\Delta_{\mu}^D u, v\right\rangle_{L^{2}(\Omega, \mu)}
	$$
	for $u \in \operatorname{dom}\left(\Delta_{\mu}^D\right)$ and $v \in \operatorname{dom}(\mathcal{E}_D)$.
	
	Similarly, 	we have $$
		\int_{M} \langle\nabla u, \nabla v\rangle \, d \nu=\left\langle-\Delta_{\mu}^E u, v\right\rangle_{L^{2}(M, \mu)}
	$$
	for $u \in \operatorname{dom}\left(\Delta_{\mu}^E\right)$ and $v \in \operatorname{dom}(\mathcal{E}_E)$.
	
	The following theorem shows that for any $f \in L^{2}(\Omega, \mu)$, the equation
$$\Delta_{\mu}^D u=f,\qquad u|_{\partial\Omega}=0$$
	has a unique solution in $L^{2}(\Omega, \mu)$;
	the equation
	$$\Delta_{\mu}^E u=f$$
	has a unique solution in $L^{2}(M, \mu)$.
	
	\begin{thm}\label{thm:2.5}
		Let $n$, $M$, $\Omega$, and $\mu$ be defined as in Proposition \ref{prop:2.3}.
		\begin{enumerate}
			\item[(a)] Assume that  $\partial\Omega\neq \emptyset$ and $\mu$ satisfies {\rm(MPID)}.
			Then for any $f \in L^{2}(\Omega, \mu)$, there exists a
			unique $u \in \operatorname{dom}\left(\Delta_{\mu}^D\right)$  such that $\Delta_{\mu}^D u=f.$ The operator
			$$(\Delta_{\mu}^D)^{-1}: L^{2}(\Omega, \mu) \rightarrow \operatorname{dom}(\Delta_{\mu}^D),\quad f\mapsto u $$
			is bounded and has norm at most $C$, the constant in \eqref{eq:PI} .
			\item[(b)] Assume that  $\partial\Omega= \emptyset$ and $\mu$ satisfies {\rm(MPIE)}.
			Then for any $f \in L^{2}(M, \mu)$, there exists a
			unique $u \in \operatorname{dom}\left(\Delta_{\mu}^E\right)$  such that $\Delta_{\mu}^E u=f.$ The operator
			$$(\Delta_{\mu}^E)^{-1}: L^{2}(M, \mu) \rightarrow \operatorname{dom}(\Delta_{\mu}^E),\quad f\mapsto u $$
			is bounded and has norm at most $C$, the constant in  \eqref{eq:PIN}.
		\end{enumerate}
\end{thm}

\subsection{$L^\infty$-dimension and proofs of Theorems \ref{thm:1.1} and \ref{thm:1.2}}
	
	Let $\mathfrak{X}$ be a complete metric space, and $\mu$ be a positive finite regular Borel measure on $\mathfrak{X}$ with compact support.  We say that $\mu$ is \textit{ upper $s$-regular} for $s>0$, if there exists some $c>0$ such that, for all $x \in {\rm supp}(\mu)$ and all $0 \leq r \leq \operatorname{diam}({\rm supp}(\mu))$,
	\begin{align*}
		\mu\left(B^{\mathfrak{X}}(x,r)\right) \leq c\, r^{s}.
	\end{align*}
	 \textit{Lower $s$-regularity} is defined by reversing the inequality.
	
	The proof of Theorem \ref{thm:1.1} is quite long; we give an outline here, focusing on the case $n\geq 2$. In Step 1, we prove a relation between the upper (resp. lower) regularity of $\mu$ and $\underline{\operatorname{dim}}_{\infty}(\mu)$ (resp. $\overline{\operatorname{dim}}_{\infty}(\mu)$) (see Lemma \ref{lem:3.1}). This relation will be used in Step 4. In Step 2, we prove that under a normal coordinate map $\varphi$, a point $w\in {\rm supp}(\mu)$ and its image $\varphi(w)$ have the same local density property specified by Lemma \ref{lem:3.10}. To this end, we first prove that each point in the image of $\varphi$ has a neighborhood $\widetilde{V_{\delta}}(x)$ which contains a ball $B(x,\delta)$ and is contained
	in a ball $B(x,c\delta)$, where $c$ is some constant; moreover, the inverse image $V_\delta(\varphi^{-1}(x))$ has a similar property (see Proposition \ref{prop:3.8}).  Then we prove that $\mu$ induces a measure $\widetilde{\mu}$ with compact support on $\R^n$ (see Remark \ref{rema:3.9}). In Step 3, we show that for $q>2$, the unit ball $B:=\{u \in C_{c}^{\infty}\left(M\right):\|u\|_{W^{1,2}_0\left(M\right)} \leq 1\}$ is relatively compact in $L^{q}\left(M, \mu\right)$ if and only if $\mu$ satisfies certain local density property (see Theorem \ref{thm:3.11}). This is a generalization of an analogous result on $\R^n$ (see Theorem \ref{thm:maz}). Lemma \ref{lem:3.10} is needed in Step 3. In Step 4, we use the result in Step 3 to prove that if $\underline{\operatorname{dim}}_{\infty}(\mu)>q(n-2) / 2$, then the unit ball $B$ is relatively compact in $L^{q}(M, \mu)$ (see Theorem \ref{thm:3.12}).  Finally, Theorem \ref{thm:1.1} follows by combining Step 4 and a result for the $1$-dimensional case (see Theorem \ref{lem:3.13}).
	
	We state a relation between the upper (resp. lower) regularity and lower (resp. upper) $L^\infty$-dimension of $\mu$.
	\begin{lem}\label{lem:3.1}
		Let $\mu$ be a compactly supported finite positive Borel measure on a complete metric space $\mathfrak{X}$.
		\begin{enumerate}
			\item[(a)]  If $\mu$ is upper (resp. lower) $s$-regular for some $s>0$, then $\underline{\operatorname{dim}}_{\infty}(\mu) \geq s$ (resp. $\overline{\operatorname{dim}}_{\infty}(\mu) \leq s$).
			\item[(b)]  Conversely, if $\underline{\operatorname{dim}}_{\infty}(\mu) \geq s$ (resp. $\overline{\operatorname{dim}}_{\infty}(\mu) \leq s$) for some $s>0$, then $\mu$ is upper (resp. lower)  $\alpha$-regular for any $0<\alpha<s$ (resp. $\alpha>s$).
		\end{enumerate}
	\end{lem}
	The proof of Lemma \ref{lem:3.1} is similar to that of\cite[Lemma 3.1]{Hu-Lau-Ngai_2006}; we omit the details.
	

	The following technical lemma is used in the proof of Proposition \ref{prop:3.8}; we postpone its proof to the appendix.

	\begin{lem}\label{lem:3.4}
	Let $\rho$ and $R_i$ be  positive constants, where $i=1,2$.  
	\begin{enumerate}
		\item[(a)]  If $\delta\in[ 0,\rho\pi/4]$ and $\lambda \in [0,2\delta]$, then
		\begin{align*}
			C_1\delta\leq{\rm arccosh}\Big[\cosh^2\Big(\frac{\rho+\lambda}{R_1}\Big)-\sinh^2\Big(\frac{\rho+\lambda}{R_1}\Big)\cos\Big(\frac{4\delta}{\rho}\Big)\Big]	\leq C_2\delta,
		\end{align*}
		where $C_1$ and $C_2$ are positive constants depending only on $\rho$ and $R_1$.
		\item[(b)] 
		Let $\pi R_2-\rho>0$. If $\delta\in[0,\rho\pi/4]$  and  $\lambda \in [0,2\delta]\cap [0,\pi R_2-\rho]$, then
		\begin{align*}
			C_3\delta\leq{\rm arccos}\Big[\cos^2\Big(\frac{\rho+\lambda}{R_2}\Big)+\sin^2\Big(\frac{\rho+\lambda}{R_2}\Big)\cos\Big(\frac{4\delta}{\rho}\Big)\Big]	\leq C_4\delta,
		\end{align*}
		where $C_3$ and $C_4$ are positive constants depending only on $\rho$ and $R_2$.
		\item[(c)]
		For $\delta\in [0,\pi\rho/4]$ and $\lambda \in [0,2\delta]$,
		\begin{align*}
			2\delta\leq2(\rho+\lambda)\sin\Big(\frac{2\delta}{\rho}\Big)	\leq 8\delta.
		\end{align*}
	\end{enumerate}
	\end{lem}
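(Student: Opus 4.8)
The plan is to collapse each bracketed quantity into a ``half-angle'' form and then reduce the estimate to the comparability of $\sin x$, $\arcsin x$, and $\operatorname{arcsinh}x$ with $x$ near the origin. Throughout write $t:=(\rho+\lambda)/R_i$ and $\theta:=4\delta/\rho$, so the hypotheses give $\theta\in[0,\pi]$, $\theta/2=2\delta/\rho\in[0,\pi/2]$, and $\rho\le\rho+\lambda\le\rho(1+\pi/2)$. The first step is the two identities, which follow from $\cosh^2-\sinh^2=1$, $\cos^2+\sin^2=1$, $1-\cos\theta=2\sin^2(\theta/2)$, $\cosh 2s=1+2\sinh^2 s$, and $\cos 2s=1-2\sin^2 s$:
\begin{align*}
\cosh^2 t-\sinh^2 t\cos\theta&=1+2\bigl(\sinh t\,\sin(\theta/2)\bigr)^2=\cosh\bigl(2\operatorname{arcsinh}(\sinh t\,\sin(\theta/2))\bigr),\\
\cos^2 t+\sin^2 t\cos\theta&=1-2\bigl(\sin t\,\sin(\theta/2)\bigr)^2=\cos\bigl(2\arcsin(\sin t\,\sin(\theta/2))\bigr),
\end{align*}
the second being legitimate because $t=(\rho+\lambda)/R_2\le\pi$ (from $\lambda\le\pi R_2-\rho$), so $\sin t\ge0$ and $2\arcsin(\cdot)\in[0,\pi]$.

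Applying $\operatorname{arccosh}$ and $\arccos$, the quantity estimated in (a) equals $2\operatorname{arcsinh}\bigl(\sinh t\,\sin(\theta/2)\bigr)$ and that in (b) equals $2\arcsin\bigl(\sin t\,\sin(\theta/2)\bigr)$. I would then invoke three elementary facts. (i) On $[0,Z]$ one has $\tfrac{\operatorname{arcsinh}Z}{Z}\,z\le\operatorname{arcsinh}z\le z$ (since $\operatorname{arcsinh}$ is increasing, concave, with unit slope at $0$), and on $[0,1]$ one has $z\le\arcsin z\le\tfrac\pi2 z$ (since $\arcsin$ is convex through the origin). (ii) Jordan's inequality $\tfrac2\pi x\le\sin x\le x$ on $[0,\tfrac\pi2]$, applied to $x=2\delta/\rho$, gives $\tfrac{4}{\pi\rho}\delta\le\sin(\theta/2)\le\tfrac{2}{\rho}\delta$. (iii) $\sinh t$ (resp. $\sin t$) is pinched between two positive constants depending only on $\rho,R_i$, because $t$ ranges over the compact interval $[\rho/R_1,\rho(1+\pi/2)/R_1]$ in case (a), and over a subinterval of $(0,\pi)$ bounded away from $0$ and (in the regime of interest) from $\pi$ in case (b). Feeding (ii) and (iii) into the argument of $\operatorname{arcsinh}$ (resp. $\arcsin$) shows it is comparable to $\delta$ and bounded by a fixed constant, and then (i) transfers this to $2\operatorname{arcsinh}(\cdots)$ (resp. $2\arcsin(\cdots)$); this yields (a) and (b) with constants depending only on $\rho$ and $R_i$.

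Part (c) needs no identities. From $0\le\lambda\le2\delta$ and $\sin x\le x$, $\sin x\le1$,
$$2(\rho+\lambda)\sin\tfrac{2\delta}{\rho}\le 2\rho\sin\tfrac{2\delta}{\rho}+4\delta\sin\tfrac{2\delta}{\rho}\le 2\rho\cdot\tfrac{2\delta}{\rho}+4\delta=8\delta,$$
and from $\rho+\lambda\ge\rho$ together with Jordan's inequality, $2(\rho+\lambda)\sin\tfrac{2\delta}{\rho}\ge 2\rho\cdot\tfrac2\pi\cdot\tfrac{2\delta}{\rho}=\tfrac8\pi\delta>2\delta$.

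The step I expect to be the main obstacle is the uniformity asserted in (iii) for part (b): keeping $\sin t=\sin\frac{\rho+\lambda}{R_2}$ bounded away from $0$ means keeping $\frac{\rho+\lambda}{R_2}$ away from both $0$ and $\pi$, i.e., the equal sides of the spherical comparison triangle away from a half great circle. This is where the hypothesis $\pi R_2-\rho>0$ enters, together with $\lambda\le2\delta\le\rho\pi/2$ and, in the geometric applications, the freedom to take $\rho$ small; everything else is routine. As an alternative to the explicit-identity route, one can argue by compactness instead: for fixed $i$ the ratio of the estimated quantity to $\delta$ extends continuously to $\delta=0$ with a positive value (a positive constant times $\sinh t$, resp. $\sin t$) and is continuous and positive on the relevant compact parameter region, hence bounded above and below there by positive constants.
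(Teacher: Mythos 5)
Your proof is correct and takes a genuinely different route from the paper. The paper's proof (given only for part (a); parts (b) and (c) are stated to be similar and omitted) proceeds by writing $\cosh y = 1 + y^2/2 + (y^4/24)\eta(y)$ and then expanding $\cosh^2(t)-\sinh^2(t)\cos(4\delta/\rho)$ as a Maclaurin series in $\delta$ of the form $s(\lambda,\delta)-\delta^4\xi(\lambda,\delta)$, after which $\eta$ and $\xi$ are bounded on compact sets, and $y$ is extracted by solving the resulting polynomial inequalities. Your approach instead collapses the bracketed expressions \emph{exactly}: the identities
\begin{align*}
\operatorname{arccosh}\bigl[\cosh^2 t-\sinh^2 t\cos\theta\bigr]&=2\operatorname{arcsinh}\bigl(\sinh t\,\sin(\theta/2)\bigr),\\
\arccos\bigl[\cos^2 t+\sin^2 t\cos\theta\bigr]&=2\arcsin\bigl(\sin t\,\sin(\theta/2)\bigr),
\end{align*}
reduce both parts to the elementary comparability of $\operatorname{arcsinh}$, $\arcsin$, $\sin$ with the identity near $0$ (concavity, convexity, Jordan). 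This removes the need to track remainder terms, makes the dependence of the constants on $\rho$ and $R_i$ transparent, and handles (a) and (b) in a unified way. It buys simplicity and also exposes an issue the paper's exposition never confronts: the paper proves only (a) and says (b), (c) are ``similar,'' but $\sinh t$ is bounded away from zero on any compact subinterval of $(0,\infty)$, whereas $\sin t$ is \emph{not} bounded away from zero near $t=\pi$.

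On that point, you are right to flag the uniformity of $\sin t$ in (b) as the delicate step, and I would sharpen your remark: as stated, the lemma actually fails at the endpoint of the $\lambda$-range. If $2\delta\ge\pi R_2-\rho$ and one takes $\lambda=\pi R_2-\rho$, then $t=\pi$, the bracket equals $1$, the arccos equals $0$, and yet $\delta>0$, so no positive $C_3$ can satisfy the lower bound. The lemma needs the implicit extra restriction under which it is actually applied in Proposition~\ref{prop:3.8}(b), namely $\rho<\pi R_2/4$ and $\delta<\pi R_2/4-\rho/2$, which force $t<\pi/2$ and keep $\sin t$ bounded below by $\sin(\rho/R_2)>0$. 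Your parenthetical about ``the freedom to take $\rho$ small'' is precisely this; I would state it as a hypothesis (or note the needed upper bound on $\lambda$) rather than leave it to the application. With that caveat incorporated, your argument is complete and cleaner than the paper's.
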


	We refer the reader to \cite{Cheeger-Ebin_2008,Karcher_1987} for Definitions \ref{def:3.5} and \ref{def:j3.7} and Theorems \ref{thm:3.6} and \ref{thm:3.7} (see, e.g.,\cite[Theorem 2.2]{Cheeger-Ebin_2008} or \cite[Theorems 4.1 and 4.2]{Karcher_1987}), which are stated below.

	\begin{defi}\label{def:3.5}
		A set of three geodesic segments $\left(\gamma_{1}, \gamma_{2}, \gamma_{3}\right)$ parameterized by arc length with lengths $l_{1}, l_{2}, l_{3}$ respectively is said to be a {\em (geodesic) triangle} in a Riemannian manifold $M$ if $\gamma_{i}\left(l_{i}\right)=\gamma_{i+1}(0)$ and $l_{i}+l_{i+1} \geq l_{i+2}$. Let
		$$
		\alpha_{i}=\measuredangle\left(-\gamma_{i+1}'\left(l_{i+1}\right), \gamma_{i+2}'(0)\right)\in [0,\pi]
		$$
		be the angle between $-\gamma_{i+1}'\left(l_{i+1}\right)$ and $\gamma_{i+2}'(0)$.
	\end{defi}
	Let $M^{\underline{\kappa}}$ (resp. $M^{\overline{\kappa}}$) be a simply connected $2$-dimensional space of constant sectional curvature $\underline{\kappa}$ (resp. $\overline{\kappa}$).
	
	
	\begin{defi}\label{def:j3.7}
		Let $T:=(\gamma_{1}, \gamma_{2}, \gamma_{3})$ be a (geodesic) triangle in a Riemannian manifold $M$. Let $\gamma_{1}, \gamma_{2}$ be geodesic segments in $M$ such that $\gamma_{1}\left(l_{1}\right)=\gamma_{2}(0)$ and $\measuredangle\left(-\gamma_{1}'\left(l_{1}\right), \gamma_{2}'(0)\right)=\alpha$. We call such a configuration a {\em hinge} and denote it by $\left(\gamma_{1}, \gamma_{2}; \alpha\right)$. A {\em Rauch hinge} in $M^{\underline{\kappa}}$ is a hinge that has the same edge length and angle as that at one vertex of $T$. A triangle in $M^{\underline{\kappa}}$ with the same edgelengths as $T$, denoted $T_{\underline{\kappa}}:=(\overline{\gamma}_{1}, \overline{\gamma}_{2}, \overline{\gamma}_{3})$, is called an {\em Aleksandrow triangle}. Let $\overline{\alpha}_i$, $i=1,2,3$, denote the angles in an Aleksandrow triangle $T_{\underline{\kappa}}$.
	\end{defi}
	
	Theorems \ref{thm:3.6} and \ref{thm:3.7} below use the notation in Definitions \ref{def:3.5} and \ref{def:j3.7}.
	\begin{thm}[Toponogov theorem]\label{thm:3.6}
		Let $M$ be a complete Riemannian $n$-manifold with $\underline{\kappa} \leq K_M$. Let $T$ be a (geodesic) triangle in $M$. Assume the circumference of $T$ satisfies $\sum_{i=1}^3l_i<2 \pi/\sqrt{\underline{\kappa}}$. Then an Aleksandrow triangle $T_{\underline{\kappa}}$ exists and the angles of $T$ and $T_{\underline{\kappa}}$ satisfy
		$$
		\overline{\alpha}_1\leq \alpha_1, \quad \overline{\alpha}_2\leq \alpha_2, \quad \overline{\alpha}_3\leq \alpha_3.
		$$
		The third edge closing a Rauch hinge $(\overline{\gamma}_{1}, \overline{\gamma}_{2}; \overline{\alpha})$ in  $M^{\underline{\kappa}}$ satisfies
		$$
		d_M\left(\gamma_{1}(0), \gamma_{2}\left(l_{2}\right)\right) \leq d_{M^{\underline{\kappa}}}\left(\overline{\gamma}_{1}(0), \overline{\gamma}_{2}\left(l_{2}\right)\right).
		$$
	\end{thm}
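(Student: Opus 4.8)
The plan is to deduce both conclusions — the angle estimate $\bar\alpha_i\le\alpha_i$ and the hinge estimate for the third side — from a single \emph{local} statement for small geodesic triangles, which is then globalized by a subdivision argument. The two formulations in the theorem are in fact equivalent modulo Alexandrov's lemma on gluing comparison triangles along a common side, so it suffices to show: for every sufficiently small hinge $(\gamma_1,\gamma_2;\alpha)$ in $M$ with $\gamma_1,\gamma_2$ minimizing, the third side closing it is no longer than the third side closing the Rauch hinge of the same data in $M^{\underline{\kappa}}$; equivalently, every small geodesic triangle admits an Aleksandrow triangle in $M^{\underline{\kappa}}$ with no larger angles.

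For the local version I would fix a hinge at $p:=\gamma_1(l_1)=\gamma_2(0)$ with $A:=\gamma_1(0)$ and $B:=\gamma_2(l_2)$, set $r(\cdot):=d_M(A,\cdot)$, and introduce the standard modified distance function $\phi_{\underline{\kappa}}$ solving $\phi''+\underline{\kappa}\,\phi=1$ with $\phi(0)=\phi'(0)=0$, which is increasing on $[0,\pi/\sqrt{\underline{\kappa}}]$. From $K_M\ge\underline{\kappa}$ and the Rauch comparison theorem one obtains the Hessian estimate $\mathrm{Hess}(\phi_{\underline{\kappa}}\circ r)\le(1-\underline{\kappa}\,\phi_{\underline{\kappa}}\circ r)\,g$ wherever $r$ is smooth; restricting to the unit-speed geodesic $\gamma_2$ and writing $f(t):=\phi_{\underline{\kappa}}(r(\gamma_2(t)))$ gives the differential inequality $f''+\underline{\kappa}f\le 1$, whereas the corresponding function $\bar f$ in $M^{\underline{\kappa}}$ satisfies $\bar f''+\underline{\kappa}\bar f=1$. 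The first variation of arclength identifies the initial data: $f(0)=\bar f(0)=\phi_{\underline{\kappa}}(l_1)$ and $f'(0)=-\phi_{\underline{\kappa}}'(l_1)\cos\alpha=\bar f'(0)$ (using $\nabla r|_p=\gamma_1'(l_1)$ and $\measuredangle(-\gamma_1'(l_1),\gamma_2'(0))=\alpha$). A Sturm-type comparison — testing the difference $\bar f-f$ against the Jacobi function $\mathrm{sn}_{\underline{\kappa}}$, which is positive on $(0,\pi/\sqrt{\underline{\kappa}})$ — then yields $f\le\bar f$ on $[0,l_2]$, and monotonicity of $\phi_{\underline{\kappa}}$ converts this to $d_M(A,B)\le d_{M^{\underline{\kappa}}}(\bar A,\bar B)$. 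Points $\gamma_2(t)$ lying in the cut locus of $A$, where $r$ fails to be smooth, are handled by the Calabi trick: one replaces $A$ by a point $A_\varepsilon$ slightly along a minimizing geodesic from $\gamma_2(t)$ to $A$, so that the relevant distance function is smooth near $\gamma_2(t)$ and lies above $r$ with equality there, which suffices to run the comparison in the barrier sense.

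To globalize, I would subdivide $\gamma_2$ into short subarcs with endpoints $p=q_0,q_1,\dots,q_N=B$, join each $q_i$ to $A$ by a minimizing geodesic $\sigma_i$, and apply the local comparison to each small triangle $(A,q_{i-1},q_i)$. Alexandrov's gluing lemma applied inductively assembles the comparison triangles of the pieces into one comparison triangle in $M^{\underline{\kappa}}$ for $(A,p,B)$ whose angles at $p$ and $B$ are no larger than in $M$, the angle monotonicity at the interior vertices $q_i$ being exactly what makes the successive gluings admissible; refining the subdivision and passing to a limit (extracting convergent subsequences of the $\sigma_i$, which remain minimizing in the limit) produces the Aleksandrow triangle $T_{\underline{\kappa}}$ with the same three side lengths as $T$ and all three angles no larger, and the hinge estimate follows by the same mechanism. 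The hypothesis $\sum_i l_i<2\pi/\sqrt{\underline{\kappa}}$ (when $\underline{\kappa}>0$) forces each side length to be $<\pi/\sqrt{\underline{\kappa}}$, which both guarantees that $T_{\underline{\kappa}}$ genuinely exists on the sphere $M^{\underline{\kappa}}$ (its side lengths obey the spherical triangle inequalities and stay below the diameter) and keeps $\phi_{\underline{\kappa}}$ within its interval of monotonicity; for $\underline{\kappa}\le0$ the model is nonpositively curved and no such restriction is needed.

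The main obstacle is the globalization: the subdivision together with the repeated use of Alexandrov's lemma requires delicate bookkeeping of angles, and one must control the minimizing geodesics $\sigma_i$ from $A$ to the subdivision points, which need not be unique and may jump as the partition is refined, so the limiting argument needs a compactness/continuity statement for minimizing geodesics plus a check that the intermediate model triangles do not degenerate. A secondary but genuine technical point, already present in the local step, is the non-smoothness of the distance function across the cut locus, which is why the Calabi barrier argument rather than a naive Hessian comparison is needed; and when $\underline{\kappa}>0$ one must ensure throughout that every distance arising stays below $\pi/\sqrt{\underline{\kappa}}$, which is precisely the role of the circumference hypothesis.
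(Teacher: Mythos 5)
This theorem is not proved in the paper: it is stated as a classical result with an explicit citation to Cheeger--Ebin and to Karcher, so there is no ``paper's own proof'' to compare your argument against. What you have written is a high-level sketch of the standard proof of Toponogov's theorem, which is precisely the argument found in those cited sources (Cheeger--Ebin, Chapter 2; Karcher's lecture notes). Your outline correctly identifies the key structural steps and the order in which they appear: (i) the local hinge estimate, obtained by applying Rauch to get the Hessian bound $\mathrm{Hess}(\phi_{\underline{\kappa}}\circ r)\le(1-\underline{\kappa}\,\phi_{\underline{\kappa}}\circ r)\,g$, restricting to $\gamma_2$, matching initial data via first variation, and closing with a Sturm-type comparison; (ii) the Calabi barrier trick to pass through the cut locus; and (iii) globalization by subdividing the opposite side, applying the local estimate to the small triangles $(A,q_{i-1},q_i)$, and assembling via Alexandrov's gluing lemma, with the circumference hypothesis $\sum_i l_i<2\pi/\sqrt{\underline{\kappa}}$ keeping everything inside the monotonicity range of $\phi_{\underline{\kappa}}$ and ensuring the model triangle exists. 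The directions of the inequalities (third side shorter in $M$; angles of the model triangle no larger than those of $T$) are also correct for $K_M\ge\underline{\kappa}$. Since this theorem is imported, not reproved, the appropriate way to handle it in this paper is exactly as the authors do --- cite it; if you wanted to include a self-contained proof you would have to flesh out the globalization (bookkeeping in the repeated use of Alexandrov's lemma, compactness of the minimizing geodesics $\sigma_i$ under refinement, non-degeneracy of the intermediate model triangles), which you already flag as the main obstacle.
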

The following is a consequence of Rauch's comparison theorem.
	\begin{thm}[Rauch comparison theorem]\label{thm:3.7}
		Let $M$ be a complete Riemannian $n$-manifold with $K_{M}\leq \overline{\kappa}$. Let $T$ be a (geodesic) triangle.  Assume $T$ does not meet the cut locus of its vertices, and the circumference of $T$ satisfies $\sum_{i=1}^3l_i<2 \pi /\sqrt{\overline{\kappa}}$ (ignore this if $\overline{\kappa} \leq 0)$. Then an Aleksandrow triangle $T_{\overline{\kappa}}$ exists and the angles of $T$ and $T_{\overline{\kappa}}$ satisfy
		$$
		\alpha_1 \leq \overline{\alpha}_1, \quad \alpha_2 \leq \overline{\alpha}_2, \quad \alpha_3 \leq \overline{\alpha}_3.
		$$
		The third edge closing a Rauch hinge $(\overline{\gamma}_{1}, \overline{\gamma}_{2}; \overline{\alpha})$ in $M^{\overline{\kappa}}$ satisfies
			$$
		d_{M^{\overline{\kappa}}}\left(\overline{\gamma}_{1}(0), \overline{\gamma}_{2}\left(l_{2}\right)\right) \leq d_M\left(\gamma_{1}(0), \gamma_{2}\left(l_{2}\right)\right).
		$$
	\end{thm}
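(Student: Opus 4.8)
The theorem bundles three assertions: that the Aleksandrov triangle $T_{\overline{\kappa}}$ exists, that $\alpha_i\le\overline{\alpha}_i$, and the hinge-distance inequality. The plan is to reduce the first two assertions to the last, and then to prove the hinge-distance inequality by comparing the distance from one vertex, measured along the opposite side, with the corresponding quantity in the model space $M^{\overline{\kappa}}$.

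Existence of $T_{\overline{\kappa}}$ is elementary: a geodesic triangle in $M^{\overline{\kappa}}$ with side lengths $l_1,l_2,l_3$ exists precisely when the triangle inequalities $l_i+l_{i+1}\ge l_{i+2}$ hold and, in the case $\overline{\kappa}>0$ in which $M^{\overline{\kappa}}$ is the round sphere of radius $1/\sqrt{\overline{\kappa}}$, when in addition $l_1+l_2+l_3<2\pi/\sqrt{\overline{\kappa}}$ — exactly the standing hypotheses. Granting the hinge-distance inequality, the angle inequalities follow from the law of cosines in the constant-curvature space $M^{\overline{\kappa}}$: with the lengths of two sides fixed, the length of the third side of a geodesic triangle there is a strictly increasing function of the included angle. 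Indeed, the Rauch hinge at the $i$-th vertex has the same two side lengths as $T$ and included angle $\alpha_i$, so by the hinge-distance inequality its closing edge has length at most $l_{i+2}$; since $T_{\overline{\kappa}}$ realizes the same two side lengths with third side $l_{i+2}$ and included angle $\overline{\alpha}_i$, monotonicity forces $\overline{\alpha}_i\ge\alpha_i$.

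The essential remaining step is the hinge-distance inequality. I would fix the hinge $(\gamma_1,\gamma_2;\alpha)$ with vertex $p=\gamma_1(l_1)=\gamma_2(0)$ and write $q=\gamma_1(0)$, $r=\gamma_2(l_2)$. Since $T$ does not meet the cut locus of its vertices, every point of $\gamma_2$ lies off $\operatorname{Cut}(q)$ and $\gamma_1$ is the unique minimizing geodesic from $q$ to $p$; hence $r_q:=d_M(q,\cdot)$ is smooth near $\gamma_2([0,l_2])$, and $\rho(s):=r_q(\gamma_2(s))$ is smooth on $[0,l_2]$ with $\rho(0)=l_1$ and, by the first variation of arc length, $\rho'(0)=\langle\gamma_1'(l_1),\gamma_2'(0)\rangle=-\cos\alpha$. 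The Hessian comparison theorem — a standard consequence of Rauch's estimate for Jacobi fields — gives, because $K_M\le\overline{\kappa}$,
\begin{align*}
\operatorname{Hess} r_q & \ge \operatorname{ct}_{\overline{\kappa}}(r_q)\bigl(g - \mathrm{d} r_q \otimes \mathrm{d} r_q\bigr) \qquad \text{on } M \setminus \operatorname{Cut}(q),
\end{align*}
where $\operatorname{ct}_{\overline{\kappa}}=\mathrm{sn}_{\overline{\kappa}}'/\mathrm{sn}_{\overline{\kappa}}$ and $\mathrm{sn}_{\overline{\kappa}}$ solves $y''+\overline{\kappa}y=0$, $y(0)=0$, $y'(0)=1$. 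Evaluating on the unit vector $\gamma_2'(s)$ and using that $\gamma_2$ is a geodesic gives $\rho''(s)\ge\operatorname{ct}_{\overline{\kappa}}(\rho(s))\,(1-\rho'(s)^2)$. The same construction for the Rauch hinge in $M^{\overline{\kappa}}$ produces $\overline{\rho}(s):=d_{M^{\overline{\kappa}}}(\overline{\gamma}_1(0),\overline{\gamma}_2(s))$ with $\overline{\rho}(0)=l_1$, $\overline{\rho}'(0)=-\cos\alpha$, and the equality $\overline{\rho}''=\operatorname{ct}_{\overline{\kappa}}(\overline{\rho})\,(1-(\overline{\rho}')^2)$, since the model has constant curvature. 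The substitution $\phi:=\mathrm{sn}_{\overline{\kappa}}\circ\rho$, $\overline{\phi}:=\mathrm{sn}_{\overline{\kappa}}\circ\overline{\rho}$ linearizes these relations to $\phi''+\overline{\kappa}\phi\ge0$ and $\overline{\phi}''+\overline{\kappa}\overline{\phi}=0$, with $\phi(0)=\overline{\phi}(0)$ and $\phi'(0)=\overline{\phi}'(0)$; the classical Sturm comparison for $\frac{\mathrm{d}^2}{\mathrm{d}s^2}+\overline{\kappa}$ then gives $\phi\ge\overline{\phi}$ on $[0,l_2]$, hence $\rho\ge\overline{\rho}$. Taking $s=l_2$ yields $d_M(q,r)\ge d_{M^{\overline{\kappa}}}(\overline{\gamma}_1(0),\overline{\gamma}_2(l_2))$, the desired inequality.

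I expect the main obstacle to be the range bookkeeping when $\overline{\kappa}>0$: one must verify that $\rho$, $\overline{\rho}$, and the various distances that occur all remain inside $[0,\pi/\sqrt{\overline{\kappa}})$, the interval on which $\operatorname{ct}_{\overline{\kappa}}$ is finite, the Hessian comparison holds, and the passage from $\mathrm{sn}_{\overline{\kappa}}(\rho)\ge\mathrm{sn}_{\overline{\kappa}}(\overline{\rho})$ back to $\rho\ge\overline{\rho}$ is legitimate. This is exactly what the hypotheses $l_i+l_{i+1}\ge l_{i+2}$ and $\sum_i l_i<2\pi/\sqrt{\overline{\kappa}}$ (which force $l_j<\pi/\sqrt{\overline{\kappa}}$ for each $j$) are meant to secure; for $\overline{\kappa}\le0$ there is nothing to check here. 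An alternative that hews closer to the name of the theorem avoids the Hessian comparison altogether: in $M^{\overline{\kappa}}$ take the Jacobi field along $\overline{\gamma}_2$ that generates its variation by minimizing geodesics to $\overline{\gamma}_1(0)$, transplant the analogous vector field into $M$ by parallel transport along the geodesics joining $q$ to the points $\gamma_2(s)$, and compare index forms via Rauch's theorem; this reproduces the same differential inequality for $\rho$. The reduction to the hinge statement and the Sturm comparison are then routine.
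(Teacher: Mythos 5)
The paper does not prove Theorem~\ref{thm:3.7} at all; it simply records it as a known consequence of Rauch's comparison theorem and cites \cite[Theorem 2.2]{Cheeger-Ebin_2008} and \cite[Theorems 4.1 and 4.2]{Karcher_1987} for the argument. Your reconstruction is therefore supplying something the paper leaves to the references. What you have is a standard and essentially correct route: reduce to the hinge-distance inequality, and prove that by restricting $r_q=d_M(q,\cdot)$ to $\gamma_2$, applying the Hessian comparison $\operatorname{Hess} r_q\ge\operatorname{ct}_{\overline{\kappa}}(r_q)(g-\mathrm{d}r_q\otimes\mathrm{d}r_q)$ valid off $\operatorname{Cut}(q)$ under $K_M\le\overline{\kappa}$, linearizing via $\mathrm{sn}_{\overline{\kappa}}$, and invoking Sturm's comparison; your first-variation computation $\rho'(0)=-\cos\alpha$ is consistent with the paper's convention $\alpha_i=\measuredangle(-\gamma_{i+1}'(l_{i+1}),\gamma_{i+2}'(0))$, and the passage from the hinge inequality to the angle inequality via strict monotonicity of the opposite side in the included angle at fixed adjacent sides is correct. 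Your suggested alternative — transplanting the model Jacobi field and comparing index forms — is the argument Karcher uses and is what makes the name ``Rauch comparison'' literal. The one place that genuinely requires care is exactly the one you flag: for $\overline{\kappa}>0$ the step from $\mathrm{sn}_{\overline{\kappa}}(\rho)\ge\mathrm{sn}_{\overline{\kappa}}(\overline{\rho})$ to $\rho\ge\overline{\rho}$ is not automatic because $\mathrm{sn}_{\overline{\kappa}}$ is not monotone on $[0,\pi/\sqrt{\overline{\kappa}}]$, and one needs an open-closed (or contradiction at a first crossing) argument together with the bounds $l_j<\pi/\sqrt{\overline{\kappa}}$ that follow from the circumference hypothesis; $\rho(s)$ itself is a priori bounded only by $l_1+l_2$, which can exceed $\pi/\sqrt{\overline{\kappa}}$, so this cannot be dismissed by pointwise bounds alone. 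Since you name the gap and indicate how the hypotheses close it, I regard the proposal as a correct sketch rather than an incomplete argument, but if you expand it into a full proof the monotonicity/continuation step in the $\overline{\kappa}>0$ case is what must be written out in detail.
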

For any $p\in M$, let
\begin{align*}
	&\underline{K}_M(p):=\inf\{K(X(p),Y(p)): X(p),Y(p)\in T_pM \text{ are linearly independent}\},\,\,\text{and}\\
	&\overline{K}_M(p):=\sup\{K(X(p),Y(p)): X(p),Y(p)\in T_pM \text{ are linearly independent}\}.	
\end{align*}		
For $A\subseteq M$, let  
 	\begin{align}\label{cur}
 	\underline{\kappa}(A):=\min\{\underline{K}_M(p):p\in A\}\quad \text{and}\quad \overline{\kappa}(A):=\max\{\overline{K}_M(p):p\in A\}.
 \end{align}
 We have the following proposition.

\begin{prop}\label{prop:3.0}
	Let $M$ be a Riemannian $n$-manifold. 
Let $\Omega\subseteq M$ be a bounded open set.
Then for any $p\in \overline{\Omega}$, 
\begin{align}\label{sec}	
	-\infty<\underline{\kappa}(\overline{\Omega})\leq K_M(p)\leq \overline{\kappa}(\overline{\Omega})<\infty.
\end{align}
\end{prop}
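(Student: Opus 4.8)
The plan is to reduce the statement to two facts: that $\overline\Omega$ is compact, and that $p\mapsto\underline{K}_M(p)$ and $p\mapsto\overline{K}_M(p)$ are continuous, real-valued functions on $M$. Granting these, the proposition follows immediately. Since $\Omega$ is bounded, $\overline\Omega$ is compact, so the continuous functions $\underline{K}_M$ and $\overline{K}_M$ attain their extrema over $\overline\Omega$; hence $\underline{\kappa}(\overline\Omega)=\min_{p\in\overline\Omega}\underline{K}_M(p)$ and $\overline{\kappa}(\overline\Omega)=\max_{p\in\overline\Omega}\overline{K}_M(p)$ are finite real numbers, which in particular gives $-\infty<\underline{\kappa}(\overline\Omega)$ and $\overline{\kappa}(\overline\Omega)<\infty$. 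Moreover, for any $p\in\overline\Omega$ and any $2$-plane at $p$, writing $K_M(p)$ for its sectional curvature, the definitions give at once
\[
\underline{\kappa}(\overline\Omega)\le\underline{K}_M(p)\le K_M(p)\le\overline{K}_M(p)\le\overline{\kappa}(\overline\Omega),
\]
which is \eqref{sec}.

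So the content lies in the continuity and finiteness of $\underline{K}_M$ and $\overline{K}_M$. The difficulty is that the index sets in their definitions --- pairs of linearly independent tangent vectors --- are noncompact, and the denominator $|X|^2|Y|^2-\langle X,Y\rangle^2$ in $K(X,Y)$ degenerates as the pair becomes dependent, so one cannot simply invoke compactness. My plan is to reparametrize the set of $2$-planes by a compact set. Fixing $p_0\in M$ and a coordinate chart about $p_0$ gives a local trivialization $TM|_U\cong U\times\R^n$; let
\[
\mathcal{V}:=\{(v,w)\in\R^n\times\R^n:\ |v|_E=|w|_E=1,\ \langle v,w\rangle_E=0\}
\]
be the compact set of pairs orthonormal for the standard inner product of $\R^n$. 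Since every $2$-plane in $T_pM$ is spanned by some $(v,w)\in\mathcal{V}$ (Gram--Schmidt with respect to $\langle\cdot,\cdot\rangle_E$) and the sectional curvature depends only on the plane spanned, for $p\in U$ one has
\[
\underline{K}_M(p)=\min_{(v,w)\in\mathcal{V}}\frac{Rm_p(v,w,w,v)}{|v|^2|w|^2-\langle v,w\rangle^2},
\]
and the analogous identity for $\overline{K}_M(p)$ with $\max$; in particular both are attained and finite. On $K'\times\mathcal{V}$ for any compact $K'\subseteq U$, the numerator is continuous (the components of $Rm$ are smooth), while the denominator is continuous, strictly positive (the pair being linearly independent there), hence bounded below by a positive constant; so the quotient is continuous, and therefore uniformly continuous, on $K'\times\mathcal{V}$. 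The elementary fact that the pointwise minimum of a jointly continuous function over a fixed compact parameter set depends continuously on the remaining variable then yields continuity of $\underline{K}_M$ and $\overline{K}_M$ on $U$, hence on all of $M$ since $p_0$ was arbitrary.

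Combining these two facts as in the first paragraph proves \eqref{sec}. For finiteness one could alternatively observe that $|Rm|$ is continuous, hence bounded on the compact set $\overline\Omega$, so that $|\underline{K}_M|$ and $|\overline{K}_M|$ are bounded there; but the continuity established in the second paragraph is still needed to guarantee that the $\min$ and $\max$ defining $\underline{\kappa}(\overline\Omega)$ and $\overline{\kappa}(\overline\Omega)$ are actually attained. I expect the only point requiring genuine care to be the passage to the compact parameter space $\mathcal{V}$ and the verification that the curvature quotient is continuous (and its denominator bounded away from $0$) there; the remaining steps are routine.
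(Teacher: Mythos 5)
Your argument is correct and shares the paper's overall strategy: establish continuity and finiteness of $\underline{K}_M$ and $\overline{K}_M$, then combine with compactness of $\overline\Omega$. You differ in the continuity proof. The paper argues sequentially with $\varlimsup$/$\varliminf$, extracting convergent subsequences of minimizing pairs from $S^{T_pM}(0,1)$ via maps $h_m : S^{T_{p_m}M}(0,1)\to S^{T_pM}(0,1)$; this is somewhat delicate because $S^{T_pM}(0,1)$ varies with $p$, and in fact the paper's $h_m$ need not land in $S^{T_pM}(0,1)$ since the frame is only assumed to have unit length at $p$, not to be orthonormal at every nearby point. Your reparametrization of the Grassmannian of $2$-planes by the fixed compact set $\mathcal{V}$ of Euclidean-orthonormal pairs in a coordinate trivialization, followed by the standard lemma that extrema over a fixed compact parameter set of a jointly continuous function depend continuously on the remaining variable, cleanly sidesteps that issue and is the tidier route; the resulting continuity proof is what I would regard as the "right" version of the paper's argument. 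One caveat you share with the paper: the deduction $\Omega$ bounded $\Rightarrow\overline\Omega$ compact uses Hopf--Rinow and hence completeness of $M$, which is not among the stated hypotheses of the proposition (though it is assumed everywhere the proposition is invoked), so strictly speaking that hypothesis should be added.
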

\noindent The proof of Proposition \ref{prop:3.0} is given in the appendix.

	Let $M$ be a complete Riemannian $n$-manifold. For each $p\in M$, the {\em injectivity radius of $M$ at $p$, denoted by ${\rm inj}(p)$ is} the supremum of all $r>0$ such that $\exp_p:B^{T_pM}(0, r)\rightarrow B^M(p, r)$ is a diffeomorphism. Let
		\begin{align*}
			{\rm inj}(M):=\inf\{ {\rm inj}(p),\,\, p\in M\}.
\end{align*}
Then ${\rm inj}(p)$ is positive for every $p\in M$ (see, e.g., \cite[Corollary I.6.3]{Chavel_2006}). Let $\Omega\subseteq M$ be a bounded open set. Let $\mu$ be a positive finite Borel measure on $M$ such that $\supp(\mu) \subseteq \overline{\Omega}$ and $\mu(\Omega)>0$. The continuity of ${\rm inj}(p)$ implies that ${\rm inj(\supp(\mu))}:=\inf\{ {\rm inj}(p),\,\, p\in \supp(\mu) \}>0$.

Let $\epsilon\in(0,{\rm inj}(p))$. Then the exponential map $\exp_p:B^{T_pM}(0,\epsilon)\to B^M(p,\epsilon)$ is a diffeomorphism.  Every orthonormal basis $(b_i)$ for $T_pM$ determines a basis isomorphism  $E_p:T_pM  \rightarrow \R^n$.  Moreover, $E_p$ is an isometry. Let $B(0,\epsilon):=E_p(B^{T_pM}(0,\epsilon))$ and $S: B(0,\epsilon) \rightarrow B(z,\epsilon)$ be a similitude. We define a normal coordinate map
\begin{align}\label{eq:j3.8}
	\varphi:=S \circ E \circ \exp _{p}^{-1}: B^M(p,\epsilon) \rightarrow B(z,\epsilon)\subseteq\R^n,
\end{align}
where $\varphi(p)=z$.

 Since $\supp(\mu)$ is compact, there exists  a finite open cover $\{B^M(p_i,\epsilon_i)\}_{i=1}^N$ of $\supp(\mu)$, where  each $B^M(p_i,\epsilon_i)$ is a geodesic ball and $p_i\in \supp(\mu)$. Let $W:=\overline{\cup_{i=1}^N B^M(p_i,\epsilon_i)}$. Proposition \ref{prop:3.0} implies that for any $p\in W$, 
\begin{align}\label{eq:22.0}
-\infty<\underline{\kappa}(W)\leq K_M(p)\leq \overline{\kappa}(W)<\infty.
\end{align}
Let $\epsilon_i\in(0,{\rm inj(\supp(\mu))})$. Then the mapping $\exp_{p_i}:B^{T_{p_i}M}(0,\epsilon_i)\rightarrow B^M(p_i,\epsilon_i)$ is a diffeomorphism.  For each $i=1,\ldots,N$, let $E_{p_i}: T_{p_i}M\rightarrow \R^n$ be defined as above, and let $B(0,\epsilon_i):=E_{p_i}(B^{T_{p_i}M}(0,\epsilon_i))$. Also, let $S_i: B(0,\epsilon_i) \rightarrow B(z_i,\epsilon_i)$ be similitudes such that  $B(z_i,\epsilon_i)$ are disjoint. Therefore we have a family of normal coordinate maps
\begin{align}\label{eq:jj3.8}	
	\varphi_i:=S_i \circ E_{p_i }\circ \exp _{p_i}^{-1}: B^M(p_i,\epsilon_i) \rightarrow B(z_i,\epsilon_i), \,\,i=1,\ldots,N,
\end{align}
where  $\varphi_i(p_i)=z_i$ and the set $\{\varphi_i(B^M(p_i,\epsilon_i))\}_{i=1}^N$ are disjoint.

Given a circle in $\R^n$ and two points $x,y$ on the circle, we let $\overset{\frown}{xy}$ be the {\em shorter arc} connecting $x,y$, and $|\overset{\frown}{xy}|$ be the {\em arc length}. Let $\overline{xy}$ be the {\em line segment} from $x$ to $y$. For each $p,q\in M$, let $\gamma_{pq}$ be an {\em admissible curve} (i.e., piecewise smooth and has non-zero velocity) in $M$  from $p$ to $q$. Denote the {\em length of an admissible curve} $\gamma$ by $L[\gamma]$.  Let $\angle(a,b)$ be {\em the angle} between two vectors $a$ and $b$ in $\R^n$.  Let $\Lambda_{\omega}(z,l)$ be an {\em open infinite cone}, where $z$ is the vertex,  $l$ is the axis of symmetry, and $\omega<\pi^{n/2}/\Gamma(n/2)$ is the solid angle. Here $\Gamma$ is the gamma function. Note that $2\pi^{n/2}/\Gamma(n/2)$ is the surface area of a unit sphere in $\R^n$, and therefore the condition imposed on $\omega$ ensures that it is less than a straight angle. We define the \textit{polar angle} $\theta$ of a cone as the angle between the axis of symmetry and any generatrix.  Let $\beta=2\theta$. For $u\in [0,1]$, let  $I_{u}(m,j):=D_u(m,j)/D(m,j)$, where for $ m>0$ and $j>0$, 
$$D_u(m,j):=\int_0^uv^{m-1}(1-v)^{j-1}\,dv\quad \text{and} \quad D(m,j):=\int_0^1v^{m-1}(1-v)^{j-1}\,dv.$$
It can be shown that $\omega=\pi^{n/2}I_{\sin^2\theta}((n-1)/2,1/2)/\Gamma(n/2)$ (see \cite{Li_2011}). Let $ \rho$ and $\delta$ be positive numbers. Define $$\widetilde{V}_{\delta}:=\widetilde{V}_{\delta,\rho,\beta,z,l}=(\Lambda_{\omega}(z,l)\bigcap B(z,\rho+2\delta))\backslash\overline{(\Lambda_{\omega}(z,l)\bigcap B(z,\rho))}$$  (see Figure \ref{fig:j1}). Let $x\in\widetilde{V}_{\delta}$ be the {\em center} of $\widetilde{V}_{\delta}$ defined as the mid-point of the intersections of $l$ with $\partial B(z,\rho)$ and $\partial B(z,\rho+2\delta)$. We also write $\widetilde{V}_{\delta}$ as $\widetilde{V}_{\delta}(x)$.

Our next goal is to prove that under a normal coordinate map $\varphi$, a point $w\in {\rm supp}(\mu)$ and its image $\varphi(w)$ have the same local density property stated in Lemma \ref{lem:3.10}. To this end, we first establish the following proposition.

\begin{prop}\label{prop:3.8}
	Let $M$ be a complete Riemannian $n$-manifold. Let $\Omega\subseteq M$ be a bounded open set. Let $\mu$ be a positive finite Borel measure on $M$ such that $\operatorname{supp}(\mu) \subseteq \overline{\Omega}$ and $\mu(\Omega)>0$.  Let $\{(U_{i},\varphi_{i})\}_{i=1}^{N}$ be a family of normal coordinate charts such that $\{U_i\}_{i=1}^N$ covers $\supp(\mu)$, where $U_i:=B^M(p_i,\epsilon_i)$, $p_i\in \supp(\mu)$, $\epsilon_i$, and $\varphi_{i}$ are defined as in the paragraph leading to \eqref{eq:jj3.8}. Let $W:=\overline{\cup_{i=1}^N B^M(p_i,\epsilon_i)}$.
	\begin{enumerate}
		\item[(a)] For $i=1,\ldots,N$ and $x\in \varphi_i(U_i)\backslash \{\varphi_i(p_i)\}$,  there exist positive numbers $\beta_i$, $\rho_i$ and $\delta_i$  satisfying $\beta_i=4\delta_i/\rho_i$, $\rho_i+2\delta_i\le \epsilon_i$, and $0<\delta_i< \pi\rho_i/4$ such that $\widetilde{V}_{\delta_i}(x):=\widetilde{V}_{\delta_i,\rho_i,\beta_i,z_i,l_i}(x)$ satisfies
		\begin{align}\label{eq:3.3.1}
			B(x,\delta_i)\subseteq \widetilde{V}_{\delta_i}(x)\subseteq B(x,\delta_i'),
		\end{align}
		where  $\delta_i'$ can be taken to be $10\delta_i$.\\
		\item[(b)] For $i=1,\ldots,N$ and $x\in \varphi_i(U_i)\backslash \{\varphi_i(p_i)\}$, define $V_{\delta_i}(\varphi^{-1}_i(x)):=\varphi^{-1}_i(\widetilde{V}_{\delta_i}(x))$. Then  for  $0<\rho_i<\min\{\pi/(4\sqrt{\overline{\kappa}(W)}),\epsilon_i\}$, there exist positive constants $c'$ and $c$ such that for $\delta_i$ satisfying $0<\delta_i<\min\{\pi/(4\sqrt{\overline{\kappa}(W)})-\rho_i/2,\pi\rho_i/4\}$, we have
		\begin{align}\label{eq:3.3.2}
			B^M(\varphi^{-1}_i(x),c'\delta_i)\subseteq V_{\delta_i}(\varphi^{-1}_i(x))\subseteq B^M(\varphi^{-1}_i(x),c\delta_i).
		\end{align}
	\end{enumerate}
\end{prop}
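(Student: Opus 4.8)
The plan is to prove part (a) entirely by Euclidean geometry in the image chart, and then transfer it to the manifold in part (b) using the two comparison theorems together with Lemma \ref{lem:3.4}.

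For part (a), fix $i$ and a point $x \in \varphi_i(U_i)\setminus\{z_i\}$. Let $\rho_i := |\overline{z_i x}|$ be the Euclidean distance from the chart center $z_i$ to $x$, let $l_i$ be the ray from $z_i$ through $x$, and choose $\delta_i>0$ small (in particular $\delta_i < \pi\rho_i/4$ and $\rho_i + 2\delta_i \le \epsilon_i$) with $\beta_i := 4\delta_i/\rho_i$. With the cone $\Lambda_{\omega}(z_i,l_i)$ of polar angle $\theta_i = \beta_i/2$, the region $\widetilde V_{\delta_i}(x)$ is the intersection of that cone with the annulus $B(z_i,\rho_i+2\delta_i)\setminus \overline{B(z_i,\rho_i)}$, and $x$ is by construction its midpoint on $l_i$. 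I would establish \eqref{eq:3.3.1} by elementary trigonometry: a point $y\in\widetilde V_{\delta_i}(x)$ has radial coordinate within $\delta_i$ of $\rho_i$ and angular displacement at most $\theta_i$ from $l_i$, so $|\overline{xy}|^2$ is controlled above by $\delta_i^2$ plus a term of order $(\rho_i\theta_i)^2 = (4\delta_i)^2$, giving $|\overline{xy}| \le C\delta_i$ with an explicit constant that, after the routine estimate, can be taken to be $10$; conversely, one checks that the small ball $B(x,\delta_i)$ stays inside the cone (because the half-opening $\theta_i$ corresponds to transverse width $\sim 2\delta_i$ at radius $\rho_i$, using part (c) of Lemma \ref{lem:3.4}) and inside the annulus, giving the left inclusion. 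This is the bookkeeping part; the constant $10$ is not sharp but suffices.

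For part (b), write $w := \varphi_i^{-1}(x)$ and recall that $\varphi_i = S_i\circ E_{p_i}\circ \exp_{p_i}^{-1}$ with $S_i$ a similitude and $E_{p_i}$ a linear isometry, so up to a fixed scaling it suffices to compare Riemannian distances $d_M(w,\cdot)$ with Euclidean distances $|\overline{x\cdot}|$ in the tangent chart at $p_i$. The key point is that $\widetilde V_{\delta_i}(x)$ was deliberately built from geodesic spheres and cones centered at $z_i = \varphi_i(p_i)$: the radial boundaries $\partial B(z_i,\rho_i)$ and $\partial B(z_i,\rho_i+2\delta_i)$ pull back under $\exp_{p_i}$ to geodesic spheres $\partial B^M(p_i,\rho_i)$ and $\partial B^M(p_i,\rho_i+2\delta_i)$, and the cone's generatrices pull back to geodesics through $p_i$. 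So for a point $y\in V_{\delta_i}(w)$, I would form a geodesic triangle $T$ with vertices $p_i$, $w$, and $y$: its two sides from $p_i$ have lengths $\rho_i + O(\delta_i)$, and the angle at $p_i$ between them is at most the polar angle, hence $\le 4\delta_i/\rho_i$. Applying the Toponogov theorem (Theorem \ref{thm:3.6}) with lower curvature bound $\underline\kappa(W)$ gives an upper bound for $d_M(w,y)$ in terms of the corresponding distance in the model space $M^{\underline\kappa(W)}$; applying the Rauch comparison theorem (Theorem \ref{thm:3.7}) with upper bound $\overline\kappa(W)$ gives a matching lower bound (the triangle avoids cut loci because $\rho_i + 2\delta_i \le \epsilon_i \le \mathrm{inj}(p_i)$ and $\rho_i$ is below the relevant $\pi/(4\sqrt{\overline\kappa(W)})$ threshold, which is exactly why those size restrictions on $\rho_i,\delta_i$ appear). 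In the model spaces of constant curvature the third side of such a "Rauch hinge" is computed by the hyperbolic or spherical law of cosines — precisely the expressions $\mathrm{arccosh}[\cosh^2(\cdot) - \sinh^2(\cdot)\cos(\cdot)]$ and $\mathrm{arccos}[\cos^2(\cdot) + \sin^2(\cdot)\cos(\cdot)]$ appearing in Lemma \ref{lem:3.4}(a),(b), with $\rho+\lambda$ the side length (so $\lambda \in [0,2\delta_i]$) and $4\delta_i/\rho_i$ the angle. Lemma \ref{lem:3.4} then sandwiches both model distances between $C\delta_i$ and $C'\delta_i$; combining this with part (a) (to relate $V_{\delta_i}(w) = \varphi_i^{-1}(\widetilde V_{\delta_i}(x))$ to Euclidean balls, and then transfer via the same comparison argument applied to $\exp_{p_i}$ being a radial isometry) yields \eqref{eq:3.3.2} with constants $c',c$ depending only on $\underline\kappa(W),\overline\kappa(W),\rho_i$.

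I expect the main obstacle to be part (b): one must be careful that the region $V_{\delta_i}(w)$ really is describable as a "geodesic annular cone" so that every point in it is the far vertex of a controlled hinge at $p_i$ — this uses that $\exp_{p_i}$ maps Euclidean rays from $0$ to unit-speed geodesics and Euclidean spheres to geodesic spheres, but it does \emph{not} preserve angles at points other than $p_i$, so the comparison must be anchored at $p_i$ and one cannot shortcut by comparing at $w$. Handling the flat case $\kappa = 0$ and the sign of $\overline\kappa(W)$ separately (Rauch's hypothesis drops the circumference condition when $\overline\kappa \le 0$) is a minor but necessary case split. The genuinely delicate quantitative input — that the two model-space side lengths are both comparable to $\delta_i$ uniformly — is already isolated in Lemma \ref{lem:3.4}, so once the geometric set-up is in place the estimate \eqref{eq:3.3.2} follows by assembling Toponogov, Rauch, and that lemma.
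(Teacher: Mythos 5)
Your plan matches the paper's proof essentially step for step: part~(a) by elementary Euclidean trigonometry in the image chart, and part~(b) by anchoring a geodesic hinge comparison at $p_i$ (where $\exp_{p_i}$ preserves angles), invoking Toponogov for the upper bound on the third side and Rauch for the lower bound, feeding the resulting model-space side lengths through Lemma~\ref{lem:3.4}, and splitting into cases by the signs of $\underline{\kappa}(W)$ and $\overline{\kappa}(W)$. The one detail to pin down is that you slide between the triangle $(p_i,w,y)$ for $y\in V_{\delta_i}(w)$ and the isoceles triangle $(p_i,x',y')$ with $x',y'$ at equal radius $\rho_i+\lambda$ on the two bounding geodesics; only the latter matches the hypotheses of Lemma~\ref{lem:3.4}(a)--(b), and the paper works with that isoceles triangle throughout, recovering the estimate on $d_M(x',w)$ only at the very end via $|(c_7/2)\delta_i-\delta_i|\le d_M(x',w)\le (c_8/2)\delta_i+\delta_i$.
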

	\begin{proof}\,(a) 
		Let $(U_i,\varphi_i)$ be a coordinate chart with  $p_i\in \supp(\mu)$.	For simplicity, we denote $(U_i,\varphi_i)$ by $(U,\varphi)$, $p_i$ by $p$, and $\epsilon_i$ by $\epsilon$. Let $B(z,\epsilon):=\varphi(B^M(p,\epsilon))\subseteq \R^n$.
		For each $ x\in B(z,\epsilon)\backslash \{z\}$, there exist positive constants $\rho$ and $\delta$ such that $B(x,\delta)\subseteq  B(z,\rho+2\delta)\setminus B(z,\rho)$ and $\rho+2\delta\leq \epsilon$.  Let $\overline{zx}$ be the axis of symmetry of $\Lambda_{\omega}(z,l)$. Let $\beta$ be the maximum of the angle between any two rays on the surface of $\Lambda_{\omega}(z,l)$ emanating from $z$. Then $\beta$ is twice the polar angle.
		Let $a$ (resp. $e$) and $b$ (resp. $c$) be the intersections of  two such rays with $\partial B(z,\rho+2\delta)$ (resp. $\partial B(z,\rho)$). Then $\beta=\angle azb$ and $a$, $z$, $b$ define a plane which contains $e,c$, and $x$. Hence $\overset{\frown}{ec}$ and $\overset{\frown}{ab}$ can be determined by intersecting this plane with $\partial B(z,\rho)$ and $\partial B(z,\rho+2\delta)$ (see Figure \ref{fig:j1}).
		
		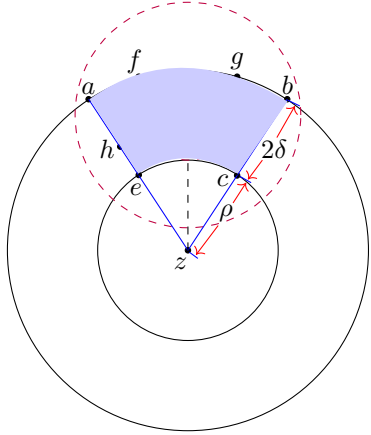
\begin{figure}[h!]
			\centerline{
				\begin{tikzpicture}[scale=0.6]
					\draw[fill=black](-2,0)circle(.06); 
					\draw[fill=black](-3.09,3.85)circle(.06);
					\draw[fill=black](-3.09,1.67)circle(.06);
					\draw[fill=black](-4.2,3.35)circle(.06);
					\draw[fill=black](0.2,3.35)circle(.06);
					\draw[fill=black](-3.5,2.3)circle(.06);
					\draw[fill=black](-2,3)circle(.06);
					\draw[fill=black](-0.91,1.66)circle(.06);
					\draw[fill=black](-0.91,3.85)circle(.06);
					\draw[black](0,0)arc(0:360:2);    
					\draw[black](2,0)arc(0:360:4);
					\draw[purple,dashed](-1,3)arc(0:360:1);
					\draw [purple,dashed] (-2,3) circle [radius=2.5];
					\draw[blue](-2,0)--(0.2,3.35);
					\draw[blue](-2,0)--(-4.2,3.35);
					\draw[black,dashed](-2,0)--(-2,4);  
					\draw[black,dotted](-2,3)--(-3.5,2.3);
					\draw[black,dashed](-3.09,1.65)--(-3.09,3.85);
					\draw[black,dashed](-0.91,1.66)--(-0.91,3.85);
					\draw[blue](-1.97,-0.04)--(-1.78,-0.18);
					\draw[blue](-0.85,1.64)--(-0.6,1.5);
					\draw[blue](0.21,3.33)--(0.48,3.2);
					\draw[red,->](-1,1.05)--(-0.71,1.5);
					\draw[red,<-](-1.82,-0.1)--(-1.33,0.55);
					\draw[red,<-](-0.67,1.57)--(-0.35,2);
					\draw[red,->](-0.03,2.5)--(0.37,3.2);
					\draw[black] (-2.55,-0.28) node[right]{\footnotesize{$z$}};
					\draw[black] (-3.53,1.35) node[right]{\footnotesize{$e$}};
					\draw[black] (-1.6,1.55) node[right]{\footnotesize{$c$}};
					\draw[black] (-2.13,2.9) node[right]{\footnotesize{$x$}};
					\draw[black] (-4.6,3.6) node[right]{\footnotesize{$a$}};
					\draw[black] (-3.6,4.2) node[right]{\footnotesize{$f$}};
					\draw[black] (-1.3,4.2) node[right]{\footnotesize{$g$}};
					\draw[black] (-0.16,3.67) node[right]{\footnotesize{$b$}};
					\draw[black] (-4.2,2.25) node[right]{\footnotesize{$h$}};
					\draw[black] (-1.55,0.82) node[right]{\footnotesize{$\rho$}};
					\draw[black] (-0.62,2.25) node[right]{\footnotesize{$2\delta$}};
					\path[fill=blue!20,opacity=.3] (0.205,3.3) to [out=148, in=36](-4.18,3.35) to (-3.1,1.7) to [out=34, in=150](-0.9,1.7) to (0.205,3.3);
			\end{tikzpicture}}
			\caption{The concentric balls $B(z,\rho)$ and $B(z,\rho+2\delta)$  inside the concentric ball $B(z,\epsilon)=\varphi(B^M(p,\epsilon))$ (not shown). The shaded part is the set $\widetilde{V}_{\delta}(x)$.}\label{fig:j1}
		\end{figure}
		
		Now we vary $\beta$ until $|\overset{\frown}{ec}|=4\delta$. Note that $d_{E}(z, c)=d_{E}(z, e)=\rho$, and $d_{E}(b, c)=d_{E}(a, e)=2\delta$. 
		Hence
		$$d_E(e,c)=2\rho\sin\frac{2\delta}{\rho}\quad \text{and} \quad d_{E}(a, b)=2\rho\sin\frac{2\delta}{\rho}+4\delta\sin\frac{2\delta}{\rho}.$$ For $\delta\in (0,\pi\rho/4)$, we have $2\rho \sin (2\delta/\rho)>2\delta$, and thus $d_{E}(e, c)>2\delta$.
		Let $f,g \in \overset{\frown}{ab}$ such that  $\overline{ef}$, $\overline{zx}$, and $\overline{cg}$ are parallel. Since
		$d_E(e, c)>2\delta$, it follows that for any point $s \in \overline{fe}$, we have
		$d_E(s, x)>\delta.$
		Thus for any point $ m\in \overline{ae}$, $d_E(m, x)>\delta$. Consequently, there exist $\rho$ and $\delta$   satisfying $\rho+2\delta\le \epsilon$ and $0<\delta< \pi\rho/4$ such that 
		\begin{align}\label{eq:9}	
			B(x,\delta)\subseteq \widetilde{V_\delta}(x).
		\end{align}
		
		Next, we choose a point $h\in \overline{ae}$ such that $\overline{ah} \perp \overline{hx}$. Then
		$$d_E(h, x)<d_E(a, b)=2 \rho \sin \frac{2\delta}{\rho}+4 \delta \sin \frac{2\delta}{\rho}.$$
		For any point $t \in \overline{ae}$ and $0<\delta<\pi\rho/4$,
		\begin{align*}
			d_E(t, x) &\leq d_E(t, h)+d_E(h, x)<d_E(a, e)+d_E(h, x)<10\delta.
		\end{align*}
		Hence there exist $\rho$ and $\delta$  satisfying $\rho+2\delta\le \epsilon$ and $0<\delta< \pi\rho/4$ such that 
		\begin{align}\label{eq:99}	
			\widetilde{V}_{\delta}(x)\subseteq B(x,\delta'),
		\end{align}
		where $\delta'$ can be taken to be $10\delta$.  Combining \eqref{eq:9} and \eqref{eq:99}, we see that for $x\in \varphi(U)\backslash\{\varphi(p)\}$, there exist $\beta$, $\rho$, and $\delta$   satisfying $\beta=4\delta/\rho$, $\rho+2\delta\le \epsilon$, and $0<\delta< \pi\rho/4$ such that
			$$B(x,\delta)\subseteq\widetilde{V}_{\delta}(x)\subseteq B(x,\delta').$$
	Hence, for $i=1,\ldots,N$ and $x\in \varphi_i(U_i)\backslash \{\varphi_i(p_i)\}$,  there exist positive numbers $\beta_i$, $\rho_i$ and $\delta_i$  satisfying $\beta_i=4\delta_i/\rho_i$, $\rho_i+2\delta_i\le \epsilon_i$, and $0<\delta_i< \pi\rho_i/4$ such that \eqref{eq:3.3.1} holds.
		
		(b) Let $S$, $E_p$, and $\exp_p$ be maps defined as in the paragraph containing \eqref{eq:j3.8}. Since $S$ is a similitude  and  $E_p$ is an isometry, it follows that the composition $E_p^{-1}\circ S^{-1}$ is an isometry. Let $a$, $b$, and $z$ be defined as in (a). Let $v_1:=a-z$ and $u_1:=b-z$. Then $v:=E_p^{-1}\circ S^{-1}(v_1)\in T_pM$ and $u:=E_p^{-1}\circ S^{-1}(u_1)\in T_pM$, and therefore $|v_1|=|v|$, $|u_1|=|u|$, and $\angle(v_1, u_1)=\angle(v, u)$. Hence $\exp_p(v)$ and $\exp_p(u)$ are geodesics in $B^M(p,\epsilon)$, and $\angle(v, u)=\measuredangle(\exp_p(v), \exp_p(u))$. Thus, there exist $a',b'\in \partial B^M(p,\rho+2\delta)$ such that
			$$|v|=L[\gamma_{pa'}]=\rho+2\delta,\,\,\,\,|u|=L\left[\gamma_{pb'}\right]=\rho+2\delta,\,\,\,\,\text{and}\,\, \,\,\angle(v, u)=\measuredangle(\gamma_{pa'},\gamma_{pb'})=\frac{4\delta}{\rho},$$
			where $\gamma_{pa'}$ and $\gamma_{pb'}$ are minimal geodesics in $ M$ and  $0<\delta<\pi\rho/4$ (see Figure \ref{fig.2}(a)).
			\begin{figure}[htbp]
				\centering
				\begin{tikzpicture}[scale=0.6]
					\draw[fill=black](-2,0)circle(.06);
					\draw[fill=black](-3.47,2.4)circle(.06);
					\draw[fill=black](-0.52,2.4)circle(.06);
					\draw[fill=black](-2,3)circle(.06); 
					\draw[fill=black](-3.7,3.65)circle(.06); 
					\draw[fill=black](-0.3,3.65)circle(.06); 
					\draw[fill=black](-3.1,1.66)circle(.06); 
					\draw[fill=black](-0.9,1.67)circle(.06); 
					\draw[black](0,0)arc(0:360:2);    
					\draw[black](2,0)arc(0:360:4);    
					\draw [purple,dashed, thin] (-2,3) circle [radius=0.98];
					\draw [purple,dashed,semithick] (-2,3) circle [radius=2.4];
					\draw[blue] (-2,0) ..controls (-2,0) and (-3.8,2.1) .. (-3.7,3.65);
					\draw [blue](-2,0) ..controls (-2,0) and (-0.2,2.1) .. (-0.3,3.65);
					\draw [blue,dashed](-3.47,2.4) ..controls (-3.47,2.4) and (-2.24,2.8) .. (-0.52,2.4);
					\draw[black] (-2.4,-0.3) node[right]{\footnotesize{$p$}};
					\draw[black] (-3.9,1.67) node[right]{\footnotesize{$e'$}};
					\draw[black] (-0.9,1.8) node[right]{\footnotesize{$c'$}};
					\draw[black] (-2.1,2.9) node[right]{\footnotesize{$w$}};
					\draw[black] (-4,4) node[right]{\footnotesize{$a'$}};
					\draw[black] (-0.7,3.95) node[right]{\footnotesize{$b'$}};
					\draw[black] (-4.3,2.3) node[right]{\footnotesize{$x'$}};
					\draw[black] (-0.6,2.5) node[right]{\footnotesize{$y'$}};
					\path[fill=blue!20,opacity=.3] (-0.4,3.6) to [out=158, in=31](-3.6,3.62) to[out=250, in=290] (-3.1,1.7) to [out=34, in=150](-0.9,1.7) to[out=70, in=-70] (-0.4,3.6);
					(3.1,7.375);
					\foreach \i/\tex in {0/(a)}
					\draw(-2,-4.2)node[below]{\tex};
					\foreach \i/\tex in {0/\text{}}
					\draw(0,-2)node[below]{\tex};
				\end{tikzpicture}\qquad\qquad\qquad
				\begin{tikzpicture}[scale=0.6]
					\draw[fill=black](-2,1.36)circle(.06);
					\draw[fill=black](-3.3,0.1)circle(.06);
					\draw[fill=black](-0.74,0.14)circle(.06);
					\draw[black](2,0)arc(0:360:4);    
					\draw[blue] (-1,3.87) ..controls (-1,3.87) and (-1,0.5) .. (-5.82,-1.2);
					\draw [blue](-3,3.87) ..controls (-3,3.87) and (-3,0.5) .. (1.82,-1.2);
					\draw [blue](-5.93,-0.8) ..controls (-5.93,-0.8) and (-2,1.5) .. (1.93,-0.8);	
					\path[fill=blue!20,opacity=.3] (-2,1.36) to [out=50, in=-120](-3.3,0.1) to[out=15, in=-15] (-0.74,0.14) to[out=135, in=-55](-2,1.36) ;
					\draw[black] (-2.3,1.85) node[right]{\footnotesize{$p'$}};
					\draw[black] (-3.45,-0.1) node[right]{\footnotesize{$h'$}};
					\draw[black] (-1,0.42) node[right]{\footnotesize{$s'$}};
					(3.1,7.375);
					\foreach \i/\tex in {0/(b)}
					\draw(-2,-4.2)node[below]{\tex};
					\foreach \i/\tex in {0/\text{}}
					\draw(0,-2)node[below]{\tex};
				\end{tikzpicture}
				\begin{tikzpicture}[scale=0.36,domain=0:180,>=stealth]
					\draw[fill=blue](8.5,15)circle(.12);
					\draw[fill=black](13.5,6)circle(.12);
					\draw[fill=black](3.5,6)circle(.12);
					\coordinate (org) at (8.5,8);
					\draw[blue](8.5,15)--(13.5,6);
					\draw[blue](8.5,15)--(3.5,6);
					\draw[blue](13.5,6)--(3.5,6);
					\draw[black] (7.9,15.7) node[right]{\footnotesize{$p'$}};
					\draw[black] (3,5.5) node[right]{\footnotesize{$h'$}};
					\draw[black] (13,5.5) node[right]{\footnotesize{$s'$}};
					\path[fill=blue!10,opacity=.3] (8.5,15) to [out=65, in=57](3.5,6) to[out=0, in=0] (13.5,6) to[out=-68, in=-55](8.5,15) ;
					\foreach \i/\tex in {0/(c)}
					\draw(8.5,1)node[below]{\tex};
					\foreach \i/\tex in {0/\text{}}
					\draw(0,0)node[below]{\tex};
				\end{tikzpicture}\qquad\,
				\begin{tikzpicture}[scale=0.36,domain=0:180,>=stealth]
					\draw[fill=blue](8.5,8)circle(.12);
					\draw[fill=black](8.5,15)circle(.12);
					\draw[fill=black](10,9.19)circle(.12);
					\draw[fill=black](12.4,9.59)circle(.12);
					\coordinate (org) at (8.5,8);
					\draw (8.5,8) circle[radius=7];
					\draw (org) ellipse (7cm and 2.2cm);
					\draw[white,dashed] plot ({7*cos(\x)+10},{2.2*sin(\x)+8});
					\draw (8.5,11.3) ellipse (6cm and 2.2cm);
					\draw[white,dashed] plot ({6*cos(\x)+10},{2.2*sin(\x)+11.3});
					\foreach\i/\text in{{3,1}/x,{19,8}/y,{10,18}/{}};
					\draw[black] (7.7,7.4) node[right]{\footnotesize{$z'$}};
					\draw[black] (7.9,15.7) node[right]{\footnotesize{$p'$}};
					\draw[black] (9.7,9.65) node[right]{\footnotesize{$h'$}};
					\draw[black] (11.8,9.25) node[right]{\footnotesize{$s'$}};
					\draw[blue] (8.5,15) ..controls (8.5,15) and (9.5,14) .. (10,9.19);
					\draw[blue] (8.5,15) ..controls (8.5,15) and (11.5,14) .. (12.4,9.59);
					\draw[blue] (10,9.19) ..controls (10,9.19) and (11.5,9.4) .. (12.4,9.59);
					\draw[black,dashed](8.5,8)--(15.5,8);
					\draw[black,dashed](8.5,8)--(8.5,15);
					\path[fill=blue!20,opacity=.3] (8.5,15) to [out=300, in=100](10,9.19) to[out=10, in=15] (12.4,9.59) to[out=100, in=-30](8.5,15) ;
					\foreach \i/\tex in {0/(d)}
					\draw(8.5,1)node[below]{\tex};
					\foreach \i/\tex in {0/\text{}}
					\draw(0,0)node[below]{\tex};
				\end{tikzpicture}
				\caption{ Figure (a) shows the concentric balls $B^M(p,\rho)$ and $B^M(p,\rho+2\delta)$ in the domain $B^M(p,\epsilon)$ of $\varphi$. The shaded part is the set $V_{\delta}(\varphi^{-1}(x))$. Figure (b) shows the set $M^{\underline{\kappa}(W)}$ (or $M^{\overline{\kappa}(W)}$), where $\underline{\kappa}(W)<0$ (or $\overline{\kappa}(W)<0$). Figure (c) shows the set $M^{\underline{\kappa}_W}$ (or $M^{\overline{\kappa}(W)}$), where $\underline{\kappa}(W)=0$ (or $\overline{\kappa}(W)=0$). Figure (d) shows the set $M^{\underline{\kappa}(W)}$ (or $M^{\overline{\kappa}(W)}$), where $\underline{\kappa}(W)>0$ (or $\overline{\kappa}(W)>0$). The shaded part of each of the figures (b), (c) and (d) is an Aleksandrow triangle in $M^{\underline{\kappa}(W)}$ (or $M^{\overline{\kappa}(W)}$).}\label{fig.2}
			\end{figure}
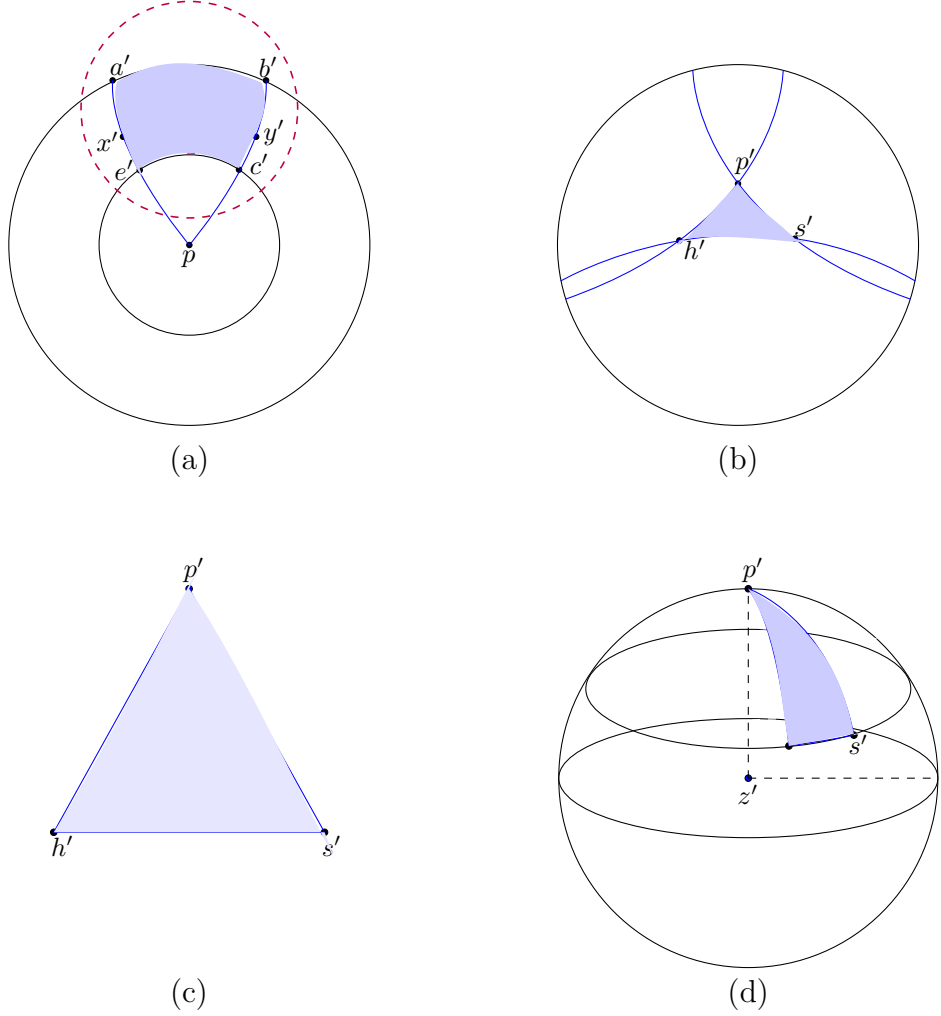
			
			Let $x'\in\gamma_{pa'}$ and $y'\in \gamma_{pb'}$ satisfy $L[\gamma_{px'}]=L[\gamma_{py'}]=\rho+\lambda$ for $\lambda\in[0,2\delta]$, and let $\gamma_{x'y'}$ be a minimal geodesic. Then $(\gamma_{px'},\gamma_{py'},\gamma_{x'y'})$ is a geodesic triangle on $M$. 
		 We claim that there exist positive constants $c_5$ and  $c_6$ such that 	
			\begin{align}\label{eq:d}
			c_5\delta\leq d_M(x', y')\leq c_6\delta.
			\end{align}
			Using $\eqref{eq:22.0}$,  we have
			\begin{align}\label{eq:22.1}
				-\infty<\underline{\kappa}(W)\leq K_M(w)\leq \overline{\kappa}(W)<\infty\quad \text{for all}\,\, w\in B^M(p,\epsilon),
			\end{align}
			where $\underline{\kappa}(W)$ and $\overline{\kappa}(W)$ are given in \eqref{cur}. To prove this claim, we consider the following six cases:
			
			 \noindent1. $\underline{\kappa}(W)<0$ and $\overline{\kappa}(W)<0$.\,\,\,\, 2. $\underline{\kappa}(W)=0<\overline{\kappa}(W)$. \,\, \,\,3. $\underline{\kappa}(W)<0<\overline{\kappa}(W)$.\\
		 4. $\underline{\kappa}(W)=0=\overline{\kappa}(W)$.\,\,\,\,\,\,\qquad\quad
			5. $\underline{\kappa}(W)<0=\overline{\kappa}(W)$.\,\,\,\,\,\, 6. $\underline{\kappa}(W)>0$ and $\overline{\kappa}(W)>0$.
			
			\noindent{\em Case 1}.	By \eqref{eq:22.1}, we have $\underline{\kappa}(W)\le K_M(w)$ for all  $w\in B^M(p,\epsilon)$. Theorem \ref{thm:3.6} implies that there exists an Aleksandrow triangle in $M^{\underline{\kappa}(W)}$. Since $\underline{\kappa}(W)<0$, we let $M^{\underline{\kappa}(W)}:=\{x=(x^1,x^2)\in \mathbb{R}^2|(x^1)^2+(x^2)^2<R_1^2\}$ be the Poincar\'e ball with radius $R_1$, where $\underline{\kappa}(W)=-1/R_{1}^{2}$ (see Figure \ref{fig.2}(b)). Let $\gamma_{p'h'}, \gamma_{p's'}\in M^{\underline{\kappa}(W)}$ such that $L[\gamma_{p'h'}]=L[\gamma_{px'}]=\rho+\lambda$, $L[\gamma_{p's'}]=L[\gamma_{py'}]=\rho+\lambda$ and $\measuredangle(\gamma_{p'h'}, \gamma_{p's'})=\measuredangle(\gamma_{px'}, \gamma_{py'})$. Then $(\gamma_{p'h'},\gamma_{p's'},\gamma_{h's'})$ is an Aleksandrow triangle in $M^{\underline{\kappa}(W)}$. By the hyperbolic law of cosines, we have 
			\begin{align*}
				\cosh\ \Big(\frac{L[\gamma_{h's'}]}{R_1}\Big)=\cosh^2\Big(\frac{\rho+\lambda }{R_1}\Big)-\sinh^2\Big(\frac{\rho+\lambda }{R_1}\Big)\cos\Big(\frac{4\delta}{\rho}\Big).
			\end{align*}
			Hence
			\begin{align*}
				L[\gamma_{h's'}]=R_1\cdot{\rm arccosh} \Big(\cosh^2\Big(\frac{\rho+\lambda }{R_1}\Big)-\sinh^2\Big(\frac{\rho+\lambda }{R_1}\Big)cos\Big(\frac{4\delta}{\rho}\Big)\Big).
			\end{align*}
			By Lemma \ref{lem:3.4}(a), for $\delta\in(0,\rho\pi/4]$ and  $\lambda \in [0,2\delta]$, we have 
			\begin{align*}
				L[\gamma_{h's'}]\leq R_1\cdot C_1\delta.
			\end{align*}
			Hence
			$$d_{M^{\underline{\kappa}(W)}}(h', s')= L[\gamma_{h's'}]\leq R_1\cdot C_1\delta=:c_1\delta.$$
			Using Theorem \ref{thm:3.6}, we have
			$$d_M(x', y')\leq d_{M^{\underline{\kappa}(W)}}(h', s')\leq c_1\delta.$$
			By \eqref{eq:22.1}, we have $\overline{\kappa}(W)\geq K_M(w)$ for all  $w\in B^M(p,\epsilon)$. Theorem \ref{thm:3.7} implies that there exists an Aleksandrow triangle in $M^{\overline{\kappa}(W)}$. Since $\overline{\kappa}(W)<0$, we let $M^{\overline{\kappa}(W)}:=\{x=(x^1,x^2)\in \mathbb{R}^2|(x^1)^2+(x^2)^2<R_2^2\}$ be a Poincar\'e ball, where $\overline{\kappa}(W)=-1/R_{2}^{2}$. Let $\gamma_{p'h'}, \gamma_{p's'}\in M^{\overline{\kappa}(W)}$ such that $L[\gamma_{p'h'}]=L[\gamma_{px'}]=\rho+\lambda$, $L[\gamma_{p's'}]=L[\gamma_{py'}]=\rho+\lambda$ and $\measuredangle(\gamma_{p'h'}, \gamma_{p's'})=\measuredangle(\gamma_{px'}, \gamma_{py'})$. Then $(\gamma_{p'h'},\gamma_{p's'},\gamma_{h's'})$ is an Aleksandrow triangle in $M^{\overline{\kappa}(W)}$. By the hyperbolic law of cosines, we have 
			\begin{align*}
				\cosh\Big(\frac{L[\gamma_{h's'}]}{R_2}\Big)=\cosh^2\Big(\frac{\rho+\lambda }{R_2}\Big)-\sinh^2\Big(\frac{\rho+\lambda }{R_2}\Big)\cos\Big(\frac{4\delta}{\rho}\Big).
			\end{align*}
			Hence
			\begin{align*}
				L[\gamma_{h's'}]=R_2\cdot{\rm arccosh} \Big(\cosh^2\Big(\frac{\rho+\lambda }{R_2}\Big)-\sinh^2\Big(\frac{\rho+\lambda }{R_2}\Big)\cos\Big(\frac{4\delta}{\rho}\Big)\Big).
			\end{align*}
			By Lemma \ref{lem:3.4} (a), for $\delta\in(0,\rho\pi/4]$  and $\lambda \in [0,2\delta]$, we have 
		$L[\gamma_{h's'}]\geq R_2\cdot C_2\delta.$
			Hence
			$$d_{M^{\overline{\kappa}(W)}}(h', s')=L[\gamma_{h's'}]\geq R_2\cdot C_2\delta=:c_2\delta.$$
			Using Theorem \ref{thm:3.7}, we have
			$$d_M(x', y')\geq d_{M^{\overline{\kappa}(W)}}(h', s')\geq c_2\delta.$$

		\noindent{\em Case 2}. By \eqref{eq:22.1}, we have $\underline{\kappa}(W)\le K_M(w)$ for all  $w\in B^M(p,\epsilon)$. Theorem \ref{thm:3.6} implies that there exists an Aleksandrow triangle in $M^{\underline{\kappa}(W)}$. Since $\underline{\kappa}(W)=0$, we let $M^{\underline{\kappa}(W)}:=\{x=(x^1,x^2)\in \mathbb{R}^2\}$ be the Euclidean plane. Let $\gamma_{p'h'}, \gamma_{p's'}\in M^{\underline{\kappa}(W)}$ such that $L[\gamma_{p'h'}]=L[\gamma_{px'}]=\rho+\lambda$, $L[\gamma_{p's'}]=L[\gamma_{py'}]=\rho+\lambda$ and $\measuredangle(\gamma_{p'h'}, \gamma_{p's'})=\measuredangle(\gamma_{px'}, \gamma_{py'})$. Then $(\gamma_{p'h'},\gamma_{p's'},\gamma_{h's'})$ is an Aleksandrow triangle in $M^{\underline{\kappa}(W)}$. By the law of cosines, we have 
			\begin{align*}
				L[\gamma_{h's'}]=2(\rho+\lambda)\sin\Big(\frac{2\delta}{\rho}\Big).
			\end{align*}
			By Lemma \ref{lem:3.4}, for $\delta\in(0,\rho\pi/4]$ and  $\lambda \in [0,2\delta]$, we have 
		$L[\gamma_{h's'}]\leq  c_3\delta.$
			Hence
			$$d_{M^{\underline{\kappa}(W)}}(h', s')= L[\gamma_{h's'}]\leq c_3\delta.$$
			Using Theorem \ref{thm:3.6}, we have
			$$d_M(x', y')\leq d_{M^{\underline{\kappa}(W)}}(h', s')\leq c_3\delta.$$
			By \eqref{eq:22.1}, we have $\overline{\kappa}(W)\geq K_M(w)$ for all  $w\in B^M(p,\epsilon)$. To apply Theorem \ref{thm:3.7}, we assume $L[\gamma_{px'}]$, $L[\gamma_{py'}]$, and $L[\gamma_{x'y'}]$ are all less than ${\rm inj}(\supp(\mu))$. By the definition of injectivity radius, $(\gamma_{px'},\gamma_{py'},\gamma_{x'y'})$ does not meet the cut locus of its vertices. For $2\rho+4\delta<\pi/\sqrt{\overline{\kappa}(W)}$, the  circumference of triangle $(\gamma_{px'},\gamma_{py'},\gamma_{x'y'})$ is less than $2\pi/\sqrt{\overline{\kappa}(W)}$. Hence for $\delta<\pi/(4\sqrt{\overline{\kappa}(W)})-\rho/2$, Theorem \ref{thm:3.7} implies that there exists an Aleksandrow triangle in $M^{\overline{\kappa}(W)}$. Since $\overline{\kappa}(W)>0$, we let $M^{\overline{\kappa}(W)}:=\{x=(x^1,x^2)\in \mathbb{R}^2|(x^1)^2+(x^2)^2=R_3^2\}$ be a sphere, where $\underline{\kappa}(W)=1/R_{3}^{2}$. Let $\gamma_{p'h'}, \gamma_{p's'}\in M^{\overline{\kappa}(W)}$ such that $L[\gamma_{p'h'}]=L[\gamma_{px'}]=\rho+\lambda$, $L[\gamma_{p's'}]=L[\gamma_{py'}]=\rho+\lambda$ and $\measuredangle(\gamma_{p'h'}, \gamma_{p's'})=\measuredangle(\gamma_{px'}, \gamma_{py'})$. Then $(\gamma_{p'h'},\gamma_{p's'},\gamma_{h's'})$ is an Aleksandrow triangle in $M^{\overline{\kappa}(W)}$. By the spherical law of cosines, we have 
			\begin{align*}
				\cos\Big(\frac{L[\gamma_{h's'}]}{R_3}\Big)=\cos^2\Big(\frac{\rho+\lambda }{R_3}\Big)-\sin^2\Big(\frac{\rho+\lambda }{R_3}\Big)\cos\Big(\frac{4\delta}{\rho}\Big).
			\end{align*}
			Hence
			\begin{align*}
				L[\gamma_{h's'}]=R_3\cdot{\rm arccos} \Big(\cos^2\Big(\frac{\rho+\lambda }{R_3}\Big)+\sin^2\Big(\frac{\rho+\lambda }{R_3}\Big)\cos\Big(\frac{4\delta}{\rho}\Big)\Big)
			\end{align*}
			By Lemma \ref{lem:3.4}, for $\delta\in(0,\rho\pi/4]$ and  $\lambda \in [0,2\delta]$, we have 
			\begin{align*}
				L[\gamma_{h's'}]\geq C_4\delta R_3.
			\end{align*}
			Hence
			$$d_{M^{\overline{\kappa}(W)}}(h', s')=L[\gamma_{h's'}]\geq C_4\delta R_3\delta=:c_4\delta.$$
			Using Theorem \ref{thm:3.7}, we have
			$$d_M(x', y')\geq d_{M^{\overline{\kappa}(W)}}(h', s')\geq c_4\delta.$$	

		For the other four cases, if $\underline{\kappa}(W)<0$ (or $\overline{\kappa}(W)<0$), then we let $M^{\underline{\kappa}(W)}$ (or $M^{\overline{\kappa}(W)}$) be a Poincar\'e ball; if $\underline{\kappa}(W)=0$ (or $\overline{\kappa}(W)=0$), then we let $M^{\underline{\kappa}(W)}$ (or $M^{\overline{\kappa}(W)}$) be a Euclidean plane; if $\overline{\kappa}(W)>0$ (or $\overline{\kappa}(W)>0$), then we let $M^{\overline{\kappa}(W)}$ (or $M^{\overline{\kappa}(W)}$) be a sphere. Therefore,  \eqref{eq:d} can be obtained by using a method similar to that in Cases 1 and 2.

			Combining the above six cases proves that for  $0<\rho<\min\{\pi/(4\sqrt{\overline{\kappa}(W)}),\epsilon\}$, there exist positive constants $c_7$ and $c_8$ such that for $\delta$ satisfying $0<\delta<\min\{\pi/(4\sqrt{\overline{\kappa}(W)})-\rho/2,\pi\rho/4\}$, we have	
			$$c_7\delta\leq d_M(x', y')\leq c_8\delta,$$
			where $c_7:=\min\{c_2,c_3,c_4\}$ and $c_8:=\max\{c_1,c_5,c_6\}$.
			Hence 
			$$c'\delta:=|(c_7/2)\delta-\delta|\leq d_M(x', w)\leq (c_8/2)\delta+\delta=:c\delta$$
			We see that for $x\in \varphi(U)\backslash\{\varphi(p)\}$ and $0<\rho<\min\{\pi/(4\sqrt{\overline{\kappa}(W)}),\epsilon\}$, there exist positive constants $c'$ and $c$ such that for $\delta$ satisfying $0<\delta<\min\{\pi/(4\sqrt{\overline{\kappa}(W)})-\rho/2,\pi\rho/4\}$, we have
			$$B^M(\varphi^{-1}(x),c'\delta)\subseteq V_{\delta}(\varphi^{-1}(x))\subseteq B^M(\varphi^{-1}(x),c\delta).$$
			Hence, for each $x\in \varphi_i(U_i)\backslash \{\varphi_i(p_i)\}$,  $i=1,\ldots,N$  and  $0<\rho_i<\min\{\pi/(4\sqrt{\overline{\kappa}(W)}),\epsilon_i\}$, there exist positive constants $c'$ and $c$ such that for $\delta_i$ satisfying $0<\delta_i<\min\{\pi/(4\sqrt{\overline{\kappa}(W)})\\-\rho_i/2,\pi\rho_i/4\}$, \eqref{eq:3.3.1} holds.
			\end{proof}
	
	\begin{rema}\label{rema:3.9}
		Let $M$ be a complete Riemannian $n$-manifold. Let $\mu$ be a positive finite Borel measure on $M$ with compact support. Then there exist coordinate charts $\{(U_i,\varphi_i)\}_{i=1}^N$, where each $\varphi_i$ is defined as in \eqref{eq:jj3.8}, such that  $\widetilde{\mu}:=\sum_{i=1}^{N}\mu\circ\varphi_i^{-1}$ is a positive finite Borel measure on $\R^n$ with compact support. In fact, since ${\rm supp}(\mu)$ is compact, it follows that ${\rm supp}(\mu)$ has a finite open cover $\{U_i\}_{i=1}^{N}$, where each $U_i:=B^M(p_i,\epsilon) $ is a geodesic ball. By definition, $\varphi_i$ is a diffeomorphism and $\{\varphi_i(U_i)\}_{i=1}^{N}$ are  disjoint. Hence each normal coordinate map $\varphi_i$ induces a measure $\widetilde{\mu}_i:=\mu \circ \varphi_i^{-1}$ on $\varphi_i(U_i)$. Therefore we can define a measure $\widetilde{\mu}:=\sum_{i=1}^{N}\mu\circ\varphi_i^{-1}$ with  ${\rm supp}(\widetilde{\mu})\subseteq \overline{\bigcup_{i=1}^N \varphi_i(U_i)}$. Moreover, ${\rm supp}(\widetilde{\mu})$ is compact.
	\end{rema}

	\begin{prop}\label{prop:3.9}
	Let $M$ be a complete Riemannian $n$-manifold. Let $\Omega \subseteq M$ be a bounded open set. Let $\mu$ be a positive finite Borel measure on $M$ such that $\operatorname{supp}(\mu) \subseteq \overline{\Omega}$ and $\mu(\Omega)>0$. Let $\widetilde{\mu}$ be defined as in Remark \ref{rema:3.9}. Let $\{(U_{i},\varphi_{i})\}_{i=1}^{N}$ be a family of coordinate charts such that $\supp(\mu)\subseteq \bigcup_{i=1}^NU_i$, where $U_i:=B^M(p_i,\epsilon_i)$, and $\varphi_i$ is defined with respect to $\epsilon_i$ as in  \eqref{eq:jj3.8}. Then for each $x\in \varphi_i(U_i)$,
		\begin{align}\label{eq:3.7.1}
		\underline{\dim}_{\widetilde{\mu}}(x)=\underline{\dim}_{\mu}(\varphi_i^{-1}(x)). 
		\end{align}
	\end{prop}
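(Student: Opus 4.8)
The plan is to deduce Proposition \ref{prop:3.9} from the two-sided ball comparison of Proposition \ref{prop:3.8}, together with the fact (Remark \ref{rema:3.9}) that $\widetilde{\mu}=\sum_{j=1}^N\mu\circ\varphi_j^{-1}$ and that the images $\varphi_1(U_1),\dots,\varphi_N(U_N)$ are pairwise disjoint. Fix $i$ and $x\in\varphi_i(U_i)$ and put $p:=\varphi_i^{-1}(x)$. Since $\varphi_i(U_i)$ is open and the $\varphi_j(U_j)$ are disjoint, I would first choose $r_0>0$ with $B(x,r_0)\subseteq\varphi_i(U_i)$; then for $0<\delta<r_0$ the ball $B(x,\delta)$ meets no $\varphi_j(U_j)$ with $j\neq i$, so every summand of $\widetilde{\mu}$ except the $j=i$ one vanishes on it, giving
\begin{align}\label{eq:prop39aux}
	\widetilde{\mu}\bigl(B(x,\delta)\bigr)=\mu\bigl(\varphi_i^{-1}(B(x,\delta))\bigr),\qquad 0<\delta<r_0 .
\end{align}
Thus everything reduces to comparing $\mu(\varphi_i^{-1}(B(x,\delta)))$ with $\mu(B^M(p,\delta))$ up to bounded distortion of the radius.

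If $x=\varphi_i(p_i)=z_i$ this comparison is exact and elementary: since $\epsilon_i<{\rm inj}(p_i)$, radial geodesics from $p_i$ are minimizing, so $\exp_{p_i}$ carries $B^{T_{p_i}M}(0,r)$ onto $B^M(p_i,r)$ for $r\le\epsilon_i$; as $E_{p_i}$ is a linear isometry and $S_i$ a similitude, $\varphi_i^{-1}$ maps $B(z_i,\delta)$ onto $B^M(p_i,c_i\delta)$ for some $c_i>0$ and all small $\delta$, and \eqref{eq:prop39aux} together with $\ln(c_i\delta)/\ln\delta\to1$ gives \eqref{eq:3.7.1}. So assume $x\neq\varphi_i(p_i)$, and apply Proposition \ref{prop:3.8} with $\rho=\rho_i$ fixed (depending on $x$ only; the $\delta$-dependence is carried by $\beta_i=4\delta_i/\rho_i$, so the resulting constants are $\delta$-independent). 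It furnishes constants $c',c>0$, depending only on $x$, such that for all sufficiently small $\delta>0$, writing $\widetilde{V}_\delta:=\widetilde{V}_\delta(x)$,
\begin{align*}
	B(x,\delta)\subseteq\widetilde{V}_\delta\subseteq B(x,10\delta)
	\qquad\text{and}\qquad
	B^M(p,c'\delta)\subseteq\varphi_i^{-1}(\widetilde{V}_\delta)\subseteq B^M(p,c\delta).
\end{align*}
Applying $\varphi_i^{-1}$ to the first chain and splicing with the second yields $B^M(p,c'\delta)\subseteq\varphi_i^{-1}(B(x,10\delta))$ and $\varphi_i^{-1}(B(x,\delta))\subseteq B^M(p,c\delta)$; feeding \eqref{eq:prop39aux} into these gives
\begin{align*}
	\mu\bigl(B^M(p,c'\delta)\bigr)\le\widetilde{\mu}\bigl(B(x,10\delta)\bigr)
	\qquad\text{and}\qquad
	\widetilde{\mu}\bigl(B(x,\delta)\bigr)\le\mu\bigl(B^M(p,c\delta)\bigr)
\end{align*}
for all small $\delta$. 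Taking logarithms, dividing by $\ln\delta<0$, passing to $\varliminf_{\delta\to0^+}$, and using $\ln(c'\delta)/\ln\delta,\ \ln(c\delta)/\ln\delta,\ \ln(10\delta)/\ln\delta\to1$ together with the reparametrisations $\delta\mapsto c'\delta$, $\delta\mapsto c\delta$, $\delta\mapsto10\delta$, the first inequality gives $\underline{\dim}_{\mu}(p)\ge\underline{\dim}_{\widetilde{\mu}}(x)$ and the second $\underline{\dim}_{\widetilde{\mu}}(x)\ge\underline{\dim}_{\mu}(p)$, i.e.\ \eqref{eq:3.7.1}. (If $p\notin\supp(\mu)$, equivalently $x\notin\supp(\widetilde{\mu})$, both sides equal $+\infty$ and the same inequalities apply verbatim.)

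I do not expect a deep obstacle: the entire geometric content is already contained in Proposition \ref{prop:3.8}, and what remains is distortion bookkeeping (tracking the radii $c'\delta$, $c\delta$, $10\delta$) and an elementary $\varliminf$ computation. The only points that need a little care are that the constants $c',c$ in Proposition \ref{prop:3.8}(b) can be taken independent of $\delta$ once $\rho$ has been fixed with $x$, and the two degenerate cases — $x$ a chart center and $x$ off the support of $\widetilde{\mu}$ — both of which are handled above.
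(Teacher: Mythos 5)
Your proof is correct and follows essentially the same strategy as the paper's: both rely on the two‐sided ball comparisons of Proposition \ref{prop:3.8} together with the identity $\mu(V_\delta(\varphi_i^{-1}(x)))=\widetilde{\mu}(\widetilde{V}_\delta(x))$, and both treat $x=\varphi_i(p_i)$ as a separate elementary case. The only cosmetic difference is that the paper introduces an auxiliary local dimension $\underline{\dim}^*$ computed along the sets $\widetilde{V}_\delta$ and shows it agrees with both sides, whereas you splice the inclusions directly to obtain two-sided ball-measure inequalities and then take $\varliminf$; these are equivalent bookkeepings of the same argument.
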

	\begin{proof}\,Fix $i\in\{1,\ldots,N\}$ and let $x\in \varphi_i(U_i)$. We consider the following two cases.
		
			\noindent{\em Case 1. $x\in \varphi_i(U_i)\backslash\{\varphi_i(p_i)\}$.}
	Define $$\underline{\operatorname{dim}}_{\widetilde{\mu}}^*(x):=\varliminf\limits_{\delta_i\rightarrow 0^{+}}\frac{\ln\widetilde{\mu}(\widetilde{V}_{\delta_i}(x))}{\ln \delta_i},$$
		where $\widetilde{V}_{\delta_i}(x)$ is defined as in Proposition \ref{prop:3.8}. Applying Proposition \ref{prop:3.8}(a), we have
		\begin{align*}
			\varliminf\limits_{{\delta_i}\rightarrow0^+}\frac{\ln\widetilde{\mu}(B(x,{\delta_i}))}{\ln{\delta_i}}\geq \varliminf\limits_{{\delta_i}\rightarrow0^+}\frac{\ln\widetilde{\mu}(\widetilde{V}_{\delta_i}(x))}{\ln{\delta_i}}\geq \varliminf\limits_{{\delta_i}'\rightarrow0^+}\frac{ \ln\widetilde{\mu}(B(x,{\delta_i}'))}{\ln{\delta_i}'},\\
		\end{align*}
		where ${\delta_i}'$ can be taken to be $10{\delta_i}$.
		Therefore
	$
			\underline{\operatorname{dim}}_{\widetilde{\mu}}(x)\geq\underline{\operatorname{dim}}_{\widetilde{\mu}}^*(x)\geq
			\underline{\operatorname{dim}}_{\widetilde{\mu}}(x).
	$
		It follows that
		$$
		\underline{\operatorname{dim}}_{\widetilde{\mu}}^*(x)=\underline{\operatorname{dim}}_{\widetilde{\mu}}(x)=\varliminf\limits_{{\delta_i}\rightarrow 0^{+}}\frac{\ln\widetilde{\mu}(B(x,{\delta_i}))}{\ln {\delta_i}}.$$
	Define $$\underline{\operatorname{dim}}_{\mu}^*(\varphi^{-1}_i(x)):=\varliminf\limits_{{\delta_i}\rightarrow 0^{+}}\frac{\ln\mu(V_{\delta_i}(\varphi^{-1}_i(x))}{\ln {\delta_i}},$$
		where $V_{\delta_i}(\varphi^{-1}_i(x)$ is defined as in Proposition \ref{prop:3.8}. Using Proposition \ref{prop:3.8}(b), we have
		\begin{align*}
			\varliminf\limits_{{\delta_i}\rightarrow0^+}\frac{\ln\mu(B^M(\varphi^{-1}_i(x),c_2{\delta_i}))}{\ln(c_2{\delta_i})\big(1-\ln c_2/\ln(c_2{\delta_i})\big)}&\geq \varliminf\limits_{{\delta_i}\rightarrow0^+}\frac{\ln\mu( V_{\delta_i}(\varphi^{-1}_i(x)))}{\ln{\delta_i}}\\&\geq \varliminf\limits_{c_3{\delta_i}\rightarrow0^+}\frac{ \ln\mu(B^M(\varphi^{-1}_i(x),c_3{\delta_i}))}{\ln(c_3{\delta_i})\big(1-\ln c_3/\ln(c_3{\delta_i})\big)}.
		\end{align*}
		Hence
	$
			\underline{\operatorname{dim}}_{\mu}(\varphi^{-1}_i(x))\geq\underline{\operatorname{dim}}_{\mu}^*(\varphi^{-1}_i(x))\geq
			\underline{\operatorname{dim}}_{\mu}(\varphi^{-1}_i(x)).
	$
		Consequently,
		$$
		\underline{\operatorname{dim}}_{\mu}^*(\varphi^{-1}_i(x))=\underline{\operatorname{dim}}_{\mu}(\varphi^{-1}_i(x))=\varliminf\limits_{{\delta_i}\rightarrow 0^{+}}\frac{\ln\mu(V_{\delta_i}(\varphi^{-1}_i(x)))}{\ln {\delta_i}}.$$
	Since $\mu(V_{\delta_i}(\varphi^{-1}_i(x)))=\mu(\varphi_i^{-1}(\widetilde{V}_{{\delta_i}}(x)))=\widetilde{\mu}(\widetilde{V}_{{\delta_i}}(x))$, it follows that \eqref{eq:3.7.1} holds.
	
\noindent{\em Case 2. $x=\varphi_i(p_i)$.}	Let $0<\delta_i<\epsilon_i$. By the definitions of $\widetilde{\mu}$ and $\varphi_i$, we have
	$$\mu(B^M(\varphi_i^{-1}(x),\delta_i))=\mu(\varphi_i^{-1}(B(x,\delta_i)))=\widetilde{\mu}(B(x,\delta_i)).$$
	Hence \eqref{eq:3.7.1} holds and this completes the proof.
\end{proof}
	
	The following lemma will be used in the proof of Theorem \ref{thm:3.11}.
	\begin{lem}\label{lem:3.10}
		Assume the hypotheses of Proposition \ref{prop:3.9}. Let $n\geq 2$ and $2<q<\infty$. The following conditions are equivalent:
		\begin{enumerate}
			\item[(a)]
			\begin{align}
				&\lim _{\delta \rightarrow 0^{+}} \sup _{w \in M ; r \in(0, \delta)} r^{1-n/2} \mu(B^M(w,r))^{1/q}=0 \quad \text { \rm for } n>2, \quad \text { \rm and } \label{eq:3.7}\\
				&\lim _{\delta \rightarrow 0^{+}} \sup _{w \in M ; r \in(0, \delta)}|\ln r|^{1/2} \mu(B^M(w,r))^{1/q}=0 \quad \text {\rm  for } n=2.\label{eq:3.8}
			\end{align}
			\item[(b)]
			\begin{align}
				&\lim _{\delta \rightarrow 0^{+}} \sup _{x \in \R^n ; r \in(0, \delta)} r^{1-n/2}\, \widetilde{\mu}\left(B(x,r)\right)^{1 / q}=0 \quad \text {\rm for } \,n>2, \quad \text {\rm and }\label{eq:3.9} \\
				&\lim _{\delta \rightarrow 0^{+}} \sup _{x \in \R^n ; r \in(0, \delta)}|\ln r|^{1 / 2}\, \widetilde{\mu}\left(B(x,r)\right)^{1 / q}=0 \quad \text {\rm for } \, n=2.\label{eq:3.10}
			\end{align}
		\end{enumerate}
	\end{lem}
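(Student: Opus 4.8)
The plan is to deduce the equivalence from the comparison of geodesic balls in $M$ with Euclidean balls in $\mathbb R^n$ contained in Propositions \ref{prop:3.8} and \ref{prop:3.9} — or, what already suffices here and is more elementary, from the fact that each chart $\varphi_i=S_i\circ E_{p_i}\circ\exp_{p_i}^{-1}$ (with $\epsilon_i<{\rm inj}(p_i)$) extends to a diffeomorphism of the compact set $\overline{U_i}$ onto $\overline{\varphi_i(U_i)}$, hence is bi-Lipschitz there: there is $L\ge1$ with $L^{-1}d_M(v,w)\le|\varphi_i(v)-\varphi_i(w)|\le L\,d_M(v,w)$ for all $i$ and all $v,w\in\overline{U_i}$ with $d_M(v,w)$ below some fixed $\eta>0$. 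What remains is to turn this local comparison into the two \emph{uniform} conditions; this is a localization over the finite cover, together with the remark that $r\mapsto r^{1-n/2}$ (when $n>2$) and $r\mapsto|\ln r|^{1/2}$ (when $n=2$) change by at most a bounded factor, above and below, under a rescaling $r\mapsto cr$ with $c>0$ fixed and $r$ small (for the logarithmic weight because $|\ln(cr)|\sim|\ln r|$ as $r\to0^+$).

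First I would reduce (a) and (b) to suprema over the supports. If $B^M(w,r)\cap\supp(\mu)=\emptyset$ then $\mu(B^M(w,r))=0$, and if $v$ lies in that intersection then $B^M(w,r)\subseteq B^M(v,2r)$; together with the trivial reverse inclusion and the change of variable $r\mapsto2r$ this shows that \eqref{eq:3.7} (resp.\ \eqref{eq:3.8}) holds if and only if the same statement holds with the supremum restricted to $w\in\supp(\mu)$. On the $\widetilde\mu$-side, subadditivity of $t\mapsto t^{1/q}$ gives $\widetilde\mu(B(x,r))^{1/q}\le\sum_{i=1}^N\mu_i(B(x,r))^{1/q}$ with $\mu_i:=\mu\circ\varphi_i^{-1}$, so it is enough to verify \eqref{eq:3.9} (resp.\ \eqref{eq:3.10}) for each $\mu_i$, and, in the other direction, each $\mu_i\le\widetilde\mu$; since $\mu_i$ is carried by $\varphi_i(\supp(\mu)\cap U_i)$, one may again restrict the relevant supremum there. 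Finally, shrinking the $\epsilon_i$ if needed (which leaves all hypotheses intact), I would arrange that the smaller balls $\{B^M(p_i,\epsilon_i/2)\}$ still cover $\supp(\mu)$, so that every $w\in\supp(\mu)$ satisfies $B^M(w,2r)\subseteq U_{i(w)}$ once $r<\epsilon_{i(w)}/4$.

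With these reductions the transfer is immediate. For $w\in\supp(\mu)$ and small $r$, $\varphi_{i(w)}$ maps $B^M(w,r)\subseteq U_{i(w)}$ into $B(\varphi_{i(w)}(w),Lr)$, whence $\mu(B^M(w,r))\le\mu_{i(w)}\bigl(B(\varphi_{i(w)}(w),Lr)\bigr)\le\widetilde\mu\bigl(B(\varphi_{i(w)}(w),Lr)\bigr)$. In the other direction, fix $i$ and $x\in\mathbb R^n$ with $\mu_i(B(x,r))>0$ for small $r$; then some $v\in\supp(\mu)\cap U_i$ has $\varphi_i(v)\in B(x,r)$, so $B(x,r)\subseteq B(\varphi_i(v),2r)$, and since $\varphi_i^{-1}$ is $L$-Lipschitz on $\overline{\varphi_i(U_i)}$ at small scales, $\mu_i(B(x,r))=\mu\bigl(\varphi_i^{-1}(B(x,r)\cap\varphi_i(U_i))\bigr)\le\mu\bigl(B^M(v,2Lr)\bigr)$. (If one prefers to invoke Proposition \ref{prop:3.8}, which yields the same two-sided control with constants depending only on $\underline{\kappa}(W),\overline{\kappa}(W)$, then the finitely many centres $p_i$ must be handled separately, using the exact identity $\mu(B^M(p_i,r))=\widetilde\mu(B(\varphi_i(p_i),r))$ for small $r$, as in Case 2 of the proof of Proposition \ref{prop:3.9}.)

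To finish, assume (a) with $n>2$: for any $x\in\mathbb R^n$ and small $r$,
\[
	r^{1-n/2}\,\widetilde\mu(B(x,r))^{1/q}\ \le\ N(2L)^{n/2-1}\sup_{w\in\supp(\mu);\ r'<2Lr}(r')^{1-n/2}\,\mu(B^M(w,r'))^{1/q},
\]
and letting $r\to0^+$ the right-hand side tends to $0$ by the reduced form of \eqref{eq:3.7}, which is exactly \eqref{eq:3.9}; the case $n=2$ is identical with $|\ln r|^{1/2}$ replacing $r^{1-n/2}$, using $|\ln(2Lr)|\le(1+|\ln(2L)|)\,|\ln r|$ for $r$ small. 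The reverse implication (b)$\Rightarrow$(a) is symmetric, using $\mu(B^M(w,r))\le\widetilde\mu(B(\varphi_{i(w)}(w),Lr))$ and the reduction of (a) to a supremum over $\supp(\mu)$. The only genuine work lies in the bookkeeping of the middle two paragraphs: fixing the cover so that every supporting ball sits inside a single chart, checking that the bi-Lipschitz constant $L$ (equivalently, the constants extracted from Proposition \ref{prop:3.8}) can be taken uniform over the finitely many charts, and handling the borderline case $n=2$, where only the asymptotic $|\ln(cr)|\sim|\ln r|$ rather than an exact scaling law is available.
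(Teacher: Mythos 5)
Your proof is correct, and it takes a genuinely different (more elementary) route than the paper. The paper derives the two-sided comparison between Euclidean balls around $x$ and geodesic balls around $\varphi_i^{-1}(x)$ from Proposition~\ref{prop:3.8}, whose proof constructs the annular-cone regions $\widetilde{V}_\delta(x)$ and controls them via the Toponogov and Rauch comparison theorems, with constants extracted from $\underline{\kappa}(W)$ and $\overline{\kappa}(W)$; since the $\widetilde{V}_\delta$ construction is anchored at the chart centre $z_i$, the point $x=\varphi_i(p_i)$ has to be treated as a separate case using the exact radial isometry $\mu(B^M(p_i,r))=\widetilde{\mu}(B(z_i,r))$. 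You bypass all of this by noting that each $\varphi_i$ is a diffeomorphism on the compact set $\overline{U_i}$ (once $\epsilon_i<\operatorname{inj}(p_i)$), hence uniformly bi-Lipschitz at small scales, with a single constant $L$ uniform over the finitely many charts and with no exceptional centre points. What the bi-Lipschitz argument buys is simplicity (no curvature comparison, no case split at the centres, no computation of the $c_j$'s); what it gives up is quantitativeness ($L$ is a compactness constant rather than an explicit function of the sectional-curvature bounds), which is irrelevant for this lemma. The supporting reductions you carry out are all sound: restriction of the supremum to $\operatorname{supp}(\mu)$ via $B^M(w,r)\subseteq B^M(v,2r)$; the subadditivity $\widetilde{\mu}(B)^{1/q}\le\sum_i\mu_i(B)^{1/q}$ with $\widetilde{\mu}=\sum_i\mu_i$; the distortion bound $|\ln(cr)|\le(1+|\ln c|)\,|\ln r|$ for small $r$ in the $n=2$ case. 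One small point worth making explicit: the step of arranging that every $B^M(w,2r)$ with $w\in\operatorname{supp}(\mu)$ and $r$ small lies in a single chart is most cleanly phrased as a Lebesgue-number argument for the open cover $\{U_i\}$ of the compact $\operatorname{supp}(\mu)$, rather than "shrinking the $\epsilon_i$" (which, taken literally, risks breaking the cover); the conclusion is the same. Note too that your elementary route would also shorten the proof of Proposition~\ref{prop:3.9}, which rests on the same ball comparison.
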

	\begin{proof}\, Assume that $(a)$ holds. We first consider the case $n>2$.  Fix $i\in\{1,\ldots,N\}$. For any $w\in U_i$, let $x:=\varphi_i(w)\subseteq \varphi_i(U_i)$.  We consider the following two cases.
			
			\noindent{\em Case 1.  $x\in \varphi_i(U_i)\backslash\{\varphi_i(p_i)\}$.} Let $\beta_i/2$ and $\beta_i'/2$ be the polar angles of the open infinite cones $\Lambda_{\beta_i}(z_i,l_i)$ and $\Lambda_{\beta_i'}(z_i,l_i)$, respectively. Let $\delta_i$ and $r_i$ be positive numbers. For  $r_i \in(0, \delta_i)$ and $\beta_i'<\beta_i$, let $\widetilde{V}_{\delta_i}(x):=\widetilde{V}_{\delta_i,\rho_i,\beta_i,z_i,l_i}(x)$ be defined as in Proposition \ref{prop:3.8}, and let $V_{r_i}(w)=V_{r_i}(\varphi_i^{-1}(x)):=\varphi_i^{-1}(\widetilde{V}_{r_i}(x))=\varphi_i^{-1}(\widetilde{V}_{r_i,\rho_i,\beta_i,z_i,l_i}(x))$. For  $0<\rho_i<\min\{\pi/(4\sqrt{\overline{\kappa}(W)}),\epsilon_i\}$, Proposition  \ref{prop:3.8}(b) implies that there exist positive constants $c_2$ and $c_3$ such that for $r_i \in(0, \delta_i)$ and $\delta_i\in(0,\min\{\pi/(4\sqrt{\overline{\kappa}(W)})-\rho_i/2,\pi\rho_i/4\})$, we have
	$B^M(\varphi^{-1}_i(x),c_2r_i)\subseteq V_{r_i}(\varphi^{-1}_i(x))\subseteq B^M(\varphi^{-1}_i(x),c_3r_i).$
	Hence
		\begin{align*}
			&\lim _{c_2{\delta_i} \rightarrow 0^{+}}\sup _{w \in U_i\backslash\{p_i\} ; c_2{r_i} \in(0, c_2{\delta_i})}\frac{\mu(B^M(w,c_2{r_i}))^{1/q}}{(c_2{r_i})^{-1+n/2}c_2^{1-n/2}}\nonumber\\
			\leq& \lim _{{\delta_i} \rightarrow 0^{+}} \sup _{w \in U_i\backslash\{p_i\}  ; {r_i} \in(0, {\delta_i})}\frac{\mu( V_{r_i}(w))^{1/q}}{{r_i}^{-1+n/2}}\nonumber\\
			\leq&\lim _{c_3{\delta_i} \rightarrow 0^{+}}\sup _{w \in U_i\backslash\{p_i\}  ;c_3 {r_i} \in(0, c_3{\delta_i})}\frac{\mu( B^M(w,c_3{r_i}))^{1/q}}{(c_3{r_i})^{-1+n/2}c_3^{1-n/2}}.
		\end{align*}
 By using (\ref{eq:3.7}), we have
		$$\lim _{{\delta_i} \rightarrow 0^{+}} \sup _{w \in U_i\backslash\{p_i\}  ; {r_i} \in(0, {\delta_i})}\frac{\mu( V_{r_i}(w))^{1/q}}{{r_i}^{-1+n/2}}=0.$$
	Since $\mu( V_{r_i}(w))=\mu(\varphi_i^{-1}(\widetilde{V}_{r_i}(x)))=\widetilde{\mu}(\widetilde{V}_{r_i}(x))$, it follows that
		\begin{align}\label{eq:3.13}
			\lim _{{\delta_i} \rightarrow 0^{+}} \sup _{x \in \varphi_i(U_i)\backslash\{\varphi_i(p_i)\} ; {r_i} \in(0, {\delta_i})}\frac{\widetilde{\mu}(\widetilde{V}_{r_i}(x))^{1/q}}{{r_i}^{-1+n/2}}=0.
		\end{align}
		Using Proposition \ref{prop:3.8}(a), we have
		\begin{align*}
			&\lim _{{\delta_i} \rightarrow 0^{+}}\sup _{x \in  \varphi_i(U_i)\backslash\{\varphi_i(p_i)\}; {r_i} \in(0, {\delta_i})}\frac{\widetilde{\mu}(B(x,{r_i}))^{1/q}}{{r_i}^{-1+n/2}}\nonumber\\\leq& \lim _{{\delta_i} \rightarrow 0^{+}}\sup _{x \in  \varphi_i(U_i)\backslash\{\varphi_i(p_i)\} ; {r_i} \in(0, {\delta_i})}\frac{\widetilde{\mu}( \widetilde{V}_{r_i}(x))^{1/q}}{r_i^{-1+n/2}}\nonumber\\
			\leq&\lim _{10{\delta_i} \rightarrow 0^{+}}\sup _{x \in  \varphi_i(U_i)\backslash\{\varphi_i(p_i)\} ; {r_i}' \in(0, 10{\delta_i})}\frac{\widetilde{\mu}( B(x,{r_i}'))^{1/q}}{({r_i}')^{-1+n/2}10^{1-n/2}},
		\end{align*}
		where ${r_i}'$ can be taken to be $10{r_i}$. By using (\ref{eq:3.13}), we have
		\begin{align}\label{eq:3.7.2}
			\lim _{{\delta_i} \rightarrow 0^{+}}\sup _{x \in \varphi_i(U_i)\backslash\{\varphi_i(p_i)\} ; {r_i} \in(0, {\delta_i})}\frac{\widetilde{\mu}(B(x,{r_i}))^{1/q}}{{r_i}^{-1+n/2}}= 0.
		\end{align}
	
\noindent{\em Case 2. $x=\varphi_i(p_i)$.} Let $0<r_i<\delta_i<\epsilon_i$.  By the definitions of $\widetilde{\mu}$ and $\varphi_i$, we have
	$$\mu(B^M(w,r_i))=\mu(B^M(\varphi_i^{-1}(x),r_i))=\mu(\varphi_i^{-1}(B(x,r_i)))=\widetilde{\mu}(B(x,r_i)).$$
	Using \eqref{eq:3.7}, we have 	
	\begin{align}\label{eq:3.7.3}
		\lim _{\delta_i \rightarrow 0^{+}} \sup _{ r_i \in(0, \delta_i)} {r_i}^{1-n/2}\, \widetilde{\mu}\left(B(x,r_i)\right)^{1 / q}=0.
		\end{align}
	
	Combining \eqref{eq:3.7.2} and \eqref{eq:3.7.3} proves \eqref{eq:3.9}.
	
	For the case $n=2$, by using a similar proof we can show that \eqref{eq:3.10} holds. This proves $(b)$. The proof of the converse is similar; we omit the details.
	\end{proof}

	In Theorem \ref{thm:3.11}, we generalize the compact embedding theorem of Maz'ja\cite{Maz'ja_1985} to Riemannian manifolds. This generalization plays a major role in the proof of Theorem \ref{thm:1.1}.
	We first state the theorem of Maz'ja.
	\begin{thm}(Maz'ja \cite[Section 8.8, Theorems 3 and 4]{Maz'ja_1985})\label{thm:maz}
		Let $\widetilde{\mu}$ be a finite measure on $\R^n$. For $q>2$, the unit ball $\widetilde{B}:=\{u \in C_{c}^{\infty}\left(\R^n\right):\|u\|_{W^{1,2}_0\left(\R^n\right)} \leq 1\}$
		is relatively compact in $L^{q}\left(\R^n,  \widetilde{\mu}\right)$ if and only if
		\begin{align} 
		&\lim _{\delta \rightarrow 0^{+}} \sup _{x \in \R^n ; r \in(0, \delta)} r^{1-n/2}\, \widetilde{\mu}\left(B(x,r)\right)^{1 / q}=0, \label{maz1} \\
		\qquad	&\lim _{|x| \rightarrow \infty} \sup _{r\in (0,1)} r^{1-n/2} \widetilde{\mu} \left(B(x,r)\right)^{1 / q}=0\qquad \text { \rm for } n>2,\label{maz2}
		\end{align}
	and
		\begin{align}
		&\lim _{\delta \rightarrow 0^{+}} \sup _{x \in \R^n ; r \in(0, \delta)}|\ln r|^{1 / 2}\, \widetilde{\mu}\left(B(x,r)\right)^{1 / q}=0, \label{maz3}\\
		\qquad	&\lim _{|x| \rightarrow \infty} \sup _{r\in (0,1)} |\ln r|^{1/2} \widetilde{\mu}(B(x,r))^{1/q}=0\qquad \text {\rm  for} \,\, n=2. \label{maz4}
		\end{align}
	\end{thm}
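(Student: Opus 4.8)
The plan is to follow Maz'ja's capacitary argument, in which both implications are reduced to the single ``ball quantity''
\[
\mathbf B_\delta(\widetilde\mu):=\sup_{x\in\mathbb R^n,\,0<r<\delta} r^{1-n/2}\,\widetilde\mu(B(x,r))^{1/q}\quad (n\ge 3),\qquad
\mathbf B_\delta(\widetilde\mu):=\sup_{x\in\mathbb R^n,\,0<r<\delta}|\ln r|^{1/2}\,\widetilde\mu(B(x,r))^{1/q}\quad (n=2),
\]
together with the analogue $\mathbf B^R(\widetilde\mu)$ in which the supremum runs over $|x|>R$, $0<r<1$; write $\mathbf B(\widetilde\mu):=\mathbf B_1(\widetilde\mu)$ (or $\mathbf B_{1/2}(\widetilde\mu)$ if $n=2$). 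Two inputs are used throughout: (i) the two-sided estimates $\operatorname{cap}(\overline{B(x,r)})\asymp r^{n-2}$ ($n\ge3$) and $\operatorname{cap}(\overline{B(x,r)})\asymp|\ln r|^{-1}$ ($n=2$) for the variational $(1,2)$-capacity, valid for $0<r\le 1$ with constants independent of $x$; and (ii) the trace theorem of \cite{Maz'ja_1985}: for a finite measure $\nu$ on $\mathbb R^n$ and $q>2$, the embedding $W^{1,2}(\mathbb R^n)\hookrightarrow L^q(\mathbb R^n,\nu)$ is bounded if and only if $\mathbf B(\nu)<\infty$, and when it holds the embedding norm is comparable to $\mathbf B(\nu)+\nu(\mathbb R^n)^{1/q}$. (The hypothesis $q>2$ is exactly what lets the ball condition, rather than the full isocapacitary condition, suffice in (ii). The degenerate ranges are disposed of directly: for $n=1$ one has $W^{1,2}(\mathbb R)\hookrightarrow C_0(\mathbb R)$, so every finite $\widetilde\mu$ is admissible and both conditions hold automatically, while for $n\ge3$ and $q\ge 2n/(n-2)$ the stated conditions can be met only by $\widetilde\mu=0$, so the assertion is vacuous there. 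Recall also $W^{1,2}_0(\mathbb R^n)=W^{1,2}(\mathbb R^n)$.)

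\emph{Necessity.} If $B$ is relatively compact in $L^q(\mathbb R^n,\widetilde\mu)$ it is bounded there, hence by homogeneity and density of $C_c^\infty(\mathbb R^n)$ the embedding $T\colon W^{1,2}(\mathbb R^n)\hookrightarrow L^q(\mathbb R^n,\widetilde\mu)$ is bounded. Testing $T$ against the cut-offs $\phi_{x,r}$ --- equal to $1$ on $B(x,r)$ and supported in $B(x,2r)$ when $n\ge3$; for $n=2$, the logarithmic condenser function between $B(x,r)$ and $B(x,\sqrt r)$, chosen so that the $L^2$-part of its $W^{1,2}$-norm is negligible --- gives $r^{1-n/2}\widetilde\mu(B(x,r))^{1/q}\lesssim\|\phi_{x,r}\|_{L^q(\widetilde\mu)}/\|\phi_{x,r}\|_{W^{1,2}}\le\|T\|$, so $\mathbf B(\widetilde\mu)<\infty$ and its $n=2$ analogue hold. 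Now suppose \eqref{maz1} (resp.\ \eqref{maz3}) failed: there would be $x_k$ and $r_k\to0$ with $r_k^{1-n/2}\widetilde\mu(B(x_k,r_k))^{1/q}\ge c_0>0$ (resp.\ the logarithmic analogue), so that $u_k:=\phi_{x_k,r_k}/\|\phi_{x_k,r_k}\|_{W^{1,2}}\in B$ satisfies $\|u_k\|_{L^q(\widetilde\mu)}\gtrsim c_0$. Since $\sup_x\int_{B(x,2r_k)}(|w|^2+|\nabla w|^2)\to0$ for every $w\in W^{1,2}(\mathbb R^n)$ as $r_k\to0$, a direct estimate of $\langle u_k,w\rangle_{W^{1,2}}$ shows $u_k\rightharpoonup0$ weakly in $W^{1,2}$; boundedness (hence weak-to-weak continuity) of $T$ gives $u_k\rightharpoonup0$ weakly in $L^q(\widetilde\mu)$. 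Relative compactness then produces a subsequence converging strongly in $L^q(\widetilde\mu)$, necessarily to $0$ --- contradicting $\|u_k\|_{L^q(\widetilde\mu)}\gtrsim c_0$. Running the same argument with $|x_k|\to\infty$ and $r_k\in(0,1)$, so that the supports of the $u_k$ escape to infinity (again forcing $u_k\rightharpoonup0$), yields \eqref{maz2} (resp.\ \eqref{maz4}).

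\emph{Sufficiency.} Assume \eqref{maz1}--\eqref{maz2} (resp.\ \eqref{maz3}--\eqref{maz4}). Together with $\widetilde\mu(\mathbb R^n)<\infty$ (which handles scales $\delta\le r\le1$ via $\widetilde\mu(B(x,r))\le\widetilde\mu(\mathbb R^n)$) and the condition at infinity, they give $\mathbf B(\widetilde\mu)<\infty$, so $T$ is bounded by (ii). To prove $T$ compact, take $(v_k)$ bounded in $W^{1,2}(\mathbb R^n)$ with $v_k\rightharpoonup0$; it suffices to show $\|v_k\|_{L^q(\widetilde\mu)}\to0$. Fix $\varepsilon>0$ and pick $R$ with $\mathbf B^R(\widetilde\mu)<\varepsilon$ and $\widetilde\mu(\{|x|>R+1\})<\varepsilon^q$; the restriction of $\widetilde\mu$ to $\{|x|>R+1\}$ then has ball quantity $\le\varepsilon$ and mass $<\varepsilon^q$, so by (ii) $\int_{\{|x|>R+1\}}|v_k|^q\,d\widetilde\mu\lesssim\varepsilon^q\|v_k\|_{W^{1,2}}^q$. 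Let $\nu$ be the restriction of $\widetilde\mu$ to $\overline{B(0,R+1)}$, a compactly supported finite measure with $\mathbf B_\delta(\nu)\to0$ as $\delta\to0$; cover $\overline{B(0,R+1)}$ by finitely many unit cubes $Q_1,\dots,Q_m$ and fix one, $Q_j$. Subdividing $Q_j$ into cubes of side $\delta$ and applying on a typical such cube $P$ the local potential representation $|v_k(x)|\lesssim\int_{2P}|x-y|^{-(n-1)}|\nabla v_k(y)|\,dy+\delta^{-n/2}\|v_k\|_{L^2(2P)}$, the Adams bound on $P$ (whose constant is controlled by $\mathbf B_\delta(\nu)$, once one checks that a compactly supported source only feels scales comparable to its support), and the inclusion $\ell^1\hookrightarrow\ell^{q/2}$ (valid since $q\ge2$) to sum the $O(\delta^{-n})$ pieces, one reaches an estimate of the schematic form
\[
\int_{Q_j}|v_k|^q\,d\nu\ \le\ C\,o_\delta(1)\,\|\nabla v_k\|_{L^2(3Q_j)}^q\ +\ C(\delta)\,\|v_k\|_{L^2(3Q_j)}^q,
\]
with $o_\delta(1)\to0$ as $\delta\to0$ and $C(\delta)$ finite for each fixed $\delta$. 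Letting $k\to\infty$ with $\delta$ fixed, the second term vanishes by Rellich's theorem (strong $L^2_{\mathrm{loc}}$ convergence of $v_k\rightharpoonup0$), and then $\delta\to0$ forces $\limsup_k\int_{Q_j}|v_k|^q\,d\nu=0$. Summing over the finitely many $j$ and combining with the tail bound gives $\limsup_k\|v_k\|_{L^q(\widetilde\mu)}^q\lesssim\varepsilon^q$; as $\varepsilon$ was arbitrary, $\|v_k\|_{L^q(\widetilde\mu)}\to0$. For $n=2$ one argues in parallel, with $r^{1-n/2}$ replaced by $|\ln r|^{1/2}$ and the capacity estimate $\asymp|\ln r|^{-1}$.

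The step I expect to cost the most work is the passage from boundedness to compactness in the sufficiency direction, i.e.\ making the displayed multiscale estimate rigorous: verifying that the local Adams constants on the $\delta$-cubes are genuinely governed by $\mathbf B_\delta(\widetilde\mu)$ (not by the full $\mathbf B(\widetilde\mu)$), controlling the far tails of the Riesz kernel, and tracking exponents so that the first coefficient really is $o_\delta(1)$ while all of the $k$-dependence sits in a single term absorbed by Rellich; the careful treatment of the ``medium'' scales $\delta\le r\le1$, where $\widetilde\mu$-balls can be large but only one scale is involved, belongs here as well. The remaining ingredients --- the capacity estimates, the trace theorem (ii), and the weak-convergence mechanism of the necessity proof --- are classical.
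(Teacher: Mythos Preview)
The paper does not give its own proof of this statement: Theorem~\ref{thm:maz} is quoted verbatim from Maz'ja's monograph \cite[Section~8.8, Theorems~3 and~4]{Maz'ja_1985} and is used as a black box in the proofs of Theorems~\ref{thm:3.11} and~\ref{thm:8.11}. So there is nothing in the paper to compare your argument against.

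That said, your sketch is a reasonable reconstruction of the capacitary/trace-theorem route that Maz'ja himself uses, and the overall architecture (test functions $\phi_{x,r}$ plus weak convergence for necessity; tail truncation plus a local multiscale/Adams estimate plus Rellich for sufficiency) is sound. One small point: your parenthetical remark that for $n\ge 3$ and $q\ge 2n/(n-2)$ the conditions force $\widetilde\mu=0$ is not quite right as stated and in any case is not needed---the theorem simply characterises relative compactness for all $q>2$, and the conditions may well fail for a given $\widetilde\mu$. You correctly identify the genuinely delicate step as the passage from boundedness to compactness in the sufficiency direction, where one must show the Adams constant on $\delta$-cubes is controlled by $\mathbf B_\delta$ rather than $\mathbf B$; Maz'ja handles this via his isocapacitary machinery, and filling in your schematic displayed estimate rigorously would amount to reproducing a chunk of \cite[Chapter~8]{Maz'ja_1985}.
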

	
	\begin{thm}\label{thm:3.11}
	Let $M$ be a complete Riemannian $n$-manifold. Let $\mu$ be a positive finite Borel measure on $M$ with
	bounded support.  For $q>2$, the unit ball $B:=\{u \in C_{c}^{\infty}\left(M\right):\|u\|_{W^{1,2}_0\left(M\right)} \leq 1\}$
		is relatively compact in $L^{q}\left(M, \mu\right)$ if and only if
	\begin{align}
		&\lim _{\delta \rightarrow 0^{+}} \sup _{w\in M ; r \in(0, \delta)} r^{1-n/2} \mu\left(B^M(w,r)\right)^{1 / q}=0 \quad \text {\rm for } n>2,  \label{eq:3.18} 
		\end{align}
	and
		\begin{align}
		&\lim _{\delta \rightarrow 0^{+}} \sup _{w \in M ; r \in(0, \delta)}|\ln r|^{1 / 2} \mu\left(B^M(w,r)\right)^{1 / q}=0 \quad \text {\rm for } n=2\label{eq:3.19}.
	\end{align}
	\end{thm}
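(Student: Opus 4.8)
The plan is to reduce the statement to the Euclidean result of Maz'ja (Theorem~\ref{thm:maz}) via the family of normal coordinate charts $\{(U_i,\varphi_i)\}_{i=1}^N$ introduced in the paragraph leading to \eqref{eq:jj3.8}, whose images $\varphi_i(U_i)=B(z_i,\epsilon_i)$ are pairwise disjoint balls in $\mathbb{R}^n$, together with the pushforward measure $\widetilde{\mu}:=\sum_{i=1}^N\mu\circ\varphi_i^{-1}$ of Remark~\ref{rema:3.9}, which is a finite Borel measure on $\mathbb{R}^n$ with \emph{compact} support. Because $\widetilde{\mu}$ is compactly supported, conditions \eqref{maz2} and \eqref{maz4} in Theorem~\ref{thm:maz} hold trivially, so Theorem~\ref{thm:maz} asserts exactly that the Euclidean unit ball $\widetilde{B}:=\{v\in C_c^\infty(\mathbb{R}^n):\|v\|_{W^{1,2}_0(\mathbb{R}^n)}\le1\}$ is relatively compact in $L^q(\mathbb{R}^n,\widetilde{\mu})$ if and only if \eqref{maz1} (for $n>2$), resp.\ \eqref{maz3} (for $n=2$), holds; and by Lemma~\ref{lem:3.10} the latter conditions on $\widetilde{\mu}$ are equivalent to \eqref{eq:3.18}, resp.\ \eqref{eq:3.19}, on $\mu$. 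Thus the theorem will follow once we prove: $B$ is relatively compact in $L^q(M,\mu)$ if and only if $\widetilde{B}$ is relatively compact in $L^q(\mathbb{R}^n,\widetilde{\mu})$.

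The transfer of relative compactness across the charts rests on a uniform two-sided comparison of norms. First I would arrange the cover so that it sits inside a fixed compact set $W=\overline{\bigcup_i U_i}$; by Proposition~\ref{prop:3.0} (see \eqref{eq:22.0}) the sectional curvature is bounded on $W$, and in any case, in each normal chart $\exp_{p_i}^{-1}$ the metric coefficients $g_{k\ell}$, their inverse, and $\det(g_{k\ell})$ are continuous and hence uniformly elliptic on the compact set $\varphi_i(\overline{U_i})$, while the subsequent similitude $S_i$ is a rigid motion of $\mathbb{R}^n$ and leaves Euclidean Sobolev norms unchanged. As there are only finitely many charts, this yields a single constant $C\ge1$ with
$$C^{-1}\|u\|_{W^{1,2}_0(M)}\le\|u\circ\varphi_i^{-1}\|_{W^{1,2}_0(\mathbb{R}^n)}\le C\|u\|_{W^{1,2}_0(M)}\qquad\text{for all }i\text{ and all }u\in C_c^\infty(U_i).$$
Fixing a partition of unity $\{\chi_i\}_{i=1}^N$ with $\chi_i\in C_c^\infty(U_i)$, $\operatorname{supp}(\chi_i)$ compact, and $\sum_i\chi_i\equiv1$ on a neighbourhood of $\operatorname{supp}(\mu)$, the Leibniz rule upgrades this to $\|(\chi_i u)\circ\varphi_i^{-1}\|_{W^{1,2}_0(\mathbb{R}^n)}\le C'\|u\|_{W^{1,2}_0(M)}$. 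I would also record the two elementary measure identities: for $f$ supported in $\varphi_i(U_i)$ one has $\int_{\mathbb{R}^n}f\,d\widetilde{\mu}=\int_{\mathbb{R}^n}f\,d(\mu\circ\varphi_i^{-1})=\int_{U_i}(f\circ\varphi_i)\,d\mu$, the first by disjointness of the $\varphi_j(U_j)$ and the second by definition of pushforward.

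With these in hand, the equivalence is proved by the usual localization argument. For the ``$\Leftarrow$'' direction, given a sequence $\{u_k\}\subseteq B$, write $u_k=\sum_i\chi_i u_k$ $\mu$-a.e.; each $v_k^{(i)}:=(\chi_i u_k)\circ\varphi_i^{-1}$ lies in a fixed dilate of $\widetilde{B}$, hence, by Theorem~\ref{thm:maz} under the assumed conditions (and Lemma~\ref{lem:3.10}), $\{v_k^{(i)}\}_k$ is relatively compact in $L^q(\mathbb{R}^n,\widetilde{\mu})$ for each $i$; passing to a common subsequence and using the measure identities together with $\|u_k-u_l\|_{L^q(M,\mu)}\le\sum_i\|v_k^{(i)}-v_l^{(i)}\|_{L^q(\mathbb{R}^n,\widetilde{\mu})}$ shows $\{u_k\}$ is Cauchy, hence convergent, in $L^q(M,\mu)$. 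For ``$\Rightarrow$'' one runs the same scheme backwards: a sequence $\{v_k\}\subseteq\widetilde{B}$ is decomposed against the partition of unity pulled back to $\mathbb{R}^n$, each piece is transported via $\varphi_i$ to $C_c^\infty(U_i)$ where it lies in a fixed dilate of $B$, a subsequence converging in $L^q(M,\mu)$ is extracted, and one transfers back; the only point requiring care is that $\widetilde{\mu}=\sum_i\mu\circ\varphi_i^{-1}$ over-counts the overlaps $U_i\cap U_j$, which is handled by choosing a shrinking $\{U_i'\}$ of the cover with $\overline{U_i'}\subseteq U_i$ compact and $\operatorname{supp}(\chi_i)\subseteq U_i'$, so that all the transition diffeomorphisms $\varphi_j\circ\varphi_i^{-1}$ act only on functions supported well inside the charts and are therefore uniformly controlled.

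The main obstacle is the uniform norm comparison of the second paragraph — quantitatively controlling the distortion of the Dirichlet energy and the volume form by the exponential map, uniformly over the finitely many charts; this is where the compactness of $\operatorname{supp}(\mu)$ and the curvature bounds of Proposition~\ref{prop:3.0} enter. The deeper geometric input, Toponogov's and Rauch's comparison theorems, has already been absorbed into Proposition~\ref{prop:3.8} and Lemma~\ref{lem:3.10}, i.e.\ into the equivalence of the density conditions on $\mu$ and on $\widetilde{\mu}$, so that here it is only invoked as a black box.
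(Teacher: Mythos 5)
Your outline follows the paper's route exactly: push $\mu$ forward by the disjoint normal charts $\varphi_i$ to get $\widetilde{\mu}$ on $\mathbb R^n$, note that compact support kills \eqref{maz2}/\eqref{maz4}, invoke Lemma~\ref{lem:3.10} to translate the density conditions, invoke Maz'ja, and transfer compactness between $B$ and $\widetilde B$ with a partition of unity. On the forward (``if'') direction your write-up is in fact \emph{cleaner} than the paper's on two points. First, you correctly ask only for a two-sided comparison $C^{-1}\|u\|_{W^{1,2}_0(M)}\le\|u\circ\varphi_i^{-1}\|_{W^{1,2}_0(\mathbb R^n)}\le C\|u\|_{W^{1,2}_0(M)}$, whereas the paper's Claim~1 asserts \emph{equality}; that equality uses $g^{jk}=\delta^{jk}$ and $\sqrt{\det g}=1$ throughout the chart, which in normal coordinates hold only at the center $p_i$, so a two-sided bound (which is all one needs for compactness) is the correct statement. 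Second, your partition-of-unity step just uses $\|\chi_i u\|_{W^{1,2}_0}\lesssim\|u\|_{W^{1,2}_0}$ via Leibniz, which is elementary; the paper's Step~5 instead introduces $\epsilon_\alpha,\delta_\alpha,V_\alpha,D_\alpha$ and asserts in \eqref{eq:int} a uniform smallness of $\int_A|\nabla f_\alpha|^2$ over small sets $A$ for \emph{all} $f_\alpha\in C^\infty(M)$, which is false and is not rescued by a $W^{1,2}_0$-bound (bounded sets in $L^2$ are not uniformly integrable).

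Where I see a genuine gap is in the converse (``only if'') direction, and it is a gap your proposed fix does not close. The obstruction is precisely the over-counting you flag: $\widetilde\mu=\sum_i\mu\circ\varphi_i^{-1}$ puts independent copies of $\mu|_{U_i\cap U_j}$ on the disjoint Euclidean balls $\varphi_i(U_i)$ and $\varphi_j(U_j)$. A function $v\in\widetilde B$ therefore has $N$ independent slices $v|_{\varphi_i(U_i)}$, and the pullbacks $v\circ\varphi_i$ do not agree on $U_i\cap U_j$; there is no single $u\in C_c^\infty(M)$ recording all of them, and $\|v\|_{L^q(\mathbb R^n,\widetilde\mu)}^q=\sum_i\int_{U_i}|v\circ\varphi_i|^q\,d\mu$ is not controlled by any $\|u\|_{L^q(M,\mu)}$ of this kind. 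Shrinking the cover so that $\operatorname{supp}\chi_i\subset\subset U_i$ and controlling the transition diffeomorphisms is beside the point: $\supp(\mu)\cap U_i$ need not be compactly contained in $U_i$, so $\widetilde\mu$ carries mass near $\partial\varphi_i(U_i)$ and a smooth $\widetilde\chi_i\in C_c^\infty(\varphi_i(U_i))$ cannot satisfy $\sum_i\widetilde\chi_i\equiv1$ $\widetilde\mu$-a.e. The paper's own converse (``$I_4=I_5^{-1}\circ I_3\circ I_2$ and $I_4$ is compact'') shares this defect: it yields only chart-wise compactness of $\widetilde B_0\subseteq C_c^\infty(\widetilde U_i)$, not compactness of $\widetilde B\subseteq C_c^\infty(\mathbb R^n)$, which is what Theorem~\ref{thm:maz} characterizes. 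The cleanest repair for the ``only if'' direction is to abandon the transfer entirely and argue by contraposition on $M$: if \eqref{eq:3.18} (resp.\ \eqref{eq:3.19}) fails along $w_k,r_k\to0$, take bump functions $\phi_k$ adapted to $B^M(w_k,r_k)$ normalized in $W^{1,2}_0(M)$; their $L^q(M,\mu)$-norms are bounded below by the failing quantity while the $\phi_k$ converge weakly to $0$, so $B$ is not relatively compact. This uses only Proposition~\ref{prop:3.8} to control the geometry of small balls and avoids the pushforward altogether.
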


	\begin{proof}\,Assume that (\ref{eq:3.18}) and (\ref{eq:3.19}) holds. We will prove that for $q>2$, the unit ball $B$
		is relatively compact in $L^{q}\left(M, \mu\right)$. We divide our proof into five steps.
		
		\noindent{\em Step 1}. Let $\{(U_{\alpha},\varphi_{\alpha})\}_{\alpha=1}^{N}$ be a family of normal coordinate charts such that $\supp(\mu)\subseteq \cup_{\alpha=1}^NU_\alpha$, where  $U_\alpha:=B^M(p_\alpha,\epsilon_\alpha)$, and $\varphi_\alpha$ is defined with respect to $\epsilon_\alpha$ as in  \eqref{eq:jj3.8}. Let $\widetilde{\mu}$ be as in Remark \ref{rema:3.9}.  By using Lemma \ref{lem:3.10}, we see that \eqref{eq:3.9}--\eqref{eq:3.10} hold.
Let $(U,\varphi)$ be a coordinate chart on $\{(U_{\alpha},\varphi_{\alpha})\}_{\alpha=1}^{N}$ and let $\widetilde{U}:=\varphi(U)$. Then
		\begin{align*}
			&\lim _{\delta \rightarrow 0^{+}} \sup _{x \in \widetilde{U} ; r \in(0, \delta)} r^{1-n/2} \widetilde{\mu}\left(B(x,r)\right)^{1 / q}=0 \quad \text { for } n>2, \quad \text { and } \\
			&\lim _{\delta \rightarrow 0^{+}} \sup _{x \in \widetilde{U} ; r \in(0, \delta)}|\ln r|^{1 / 2} \widetilde{\mu}\left(B(x,r)\right)^{1 / q}=0 \quad \text { for } n=2.
		\end{align*}
		Using Theorem \ref{thm:maz}, we see that the unit ball $\widetilde{B}_0:=\{u \in C_{c}^{\infty}(\widetilde{U}):\|u\|_{W^{1,2}_0(\widetilde{U})} \leq 1\}$
		is relatively compact in $L^{q}(\widetilde{U}, \widetilde{\mu})$.
		
		\noindent{\em Step 2}. It is straightforward to show that $f\in C_c^{\infty}(U)$ if and only if $\widetilde{f}:=f\circ\varphi^{-1} \in C_c^{\infty}(\widetilde{U})$. Hence we can define a bijection $I_1: C^{\infty}_c(\widetilde{U})\rightarrow C^{\infty}_c(U)$ by $I_1(\widetilde{f}):=\widetilde{f}\circ \varphi$. Next, we prove the following claims.
		
		\noindent{\em Claim 1. For any $f\in C^{\infty}_c(U)$, we have $\|f\|_{W^{1,2}_0(U)}=\|f\circ\varphi^{-1}\|_{W^{1,2}_0(\widetilde{U})}$}. In fact, for any $f\in C_c^{\infty}(U)$, we have
		\begin{align*}
			\|f\|_{W^{1,2}_0(U)}^2&=\int_U|f|^2\,d\nu+\int_U|\nabla f|^2\,d\nu=\int_U|f|^2\,d\nu+\int_U\sum_j(\partial_jf)^2\,d\nu\nonumber\\
			&=\int_{\widetilde{U}}|f\circ \varphi^{-1}|^2 dx+\int_{\widetilde{U}}|\nabla(f\circ \varphi^{-1})|^2 dx
			=\|f\circ\varphi^{-1}\|^2_{W^{1,2}_0(\widetilde{U})}.
		\end{align*}
		\noindent{\em Claim 2. For any $f\in C^{\infty}_c(U)$, we have $\|f\|_{L^q(U, \mu)}=\|f\circ\varphi^{-1}\|_{L^q(\widetilde{U},\,\widetilde{\mu})}$.} In fact, by a change of variables, we have
		\begin{align*}
			\|f\|_{L^{q}(U,\mu)}^{q} &=\int_{U}|f|^{q} \,d \mu=\int_{\widetilde{U}}\left|f \circ \varphi^{-1}\right|^{q} \,d \mu\circ \varphi^{-1} \\
			&=\int_{\widetilde{U}} | f \circ \varphi^{-1}|^{q} \,d \widetilde{\mu}
			=\left\|f \circ \varphi^{-1}\right\|_{L^{q}(\widetilde{U},\widetilde{\mu})}^{q}.
		\end{align*}
		
		\noindent{\em Step 3}. Let $[u]_{\mu}$ (resp. $[\widetilde{v}]_{\widetilde{\mu}}$) be the equivalence class of $u$ in $L^q(U,\mu)$ (resp. $\widetilde{v}$ in $L^q(\widetilde{U},\widetilde{\mu})$), and let $B_0:=\{u \in C_{c}^{\infty}(U):\|u\|_{W^{1,2}_0\left(U\right)} \leq 1\}$. Define a map $I_2:\widetilde{B}_0\rightarrow B_0$ by $I_2(\widetilde{u}):=\widetilde{u}\circ \varphi$, which is well-defined and bijective by Step $2$. Define maps $I_3: B_0\rightarrow L^q(U, \mu)$ by $I_3(u):=[u]_{\mu}$, and  $I_4: \widetilde{B}_0\rightarrow L^q(\widetilde{U}, \widetilde{\mu})$ by $I_4(\widetilde{u}):=[\widetilde{u}]_{\widetilde{\mu}}$. It is straightforward to show that the maps $I_3$ and $I_4$ are well-defined. Given $[\widetilde{u}]_{\widetilde{\mu}}\in L^q(\widetilde{U},\widetilde{\mu})$, we can define 
	\begin{align*}
	[\widetilde{u}]_{\widetilde{\mu}}\circ \varphi:=[\widetilde{u}\circ \varphi]_{\widetilde{\mu}\circ \varphi}=[\widetilde{u}\circ \varphi]_{\mu}\in L^q(U,\mu).
		\end{align*}	
	To see that this is well-defined, we notice that Claim 2 implies that
$
	\int_{U}| \widetilde{\mu}\circ \varphi|^{q} \,d \mu= \int_{\widetilde{U}} |\widetilde{\mu}|^q \,d \widetilde{\mu}<\infty.
$
	 Thus $\widetilde{u}\circ \varphi\in L^q(U,\mu)$. Let $\widetilde{u}$ and $\widetilde{u}'$ be representatives of $[\widetilde{u}]_{\widetilde{\mu}}$. Then
	$$\|\widetilde{u}\circ \varphi-\widetilde{u}'\circ \varphi\|_{L^q(U, \mu)}=\|\widetilde{u}-\widetilde{u}'\|_{L^q(\widetilde{U}, \widetilde{\mu})}= 0.$$
Hence $\widetilde{u}\circ \varphi=\widetilde{u}'\circ \varphi$  in $L^q(U,\mu)$. 

Now we define a map $I_5: L^q(\widetilde{U}, \widetilde{\mu})\rightarrow L^q(U, \mu)$ by $I_5([\widetilde{u}]_{\widetilde{\mu}}):=[\widetilde{u}]_{\widetilde{\mu}}\circ\varphi$.  This is also well-defined.  In fact, we have $[\widetilde{u}]_{\widetilde{\mu}}\circ\varphi\in L^q(U,\mu)$ by definition. Let $\widetilde{u}$ and $\widetilde{u}'$ be representatives of $[\widetilde{u}]_{\widetilde{\mu}}$. Then
$$[\widetilde{u}]_{\widetilde{\mu}}\circ \varphi=[\widetilde{u}\circ \varphi]_{\mu}=[\widetilde{u}'\circ \varphi]_{\mu}=[\widetilde{u}']_{\widetilde{\mu}}\circ \varphi.$$ 
This proves that $I_5$ is well-defined. It can be verified directly that  $I_3=I_5\circ I_4\circ I_2^{-1}$.
		\[ \psset{arrows=->, arrowinset=0.25, linewidth=0.6pt, nodesep=3pt, labelsep=2pt, rowsep=0.7cm, colsep = 1.1cm, shortput =tablr}
		\everypsbox{\scriptstyle}
		\]
		\[ \begin{tikzcd}
			\widetilde{B}_0 \arrow{r}{I_2} \arrow[swap]{d}{I_4} & B_0 \arrow{d}{I_3} \\
			L^q(\widetilde{U},\widetilde{\mu}) \arrow{r}{I_5}& L^q(U,\mu)
		\end{tikzcd}
		\]

		\noindent{\em Step 4}. We show that the map $I_3: B_0\rightarrow L^q(U, \mu)$ is compact. Since the unit ball $\widetilde{B}_0$ is relatively compact in $L^{q}(\widetilde{U}, \widetilde{\mu})$,  $I_4$ is compact.  Let $(u_n)$ be a bounded sequence in $B_0$, i.e., $u_n\in C^{\infty}_c(U)$ and $\|u_n\|_{W^{1,2}_0(U)}\leq 1$. Then $(u_n\circ\varphi^{-1})\in  C^{\infty}_c(\widetilde{U})$, and $\|u_n\circ\varphi^{-1}\|_{W^{1,2}_0(\widetilde{U})}\leq 1$ by Claim 1.
		Since $I_4$ is compact, we see that  $(u_n\circ\varphi^{-1})$ has a convergent subsequence $(u_{n_j}\circ\varphi^{-1})$ in $L^q(\widetilde{U}, \widetilde{\mu})$. Hence there exists $\widetilde{u}\in L^q(\widetilde{U}, \widetilde{\mu})$ such that
		\begin{align}\label{eq:j3.11}
			\|u_{n_j}\circ\varphi^{-1}-\widetilde{u}\|_{L^q(\widetilde{U}, \widetilde{\mu})}\rightarrow 0\,\,(j\rightarrow\infty).
		\end{align}
		Let $u:=\widetilde{u}\circ\varphi$. By using Claim 2, we see that $u\in L^q(U, \mu)$. Combining  \eqref{eq:j3.11} and Claim 2, we obtain $\|u_{n_j}-u\|_{L^q(U, \mu)}\rightarrow 0\,(j\rightarrow\infty)$.  Thus $I_3$ is compact.
		
		\noindent{\em Step 5}. We prove that $B$ is relatively compact in $L^q(M, \mu)$. We know that  $\{U_{\alpha}\}_{\alpha=1}^{N}$ is a finite open cover of $\supp(\mu)$. Let $f_\alpha\in C^\infty_c(U_\alpha)$. For each $\widetilde{\epsilon}_\alpha>0$, $\alpha=2,\ldots,N$, there exists $\delta_\alpha\in(0,\epsilon_\alpha)$ such that for $\delta\in(0,\delta_\alpha)$,
			\begin{align}
			&\int_{\cup_{i=1}^{\alpha-1}(U_{\alpha}\cap(U_i\backslash B^M(p_i,\epsilon_i-\delta)))}|f_\alpha|^2\,d\nu<\frac{\epsilon_\alpha}{2}\qquad\text{and} \label{eq:int1}\\
				&\int_{\cup_{i=1}^{\alpha-1}(U_{\alpha}\cap(U_i\backslash B^M(p_i,\epsilon_i-\delta)))}|\nabla f_\alpha|^2\,d\nu<\frac{\epsilon_\alpha}{2}\label{eq:int}.
			\end{align}
		Let $\underline{\delta}:=\min\{\delta_\alpha:\alpha=2,\ldots,N\}$, $V_1:=U_1$, and $V_{\alpha}:=U_{\alpha}\backslash\overline{\cup_{i=1}^{\alpha-1} (U_i\cap B^M(p_i,\epsilon_i-\underline{\delta}))}$ for $\alpha=2,\ldots,N$.
		Then $V_{\alpha}\subseteq U_{\alpha}$ for $\alpha=1,\ldots,N$, and $\{V_{\alpha}\}_{\alpha=1}^{N}$ is a  finite open cover of $\supp(\mu)$. Let $\{g_{\alpha}\}_{\alpha=1}^{N}$ be a $C^{\infty}$ partition of unity subordinate to $\{V_{\alpha}\}_{\alpha=1}^{N}$. Let $(u_n)$ be a bounded sequence in $B$. Then we can write $u_n|_{\supp(\mu)}=\sum_\alpha g_{\alpha}u_n$, and thus ${\rm supp}(g_{\alpha}u_n)\subseteq V_{\alpha}$ and $g_{\alpha}u_n\in C_{c}^{\infty}\left(V_{\alpha}\right)$.
		Let $D_1:=V_1$, $D_{\alpha}:=U_{\alpha}\backslash\overline{\cup_{i=1}^{\alpha-1}(U_\alpha\cap U_i)}$, $\alpha=2,\ldots,N$. Then $D_\alpha\subseteq V_\alpha$ for $\alpha=1,\ldots,N$, and $\{D_\alpha\}_{\alpha=1}^N$ is a disjoint collection.
	As $\|u_n\|_{W^{1,2}_0(M)}\leq 1$, we have $\|g_\alpha u_n\|_{W^{1,2}_0(D_\alpha)}\leq 1$. Combining \eqref{eq:int1} and \eqref{eq:int}, we obtain $\|g_\alpha u_n\|_{W^{1,2}_0(V_\alpha\backslash D_\alpha)}<\widetilde{\epsilon}_\alpha$. Hence $\|g_\alpha u_n\|_{W^{1,2}_0(V_\alpha)}\leq 1+\widetilde{\epsilon}_\alpha\leq2$.
		Thus $(g_{\alpha}u_n)$ is a bounded sequence in $ B_{{\alpha}}:=\{u \in C_{c}^{\infty}\left(V_{\alpha}\right):\|u\|_{W^{1,2}_0\left(V_{\alpha}\right)} \leq 2\}$. Analogous to the mapping $I_3$ in Step 3, we defined $T_{\alpha}: B_{{\alpha}}\rightarrow  L^q(V_{\alpha}, \mu)$. Then for all $\alpha=1,\ldots,N$, $T_{\alpha}$ are compact by Step 4.    Since $T_1$ is compact, there exists a subsequence  $(g_{1}u_{n_{j}}^{(1)}) \subset (g_1u_{n})$ converging to $u^{(1)}$ in $L^q(V_1, \mu)$. Similarly, there exists a subsequence  $(g_{2}u_{n_{j}}^{(2)}) \subset (g_{2}u_{n_{j}}^{(1)}) $ converging to $u^{(2)}$ in $L^q(V_2, \mu)$.  Continuing in this way, we see that the ``diagonal sequence" $(g_{\alpha}u_{n_{j}}^{(\alpha)})$ is a subsequence of $(g_{\alpha}u_n)$ such that for every $\alpha=1,\ldots,N$, $g_{\alpha}u_{n_{j}}^{(\alpha)}\rightarrow u^{(\alpha)}\, (j\rightarrow\infty)$ in $L^q(V_{\alpha},\mu)$.
		Write $u_{n_j}^{(N)}:=\sum_{\alpha=1}^{N}g_{\alpha}u_{n_{j}}^{(\alpha)}$ and $u:=\sum_{\alpha=1}^{N} u^{(\alpha)} \in L^q(M,\mu)$.
		Then
		\begin{align*}
			\|u_{n_j}^{(N)}-u\|^q_{L^q(M, \mu)}&=\int_{\supp(\mu)}\Big|\sum_{\alpha=1}^{N} g_{\alpha}u_{n_{j}}^{(\alpha)}-\sum_{\alpha=1}^{N} u^{(\alpha)}\Big|^q\,d\mu\\
			&\leq C\sum_{\alpha=1}^{N} \int_{V_{\alpha}}|g_{\alpha}u_{n_{j}}^{(\alpha)}-u^{(\alpha)}|^q\,d\mu
			\rightarrow 0 \,\,(j\rightarrow \infty).
		\end{align*}
		Hence  $B$ is relatively compact in $L^q(M, \mu)$.

		Conversely, assume that for $q>2,$ the unit ball $B$ is relatively compact in $L^q(M, \mu)$. By using a method similar to Steps $3$ and 4, we have $I_4=I_5^{-1}\circ I_3\circ I_2$ and $I_4$ is compact. Applying Lemma \ref{lem:3.10} and Theorem \ref{thm:maz}, we can show that \eqref{eq:3.18} and \eqref{eq:3.19} hold.	
	\end{proof}

	\begin{thm}\label{thm:3.12}Let $M$ be a complete Riemannian $n$-manifold. Let $n\geq 2$ and $2<q<\infty$, and let $\mu$ be a finite positive Borel measure on $M$ with compact support.  Let $B:=\{u \in C_{c}^{\infty}(M):\|u\|_{W^{1,2}_0(M)} \leq 1\} $.
		\begin{enumerate}
			\item[(a)] If $\underline{\operatorname{dim}}_{\infty}(\mu)>q(n-2) / 2$, then $B$ is relatively compact in $L^{q}(M, \mu)$.
			\item[(b)] If $\underline{\operatorname{dim}}_{\infty}(\mu)<q(n-2) / 2$, then $B$ is not relatively compact in $L^{q}(M, \mu)$.
		\end{enumerate}
	\end{thm}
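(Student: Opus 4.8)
The plan is to reduce Theorem~\ref{thm:3.12} to the local density criterion of Theorem~\ref{thm:3.11} by translating the hypotheses on $\underline{\operatorname{dim}}_{\infty}(\mu)$ into regularity statements via Lemma~\ref{lem:3.1}.

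For part (a), suppose $\underline{\operatorname{dim}}_{\infty}(\mu)>q(n-2)/2$. First I would fix an auxiliary exponent $\alpha$ with $q(n-2)/2<\alpha<\underline{\operatorname{dim}}_{\infty}(\mu)$ (for $n=2$ this just means $0<\alpha<\underline{\operatorname{dim}}_{\infty}(\mu)$). Lemma~\ref{lem:3.1}(b) then supplies a constant $c>0$ with $\mu(B^M(x,r))\le c\,r^{\alpha}$ for every $x\in\supp(\mu)$ and every $r\le\operatorname{diam}(\supp(\mu))$. To pass to arbitrary centers $w\in M$, note that if $B^M(w,r)$ meets $\supp(\mu)$ we may pick $w'\in B^M(w,r)\cap\supp(\mu)$ and use $B^M(w,r)\subseteq B^M(w',2r)$, giving $\mu(B^M(w,r))\le 2^{\alpha}c\,r^{\alpha}$, while $\mu(B^M(w,r))=0$ if the ball misses $\supp(\mu)$. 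Hence for $n>2$,
\[
r^{1-n/2}\mu(B^M(w,r))^{1/q}\le (2^{\alpha}c)^{1/q}\,r^{\,1-n/2+\alpha/q},
\]
and the choice of $\alpha$ makes $1-n/2+\alpha/q>0$, so $\sup_{w\in M;\,r\in(0,\delta)}r^{1-n/2}\mu(B^M(w,r))^{1/q}\to 0$ as $\delta\to 0^{+}$, which is \eqref{eq:3.18}. For $n=2$ the same bound yields $|\ln r|^{1/2}\mu(B^M(w,r))^{1/q}\le (2^{\alpha}c)^{1/q}|\ln r|^{1/2}r^{\alpha/q}\to 0$, since any positive power of $r$ dominates $|\ln r|^{1/2}$; this is \eqref{eq:3.19}. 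Theorem~\ref{thm:3.11} then gives that $B$ is relatively compact in $L^{q}(M,\mu)$.

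For part (b), suppose $\underline{\operatorname{dim}}_{\infty}(\mu)<q(n-2)/2$. I would first observe that $\underline{\operatorname{dim}}_{\infty}(\mu)\ge 0$ always holds for a finite measure, because $\sup_{x}\mu(B^M(x,\delta))\le\mu(M)$ forces $\ln(\sup_{x}\mu(B^M(x,\delta)))/\ln\delta\ge \ln\mu(M)/\ln\delta\to 0$; hence the hypothesis forces $q(n-2)/2>0$, i.e.\ $n>2$, and the case $n=2$ is vacuous. Now choose $s$ with $\underline{\operatorname{dim}}_{\infty}(\mu)<s<q(n-2)/2$. By the definition of the lower $L^{\infty}$-dimension as a $\varliminf$, there is a sequence $\delta_k\to 0^{+}$ with $\ln(\sup_{x}\mu(B^M(x,\delta_k)))/\ln\delta_k<s$, hence $\sup_{x}\mu(B^M(x,\delta_k))>\delta_k^{s}$, and therefore a center $w_k\in\supp(\mu)$ with $\mu(B^M(w_k,\delta_k))>\delta_k^{s}$. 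Then
\[
\delta_k^{\,1-n/2}\mu(B^M(w_k,\delta_k))^{1/q}>\delta_k^{\,1-n/2+s/q},
\]
and now $1-n/2+s/q<0$, so the right-hand side tends to $+\infty$. Thus for every $\delta>0$ the supremum in \eqref{eq:3.18} over $r\in(0,\delta),\ w\in M$ is unbounded (take $k$ large), so \eqref{eq:3.18} fails, and by the ``only if'' direction of Theorem~\ref{thm:3.11}, $B$ is not relatively compact in $L^{q}(M,\mu)$.

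All the individual estimates are elementary; the only points needing a little attention are the doubling trick that moves the ball centers from $\supp(\mu)$ into all of $M$ in part (a), and the remark $\underline{\operatorname{dim}}_{\infty}(\mu)\ge 0$ that disposes of $n=2$ in part (b). I do not anticipate any genuine obstacle: Theorem~\ref{thm:3.11} already carries all the analytic content, and Theorem~\ref{thm:3.12} is essentially its restatement in dimensional language.
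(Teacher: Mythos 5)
Your argument is correct and matches the route the paper itself indicates: the paper omits the proof but explicitly says it "mainly uses Lemma \ref{lem:3.1}(b) and Theorem \ref{thm:3.11}" and follows \cite[Theorem 3.2]{Hu-Lau-Ngai_2006}, which is precisely what you do — part (a) via upper $\alpha$-regularity, a doubling step to move ball centers from $\supp(\mu)$ to all of $M$, and verification of \eqref{eq:3.18}/\eqref{eq:3.19}; part (b) by producing a blow-up sequence of balls that violates \eqref{eq:3.18} and invoking the "only if" direction.
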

	The proof of this theorem mainly uses Lemma \ref{lem:3.1}(b) and Theorem \ref{thm:3.11}; it is similar to that of \cite[Theorem 3.2]{Hu-Lau-Ngai_2006} and so is omitted.

	We let $C(\overline{\Omega})$ be the space of all  functions in $C(\Omega)$ which are bounded and uniformly continuous on $\Omega$. To prove Theorem \ref{thm:1.1}, we need the following theorem. 
	\begin{thm}\label{lem:3.13}
		Let $M$ be a complete Riemannian $1$-manifold, and let $\Omega \subseteq M$ be a bounded open set. Then $W^{1,2}_0(\Omega)$ is compactly embedded in $C(\overline{\Omega})$.
	\end{thm}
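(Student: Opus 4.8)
The plan is to localize the problem to Euclidean intervals, invoke there the classical one‑dimensional Sobolev compact embedding, and patch the local pieces together with a finite partition of unity.

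First I would record the basic reductions. Since $M$ is complete and $\Omega$ is bounded, $\overline{\Omega}$ is compact by the Hopf--Rinow theorem, so it can be covered by finitely many coordinate charts $(U_j,\varphi_j)$, $j=1,\dots,m$, with each $\varphi_j\colon U_j\to\mathbb{R}$ a diffeomorphism onto a bounded open interval and each $\overline{U_j}$ compact. In dimension one the metric in a coordinate $t$ has the form $g=\rho_j(t)^2\,dt^2$ with $\rho_j=\sqrt{g_{11}}$ smooth and positive, hence pinched, $0<c_j\le\rho_j\le C_j$, on the compact set $\varphi_j(\overline{U_j})$. Therefore, for $u$ supported in $U_j$, the Riemannian integrals $\int|u|^2\,d\nu$ and $\int|\nabla u|^2\,d\nu$ are comparable to the Euclidean integrals of $|u\circ\varphi_j^{-1}|^2$ and $|\partial_t(u\circ\varphi_j^{-1})|^2$ over $\varphi_j(U_j)$. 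The same chart picture, together with the one‑dimensional continuous Sobolev embedding, shows that every element of $W^{1,2}_0(\Omega)$ has a continuous representative on $\overline{\Omega}$ (vanishing on $\partial\Omega$), so that $W^{1,2}_0(\Omega)\hookrightarrow C(\overline{\Omega})$ is a well‑defined bounded operator and only its compactness is at issue.

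Next I would isolate the Euclidean model: for any bounded open set $\omega\subseteq\mathbb{R}$, the embedding $W^{1,2}_0(\omega)\hookrightarrow C(\overline{\omega})$ is compact. The argument is the familiar one: extend $v\in W^{1,2}_0(\omega)$ by zero to $\bar v\in W^{1,2}(\mathbb{R})$, note $|\bar v(x)-\bar v(y)|\le\|\bar v'\|_{L^2}|x-y|^{1/2}$ and $\|\bar v\|_{\infty}\le|\omega|^{1/2}\|\bar v'\|_{L^2}$ by the fundamental theorem of calculus and Cauchy--Schwarz, and apply the Arzel\`a--Ascoli theorem to the uniformly bounded, uniformly $\tfrac12$‑H\"older family so obtained on the compact set $\overline{\omega}$. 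To globalize, take a bounded sequence $(u_n)$ in $W^{1,2}_0(\Omega)$; by density of $C^\infty_c(\Omega)$ and boundedness of the embedding it suffices to treat $u_n\in C^\infty_c(\Omega)$. Fixing a smooth partition of unity $\{\psi_j\}_{j=1}^m$ subordinate to $\{U_j\}$ with $\sum_j\psi_j\equiv1$ near $\overline{\Omega}$, each $\psi_ju_n$ lies in $C^\infty_c(U_j\cap\Omega)$ with $\|\psi_ju_n\|_{W^{1,2}_0(U_j\cap\Omega)}\le C'$ (since $\psi_j$ and $|\nabla\psi_j|$ are bounded). Transporting by $\varphi_j$ and applying the model yields, after a diagonal extraction over $j=1,\dots,m$, a single subsequence $(u_{n_k})$ with $\psi_ju_{n_k}$ converging uniformly on $\overline{\Omega}$ for every $j$; summing, $u_{n_k}=\sum_j\psi_ju_{n_k}$ converges in $C(\overline{\Omega})$, which gives the compactness.

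The only genuine obstacle is that $\Omega$ need not be an interval: it may have infinitely many connected components and an irregular boundary. Both difficulties dissolve once one observes that $\overline{\Omega}$ is compact (so finitely many charts suffice) and that the model step is carried out through the zero‑extension to all of $\mathbb{R}$, which never uses any structure of $\partial\Omega$.
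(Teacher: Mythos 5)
Your proof is correct and follows essentially the same strategy as the paper's: reduce to a compact $\overline{\Omega}$ via Hopf--Rinow, cover it by finitely many coordinate charts, invoke the one-dimensional Euclidean compact embedding $W^{1,2}_0(\omega)\hookrightarrow C(\overline{\omega})$, and patch the local conclusions together with a finite partition of unity and a diagonal extraction. The only cosmetic difference is that the paper uses normal coordinate charts, which in dimension one are exact Riemannian isometries onto Euclidean intervals (so the $W^{1,2}_0$ and $C$ norms transport without change), whereas you use arbitrary charts and work with pinching of $\sqrt{g_{11}}$ to get norm equivalence rather than equality; both are adequate, and you additionally supply the proof of the Euclidean model step (zero extension, $\tfrac12$-H\"older Morrey bound, Arzel\`a--Ascoli) that the paper simply cites.
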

	
 We postpone its proof to the appendix. A similar theorem with $\Omega$ replaced by a compact manifold can be found in \cite[Theorem 8.2]{Hebey-Robert_2008}.

	\begin{proof}[Proof of Theorem \ref{thm:1.1}]\,
		(a) For the case $n=1$, we have $W^{1,2}_0(\Omega)$ is compactly embedded in $C(\overline{\Omega})$ by Theorem \ref{lem:3.13}.  Let $(u_m)$ be a bounded sequence in $W^{1,2}_0(\Omega)$. Then there exists a subsequence $(u_{m_k})$ that converges to some $u\in C(\overline{\Omega})$. Hence
		\begin{align*}
			\lim\limits_{k\rightarrow\infty}\|u_{m_k}-u\|_{C(\overline{\Omega})}=\lim\limits_{k\rightarrow\infty}\|u_{m_k}-u\|_\infty=0.
		\end{align*}
		We know that $u_{m_k}\in C(\overline{\Omega})\subseteq L^{2}(\Omega, \mu)$ and $u\in C(\overline{\Omega})\subseteq L^{2}(\Omega, \mu)$. Thus
		\begin{align*}
			\lim\limits_{k\rightarrow\infty} \int_\Omega|u_{m_k}-u|^2\,\,d\mu\leq\lim\limits_{k\rightarrow\infty}\int_\Omega\|u_{m_k}-u\|^2_\infty \,d\mu=\lim\limits_{k\rightarrow\infty}\|u_{m_k}-u\|^2_\infty\int_\Omega \,d\mu=0.
		\end{align*}
		Hence $(u_{m_k})$ converges to $u$ in $L^{2}(\Omega, \mu)$. Therefore $W^{1,2}_0(\Omega)$ is compactly embedded in $L^{2}(\Omega, \mu)$, and thus (MPID) holds.  Moreover, the embedding $\operatorname{dom}(\mathcal{E}_D) \hookrightarrow L^{2}(\Omega, \mu)$ is compact.  By using Theorem \ref{thm:3.12} and the argument in \cite[Theorem 1.1]{Hu-Lau-Ngai_2006}, we can prove that Theorem \ref{thm:1.1} holds in the case $n\geq 2$; we omit the proof.
		
	(b) For $n=1$ and $\partial\Omega=\emptyset$, i.e., $\Omega=M$ is compact, we have $W^{1,2}(M)$ is compactly embedded in $C(M)$ by \cite[Theorem 8.2]{Hebey-Robert_2008}.  By using a  method similar to that of (a), we have  $W(M)$  is compactly embedded in $L^{2}(\Omega, \mu)$, and thus (MPIE) holds. Moreover, the embedding $\operatorname{dom}(\mathcal{E}_E) \hookrightarrow L^{2}(M, \mu)$ is compact. Similarly, by using Theorem \ref{thm:3.12} and the argument in \cite[Theorem 1.1]{Hu-Lau-Ngai_2006}, we can prove that Theorem \ref{thm:1.1} holds in the case $n\geq 2$; we omit the  details.
			\end{proof}
		
		\begin{proof}[Proof of Theorem \ref{thm:1.2}]\, This theorem is a direct consequence of Theorem \ref{thm:1.1} and \cite[Theorem B.1.13]{Kigami_2001}; we  omit the details.	
		\end{proof}

	\section{Compact embedding theorem for measures without compact support}\label{S:comp}
\setcounter{equation}{0}
\subsection{Kre\u{\i}n-Feller operators defined by measures without compact support}

	Let $M$ be a complete Riemannian $n$-manifold and let $\Omega\subseteq M$ be an open subset. We recall that if $\partial \Omega=\emptyset$, then $\Omega=M$.   We recall the following definitions of {\em Poincar\'e inequalities} depending on  boundary conditions:
		
\noindent ($\partial\Omega\neq \emptyset$)   If there exists a constant $C>0$ such that for all $u\in C_c^{\infty}({\Omega})$,
		\begin{align}\label{eq:PI1*}
			\int_\Omega |u|^2\,d\nu\leq C\int_\Omega |\nabla u|^2\,d\nu,
		\end{align}
		then we say that the {\em Poincar\'e inequality holds for the case $\partial\Omega\neq\emptyset$ (PID)}.
		
\noindent($\partial \Omega=\emptyset$)  If there exists a constant $C>0$ such that for all $u\in \mathcal{C}(M)$,
		\begin{align}\label{eq:PIN1*}
			\int_M |u|^2\,d\nu\leq C\int_M |\nabla u|^2\,d\nu,
		\end{align}
		then we say that the {\em Poincar\'e inequality holds for the case $\partial\Omega=\emptyset$ (PIE)}.

	\begin{thm}[McKean \cite{McKean_1970}]
		Let $M$ be a complete simply connected
	Riemannian $n$-manifold with sectional curvature bounded from above by a negative constant $-\kappa$. Let $\lambda(D)$ be the first (non-zero) eigenvalue (for either the Dirichlet problem
	or the Neumann problem) of the Laplacian for the compact subdomain $D$.  Then
	$\lambda(M):=\inf \lambda(D) \geq (n-1)^2\kappa/4$.
	\end{thm}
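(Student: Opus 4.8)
The plan is to reduce the theorem to the single functional (Poincar\'e-type) inequality
\[
\int_M |\nabla u|^2 \, d\nu \;\ge\; \frac{(n-1)^2\kappa}{4}\int_M |u|^2\, d\nu
\qquad\text{for every } u\in C_c^\infty(M),
\]
after which the variational characterization of the first eigenvalue does the rest. Indeed, for the Dirichlet problem on a compact subdomain $D$ one has $\lambda(D)=\inf\{\int_D|\nabla u|^2\,d\nu\,/\,\int_D u^2\,d\nu : u\in C_c^\infty(D)\setminus\{0\}\}$ and $C_c^\infty(D)\subseteq C_c^\infty(M)$, so applying the inequality to each such $u$ gives $\lambda(D)\ge (n-1)^2\kappa/4$, hence $\lambda(M)\ge (n-1)^2\kappa/4$; for the Neumann problem the same lower bound for $\inf_D\lambda(D)$ follows from the Dirichlet case together with the classical identification of $\inf_D\lambda(D)$ with the bottom $\lambda_0(M)$ of the $L^2(M)$-spectrum of $-\Delta$ (cf.\ \cite{McKean_1970}). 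The case $n=1$ is vacuous (the right-hand side is $0$), so assume $n\ge 2$.

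Fix $x_0\in M$ and put $r(x):=d_M(x_0,x)$. Since $M$ is complete, simply connected, and has sectional curvature $K_M\le -\kappa<0$, the Cartan--Hadamard theorem shows $\exp_{x_0}: T_{x_0}M\to M$ is a diffeomorphism; in particular $M$ has empty cut locus and $r\in C^\infty(M\setminus\{x_0\})$ with $|\nabla r|\equiv 1$. The geometric heart of the proof is the Laplacian comparison estimate that the bound $K_M\le -\kappa$ forces: by the Hessian comparison theorem (a standard consequence of Rauch's comparison theorem, Theorem \ref{thm:3.7}), $\operatorname{Hess}r\ge \sqrt{\kappa}\,\coth(\sqrt{\kappa}\,r)\,(g-dr\otimes dr)$ on $M\setminus\{x_0\}$, and taking traces,
\[
\Delta r \;\ge\; (n-1)\sqrt{\kappa}\,\coth(\sqrt{\kappa}\,r)\;>\;(n-1)\sqrt{\kappa}.
\]

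Given $u\in C_c^\infty(M)$ with $u\not\equiv 0$, I integrate by parts against the vector field $\nabla r$ over $M\setminus\overline{B^M(x_0,\varepsilon)}$. The boundary term over $\partial B^M(x_0,\varepsilon)$ is bounded in modulus by $\sup_M|u|^2\cdot\operatorname{Area}\big(\partial B^M(x_0,\varepsilon)\big)=O(\varepsilon^{\,n-1})$, and $u^2\,\Delta r$ is integrable near $x_0$ because there $\Delta r=(n-1)/r+O(r)$; letting $\varepsilon\to 0^+$ one obtains
\[
(n-1)\sqrt{\kappa}\int_M u^2\,d\nu \;\le\; \int_M u^2\,\Delta r\,d\nu \;=\; -\int_M \langle\nabla(u^2),\nabla r\rangle\,d\nu \;=\; -2\int_M u\,\langle\nabla u,\nabla r\rangle\,d\nu \;\le\; 2\int_M |u|\,|\nabla u|\,d\nu,
\]
using $|\nabla r|\equiv 1$ and the pointwise Cauchy--Schwarz inequality. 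Applying Cauchy--Schwarz in $L^2(M,\nu)$ to the right-hand side gives $(n-1)\sqrt{\kappa}\,\|u\|_{L^2(M,\nu)}^2\le 2\,\|u\|_{L^2(M,\nu)}\,\|\nabla u\|_{L^2(M,\nu)}$, and dividing by $\|u\|_{L^2(M,\nu)}$ and squaring yields exactly the inequality displayed at the start.

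I expect the main obstacle to be the comparison step, namely extracting the pointwise lower bound $\Delta r\ge (n-1)\sqrt{\kappa}$ from the upper curvature bound $K_M\le -\kappa$. One may quote the Hessian/Laplacian comparison theorem outright, or argue directly via the Riccati equation underlying Theorem \ref{thm:3.7}: along a unit-speed geodesic from $x_0$, the shape operator $S(t)$ of the level hypersurfaces $\{r=t\}$ satisfies $S'+S^2+R=0$, and comparison with the constant-curvature-$(-\kappa)$ model, where $\operatorname{tr}S(t)=(n-1)\sqrt{\kappa}\coth(\sqrt{\kappa}\,t)$, forces $\operatorname{tr}S(t)\ge(n-1)\sqrt{\kappa}\coth(\sqrt{\kappa}\,t)$; simple connectedness enters precisely to guarantee these geodesics meet no cut point, so that $r$ is globally smooth off $x_0$ and no cut-locus corrections arise in the integration by parts. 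The remaining ingredients, namely the integration by parts, the $\varepsilon\to 0$ limit, the two applications of Cauchy--Schwarz, and the passage to eigenvalues, are routine.
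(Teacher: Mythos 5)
The paper does not prove this theorem; it cites it directly from McKean's 1970 paper and uses it only to motivate the hypothesis (PI1), so there is no in-paper proof to compare against. Your argument is a correct and complete modern rendering of McKean's original argument. The structure is right: Cartan--Hadamard gives a globally smooth distance function $r$ with $|\nabla r|\equiv 1$ off $x_0$; the Hessian/Rauch comparison in the direction of an upper curvature bound gives the pointwise lower bound $\Delta r\ge (n-1)\sqrt{\kappa}\coth(\sqrt{\kappa}\,r)>(n-1)\sqrt{\kappa}$; integrating $u^2\Delta r$ by parts against $\nabla r$ (with the boundary term over $\partial B^M(x_0,\varepsilon)$ vanishing as $\varepsilon\to 0^+$ since $n\ge 2$) followed by two applications of Cauchy--Schwarz yields the Poincar\'e inequality $\|\nabla u\|_{L^2}^2\ge \tfrac{(n-1)^2\kappa}{4}\|u\|_{L^2}^2$; and the Rayleigh-quotient characterization turns this into the eigenvalue bound. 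The integrability check of $u^2\Delta r$ near $x_0$ and the $O(\varepsilon^{n-1})$ boundary estimate are both correctly handled.

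One small point worth tightening: for the Neumann problem, the first nonzero Neumann eigenvalue of an arbitrary compact $D$ need not dominate $\lambda_0(M)$ (dumbbell-type domains can have arbitrarily small first nonzero Neumann eigenvalue), so the assertion $\inf_D\lambda^{\mathrm{Neu}}(D)\ge (n-1)^2\kappa/4$ is really a statement about exhausting sequences, i.e.\ about the bottom of the $L^2$-spectrum, as in McKean's original formulation. You do flag this and defer to the classical identification with $\lambda_0(M)$, which is the standard reading, but it would be cleaner to phrase the Neumann half of the reduction as a limit over an exhaustion rather than an infimum over all compact subdomains.
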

 Notice that the condition $\lambda(M)>0$ is equivalent to the validity of (PID) for all open subsets $\Omega\subseteq M$, and the condition $\lambda(M)> 0$ is equivalent to the validity of (PIE) for $M$. McKean's theorem implies that a complete simply connected Riemannian manifold $M$ with negative sectional curvature satisfies (PID) or (PIE). 

Next, we provide some other examples of manifolds that satisfy (PID). Let $M$ be a complete noncompact Riemannian $n$-manifold. We say that $M$ has {\em $k$ ends $E_1,\ldots,E_k$} if there exists a bounded open set $U$ such that
	\begin{align}\label{ends}
		M\backslash\overline{U}=\bigcup_{i=1}^kE_i,	
	\end{align}
	where each $E_i$ is a noncompact connected component of $	M\backslash\overline{U}$.	By \cite[Lemma 4.1]{Ding-Wang_1991}, suppose that $M$ has  $k$ ends $E_1,\ldots,E_k$ corresponding to the decomposition \eqref{ends}. If ${\rm Vol}(M)=\infty$ and $\lambda(E_i)>0$ for $1\leq i\leq k$, then $\lambda(M)>0$. For $n=3$, the following example in \cite{Benjamini-Cao_1996} shows that $\lambda(M)>0$.
	\begin{exam}(Benjamini and Cao \cite[ Proposition 1.1]{Benjamini-Cao_1996})
		 Let $g = e^{2\eta_1(s)}dx^2 + e^{2\eta_2(s)}dy^2 + ds^2$ be a warped product metric defined on $\R^3$. Suppose that there exists a constant $\delta>0$ such that 
		$$\eta'_1(s)+\eta'_2(s)\geq \delta>0$$
		holds for all $s\in\R$. Then the Riemannian manifold $M:=(\R^3,g)$ has positive Cheeger isoperimetric constant $h(M)\geq\delta>0$ and hence
		$\lambda(M)\geq\delta^2/4>0$.
	Thus {\rm (PID)} holds.
	\end{exam}

	Let $\mu$ be a positive finite Borel measure on $M$ with unbounded support and with $\supp(\mu)\subseteq \overline{\Omega}$. In this subsection, if $\partial\Omega\neq\emptyset$, then we define the Kre\u{\i}n-Feller operator $\Delta_{\mu}^D$ on $L^2(\Omega, \mu)$. If $\partial\Omega=\emptyset$, then $\Omega=M$,  and we  define the Kre\u{\i}n-Feller operator $\Delta_{\mu}^E$ on $L^2(M, \mu)$. The method is similar to that in Section \ref{S:L}.

	We let (MPID) and (MPIE) be as in \eqref{eq:PI} and \eqref{eq:PIN}; let $\mathcal{E}_D$, $\mathcal{E}_E$, and  $\mathcal{E}_{*}$ be the quadratic forms defined as in (\ref{eq(1.1)}), \eqref{eq:new}, and (\ref{eq(2.1)}), respectively, where $\Omega\subseteq M$ is an open set.
	\begin{prop}\label{prop:9.1}
		Let $M$ be a complete Riemannian $n$-manifold and let $\Omega\subseteq M$ be an open subset. Let $\mu$ be a positive finite Borel measure on $M$ (need not have compact support) and assume that  $\supp(\mu)\subseteq \overline{\Omega}$.
	\begin{enumerate}
		\item[(a)] Assume  that $\partial\Omega\neq \emptyset$, (PID) holds, and $\mu$ satisfies (MPID). Then $\operatorname{dom}(\mathcal{E}_D)$ is dense in $L^{2}(\Omega, \mu)$. Moreover,  $\left(\mathcal{E}_{D}^*, \operatorname{dom}(\mathcal{E}_D)\right)$ is a Hilbert space.
		\item[(b)] Assume that  $\partial\Omega= \emptyset$, (PIE) holds, and $\mu$ satisfies (MPIE). Then $\operatorname{dom}(\mathcal{E}_E)$ is dense in $L^{2}(M, \mu)$. Moreover,  $\left(\mathcal{E}_{E}^*, \operatorname{dom}(\mathcal{E}_E)\right)$ is a Hilbert space.
\end{enumerate}
	\end{prop}
	Proposition \ref{prop:9.1} can be proved by using Lemma \ref{lem:2.2}. The proof is also similar to that of 	Proposition \ref{prop:2.3} and is omitted.

	Proposition \ref{prop:9.1} implies that if condition (PID) holds and $\mu$ satisfies (MPID), then the quadratic form $(\mathcal{E}_D, \operatorname{dom}(\mathcal{E}_D))$ is closed on $L^{2}(\Omega, \mu)$.  Hence there exists a nonnegative self-adjoint operator $H$ on $L^{2}(\Omega, \mu)$ such that $\operatorname{dom}(H) \subseteq \operatorname{dom}\left(H^{1 / 2}\right)=\operatorname{dom}(\mathcal{E}_D)$ and for all  $u, v \in \operatorname{dom}(\mathcal{E}_D)$,
	$$
	\mathcal{E}_D(u, v)=\left\langle H^{1 / 2} u, H^{1 / 2} v\right\rangle_{L^{2}(\Omega, \mu)}.
	$$
	Moreover, $u \in \operatorname{dom}(\Delta_\mu^D)$ if and only if $u \in \operatorname{dom}(\mathcal{E}_D)$ and there exists $f \in L^{2}(\Omega, \mu)$ such that $\mathcal{E}_D(u, v)=\langle f, v\rangle_{L^{2}(\Omega, \mu)}$ for all $v \in \operatorname{dom}(\mathcal{E}_D)$. Note that for all $u \in \operatorname{dom}(H)$ and $v \in \operatorname{dom}(\mathcal{E}_D)$,
	\begin{align}\label{eq:j(2.2)}
		\int_{\Omega} \langle\nabla u, \nabla v\rangle \, d \nu=\mathcal{E}_D(u, v)=\langle H u, v\rangle_{L^{2}(\Omega, \mu)}.
	\end{align}
Similarly, $\mathcal{E}_E(u,v)$ satisfies the above properties. We let $\Delta_{\mu}^D:=-H$ and call it the   {\em Kre\u{\i}n-Feller operator with respect to} $\mu$. Similarly, we can define $\Delta_\mu^E$.

We omit the proofs of Proposition \ref{prop:j2.4} and Theorem \ref{thm:j2.5} as they are similar to those of \cite[Proposition 2.2]{Hu-Lau-Ngai_2006} and \cite[Theorem 2.3]{Hu-Lau-Ngai_2006}, respectively.
	\begin{prop}\label{prop:j2.4}
			Let $n$, $M$, $\Omega$, and $\mu$ be defined as in Proposition \ref{prop:9.1}.
		\begin{enumerate}
			\item[(a)] 	Assume  that $\partial\Omega\neq \emptyset$, {\rm(PID)} holds, and $\mu$ satisfies {\rm(MPID)}. For $u \in \operatorname{dom}(\mathcal{E}_D)$ and $f \in L^{2}(\Omega, \mu)$, the following conditions are equivalent:
			\begin{enumerate}
				\item[(i)] $u \in \operatorname{dom}(-\Delta_\mu^D)$ and $-\Delta_\mu^D u=f$;
				\item[(ii)] $-\Delta u=f d \mu$ in the sense of distribution; that is, for any $v \in \mathcal{D}(\Omega)$,
				\begin{align*}
					\int_{\Omega} \langle\nabla u, \nabla v\rangle  \,d\nu=\int_{\Omega} v f\, d \mu.
				\end{align*}
			\end{enumerate}
			\item[(b)]	Assume that  $\partial\Omega= \emptyset$, {\rm(PIE)} holds, and $\mu$ satisfies {\rm(MPIE)}. For $u \in \operatorname{dom}(\mathcal{E}_E)$ and $f \in L^{2}(M, \mu)$, the following conditions are equivalent:
				\begin{enumerate}
					\item[(i)] $u \in \operatorname{dom}(-\Delta_\mu^E)$ and $-\Delta_\mu^E u=f$;
					\item[(ii)] $-\Delta u=f d \mu$ in the sense of distribution; that is, for any $v \in \mathcal{D}(M)$,
					\begin{align*}
						\int_{M} \langle\nabla u, \nabla v\rangle  \,d\nu=\int_{M} v f\, d \mu.
					\end{align*}
			\end{enumerate}
		\end{enumerate}
	\end{prop}
	
 For any $u \in \operatorname{dom}\left(\Delta_{\mu}^D\right)$, Proposition \ref{prop:9.1} implies that $\Delta u=\Delta_{\mu}^D u\, d \mu$ in the sense of distribution.

	\begin{thm}\label{thm:j2.5}
			Let $n$, $M$, $\Omega$, and $\mu$ be defined as in Proposition \ref{prop:9.1}.
				\begin{enumerate}
			\item[(a)] Assume that  $\partial\Omega\neq \emptyset$, {\rm(PID)} holds, and $\mu$ satisfies {\rm(MPID)}.
		Then for any $f \in L^{2}(\Omega, \mu)$, there exists a
		unique $u \in \operatorname{dom}\left(\Delta_{\mu}^D\right)$  such that $\Delta_{\mu}^D u=f.$ The operator
		$$(\Delta_{\mu}^D)^{-1}: L^{2}(\Omega, \mu) \rightarrow \operatorname{dom}(\Delta_{\mu}^D),\quad f\mapsto u $$
		is bounded and has norm at most $C$, the constant in \eqref{eq:PI} .
		\item[(b)] Assume that  $\partial\Omega= \emptyset$ , {\rm(PIE)} holds, and $\mu$ satisfies {\rm(MPIE)}.
			Then for any $f \in L^{2}(M, \mu)$, there exists a
			unique $u \in \operatorname{dom}\left(\Delta_{\mu}^E\right)$  such that $\Delta_{\mu}^E u=f.$ The operator
			$$(\Delta_{\mu}^E)^{-1}: L^{2}(M, \mu) \rightarrow \operatorname{dom}(\Delta_{\mu}^E),\quad f\mapsto u $$
			is bounded and has norm at most $C$, the constant in  \eqref{eq:PIN}.
	\end{enumerate}
	\end{thm}

	\subsection{Compact embedding theorem}
	\begin{lem}\label{lem:f4}
		Let $\mu$ be a positive finite Borel measure on a complete metric space $\mathfrak{X}$ (need not have compact support).
		\begin{enumerate}
			\item[(a)]  If $\mu$ is upper (resp. lower) $s$-regular for some $s>0$, then $\underline{\operatorname{dim}}_{\infty}(\mu) \geq s$ (resp. $\left.\overline{\operatorname{dim}}_{\infty}(\mu) \leq s\right)$.
			\item[(b)]  Conversely, if $\underline{\operatorname{dim}}_{\infty}(\mu) \geq s$ for some $s>0$, then $\mu$ is upper  $\alpha$-regular for any $0<\alpha<s$.
		\end{enumerate}
	\end{lem}
	The proof of Lemma \ref{lem:f4} is similar to that of \cite[Proposition 3.1]{Hu-Lau-Ngai_2006} and is omitted.

Let $M$ be a  Riemannian $n$-manifold and  $\{(U_i, \psi_i)\}_{i\in \Lambda}$ be an atlas of $M$, where $\psi_i: U_i\rightarrow \psi_i(U_i)\subseteq \R^n$ are diffeomorphisms. 
	Let $(\rho_{i})_{i\in \Lambda}$ be a smooth partition of unity subordinate to the cover $(U_i)_{i\in \Lambda}$. The triple $\{(U_i, \psi_i, \rho_i)\}_{i\in \Lambda}$ is called a {\em trivialization} of the manifold $M$. We say that a family $(U_i)_{i\in\Lambda}$ of
	open subsets of $M$ is a {\em uniformly locally finite covering of $M$} if $\cup_{i\in \Lambda}U_i=M$ and there exists an integer $N$ such that each point $p\in M$ has a
	neighborhood which intersects at most $N$ of the $U_i$.
	
The following lemma follows directly from \cite[Lemma 5]{Hebey_1996_1}.
	\begin{lem}(Hebey \cite[Lemma 5]{Hebey_1996_1})\label{lem:8.2}
		Let $M$ be a Riemannian $n$-manifold of bounded geometry, and let $0 <\epsilon'< {\rm inj}(M) /3.$ There exists a sequence $\{p_i\}_{i\in\Lambda}$ of points of $M$ such that for any  $\epsilon\in[\epsilon',{\rm inj}(M)/3)$, the following hold.
			\begin{enumerate}
		\item [(i)] There exists a constant $N>0$ such that each geodesic ball $B^M(p_i,\epsilon)$ intersects at most $N$ geodesic balls $B^M(p_j,\epsilon)$.
		\item [(ii)] The family $B^M(p_i,\epsilon)$ of  geodesic balls is a uniformly locally finite covering of $M$, and $N$ has an upper bound in terms of $n$, $\epsilon'$, and $\epsilon$.
		\item [(iii)] For any $i \neq j$, $B^M(p_i,\epsilon/2)\cap B^M(p_j,\epsilon/2)=\emptyset.$		 
	\end{enumerate}
	\end{lem}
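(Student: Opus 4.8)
The plan is to obtain this as the standard construction of a uniformly locally finite geodesic covering of a manifold of bounded geometry, which is exactly \cite[Lemma 5]{Hebey_1996_1}; thus the proof amounts to recalling that construction and checking the three properties hold with the claimed uniformity in $\epsilon$. First I would fix $\epsilon'$ with $0<\epsilon'<{\rm inj}(M)/3$ and, by Zorn's lemma, choose a set $\{p_i\}_{i\in\Lambda}\subseteq M$ that is maximal among subsets whose points are pairwise $\epsilon'$-separated, i.e. $d_M(p_i,p_j)\geq\epsilon'$ for $i\neq j$. Property (iii) is then immediate from the separation (a common point of $B^M(p_i,\epsilon'/2)$ and $B^M(p_j,\epsilon'/2)$ would violate the triangle inequality), and the covering half of (ii) is immediate from maximality: no point $q\in M$ can satisfy $d_M(q,p_i)\geq\epsilon'$ for all $i$, so $q\in B^M(p_i,\epsilon')\subseteq B^M(p_i,\epsilon)$ for some $i$, giving $\bigcup_{i\in\Lambda} B^M(p_i,\epsilon)=M$ for every $\epsilon\geq\epsilon'$.

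The substantive point is the uniform bound $N$ in (i) and (ii), which I would get from a packing argument backed by two volume comparisons. Bounded geometry forces $|Rm|\leq C_0$, hence a two-sided sectional curvature bound $|K_M|\leq\Lambda_0$ and ${\rm Ric}_M\geq-(n-1)\Lambda_0$; together with ${\rm inj}(M)\geq 3\epsilon'>0$ this yields (a) a uniform upper bound $\nu\big(B^M(w,3\epsilon)\big)\leq V=V(n,\Lambda_0,\epsilon)$ for all $w\in M$, by the Bishop--Gromov inequality, and (b) a uniform lower bound $\nu\big(B^M(w,\epsilon'/2)\big)\geq v=v(n,\Lambda_0,\epsilon')>0$ for all $w\in M$, by G\"unther's volume comparison (a consequence of the Rauch comparison theorem), since on $B^{T_wM}(0,\epsilon'/2)$ the exponential map is a diffeomorphism and the metric is bounded below by that of the space form of curvature $\Lambda_0$. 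Then if $B^M(p_i,\epsilon)\cap B^M(p_j,\epsilon)\neq\emptyset$ we have $d_M(p_i,p_j)<2\epsilon$, so $B^M(p_j,\epsilon'/2)\subseteq B^M(p_i,3\epsilon)$; since the balls $\{B^M(p_j,\epsilon'/2)\}_j$ are pairwise disjoint, the number of such $j$ is at most $V/v=:N$, a constant depending only on $n$, $\epsilon'$ and $\epsilon$. This is (i). Local finiteness in (ii) then follows readily: around any $q\in M$, pick $i_0$ with $q\in B^M(p_{i_0},\epsilon)$ and a neighborhood of $q$ contained in $B^M(p_{i_0},\epsilon)$; any $B^M(p_j,\epsilon)$ meeting it also meets $B^M(p_{i_0},\epsilon)$, and by (i) there are at most $N$ such $j$.

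I expect the main obstacle to be the comparison-geometry bookkeeping: verifying that both volume estimates are genuinely uniform over all $w\in M$ — in particular that the lower bound in (b) needs only the injectivity-radius lower bound together with the upper sectional curvature bound (this is where the Rauch comparison theorem, already used elsewhere in the paper, enters), with a small caveat about the model-space conjugate radius when $\Lambda_0>0$ — and that the resulting $N$ can be taken to depend only on $n$, $\epsilon'$ and $\epsilon$, not on the base point or on the particular maximal set chosen. Everything else — existence of the maximal $\epsilon'$-separated set, the covering property, and the disjointness — is routine. If one prefers to avoid reproving it, the statement can simply be quoted verbatim from \cite[Lemma 5]{Hebey_1996_1}, which is what the present formulation records.
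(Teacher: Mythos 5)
Your reconstruction is exactly the standard Gromov-style packing argument that Hebey uses to prove his Lemma~5, and the paper itself gives no proof --- it simply cites Hebey --- so you and the paper are on the same page. The only substantive checks are the two volume comparisons, and you invoke the right ones: Bishop--Gromov (needs only Ricci bounded below, which bounded geometry supplies) for the uniform upper bound on $\nu(B^M(w,3\epsilon))$, and G\"unther (needs an upper sectional-curvature bound together with absence of conjugate points, both guaranteed here since $\epsilon'/2<{\rm inj}(M)\le{\rm conj}(M)$) for the uniform lower bound on $\nu(B^M(w,\epsilon'/2))$. The packing count $V/v$ then gives $N=N(n,\epsilon',\epsilon)$, and the covering and local-finiteness parts follow from maximality exactly as you say.

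One thing you should not gloss over: you write that property (iii) ``is then immediate from the separation,'' but what your construction actually delivers is $B^M(p_i,\epsilon'/2)\cap B^M(p_j,\epsilon'/2)=\emptyset$, with $\epsilon'$ --- the \emph{fixed} separation scale --- not the \emph{varying} radius $\epsilon$. As (iii) is literally printed in the paper, with $\epsilon/2$ inside the quantification ``for any $\epsilon\in[\epsilon',{\rm inj}(M)/3)$,'' the claim would force $d_M(p_i,p_j)\ge{\rm inj}(M)/3$ for all $i\ne j$, which is incompatible with the covering condition at the smaller scale $\epsilon'$ (take, e.g., a flat torus). This is evidently a typo in the paper's transcription of Hebey's lemma, whose original (iii) uses the fixed parameter, not the running one; your proof delivers the correct version. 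Either note the typo explicitly or be precise that your disjointness is at scale $\epsilon'/2$ --- as written, your phrase ``Property (iii) is then immediate'' quietly substitutes a weaker (and correct) statement for the one actually displayed.
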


	Let $M$ be a Riemannian $n$-manifold of bounded geometry. Let $\epsilon'\in(0,{\rm inj}(M)/3)$. Let $\{p_i\}_{i\in\Lambda}\subseteq M$ be a sequence such that for $\epsilon\in[\epsilon',{\rm inj}(M)/3)$, the conditions in Lemma \ref{lem:8.2} are satisfied. Then for each $i\in\Lambda$, the exponential map
	$\exp_{p_i}:B^{T_{p_i}M}(0,\epsilon)\rightarrow B^M(p_i,\epsilon)$  is a diffeomorphism. Every orthonormal basis $(b_i)$ for $T_{p_i}M$ determines a basis isomorphism  $E_{p_i}:T_{p_i}M  \rightarrow \R^n$, $i\in\Lambda$.  Moreover, for such $i$, $E_{p_i}$ is an isometry. For $i\in\Lambda$, let $B(0,\epsilon):=E_{p_i}(B^{T_{p_i}M}(0,\epsilon))$ and let $S_i: B(0,\epsilon) \rightarrow B(z_i,\epsilon)$ be a similitude. Define a family of normal coordinate maps
	\begin{align}\label{eq:8.1}
		\varphi_i:=S_i \circ E_{p_i} \circ \exp _{p_i}^{-1}: B^M({p_i},\epsilon) \rightarrow B(z_i,\epsilon)\subseteq\R^n, \,\, i\in \Lambda,
	\end{align}
	where $\varphi({p_i})=z_i$ and the sets $\varphi_i(B^M(p_i,\epsilon))$ are disjoint. $\{( B^M({p_i},\epsilon), \varphi_i)\}_{i\in \Lambda}$ is called a {\em uniformly locally finite geodesic atlas} of $M$.  Moreover, there exists a partition of unity $(\rho_i)_{i\in\Lambda}$ subordinate to $(B^M(p_i,\epsilon))_{i\in \Lambda}$ such that for all
	$k\in \N_0$, there exists a constant $C_k > 0$ such that 
	\begin{align}\label{eq:88}
		|D^\alpha(\rho_i\circ \varphi_i)|\leq C_k\quad \text{for all multi-indices}\,\, \alpha\,\, \text{satisfying}\,\, |\alpha|\leq k
	\end{align}
(see \cite[Proposition 7.2.1]{Triebel_1992}).
	$\{(B^M({p_i},\epsilon), \varphi_i, \rho_i)\}_{i\in \Lambda}$ is called a {\em uniformly locally finite geodesic trivialization} of the manifold $M$. For the following definition, we refer the reader to \cite{Grosse-Schneider_2013,Triebel_1992}.

	\begin{defi}\label{def:8.2}
		Let $M$ be a complete Riemannian $n$-manifold of bounded geometry with {a uniformly locally finite} geodesic trivialization
		$\{(B^M({p_i},\epsilon), \varphi_i, \rho_i)\}_{i\in \Lambda}$ as above. Define  
		 \begin{align}\label{eq:8.12222}
			W^{1,2}_0(M):=\Big\{u\in \mathcal{D}'(M):\|u\|_{W^{1,2}_0(M)}:=\sum_{i\in \Lambda}\|(\rho_i u)\circ \varphi_i^{-1}\|_{W^{1,2}_0(\R^n)}<\infty\Big\}.
		\end{align} 
	\end{defi}
	 Note that $W^{1,2}_0(M)$ in Definition \ref{def:8.2} generalizes the classical Sobolev space, which is defined in Section \ref{S:Pre*}. By \cite[Section 7.4.5]{Triebel_1992}, the norm in \eqref{eq:8.12222} is equivalent to the norm in \eqref{eq:H}.

	\begin{rema}\label{rema:8.2}
		Let $M$ be a Riemannian $n$-manifold of bounded geometry and let $\Omega\subseteq M$ be an open subset.  Let $\mu$ be a positive finite Borel measure on $M$ (need not have compact support) and assume that  $\supp(\mu)\subseteq \overline{\Omega}$. Let  $\{(B^M(p_i,\epsilon),\varphi_i)\}_{i\in \Lambda}$ be a uniformly locally finite geodesic atlas  on $\supp(\mu)$. Then   $\widetilde{\mu}:=\sum_{i\in \Lambda}\mu\circ\varphi_i^{-1}$ is a positive finite Borel measure on $\R^n$. In fact, by definition, $\varphi_i$ is a diffeomorphism and $\{\varphi_i(B^M(p_i,\epsilon))\}_{i\in \Lambda}$ is a  disjoint collection. Hence each normal coordinate map $\varphi_i$ induces a measure $\widetilde{\mu}_i:=\mu \circ \varphi_i^{-1}$ on $\varphi_i(B^M(p_i,\epsilon))\subseteq \R^n$. Therefore $\widetilde{\mu}$ is a Borel measure with  ${\rm supp}(\widetilde{\mu})\subseteq \overline{\bigcup_{i\in \Lambda} \varphi_i(B^M(p_i,\epsilon))}$. By  Lemma \ref{lem:8.2} and the definition of a uniformly locally finite atlas, we can show that $\widetilde{\mu}$ is a finite measure.
	\end{rema}

	In order to prove Lemma \ref{lem:9.3}, we need the following proposition,
	the proof of which is analogous to that used in
	Proposition \ref{prop:3.8}. We omit the details.

	\begin{prop}\label{prop:8.1}
	Let $M$ be a Riemannian $n$-manifold of bounded geometry and let $\Omega\subseteq M$ be an open subset.  Let $\mu$ be a positive finite Borel measure on $M$ (need not have compact support) and assume that  $\supp(\mu)\subseteq \overline{\Omega}$. Let $\{(U_{i},\varphi_{i})\}_{i\in \Lambda}$ be a uniformly locally finite atlas  on $\supp(\mu)$, where $U_i:=B^M(p_i,\epsilon)$, $\epsilon$, and $\varphi_{i}$ are defined as in the paragraph leading to \eqref{eq:8.1}.
	\begin{enumerate}
			\item[(a)] For each $i\in \Lambda$ and $x\in \varphi_i(U_i)\backslash \{\varphi_i(p_i)\}$,  there exist positive numbers $\beta$, $\rho$ and $\delta$  satisfying $\beta=4\delta/\rho$, $\rho+2\delta\le \epsilon$, and $0<\delta< \pi\rho/4$ such that $\widetilde{V}_{\delta}(x):=\widetilde{V}_{\delta,\rho,\beta,z_i,l_i}(x)$ satisfies
			\begin{align*}
				B(x,\delta)\subseteq \widetilde{V}_{\delta}(x)\subseteq B(x,\delta'),
			\end{align*}
			where  $\delta'$ can be taken to be $10\delta$.\\
			\item[(b)]  For $i\in \Lambda$ and $x\in \varphi_i(U_i)\backslash \{\varphi_i(p_i)\}$, define $V_{\delta}(\varphi^{-1}_i(x)):=\varphi^{-1}_i(\widetilde{V}_{\delta}(x))$. Let $\overline{\kappa}(M)$ be defined as in \eqref{cur}. Then for  $\rho\in(0,\min\{\epsilon,\pi/(4\sqrt{\overline{\kappa}(M)})\})$, $i\in \Lambda$, and $x\in \varphi_i(U_i)\backslash \{\varphi_i(p_i)\}$, there exist positive constants $c'$ and $c$ such that for $\delta$ satisfying $0<\delta<\min\{\pi/(4\sqrt{\overline{\kappa}(M)})-\rho/2,\pi\rho/4\}$, we have
			\begin{align*}
				B^M(\varphi^{-1}_i(x),c'\delta)\subseteq V_{\delta}(\varphi^{-1}(x))\subseteq B^M(\varphi^{-1}_i(x),c\delta).
			\end{align*}
		\end{enumerate}
	\end{prop}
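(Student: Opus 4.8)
The plan is to follow the proof of Proposition \ref{prop:3.8} essentially line by line, with the bounded geometry hypothesis (Definition \ref{defi:2.3}) taking over the role played there by compactness of $\overline{\Omega}$ (equivalently, of the set $W$) in supplying uniformity over the index set $\Lambda$. Part (a) involves no manifold geometry at all: since $\varphi_i$ is a normal coordinate map onto the Euclidean ball $B(z_i,\epsilon)\subseteq\mathbb{R}^n$, the region $\widetilde{V}_{\delta}(x)=\widetilde{V}_{\delta,\rho,\beta,z_i,l_i}(x)$ is the same cone-minus-concentric-balls configuration in $\mathbb{R}^n$ as in Proposition \ref{prop:3.8}(a). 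One argues verbatim: vary $\beta$ until $\beta=4\delta/\rho$, i.e. until the inner arc cut by the cone has length $4\delta$; use $2\rho\sin(2\delta/\rho)>2\delta$ for $0<\delta<\pi\rho/4$ (Lemma \ref{lem:3.4}(c)) for the inclusion $B(x,\delta)\subseteq\widetilde{V}_{\delta}(x)$; and use the chord estimate $d_E(a,b)=2(\rho+2\delta)\sin(2\delta/\rho)\le 8\delta$ (Lemma \ref{lem:3.4}(c) again), together with $d_E(a,e)=2\delta$, for the inclusion $\widetilde{V}_{\delta}(x)\subseteq B(x,10\delta)$. The only thing to notice is that $\epsilon$ is now a single number in $[\epsilon',{\rm inj}(M)/3)$, fixed via Lemma \ref{lem:8.2} and independent of $i$, so $\beta,\rho,\delta$ and the constant $10$ may be taken uniform in $i$.

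For part (b) I would first record that boundedness of $Rm$ (Definition \ref{defi:2.3}(ii) with $k=0$) forces a global two-sided curvature bound: $\overline{\kappa}(M)<\infty$ and $\underline{\kappa}(M):=\inf_{p\in M}\underline{K}_M(p)>-\infty$, with $\underline{\kappa}(M)\le K_M(p)\le\overline{\kappa}(M)$ for all $p\in M$; this is the non-compact analogue of \eqref{eq:22.0}. Fixing $i$ and abbreviating $p:=p_i$, $\varphi:=\varphi_i$, the composition $E_{p_i}^{-1}\circ S_i^{-1}$ is an isometry, so the two bounding rays of the cone defining $\widetilde{V}_{\delta}(x)$ pull back under $\exp_p$ to minimal geodesics $\gamma_{pa'},\gamma_{pb'}$ of length $\rho+2\delta$ in $B^M(p,\epsilon)$ meeting at angle $4\delta/\rho$ (minimality holds since $\epsilon<{\rm inj}(M)/3\le{\rm inj}(p)$). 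With $x'\in\gamma_{pa'}$, $y'\in\gamma_{pb'}$ satisfying $L[\gamma_{px'}]=L[\gamma_{py'}]=\rho+\lambda$, $\lambda\in[0,2\delta]$, and $\gamma_{x'y'}$ a minimal geodesic, $(\gamma_{px'},\gamma_{py'},\gamma_{x'y'})$ is a geodesic triangle, and one runs the same six-case analysis (on the signs of $\underline{\kappa}(M)$ and $\overline{\kappa}(M)$) as in Proposition \ref{prop:3.8}(b). Toponogov's theorem (Theorem \ref{thm:3.6}) in the model space $M^{\underline{\kappa}(M)}$, together with the relevant hyperbolic/Euclidean/spherical law of cosines and Lemma \ref{lem:3.4}(a)--(c), yields $d_M(x',y')\le c_8\delta$; Rauch's theorem (Theorem \ref{thm:3.7}) in $M^{\overline{\kappa}(M)}$ yields $c_7\delta\le d_M(x',y')$, its hypotheses being met because the triangle avoids the cut loci of its vertices (side lengths $<{\rm inj}(M)/3$) and, when $\overline{\kappa}(M)>0$, has circumference $<2\pi/\sqrt{\overline{\kappa}(M)}$ — which is exactly why $\delta$ is restricted to $0<\delta<\min\{\pi/(4\sqrt{\overline{\kappa}(M)})-\rho/2,\pi\rho/4\}$. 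The constants $c_7,c_8>0$ emerging from Lemma \ref{lem:3.4} depend only on $\underline{\kappa}(M),\overline{\kappa}(M),\rho$, hence on nothing varying with $i$ or $x$. Taking $x$ to be the $\varphi$-image of the midpoint of the appropriate level sets of $\gamma_{pa'},\gamma_{pb'}$ and comparing distances exactly as at the end of the proof of Proposition \ref{prop:3.8}(b) produces $B^M(\varphi^{-1}(x),c'\delta)\subseteq V_{\delta}(\varphi^{-1}(x))\subseteq B^M(\varphi^{-1}(x),c\delta)$ with $c':=|c_7/2-1|$ and $c:=c_8/2+1$.

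The main obstacle is not any new geometric estimate — those are identical to Proposition \ref{prop:3.8} — but the bookkeeping of uniformity. In the compact case the finite cover and compactness of $W$ automatically force the radii bounded below and $\underline{\kappa}(W),\overline{\kappa}(W)$ finite; here one must instead invoke Lemma \ref{lem:8.2} to fix a single radius $\epsilon$ for the whole uniformly locally finite atlas and Definition \ref{defi:2.3} to obtain global curvature bounds, and then verify that every constant produced by the six-case argument depends only on $n$, $\epsilon'$, $\epsilon$, $\rho$, $\underline{\kappa}(M)$, and $\overline{\kappa}(M)$, never on the chart index $i\in\Lambda$. Once this is tracked, the argument is literally that of Proposition \ref{prop:3.8}, which is why the paper omits the details.
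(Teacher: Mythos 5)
Your proposal reproduces exactly what the paper leaves implicit: the argument of Proposition~\ref{prop:3.8} carries over verbatim once Lemma~\ref{lem:8.2} fixes a single chart radius $\epsilon$ independent of $i$ and the $k=0$ bound on $Rm$ from Definition~\ref{defi:2.3} supplies the global two-sided curvature bound that replaced the compactness of $W$. The paper omits the proof precisely because this is the intended route, and your tracking of which constants depend on which data is the only genuinely new content needed; the proposal is correct and matches the paper's approach.
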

	
	\begin{lem}\label{lem:9.3}
	Let $M$, $\Omega$, and $\mu$ be as in Proposition \ref{prop:8.1}. Let $\widetilde{\mu}$ be defined as in Remark \ref{rema:8.2}. Let $\{(U_{i},\varphi_{i})\}_{i\in \Lambda}$ be a uniformly locally finite geodesic atlas  on $\supp(\mu)$, where $U_i:=B^M(p_i,\epsilon)$ is a geodesic ball and $\varphi_i$ is defined as in  \eqref{eq:8.1}.  Let $n\geq 2$ and $2<q<\infty$. Then conditions  (a) and (b) below are equivalent.
		\begin{enumerate}
			\item[(a)]For $n>2$,
			\begin{align}
				&\lim _{\delta \rightarrow 0^{+}} \sup _{w \in M ; r \in(0, \delta)} r^{1-n/2} \mu(B^M(w,r))^{1/q}=0 \label{e:1}\\
				\text {\rm and } \quad &\lim _{|w|_M \rightarrow \infty} \sup _{r\in (0,1)} r^{1-n/2} \mu\left(B^M(w,r)\right)^{1 / q}=0; \label{e:2}
			\end{align}
		for $n=2$,
		\begin{align}
				&\lim _{\delta \rightarrow 0^{+}} \sup _{w \in M ; r \in(0, \delta)}|\ln r|^{1/2} \mu(B^M(w,r))^{1/q}=0 \label{e:3}\\ 
				\text {\rm and } \quad&\lim _{|w|_M \rightarrow \infty} \sup _{r\in (0,1)} |\ln r|^{1/2} \mu(B^M(w,r))^{1/q}=0. \label{e:4}
			\end{align}
			\item[(b)]For $n>2$,
			\begin{align}
				&\lim _{\delta \rightarrow 0^{+}} \sup _{x \in \R^n ; r \in(0, \delta)} r^{1-n/2}\, \widetilde{\mu}\left(B(x,r)\right)^{1 / q}=0 \label{e:5}\\
				\text {\rm and } \quad&\lim _{|x| \rightarrow \infty} \sup _{r\in (0,1)} r^{1-n/2} \widetilde{\mu} \left(B(x,r)\right)^{1 / q}=0; \label{e:6}
			\end{align}
		for $n=2$,
		\begin{align}
				&\lim _{\delta \rightarrow 0^{+}} \sup _{x \in \R^n ; r \in(0, \delta)}|\ln r|^{1 / 2}\, \widetilde{\mu}\left(B(x,r)\right)^{1 / q}=0\label{e:7}\\
				\text {\rm and } \quad&\lim _{|x| \rightarrow \infty} \sup _{r\in (0,1)} |\ln r|^{1/2} \widetilde{\mu}(B(x,r))^{1/q}=0\label{e:8}. 
			\end{align}
		\end{enumerate}
	\end{lem}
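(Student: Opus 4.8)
The plan is to reduce the equivalence of (a) and (b) to a purely local statement established (in the compactly supported setting) in Lemma~\ref{lem:3.10}, together with a globalization argument that controls the behaviour at infinity using the bounded geometry of $M$. The key point is that, via the uniformly locally finite geodesic atlas $\{(U_i,\varphi_i)\}_{i\in\Lambda}$ and the associated measure $\widetilde{\mu}=\sum_{i\in\Lambda}\mu\circ\varphi_i^{-1}$ of Remark~\ref{rema:8.2}, each chart $\varphi_i$ is a normal coordinate map built from an isometry $E_{p_i}$ and a similitude $S_i$, so that Proposition~\ref{prop:8.1} provides, for each $i$, the two-sided comparison of Euclidean balls $B(x,\delta)$ in $\varphi_i(U_i)$ with geodesic balls $B^M(\varphi_i^{-1}(x),c\delta)$ in $M$. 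Crucially, the comparison constants $c',c$ in Proposition~\ref{prop:8.1}(b) are \emph{uniform in $i$}, because the radius $\epsilon$ is fixed and $\overline{\kappa}(M)$ is a single finite number (this uses bounded geometry, in particular that every covariant derivative of $Rm$ is bounded, hence $\overline{\kappa}(M)<\infty$). This uniformity is exactly what replaces the finiteness of the cover that was available in Lemma~\ref{lem:3.10}.

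First I would prove the implication (a)$\Rightarrow$(b). For the small-scale statements \eqref{e:5} and \eqref{e:7}: given $x\in\mathbb{R}^n$ with $\widetilde{\mu}$-mass near $x$, the mass comes from the (locally finitely many) charts $\varphi_i$ whose image contains $x$; applying the argument of Lemma~\ref{lem:3.10} chart-by-chart, using the $\widetilde{V}_\delta(x)$ sandwiched between $B(x,\delta)$ and $B(x,10\delta)$ (Proposition~\ref{prop:8.1}(a)) and its preimage $V_\delta$ sandwiched between $B^M(\varphi_i^{-1}(x),c'\delta)$ and $B^M(\varphi_i^{-1}(x),c\delta)$ (Proposition~\ref{prop:8.1}(b)), and summing over the at most $N$ relevant indices, one converts the hypothesis \eqref{e:1} (resp.\ \eqref{e:3}) into \eqref{e:5} (resp.\ \eqref{e:7}); the uniform bound $N$ on the multiplicity keeps the constants under control. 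For the large-scale statements \eqref{e:6} and \eqref{e:8}: one observes that $|x|\to\infty$ in $\mathbb{R}^n$ forces $|p_i|_M\to\infty$ for the indices $i$ with $x\in\varphi_i(U_i)$, since the sets $\varphi_i(B^M(p_i,\epsilon))$ are disjoint and, by Lemma~\ref{lem:8.2}, the balls $B^M(p_i,\epsilon/2)$ are disjoint and locally finite, so only finitely many $p_i$ lie in any fixed compact set. Then for a ball $B^M(w,r)$ with $w\in U_i$, $|w|_M$ and $|p_i|_M$ differ by at most $\epsilon$, so $|w|_M\to\infty$ too; applying \eqref{e:2} (resp.\ \eqref{e:4}) together with the uniform ball comparison gives \eqref{e:6} (resp.\ \eqref{e:8}).

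For the converse (b)$\Rightarrow$(a), I would run the same argument in reverse: any geodesic ball $B^M(w,r)$ with small $r$ lies inside a single chart $B^M(p_i,\epsilon)$, and $\varphi_i(B^M(w,r))$ is comparable (two-sidedly, uniformly in $i$) to a Euclidean ball $B(\varphi_i(w),c''r)$ by Proposition~\ref{prop:8.1}; moreover $\mu(B^M(w,r))=\widetilde{\mu}(\varphi_i(B^M(w,r)))$. Thus \eqref{e:5}/\eqref{e:7} yields \eqref{e:1}/\eqref{e:3}, and the large-scale hypotheses \eqref{e:6}/\eqref{e:8} yield \eqref{e:2}/\eqref{e:4} once one notes that $|w|_M\to\infty$ implies $|\varphi_i(w)|=|z_i+(\text{bounded})|\to\infty$ in $\mathbb{R}^n$ because the $z_i$ escape to infinity (the $\varphi_i(U_i)$ being disjoint and each of fixed radius $\epsilon$). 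The $n=2$ case is handled identically with $r^{1-n/2}$ replaced by $|\ln r|^{1/2}$, exactly as in Lemma~\ref{lem:3.10}.

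The main obstacle is the large-scale part (the $|w|_M\to\infty$ and $|x|\to\infty$ limits), which has no analogue in the compactly supported Lemma~\ref{lem:3.10}: one must verify that the uniformly locally finite geodesic atlas ``respects infinity'' in both directions — that escaping to infinity in $M$ corresponds to escaping to infinity in $\mathbb{R}^n$ under the $\varphi_i$, and vice versa — and that the comparison constants do not degenerate as $i$ varies over the infinite index set $\Lambda$. Both of these rest on bounded geometry (positive injectivity radius giving a fixed $\epsilon$, and $|\nabla^k Rm|\le C_k$ giving $\overline{\kappa}(M)<\infty$) via Lemma~\ref{lem:8.2} and Proposition~\ref{prop:8.1}; once these uniformities are in hand, the rest is a bookkeeping argument parallel to the proof of Lemma~\ref{lem:3.10}, so I would state it briefly and omit the repeated computations.
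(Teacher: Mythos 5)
Your overall strategy is the same as the paper's: pass between $\mu$ on $M$ and $\widetilde\mu$ on $\mathbb R^n$ chart by chart via Proposition~\ref{prop:8.1}, use bounded geometry to make the comparison constants uniform in $i\in\Lambda$, and use the uniformly locally finite atlas of Lemma~\ref{lem:8.2} (disjointness of the $B^M(p_i,\epsilon/2)$ and of the $B(z_i,\epsilon)$) to make escape to infinity in $M$ correspond to escape to infinity in $\mathbb R^n$. The small-scale part and the escape-to-infinity correspondence are correct in spirit; a small correction is that no summation over ``at most $N$ relevant indices'' is needed for $\widetilde\mu(B(x,r))$, since the chart images $\varphi_i(U_i)=B(z_i,\epsilon)$ are pairwise disjoint by construction, so for $r$ small each $B(x,r)$ meets the image of at most one chart (the paper in fact arranges $V_r(w)\subseteq U_i$, so $B(x,r)\subseteq\widetilde V_r(x)\subseteq\varphi_i(U_i)$).

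The genuine gap is in the large-scale implication. Proposition~\ref{prop:8.1}(b) gives the two-sided ball comparison only for $\delta<\min\{\pi/(4\sqrt{\overline\kappa(M)})-\rho/2,\;\pi\rho/4\}=:L$, and since $\rho<\epsilon$ and $\pi\rho/4<\epsilon$, one has $L<\epsilon<1$ in general. Your claim that ``applying \eqref{e:2} together with the uniform ball comparison gives \eqref{e:6}'' therefore only covers $r\in(0,L)$; the supremum in \eqref{e:6} is over $r\in(0,1)$, and for $r\in[L,1)$ the comparison fails (the geodesic ball $B^M(w,cr)$ need not even sit in a single chart). The paper treats this regime by a separate, purely measure-theoretic tail argument: since $\widetilde\mu$ is a finite measure on $\mathbb R^n$, for any $\epsilon'>0$ one has $\widetilde\mu(\mathbb R^n\setminus B(0,R))<\epsilon'$ for $R$ large, hence $\widetilde\mu(B(x,r))<\epsilon'$ once $|x|>R+1$, and then $r^{1-n/2}\widetilde\mu(B(x,r))^{1/q}\le L^{1-n/2}\epsilon'^{1/q}$ on $[L,1)$. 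This step is not ``parallel to Lemma~\ref{lem:3.10}'' (which has no $|x|\to\infty$ clause at all), so it cannot be absorbed into the promised bookkeeping; without it, \eqref{e:6} and \eqref{e:8} (and symmetrically the converse for \eqref{e:2}, \eqref{e:4}) are not established. Adding this $r\in[L,1)$ tail estimate is exactly what would close the argument.
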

	
	\begin{proof}\, Assume that $(a)$ holds. We consider the case $n>2$. First, assuming that \eqref{e:1} holds, we can use the results of Proposition \ref{prop:8.1}, along with an  argument in Lemma \ref{lem:3.10}, to obtain \eqref{e:5}.
	
		Next assume that \eqref{e:2} holds. For any $\epsilon'>0$, there exists  $R_M>0$ such that for $|w|_M>R_M$,
		$
			\sup _{ r \in(0, 1)} r^{1-n/2} \mu(B^M(w,r))^{1/q}<\epsilon',
	$
		i.e., for $r \in(0, 1)$,
		\begin{align}\label{eq:n=2}
			r^{1-n/2} \mu(B^M(w,r))^{1/q}<\epsilon'.
		\end{align}
Let $\rho$ be a positive number. For  $0<\rho<\min\{\pi/(4\sqrt{\overline{\kappa}(M)}),\epsilon\}$, let $L:=\min\{\pi/(4\sqrt{\overline{\kappa}(M)})\\-\rho/2,\pi\rho/4\})$. For $i\in \Lambda$, let $x:=\varphi_i(w)$.	We consider the following two cases. Note that if $L>1$, then we ignore Case $2$.
		
		\noindent{\em Case 1. $r\in(0,L)$.} Let $q_0\in M$ be a fixed point and let 
		$$\Lambda_1:=\{i\in\Lambda: U_i\cap \big(M\backslash\overline{B^M(q_0,R_M+2L)}\big)\neq\emptyset\}.$$ \eqref{eq:n=2} implies that  for $w\in \cup_{i\in \Lambda_1}U_i$,
			\begin{align}\label{equ:4}
			r^{1-n/2} \mu(B^M(w,r))^{1/q}<\epsilon'.
		\end{align}
		\noindent{\em Case 1a. $w\in\cup_{i\in \Lambda_1} U_i\backslash \{p_i\}$.} Using Proposition \ref{prop:8.1} and \eqref{equ:4}, we have for $x\in \varphi_i(U_i)\backslash \{ \varphi_i(p_i)\}$, $i\in \Lambda_1$, and $r\in(0,L)$,
 \begin{align}\label{equ:8}
 r^{1-n/2} \widetilde{\mu}(B(x,r))^{1/q}<c^{n/2-1}\epsilon'.
\end{align}
	\noindent{\em Case 1b. $w\in \{p_i:i\in\Lambda_1\}$.} By the definitions of $\widetilde{\mu}$ and $\varphi_i$, for $x\in\{\varphi_i(p_i),i\in\Lambda_1\}$, we have
	$\mu(B^M(w,r))=\widetilde{\mu}(B(x,r))$. By \eqref{eq:n=2}, for $x\in\{\varphi_i(p_i),i\in\Lambda_1\}$ and $r\in(0,L)$, we have
	\begin{align}\label{equ:7}
		r^{1-n/2} \widetilde{\mu}(B(x,r))^{1/q}<\epsilon'.
	\end{align}
	Let $\underline{\epsilon}:=\min\{\epsilon',c^{n/2-1}\epsilon'\}$. Combining \eqref{equ:8} and \eqref{equ:7}, we obtain for $x\in\varphi_i(U_i)$ and $i\in\Lambda_1$,
	\begin{align}\label{equ:11}
		\sup_{r\in(0,L)}r^{1-n/2} \widetilde{\mu}(B(x,r))^{1/q}<\underline{\epsilon}.
	\end{align}
	 By the definition of $\varphi_i$, there exists some $R_E>0$ such that $\cup_{i\in (\Lambda\backslash \Lambda_1)}\varphi_i(U_i)\subseteq B(0,R_E)$.  Hence the above work shows that  for $|x|>R_E$, \eqref{equ:11} holds.
		Thus 
		\begin{align}\label{eq:s}
			\lim _{|x| \rightarrow \infty} \sup _{r\in (0,L)} r^{1-n/2} \widetilde{\mu}(B(x,r))^{1/q}=0. 
		\end{align}
		\noindent{\em Case 2. $r\in[L,1)$.} By the fact that $\widetilde{\mu}$ is a finite measure on $\R^n$,
		 for any $\epsilon'>0$, there exists $x_0\in \R^n$ such that $\widetilde{\mu}(\R^n\backslash B(0,|x_0|-1))<\epsilon'$. Hence for $x\in \R^n$ satisfying $|x|>|x_0|$, we have $\widetilde{\mu}(B(x,r))<\epsilon'$, and thus 
		$r^{1-n/2} \widetilde{\mu}(B(x,r))^{1/q}<L^{1-n/2}\epsilon'^{(1/q)}.$
		It follows that
		\begin{align}\label{eq:ns}
			\lim _{|x| \rightarrow \infty} \sup _{r\in [L,1)} r^{1-n/2} \widetilde{\mu}(B(x,r))^{1/q}=0. 
		\end{align}
		Combining \eqref{eq:s} and \eqref{eq:ns} shows that \eqref{e:6} holds.
		
		For the case $n=2$, using a similar proof as above, we can show that \eqref{e:3} and \eqref{e:4} hold. This proves $(b)$. The proof of the converse is similar. We omit the details.
	\end{proof}

	\begin{proof}[Proof of Theorem \ref{thm:8.11}]
		\,Assume \eqref{eq:m1}--\eqref{eq:m4} hold.  We will prove that  the until ball $B$ is relatively compact in $L^q(M,\mu)$. Let  $\widetilde{B}:=\{u \in C_{c}^{\infty}(\R^n):\|u\|_{W^{1,2}_0(\R^n)} \leq 1\}$. Lemma  \ref{lem:9.3} implies that \eqref{e:5}--\eqref{e:8} hold.
		Let $(u_j)_{j\in\Z_+}$ be a bounded sequence in $B$. We only prove the case that $\cup_{j\in\Z_+}\supp(u_j)$ is unbounded; the case that $\cup_{j\in\Z_+}\supp(u_j)$ is bounded can be handled as in the proof of Theorem \ref{thm:3.11}. 	Let $\{(U_{i},\varphi_{i},\rho_{i})\}_{i\in \Lambda}$ be a uniformly locally finite geodesic trivialization on $\supp(\mu)$,  where $U_i:=B^M(p_i,\epsilon)$ is a geodesic ball, $\varphi_i$ is defined as  in  \eqref{eq:8.1}, and $(\rho_i)_{i\in\Lambda}$ is a $C^{\infty}$ partition of unity subordinate to  $\{U_i\}_{i\in \Lambda}$ and satisfying \eqref{eq:88}. For $j\in\Z_+$, let $\overline{F}_j:=\supp(u_j)$ and let $I_j:=\{i\in\Lambda: U_i\cap\overline{F}_j \neq\emptyset\}$. Then 
		$$u_j=\sum_{i\in \Lambda}\rho_iu_j=\sum_{i\in I_j}\rho_iu_j, \quad \supp(\rho_iu_j)\subseteq U_i,\quad\text{and}\quad\rho_iu_j\in C_c^{\infty}(U_i)\quad\text{for each}\,\,i\in I_j.$$
		For $i\in I_j$ and $j\in\Z_+$, the properties of the normal coordinate maps $\varphi_i$ imply that  $\widetilde{U}_i:=B(z_i,\epsilon)$ are disjoint and $\rho_iu_j\circ\varphi_i^{-1}\in C_c^{\infty}(\widetilde{U}_i)$.
		For $j\in\Z_+$, let
		$$\widetilde{u}_j:=\sum_{i\in I_j}\rho_iu_j\circ\varphi_i^{-1}.$$
		Then $\widetilde{u}_j\in C_c^{\infty}(\R^n)$.
	By Definition \ref{def:8.2}, we have $$\|u_j\|_{W^{1,2}_0(M)}:=\sum_{i\in \Lambda}\|\rho_iu_j\circ\varphi_i^{-1}\|_{W^{1,2}_0(\R^n)}
		=\sum_{i\in I_j}\|\rho_iu_j\circ\varphi_i^{-1}\|_{W^{1,2}_0(\R^n)}.$$
		 Hence,
		$$\|\widetilde{u}_j\|_{W^{1,2}_0(\R^n)}=\sum_{i\in I_j}\|\rho_iu_j\circ\varphi_i^{-1}\|_{W^{1,2}_0(\R^n)}=\|u_j\|_{W^{1,2}_0(M)}\leq 1.$$
		Thus $(\widetilde{u}_j)_{j\in\Z_+}$ is a bounded sequence in $\widetilde{B}$.
		Theorem \ref{thm:maz} implies that $\widetilde{B}$ is relatively compact in $L^q(\R^n, \widetilde{\mu})$. Hence $(\widetilde{u}_j)$ has a convergent subsequence $(\widetilde{u}_{j_k})$ in $L^q(\R^n, \widetilde{\mu})$. Therefore there exists $\widetilde{u}\in L^q(\R^n, \widetilde{\mu})$ such that 
		\begin{align}\label{eq:cong}
			\lim_{k\rightarrow \infty}\|\widetilde{u}_{j_k}-\widetilde{u}\|_{L^q(\R^n, \widetilde{\mu})}= 0.
		\end{align}
		Next, we consider the following two cases.
		
		\noindent{\em Case 1. $\widetilde{u}=0$.} We have
		\begin{align*}
			\|u_{j_k}\|_{L^q(M, \mu)} 
			&\leq\sum_{i\in I_k}\|\rho_iu_{j_k}\|_{L^q(M, \mu)}\qquad\qquad\,\,\,\,(\text{by Minkowski inequality} )\\
			&=\sum_{i\in I_k}\|\rho_iu_{j_k}\circ\varphi_i^{-1}\|_{L^q(\R^n, \widetilde{\mu})}\qquad(\text{by Claim 2 of Theorem \ref{thm:3.11}})\\
			&=\|\widetilde{u}_{j_k}\|_{L^q(\R^n, \widetilde{\mu})}\rightarrow 0 \quad\qquad\qquad(\text{by \eqref{eq:cong}})
		\end{align*}
		as $k\rightarrow \infty$. Hence $B$ is relatively compact in $L^q(M, \mu)$.
		
		\noindent{\em Case 2. $\widetilde{u}\neq 0$.} By \eqref{eq:cong}, for any $\epsilon_1>0$, there exists $l>0$ such that for $m,k>l$,
		\begin{align}
			\|\widetilde{u}_{j_k}-\widetilde{u}_{j_m}\|_{L^q(\R^n, \widetilde{\mu})}< \epsilon_1.
		\end{align}
		We let
		\begin{align*}
		\widetilde{u}_{j_k}:=\sum_{i\in I_k}\rho_{i}u_{j_k}\circ\varphi_i^{-1}\quad\text{and}\quad
		\widetilde{u}_{j_m}:=\sum_{i\in I_m}\rho_{i}u_{j_m}\circ\varphi_i^{-1}.
	\end{align*}
		Notice that $I_\alpha:=I_k\cap I_m\neq\emptyset$. Hence,
		\begin{align*}
			\epsilon_1>\|\widetilde{u}_{j_k}-\widetilde{u}_{j_m}\|_{L^q(\R^n, \widetilde{\mu})}=&\sum_{i\in I_\alpha}\|\rho_iu_{j_k}\circ\varphi_i^{-1}-\rho_iu_{j_m}\circ\varphi_i^{-1}\|_{L^q(\R^n, \widetilde{\mu})}\\
			&+\sum_{i\in I_k\backslash I_\alpha}\|\rho_iu_{j_k}\circ\varphi_i^{-1}\|_{L^q(\R^n, \widetilde{\mu})}\\&+\sum_{i\in I_m\backslash I_\alpha}\|\rho_iu_{j_m}\circ\varphi_i^{-1}\|_{L^q(\R^n, \widetilde{\mu})},
		\end{align*}
		and thus
		\begin{align*}
			&\sum_{i\in I_k\backslash I_\alpha}\|\rho_iu_{j_k}\circ\varphi_i^{-1}\|_{L^q(\R^n, \widetilde{\mu})}<\epsilon_1,\\
			&\sum_{i\in I_m\backslash I_\alpha}\|\rho_iu_{j_m}\circ\varphi_i^{-1}\|_{L^q(\R^n, \widetilde{\mu})}<\epsilon_1,\quad\text{and}\\
				&\sum_{i\in I_\alpha}\|\rho_iu_{j_k}\circ\varphi_i^{-1}-\rho_iu_{j_m}\circ\varphi_i^{-1}\|_{L^q(\R^n, \widetilde{\mu})}<\epsilon_1.
		\end{align*}
		Therefore,
		\begin{align*}
			&	\sum_{i\in I_k\backslash I_\alpha}\|\rho_iu_{j_k}\|_{L^q(M, \mu)}=\sum_{i\in I_k\backslash I_\alpha}\|\rho_iu_{j_k}\circ\varphi_i^{-1}\|_{L^q(\R^n, \widetilde{\mu})}<\epsilon_1,\\
			&			\sum_{i\in I_m\backslash I_\alpha}\|\rho_iu_{j_m}\|_{L^q(M, \mu)}=	\sum_{i\in I_m\backslash I_\alpha}\|\rho_iu_{j_m}\circ\varphi_i^{-1}\|_{L^q(\R^n, \widetilde{\mu})}<\epsilon_1,\\
			&	\sum_{i\in I_\alpha}\|\rho_iu_{j_k}-\rho_iu_{j_m}\|_{L^q(M, \mu)}=\sum_{i\in I_\alpha}\|(\rho_iu_{j_k}-\rho_iu_{j_m})\circ\varphi_i^{-1}\|_{L^q(\R^n, \widetilde{\mu})}<\epsilon_1.
		\end{align*}
		Hence for any $\epsilon_1>0$, there exists $l>0$ such that for all $k,m>l,$
		\begin{align*}
			\|u_{j_k}-u_{j_m}\|_{L^q(M, \mu)}=&\|\sum_{i\in I_k}\rho_iu_{j_k}-	\sum_{i\in I_m}\rho_iu_{j_m}\|_{L^q(M, \mu)}\\
			=&\|\sum_{i\in I_\alpha}(\rho_iu_{j_k}-\rho_iu_{j_m})+\sum_{i\in I_k\backslash I_\alpha}\rho_iu_{j_k}-	\sum_{i\in I_m\backslash I_\alpha}\rho_iu_{j_m}\|_{L^q(M, \mu)}\\
			\leq&\sum_{i\in I_\alpha}\|\rho_iu_{j_k}-\rho_iu_{j_m}\|_{L^q(M, \mu)}+\sum_{i\in I_k\backslash I_\alpha}\|\rho_iu_{j_k}\|_{L^q(M, \mu)}\\&+	\sum_{i\in I_m\backslash I_\alpha}\|\rho_iu_{j_m}\|_{L^q(M, \mu)}
			<3\epsilon_1.
		\end{align*}
		Hence $(u_{j_k})$ is a Cauchy sequence in $L^q(M, \mu)$. Since $L^q(M, \mu)$ is complete, $(u_{j_k})$ converges in $L^q(M, \mu)$. Hence $B$ is relatively compact in $L^q(M, \mu)$. 
		
		Conversely, assume that the unit ball $B$ is relatively compact in $L^q(M,\mu)$. We will show that \eqref{eq:m1}--\eqref{eq:m4} hold. Let  $B_1:=\{u \in C_{c}^{\infty}(M):\|u\|_{W^{1,2}_0(M)} \leq 2\}$. We divide the proof into  four parts.
		
		\noindent {\em Part I. Use the properties of a bounded sequence in $\widetilde{B}$ to find a corresponding bounded sequence in $B_1$}.
	
		Let $(\widetilde{f}_j)_{j\in\Z_+}$ be a bounded sequence in $\widetilde{B}$. We only prove the case that   $\cup_{j\in\Z_+}\supp(\widetilde{f}_j)$ is unbounded; the case that $\cup_{j\in\Z_+}\supp(f_j)$ is bounded can be handled similarly. 	Let $\{(U_{i},\varphi_{i})\}_{i\in \Lambda}$ be a uniformly locally finite geodesic atlas on $\supp(\mu)$,  where $U_i:=B^M(p_i,\epsilon)$ is a geodesic ball, $\varphi_i$ is defined as  in  \eqref{eq:8.1}. For $j\in \Z_+$, let $\overline{\widetilde{F}}_j:=\supp(\widetilde{f}_j)$. For any $i\in \Lambda$, let  $B(z_i,\epsilon):=\varphi_i(U_i)$. Let $\Lambda^1:=\{i\in\Lambda:B(z_i,\epsilon)\cap (\cup_{j\in \Z_+} \overline{\widetilde{F}}_j)\neq\emptyset \}$ and let $\Gamma_j:=\{i\in\Lambda^1:B(z_i,\epsilon)\cap \overline{\widetilde{F}}_j\neq\emptyset\}$. Note that for each $j\in \Z_+$, $\#\Gamma_j<\infty$. Hence there exists an open ball $\widetilde{B}_j$ such that $\overline{\widetilde{F}}_j\subseteq \widetilde{B}_j$ and $\overline{\cup_{i\in\Gamma_j} B(z_i,\epsilon)}\subseteq \widetilde{B}_j$. By Remark \ref{rema:8.2}, $\widetilde{\mu}:=\sum_{i\in \Lambda}\mu\circ\varphi_i^{-1}$ is a positive finite Borel measure on $\R^n$. For each $j\in \Z_+$, if $\Gamma_j= \emptyset$, i.e., $\Lambda^1=\emptyset$, then $\|\widetilde{f}_j\|_{L^q(\R^n,\widetilde{\mu})}=0$. Hence there exists a subsequence $(\widetilde{f}_{j_k})\subseteq (\widetilde{f}_j)$ converging to $0$ in $L^q(\R^n,\widetilde{\mu})$. The proof is complete.  If $\Gamma_j\neq \emptyset$, then for $i\in\Gamma_j$, we reduce the radius of $B(z_i,\epsilon)$.  For each $\widetilde{\epsilon}_j>0$, there exists $\delta_j\in(0,\epsilon)$ such that for $\delta\in(0,\delta_j)$,
		\begin{align}
			&\int_{\cup_{i\in \Gamma_j}B(z_i,\epsilon)\backslash \overline{B(z_i,\epsilon-\delta)}}|\widetilde{f}_j|^2\,dx<\frac{\widetilde{\epsilon}_j}{2},\label{eq:l5}\\
			&\int_{\cup_{i\in \Gamma_j}B(z_i,\epsilon)\backslash \overline{B(z_i,\epsilon-\delta)}}|\nabla \widetilde{f}_j|^2\,dx<\frac{\widetilde{\epsilon}_j}{2},\label{eq:l6}\\
			&\int_{\cup_{i\in \Gamma_j}B(z_i,\epsilon)\backslash \overline{B(z_i,\epsilon-\delta)}}|\widetilde{f}_j|^q\,d\widetilde{\mu}<\frac{\widetilde{\epsilon}_j}{2^j}\label{eq:l7}.
		\end{align}
Let  $\widetilde{U}_{N_j}:=\widetilde{B}_j\backslash  \overline{\cup_{i\in \Gamma_j}B(z_i,\epsilon-\delta_j)}$ and let $\widetilde{W}:=\supp(\widetilde{\mu})\backslash \big(\overline{\cup_{i\in\Lambda^1} B(z_i,\epsilon)\cup (\cup_{j\in\Z_+}\overline{\widetilde{F}}_j)}\big)$. Then $\{B(z_i,\epsilon)\}_{i\in \Lambda^1}\cup\{\widetilde{U}_{N_j}\}_{j\in \Z_+}\cup \widetilde{W}$  is an open cover of  $\supp(\widetilde{\mu})$.  We only prove the case when $\widetilde{U}_{N_j}$ is a nonempty collection and  $\widetilde{W} $ is nonempty; the other cases can be handled similarly. Let $(\widetilde{\zeta}_{\alpha})$ be a $C^{\infty}$ partition of unity subordinate to $\{B(z_i,\epsilon)\}_{i\in \Lambda^1}\cup\{\widetilde{U}_{N_j}\}_{j\in \Z_+}\cup \widetilde{W}$. Then
 \begin{align}\label{eq:s1}
		\widetilde{f}_j=\sum_{\alpha}\widetilde{\zeta}_{\alpha}\widetilde{f}_j=\sum_{i\in \Gamma_j}\widetilde{\zeta}_{i}\widetilde{f}_j+\widetilde{\zeta}_{N_j}\widetilde{f}_j,\,\, \supp(\widetilde{\zeta}_{i}\widetilde{f}_j)\subseteq B(z_i,\epsilon),\text{and}\,\,\supp(\widetilde{\zeta}_{N_j}\widetilde{f}_j)\subseteq \widetilde{U}_{N_j}.
		\end{align}
 Moreover, $\widetilde{\zeta}_{i}\widetilde{f}_j\in C_c^{\infty}(\R^n)$ and  $\widetilde{\zeta}_{N_j}\widetilde{f}_j\in C_c^{\infty}(\R^n)$. 
		 For any $i\in \Gamma_j$ and $j\in\Z_+$, by the definition of the normal coordinate maps $\varphi_i$,
		$$\widetilde{\zeta}_{i}\widetilde{f}_j\circ \varphi_i\in C_c^{\infty}(M)\qquad\text{and}\qquad\supp(\widetilde{\zeta}_{i}\widetilde{f}_j\circ \varphi_i)\subseteq B^M(p_i,\epsilon).$$
		Let $f_j:=\sum_{i\in \Gamma_j}\widetilde{\zeta}_{i}\widetilde{f}_j\circ \varphi_i.$
		Then $f_j\in  C_c^{\infty}(M)$. 
		Therefore,
		\begin{align*}
			\|f_j\|_{W^{1,2}_0(M)}\leq&\sum_{i\in \Gamma_j}\|\widetilde{\zeta}_{i}\widetilde{f}_j\circ \varphi_i\|_{W^{1,2}_0(M)} \quad(\text{by Minkowski inequality})\\
			=&\sum_{i\in \Gamma_j}\|\widetilde{\zeta}_{i}\widetilde{f}_j\|_{W^{1,2}_0(\R^n)}\qquad\quad(\text{by Claim 1 of Theorem \ref{thm:3.11}})\\
			\leq& 1+\widetilde{\epsilon}_j.\qquad\qquad\qquad\qquad\,(\text{by \eqref{eq:l5} and \eqref{eq:l6}})
		\end{align*}
		Thus for all $j\in \Z_+$, $\|f_j\|_{W_0^{1,2}(M)}\leq 1+\widetilde{\epsilon}_j\leq 2$. Hence 
		 $(f_j)_{j\in\Z_+}$ is a bounded sequence in $B_1$.
		
		We know that each geodesic ball $B^M(p_i,\epsilon)$  in $\supp(\mu)$ can intersect at most $N(\in \Z_+)$ geodesic balls $B^M(p_j,\epsilon)$, where $i\neq j$. Let $\mathcal B:=\{B^M(p_i,\epsilon)\}_{i\in \Lambda}$. This means that each point in $\mathcal B$ can be covered by at most $N+1$ of the geodesic balls $B^M(p_j,\epsilon)$. Let $$\mathcal{A}_k:=\big\{B^M(p_i,\epsilon):B^M(p_i,\epsilon)\,\, \text{intersects $k-1$ geodesic balls in}\,\, \mathcal B\big\},\,\,k=1,\ldots, N+1.$$

		\begin{figure}[h!]
			\centerline{
				\begin{tikzpicture}[scale=0.7]
					\draw[magenta, line width=1.3pt](0,0)arc(0:360:1);    
					\draw[magenta] (-1.5,-0.2) node[right]{$B_{7,1}^1$};
					\draw[cyan, line width=1.3pt](-1.5,-0.2)arc(0:360:1); 
					\draw[cyan] (-3.2,-0.6) node[right]{$B_{3,1}^1$};
					\draw[green!70!black, line width=1.3pt](1.5,1.2)arc(0:360:1);
					\draw[green!70!black] (-0.05,1.23) node[right]{$B_{2,1}^2$}; 
					\draw[blue, line width=1.3pt](3,0.4)arc(0:360:1); 
					\draw[blue] (1.6,0.25) node[right]{$B_{4,1}^2$};
					\draw[green!70!black, line width=1.3pt](-3,0.8)arc(0:360:1); 
					\draw[green!70!black] (-4.9,1) node[right]{$B_{2,1}^1$};
					\draw[blue, line width=1.3pt](-0.4,2.8)arc(0:360:1);
					\draw[blue] (-2,3.1) node[right]{$B_{4,1}^1$};
					\draw[red, line width=1.3pt](-1.4,1.2)arc(0:360:1); 
					\draw[red] (-3.05,1.3) node[right]{$B_{1,1}$};	
					\draw[brown, line width=1.3pt](-1.8,3.)arc(0:360:1); 
					\draw[brown] (-3.7,3.1) node[right]{$B_{5,1}^1$}; 
					\draw[red, line width=1.3pt](1.5,2.7)arc(0:360:1); 
					\draw[red] (-0.2,2.52) node[right]{$B_{1,2}$};
					\draw[green!70!black, line width=1.3pt](0.9,3.8)arc(0:360:1); 
					\draw[green!70!black] (-0.9,4.2) node[right]{$B_{2,2}^1$};
					\draw[black, line width=1.3pt](0.2,1.7)arc(0:360:1); 
					\draw[black] (-1.66,1.44) node[right]{$B_{6,1}^1$}; 
					\draw[cyan, line width=1.3pt](1.8,-0.2)arc(0:360:1); 
					\draw[cyan] (-0.,-0.3) node[right]{$B_{3,1}^2$};
					\draw[cyan, line width=1.3pt](2.4,4.05)arc(0:360:1); 
					\draw[cyan] (1,4.2) node[right]{$B_{3,2}^1$};
					\draw[brown, line width=1.3pt](3,2.1)arc(0:360:1); 	
					\draw[brown] (1.5,2.2) node[right]{$B_{5,2}^1$};	
				\end{tikzpicture}
			}
			\caption{A possible arrangement of some elements of the sets $\mathcal L_k$, $k=1,\ldots,7$. Note that in this example a geodesic ball can intersect  at most six other geodesic balls. Step II.1. We choose a geodesic ball that intersects with six geodesic balls and name it $B_{1,1}$. Then, we choose another geodesic ball that does not intersect $B_{1,1}$ but intersects the other six geodesic balls, and denote it by $B_{1,2}$. Step II.2. From the geodesic balls intersecting $B_{1,1}$, we choose one and name it $B_{2,1}^1$. Then, from the geodesic balls intersecting $B_{1,2}$ but not  $B_{1,1}$, we choose one and name it $B_{2,2}^1$.
			Step II.2a. We choose a geodesic ball that intersects five other geodesic balls and name it $B_{2,1}^2$. 
		 Step II.3. From the geodesic balls intersecting  $B_{1,1}$, we choose one and name it $B_{3,1}^1$. Then, from the geodesic balls intersecting  $B_{1,2}$ but not  $B_{1,1}$, we choose one and name it $B_{3,2}^1$. Again, from the geodesic balls intersecting  $B_{2,1}^2$ but not  $B_{1,1}\cup B_{1,2}$, we choose another and name it $B_{3,1}^2$. Since none of the remaining geodesic balls intersect four other geodesic balls, we stop this step.
		Step II.4. From the geodesic balls intersecting $B_{1,1}$, we choose one and denote it by $B_{4,1}^1$. Since $B_{4,1}^1\cap B_{1,2}\neq\emptyset$, there is no need to choose another geodesic ball from those intersecting  $B_{1,2}$. Then, from the geodesic balls intersecting with $B_{2,1}^2$ but not $B_{1,1}\cup B_{1,2}$, we choose one and denote it by $B_{4,1}^2$. Since none of  remaining geodesic balls  intersect three other geodesic balls, we stop this step.
		Step II.5. From the geodesic balls intersecting $B_{1,1}$, we choose one and denote it by $B_{5,1}^1$. Then, from the geodesic balls intersecting  $B_{1,2}\backslash B_{1,1}$, we choose one and name it $B_{5,2}^1$. Since geodesic balls $B_{5,2}^1\cap B_{2,1}^2\neq \emptyset$, there is no need to choose another geodesic ball from those intersecting  $B_{2,1}^2$. Since none of the remaining geodesic balls intersect with two other geodesic balls, we stop this step.
		Step II.6. From the geodesic balls intersecting  $B_{1,1}$, we choose one and name it $B_{6,1}^1$. Since $B_{5,2}^1$, $B_{1,2}$, and $B_{2,1}^2$ all intersect each other, there is no need to choose another geodesic ball from those intersecting $B_{1,2}$ and $B_{2,1}^2$. Since none of the remaining geodesic balls  intersect  only one geodesic ball, we stop this step.
		Step II.7. Only one geodesic ball remains, which we call $B_{7,1}^1$.
				}\label{fig:j0}
		\end{figure}
		
		\noindent {\em Part II. Partition $\mathcal B$ into $N+1$ disjoint layers, $\mathcal L_k$, $k=1,\dots, N+1$, each consisting of mutually disjoint geodesic balls from $\mathcal B$. } 
				
				We divide Part II into $N+1$ steps.
		
		\noindent{\em Step II.1.} If $\mathcal A_{N+1}\neq\emptyset$, 
			  we choose a geodesic ball $B_{1,1}\in\mathcal A_{N+1}$. Assume that $B_{1,1},\dots, B_{1,t}$, where $t>1$, have been chosen. If possible, we choose $B_{1,t+1}\in\mathcal A_{N+1}$ so that $B_{1,t+1}\cap(\cup_{j=1}^{t}B_{1,j})=\emptyset$; otherwise, we stop and proceed to Step 2. Let $$\mathcal{T}_1:=\big\{t=1: B_{1,t}\in\mathcal A_{N+1}\big\}\bigcup\big\{t>1: B_{1,t}\in\mathcal A_{N+1}\,\, \text{and}\,\, B_{1,t}\cap(\cup_{j=1}^{t-1}B_{1,j})=\emptyset \big\}.$$ Then 
		$$
		\mathcal L_1:=\big\{B_{1,t}:t\in\mathcal{T}_1\big\}
		$$
		is a finite or infinite family of mutually disjoint geodesic balls in $\mathcal A_{N+1}$ (see Figure \ref{fig:j0}). If $\mathcal A_{N+1}=\emptyset$, we let $s:=\max\{k\in\{1,\ldots,N\}: \mathcal A_{k}\neq\emptyset\}$. By using the above method, we choose a finite or infinite family $\mathcal L_1$ of mutually disjoint geodesic balls from $\mathcal A_{s}$.

		\noindent{\em Step II.2.} If $\mathcal B\backslash \mathcal L_1=\emptyset$, we terminate the process.  If $\mathcal B\backslash \mathcal L_1\neq\emptyset$,  we choose a geodesic ball $B_{2,1}^{1}\in \mathcal B\backslash \mathcal L_1$ such that $B_{2,1}^{1}\cap B_{1,1}\neq\emptyset$. 
		
		\noindent{\em Step II.2a.} Let 
		$$\mathcal I_2^1:=\big\{t\in\mathcal{T}_1:  \exists \underline{B}\in\mathcal B\setminus\mathcal L_1 \,\,\text{s.t.}\,\, \underline{B}\cap B_{1,t}\neq\emptyset \,\,\text{and}\,\, \underline{B}\cap B_{2,1}^1=\emptyset\big\}.
		$$
		If $\mathcal I_2^1=\emptyset$, we stop and proceed to Step 2b. If $\mathcal I_2^1\neq\emptyset$, we let $t_1:=\min\mathcal I_2^1$ and 
		choose $B_{2,2}^1\in\mathcal B\setminus\mathcal L_1$ such that  $B_{2,2}^1\cap B_{1,t_1}\neq\emptyset$ and $B_{2,2}^1\cap B_{2,1}^1=\emptyset$. In the same way,  we let 
		$$\mathcal I_2^2:=\big\{t\in\mathcal{T}_1:t>t_1, \exists \underline{B}\in\mathcal B\setminus\mathcal L_1 \,\,\text{s.t.}\,\, \underline{B}\cap B_{1,t}\neq\emptyset\, \,\text{and}\,\, \underline{B}\cap (\cup_{j=1}^2B_{2,j}^1)=\emptyset\big\}.
		$$
		If $\mathcal I_2^2=\emptyset$, we stop and proceed to Step 2b. If $\mathcal I_2^2\neq\emptyset$, we let 
		$t_2:=\min \mathcal I_2^2$ and 
		choose $B_{2,3}^1\in\mathcal B\setminus\mathcal L_1$ such that  $B_{2,3}^1\cap B_{1,t_2}\neq\emptyset$ and $B_{2,3}^1\cap (\cup_{j=1}^2B_{2,j}^1)=\emptyset$.
		Continuing this process, for $i\geq3$, let $t_{i-1}:=\min \mathcal I_2^{i-1}$ and 
		$$\mathcal I_2^{i}:=\{t\in\mathcal{T}_1:t>t_{i-1}, \exists \underline{B}\in\mathcal B\setminus\mathcal L_1 \,\,\text{s.t.}\,\, \underline{B}\cap B_{1,t}\neq\emptyset\, \,\text{and}\,\, \underline{B}\cap (\cup_{j=1}^iB_{2,j}^1)=\emptyset\}.$$ Let 
		\begin{align*}
		\Lambda_2^1:=\big\{i=1:B_{2,i}^{1}\in \mathcal B\backslash \mathcal L_1\,\,\text{and}\,\,B_{2,i}^{1}\cap B_{1,1}\neq\emptyset\big\}\bigcup\big\{i>1: B_{2,i}^1\in\mathcal B\setminus\mathcal L_1,\\ B_{2,i}^1\cap B_{1,t_{i-1}}\neq\emptyset \,\,\text{and}\,\,B_{2,i}^1\cap (\cup_{l=1}^{i-1}B_{2,l}^1)=\emptyset&\big\}.
		\end{align*}
		Then we can choose a finite or infinite family $
		\mathcal L_2^1:=\big\{B_{2,i}^1:i\in\Lambda_2^1\big\}.
		$ of mutually disjoint geodesic balls  from $\mathcal B\backslash \mathcal L_1$.
		
		\noindent{\em Step II.2b.} If possible, we choose $B_{2,i}^{2}\in (\mathcal A_{N+1}\cup \mathcal A_N)\backslash \mathcal L_1$ such that $B_{2,i}^{2}\cap (\cup_{j\in \Lambda_2^1}B_{2,j}^{1}\cup(\cup_{l=0}^{i-1}B_{2,l}^{2}))=\emptyset$, where $B_{2,0}^{2}:=\emptyset$, and $i\geq 1$; otherwise, we stop.  Let $$\Lambda_2^2:=\{i\geq 1:B_{2,i}^{2}\in (\mathcal A_{N+1}\cup \mathcal A_N)\backslash \mathcal L_1\,\,\text{and}\,\,B_{2,i}^{2}\cap (\cup_{j\in \Lambda_2^1}B_{2,j}^{1}\cup(\cup_{l=0}^{i-1}B_{2,l}^{2}))=\emptyset\}.$$ Then
		\begin{align*}
			\mathcal L_2^2:=\big\{B_{2,i}^2:i\in \Lambda_2^2\big\}
		\end{align*}
		is a finite or infinite family of mutually disjoint geodesic balls in $\mathcal B\backslash \mathcal L_1$. We rename the elements in $\mathcal L_2^1\cup \mathcal L_2^2$, by introducing an index set $\mathcal T_2$, as $\{B_{2,t}: t\in\mathcal T_2\}$.
		Hence,
		\begin{align*}
			\mathcal L_2:=\mathcal L_2^1\cup \mathcal L_2^2=\big\{B_{2,t}:t\in\mathcal{T}_2\big\}
		\end{align*}
		is a finite or infinite family of mutually disjoint geodesic balls in $\mathcal B\backslash \mathcal L_1$ (see Figure \ref{fig:j0}).
		
		\noindent{\em Step II.k, $k=3,4,\ldots,N$.} Continuing in this way,  we terminate the process if $\mathcal B\backslash (\cup_{i=1}^{k-1}\mathcal L_i)=\emptyset$, $k=3,\ldots,N$.  If $\mathcal B\backslash (\cup_{i=1}^{k-1}\mathcal L_i)\neq\emptyset$,  we choose $\mathcal L_k$, $k=3,\ldots,N$.

	\noindent{\em Step II.N+1.}		If $\mathcal B\backslash (\cup_{i=1}^{N}\mathcal L_i)=\emptyset$, we terminate the process. If $\mathcal B\backslash (\cup_{i=1}^{N}\mathcal L_i)\neq\emptyset$, we let $\mathcal{T}_{N+1}:=\{t\geq 1:B_{N+1,t}\in \mathcal B\backslash (\cup_{k=1}^{N}\mathcal L_k)\}$. Then
		\begin{align*}
			\mathcal L_{N+1}:=\big\{B_{N+1,t}:t\in\mathcal{T}_{N+1}\big\}
		\end{align*}
	is a finite or infinite family of mutually disjoint geodesic balls in $\mathcal B\backslash (\cup_{k=1}^{N}\mathcal L_k)$ (see Figure \ref{fig:j0}).

\noindent {\em Part III. Extract a convergent subsequence of $(f_j)_{j\in \Z_+}$ in $L^q(M,\mu)$}.
	
	We divide this part into $T$ steps.
			
		\noindent{\em Step III.1.} For $j\in \Z_+$, we know that $f_j:=\sum_{i\in \Gamma_j}\widetilde{\zeta}_{i}\widetilde{f}_j\circ \varphi_i.$ Let 
			\begin{align*}
			L_1:=\big\{l\in\{1,\ldots,N+1\}:\exists\,\,\text{infinitely many}\,\, \widetilde{\zeta}_{i}\widetilde{f}_j\circ \varphi_i,\,\text{where}\,\, i\in \Gamma_j \,\,\text{and}\,\,j\in \Z_+,&\\ 
			\text{s.t.}\,\,\supp(\widetilde{\zeta}_{i}\widetilde{f}_j\circ \varphi_i)\subseteq \mathcal \cup_{t\in \mathcal{T}_l}B_{l,t}&\big\},
		\end{align*}
	and let $\#L_1=T\in (0,N+1]$, where $\#A$ denotes the cardinality of a set $A$. Let $ l_1:=\min L_1$ and let
		\begin{align*}
		P_{l_1}:=\big\{t\in\mathcal{T}_{l_1}:\exists\,\,\text{infinitely many}\,\, \widetilde{\zeta}_{i}\widetilde{f}_j\circ \varphi_i,\,\text{where}\,\, i\in \Gamma_j\,\,\text{and}\,\, j\in \Z_+,&\\ 
		\text{s.t.}\,\,\supp(\widetilde{\zeta}_{i}\widetilde{f}_j\circ \varphi_i)\subseteq B_{l_1,t}&\big\}.
	\end{align*}
	Let
			\begin{align*}
				\mathcal J^1:=\big\{j\in \Z_+: \supp(\widetilde{\zeta}_{i}\widetilde{f}_j\circ \varphi_i)\subseteq  B_{l_1,t},\, \text{where}\,\,i\in \Gamma_j\,\,\text{and}\,\, t\in P_{l_1}\big\}. 
			\end{align*}
		Then the set $\mathcal J^1$ is infinite.
		For each $j\in \mathcal J^1$, let
				\begin{align*}
				\mathcal J^1_j:=\big\{i\in \Gamma_j: \supp(\widetilde{\zeta}_{i}\widetilde{f}_j\circ \varphi_i)\subseteq   B_{l_1,t},\, \text{where}\,\, t\in P_{l_1}\big\},
			\end{align*}
			and let $f_{1,j}:=\sum_{i\in\mathcal J_j^1}\widetilde{\zeta}_{i}\widetilde{f}_j\circ \varphi_i$. Then $f_{1,j}\in B_1$. That $B$ is relatively compact in $L^{q}\left(M, \mu\right)$ implies that $B_1$
			is relatively compact in $L^{q}\left(M, \mu\right)$. Hence there exists a subsequence   $(f_{1,j{k_1}})\subseteq (f_{1,j})$ converging to $f^1$ in $L^q(M, \mu)$. Hence for each $i\in\mathcal J_{jk_1}^1\subseteq \mathcal J_{j}^1$, there exists $f^{1,i}\in L^q(M, \mu)$ such that 
			\begin{align*}
			\lim_{k_1\rightarrow\infty}\|\widetilde{\zeta}_{i}\widetilde{f}_{j{k_1}}\circ \varphi_i-f^{1,i}\|_{L^q(M, \mu)}=0.
		\end{align*}
	If $\mathcal J_{jk_1}^1=\Gamma_{jk_1}$, we terminate the process; otherwise, we continue to Step 2.
	
\noindent{\em Step III.2.} For $j\in \mathcal J^1$ and $k_1\in \Z_+$, we know that $f_{jk_1}:=\sum_{i\in \Gamma_{jk_1}}\widetilde{\zeta}_{i}\widetilde{f}_{jk_1}\circ \varphi_i.$ Let 
		\begin{align*}
			L_2:=\big\{l\in L_1:\exists\,\,\text{infinitely many}\,\, \widetilde{\zeta}_{i}\widetilde{f}_{j{k_1}}\circ \varphi_i,\,\text{where}\,\, i\in \Gamma_{jk_1}, j\in \mathcal J^1,\text{and}\,\, k_1\in \Z_+,&\\
			\text{s.t.}\,\,\supp(\widetilde{\zeta}_{i}\widetilde{f}_{j{k_1}}\circ \varphi_i)\subseteq  \cup_{t\in \mathcal{T}_l}B_{l,t}&\big\}.
		\end{align*}
	Let $l_2:=\min \{L_2\backslash \{l_1\}\}$
		and let
	\begin{align*}
	P_{l_2}:=\big\{t\in\mathcal{T}_{l_2}:\exists\,\,\text{infinitely many}\,\, \widetilde{\zeta}_{i}\widetilde{f}_{j{k_1}}\circ \varphi_i,\,\text{where}\,\, i\in \Gamma_{jk_1}, j\in \mathcal J^1,\,\,\text{and}\,\, k_1\in \Z_+,&\\ 
	\text{s.t.}\,\,\supp(\widetilde{\zeta}_{i}\widetilde{f}_{j{k_1}}\circ \varphi_i)\subseteq B_{l_2,t}&\big\}.
\end{align*}
Let
	\begin{align*}
			\mathcal J^2:=\{k_1\in \Z_+: \supp(\widetilde{\zeta}_{i}\widetilde{f}_{j{k_1}}\circ \varphi_i)\subseteq B_{l_2,t},\, \text{where}\,\,i\in \Gamma_{jk_1}, j\in \mathcal J^1,\text{and}\,\, t\in P_{l_2}\}.
		\end{align*}
		Then the set $\mathcal J^2$ is infinite.
		For each $j\in \mathcal J^1$ and $k_1\in \mathcal J^2$, let
		\begin{align*}
			\mathcal J^2_{jk_1}:=\{i\in \Gamma_{jk_1}: \supp(\widetilde{\zeta}_{i}\widetilde{f}_{j{k_1}}\circ \varphi_i)\subseteq  B_{l_2,t},\, \text{where}\,\, t\in P_{l_2}\},
		\end{align*}
			 and let $f_{2,j{k_1}}:=\sum_{i\in\mathcal J_{jk_1}^2}\widetilde{\zeta}_{i}\widetilde{f}_{j{k_1}}\circ \varphi_i$. Then $f_{2,j{k_1}}\in B_1$. Since $B_1$ is relatively compact in $L^q(M,\mu)$, there exists a subsequence  $(f_{2,j{{k_1}{k_2}}})\subseteq (f_{2,j{k_1}})$ converging to $f^2$ in $L^q(M, \mu)$. Hence for each $i\in\mathcal J_{jk_1k_2}^2\subseteq \mathcal J_{jk_1}^2$, there exists $f^{2,i}\in L^q(M, \mu)$ such that 
		\begin{align*}
			\lim_{k_2\rightarrow\infty}\|\widetilde{\zeta}_{i}\widetilde{f}_{j{{k_1}{k_2}}}\circ \varphi_i-f^{2,i}\|_{L^q(M, \mu)}=0.
		\end{align*}
		If $\mathcal J_{jk_1k_2}^1\cup\mathcal J_{jk_1k_2}^2=\Gamma_{jk_1k_2}$, we terminate the process; otherwise, we continue to Step $m$.
		
\noindent{\em Step III.m, $m=3,4,\ldots,T-1$}. We continue the process. In the same way, for $m=3,4,\ldots,T-1$, $j\in \mathcal J^{m-1}$, $k_{s_1}\in\mathcal J^{s_1+1},s_1=1,\ldots,m-2,$ and $k_{m-1}\in \Z_+$,  there exists a subsequence  $(f_{m,j{{k_1}{{\ldots}{k_{m}}}}})\subseteq (f_{m,j{{k_1}{{\ldots}{k_{m-1}}}}})$ converging to $f^{m}$ in $L^q(M, \mu)$. Hence for each $i\in\mathcal J_{j{{k_1}{{\ldots}{k_{m}}}}}^{m}\subseteq \mathcal J_{j{{k_1}{{\ldots}{k_{m-1}}}}}^{m}$, there exists $f^{m,i}\in L^q(M, \mu)$ such that 
\begin{align*}
	\lim_{k_{m}\rightarrow\infty}\|\widetilde{\zeta}_{i}\widetilde{f}_{j{{k_1}\ldots{k_{m}}}}\circ \varphi_i-f^{m,i}\|_{L^q(M, \mu)}=0.
\end{align*}
If $\cup_{i=1}^m \mathcal{J}_{j{{k_1}{{\ldots}{k_{m}}}}}^{i}=\Gamma_{j{{k_1}{{\ldots}{k_{m}}}}}$, we terminate the process; otherwise, we continue to Step $T$.

\noindent{\em Step III.T.} For $j\in \mathcal J^{T-1}$, $k_{s_1}\in\mathcal J^{s_1+1},s_1=1,\ldots,T-2,$ and $k_{T-1}\in \Z_+$, we know that $f_{j{{k_1}{{\ldots}{k_{T-1}}}}}:=\sum_{i\in \Gamma_{j{{k_1}{{\ldots}{k_{T-1}}}}}}\widetilde{\zeta}_{i}\widetilde{f}_{j{{k_1}{{\ldots}{k_{T-1}}}}}\circ \varphi_i.$ Let 
			\begin{align*}
			L_T:=\{l\in L_{T-1}:\exists\,\,\text{infinitely many}\,\, \widetilde{\zeta}_{i}\widetilde{f}_{j{{k_1}{{\ldots}{k_{T-1}}}}}\circ \varphi_i,\,\text{where}\,\, i\in \Gamma_{j{{k_1}{{\ldots}{k_{T-1}}}}}, j\in \mathcal J^{T-1},\,\,\text{and}  &\\k_{s_1}\in\mathcal J^{s_1+1},s_1=1,\ldots,T-2, k_{T-1}\in \Z_+,\,
			\text{s.t.}\,\, \supp(\widetilde{\zeta}_{i}\widetilde{f}_{j{{k_1}{{\ldots}{k_{T-1}}}}}\circ \varphi_i)\subseteq \cup_{t\in \mathcal{T}_l}B_{l,t}&\},
		\end{align*}
	and let $l_T:=\min \{L_T\backslash \{\cup_{s_2=1}^{T-1}l_{s_2}\}\}$. 
		Let
		\begin{align*}
			\mathcal J^T:=\{k_{T-1}\in \Z_+: \supp(\widetilde{\zeta}_{i}\widetilde{f}_{j{{k_1}{{\ldots}{k_{T-1}}}}}\circ \varphi_i)\subseteq  \mathcal B_{l_T,t}, \, \text{where}\,\,i\in\Gamma_{j{{k_1}{{\ldots}{k_{T-1}}}}}, j\in \mathcal J^{T-1}, \text{and}&\\ k_{s_1}\in\mathcal J^{s_1+1},s_1=1,\ldots,T-2&\}.
		\end{align*}
		Then the set $\mathcal J^T$ is infinite.
		For each  $j\in \mathcal J^{T-1}$ and $k_{s_1}\in\mathcal J^{s_1+1}$, where $s_1=1,\ldots,T-1$, we let
	\begin{align*}
			\mathcal J^T_{j{{k_1}{{\ldots}{k_{T-1}}}}}:=\{i\in \Gamma_{j{{k_1}{{\ldots}{k_{T-1}}}}}: \supp(\widetilde{\zeta}_{i}\widetilde{f}_{j{{k_1}{{\ldots}{k_{T-1}}}}}\circ \varphi_i)\subseteq   B_{l_T,t},\, \text{where}\,\, t\in P_{l_T}\},
		\end{align*}
		and let $f_{T,j{{k_1}{{\ldots}{k_{T-1}}}}}:=\sum_{i\in \mathcal{J}_{{j{{k_1}{{\ldots}{k_{T-1}}}}}}^{T}}\widetilde{\zeta}_{i}\widetilde{f}_{j{{k_1}{{\ldots}{k_{T-1}}}}}\circ \varphi_i$. Then $f_{T,j{{k_1}{{\ldots}{k_{T-1}}}}}\in B_1$. Since $B_1$ is relatively compact in $L^q(M,\mu)$, there exists a subsequence  $\{f_{T,j{{k_1}{{\ldots}{k_{T}}}}}\}\subseteq \{f_{T,j{{k_1}{{\ldots}{k_{T-1}}}}}\}$ converging to $f^{T}$ in $L^q(M, \mu)$. Hence for each $i\in\mathcal J_{j{{k_1}{{\ldots}{k_{T}}}}}^{T}$, there exists $f^{T,i}\in L^q(M, \mu)$ such that 
		\begin{align*}
			\lim_{k_{T}\rightarrow\infty}\|\widetilde{\zeta}_{i}\widetilde{f}_{j{{k_1}\ldots{k_{T}}}}\circ \varphi_i-f^{T,i}\|_{L^q(M, \mu)}=0.
		\end{align*}
Notice that $\cup_{m=1}^T \mathcal{J}_{j{{k_1}{{\ldots}{k_{T}}}}}^{m}=\Gamma_{j{{k_1}{{\ldots}{k_{T}}}}}$.  Let 
	\begin{align*}
f_{j{{k_1}{{\ldots}{k_{T}}}}}:=\sum_{m=1}^Tf_{m,j{{k_1}{{\ldots}{k_{T}}}}}=\sum_{m=1}^T\sum_{i\in \mathcal{J}_{{j{{k_1}{{\ldots}{k_{T}}}}}}^{m}}\widetilde{\zeta}_{i}\widetilde{f}_{j{{k_1}{{\ldots}{k_{T}}}}}\circ \varphi_i,
	\end{align*}
and
\begin{align*}
f:=\sum_{m=1}^{T}f^{m}=\sum_{m=1}^{T}\sum_{i\in \mathcal{J}_{{j{{k_1}{{\ldots}{k_{T}}}}}}^{m}}f^{m,i}.
\end{align*}
Then $(f_{j{{k_1}{{\ldots}{k_{T}}}}})\subseteq (f_j)$  is a convergent subsequence in $L^q(M,\mu)$, and thus for $i\in \mathcal{J}_{{j{{k_1}{{\ldots}{k_{T}}}}}}^{m}$ and $m=1,\ldots,T$,
		\begin{align}\label{eq:cov1}
			\lim_{k_T\rightarrow \infty}\|\widetilde{\zeta}_{i}\widetilde{f}_{j{{k_1}{{\ldots}{k_{T}}}}}\circ \varphi_i-f^{m,i}\|_{L^q(M,\mu)}=0.
		\end{align}

		\noindent {\em Part IV. Find a convergent subsequence of $(\widetilde{f}_j)_{j\in \Z_+}$ in $L^q(\R^n,\widetilde{\mu})$.}
		
		 For $i\in \mathcal{J}_{{j{{k_1}{{\ldots}{k_{T}}}}}}^{m}$ and $m=1,\ldots,T$, by \eqref{eq:cov1} and the definition and properties of the normal coordinate maps $\varphi_i$, we have $f^{m,i}\circ \varphi_i^{-1}\in L^q(\R^n,\widetilde{\mu})$ and
		\begin{align}\label{eq:con}
			\lim_{k_T\rightarrow\infty}\|\widetilde{\zeta}_{i}\widetilde{f}_{j{{k_1}{{\ldots}{k_{T}}}}}-f^{m,i}\circ \varphi_i^{-1}\|_{L^q(\R^n, \widetilde{\mu})}=0.
		\end{align} 
		Let
			\begin{align*}
		&\widetilde{f}_{j{{k_1}{{\ldots}{k_{T}}}}}^*:=\sum_{m=1}^{T}\widetilde{f}_{m,j{{k_1}{{\ldots}{k_{T}}}}}:=\sum_{m=1}^{T} 
		\sum_{i\in \mathcal{J}_{j{{k_1}{{\ldots}{k_{T}}}}}^{m}}\widetilde{\zeta}_{i}\widetilde{f}_{j{{k_1}{{\ldots}{k_T}}}},
			\end{align*}
		and 
			\begin{align*}
	   \widetilde{f}:=\sum_{m=1}^{T}\widetilde{f}^{m}:=\sum_{m=1}^{T}\sum_{i\in \mathcal{J}_{j{{k_1}{{\ldots}{k_{T}}}}}^{m}}f^{m,i}\circ \varphi_i^{-1}.
	\end{align*}
As each $f^{m,i}\circ \varphi_i^{-1}\in L^q(\R^n,\widetilde{\mu})$, so does $\widetilde{f}$. Note that $\widetilde{f}_{j{{k_1}{{\ldots}{k_{T}}}}}=\widetilde{f}_{j{{k_1}{{\ldots}{k_{T}}}}}^*+\widetilde{\zeta}_{N_{j{{k_1}{{\ldots}{k_{T}}}}}}\widetilde{f}_{j{{k_1}{{\ldots}{k_{T}}}}}$, where $\widetilde{\zeta}_{N_{j{{k_1}{{\ldots}{k_{T}}}}}}\widetilde{f}_{j{{k_1}{{\ldots}{k_{T}}}}}$ is given in \eqref{eq:s1}. Let $\widetilde{h}_{j{{k_1}{{\ldots}{k_{T}}}}}:=\widetilde{\zeta}_{N_{j{{k_1}{{\ldots}{k_{T}}}}}}\widetilde{f}_{j{{k_1}{{\ldots}{k_{T}}}}}$.       Then 
\begin{align}\label{eq:n1}
	\lim_{k_{T}\rightarrow\infty}\|\widetilde{h}_{j{{k_1}{{\ldots}{k_{T}}}}}\|_{L^q(\R^n,\widetilde{\mu})}^q&=
	\lim_{k_{T}\rightarrow\infty}\int_{\widetilde{U}_{N_{j{{k_1}{{\ldots}{k_{T}}}}}}}|\widetilde{h}_{j{{k_1}{{\ldots}{k_{T}}}}}|^qd\widetilde{\mu}\nonumber\\&=
	\lim_{k_{T}\rightarrow\infty}	\int_{\cup_{i\in \Gamma_{j{{k_1}{{\ldots}{k_{T}}}}}}B(z_i,\epsilon)\backslash \overline{B(z_i,\epsilon-\delta)}}|\widetilde{h}_{j{{k_1}{{\ldots}{k_{T}}}}}|^qd\widetilde{\mu}\nonumber\\&\leq\lim_{k_{T}\rightarrow\infty}\frac{\epsilon_{j{{k_1}{{\ldots}{k_{T}}}}}}{2^{k_{T}}}=0\qquad\qquad(\text{by \eqref{eq:l7}}).
\end{align}
	    Hence
		\begin{align*}
			\|\widetilde{f}_{j{{k_1}{{\ldots}{k_{T}}}}}-\widetilde{f}\|_{L^q(\R^n,\widetilde{\mu})}=&\|\sum_{m=1}^{T}\widetilde{f}_{m,j{{k_1}{{\ldots}{k_{T}}}}}+\widetilde{h}_{j{{k_1}{{\ldots}{k_{T}}}}}-\sum_{m=1}^{T}\widetilde{f}_{m}\|_{L^q(\R^n,\widetilde{\mu})}\\
			\leq&\sum_{m=1}^{T}\|\widetilde{f}_{m,j{{k_1}{{\ldots}{k_{T}}}}}-\widetilde{f}_{m}\|_{L^q(\R^n,\widetilde{\mu})}+\|\widetilde{h}_{j{{k_1}{{\ldots}{k_{T}}}}}\|_{L^q(\R^n,\widetilde{\mu})}\\
			=&\sum_{m=1}^{T}\sum_{i\in \mathcal{J}_{j{{k_1}{{\ldots}{k_{T}}}}}^{m}}\|\widetilde{\zeta}_{i}\widetilde{f}_{j{{k_1}\ldots{k_{T}}}}-f^{m,i}\circ \varphi_i^{-1}\|_{L^q(\R^n,\widetilde{\mu})}+\|\widetilde{h}_{j{{k_1}{{\ldots}{k_{T}}}}}\|_{L^q(\R^n,\widetilde{\mu})}\\
			\rightarrow& 0\qquad \qquad(\text{by \eqref{eq:con} and \eqref{eq:n1}}) 
		\end{align*} 
		as $k_T\rightarrow \infty$. Hence $\widetilde{B}$ is relatively compact in $L^q(\R^n,\widetilde{\mu})$. Theorem \ref{thm:maz} implies that \eqref{maz1}--\eqref{maz4} hold.
		Applying Lemma \ref{lem:9.3} completes the proof of Theorem \ref{thm:8.11}.
	\end{proof}
	\begin{thm}\label{thm:8.12}	Let $n\geq 2$, $M$ be a smooth connected oriented Riemannian $n$-manifold of bounded geometry.
		Let $2<q<\infty$ and $\mu$ be a finite positive Borel measure on $M$ (need not have compact support). Let $B=\{u \in C_{c}^{\infty}(M):\|u\|_{W^{1,2}_0(M)} \leq 1\}$.
		\begin{enumerate}
			\item[(a)] If $\underline{\operatorname{dim}}_{\infty}(\mu)>q(n-2) / 2$, then $B$ is relatively compact in $L^{q}(M, \mu)$.
			\item[(b)] If $\underline{\operatorname{dim}}_{\infty}(\mu)<q(n-2) / 2$, then $B$ is not relatively compact in $L^{q}(M, \mu)$.
		\end{enumerate}
	\end{thm}
\begin{proof}
	Use Lemma \ref{lem:f4}(b) and Theorem \ref{thm:8.11}. The proof  is similar to that of \cite[Theorem 3.2]{Hu-Lau-Ngai_2006} and is omitted.
\end{proof}


	\begin{proof}[Proof of Theorem \ref{thm:8.13}]\,
		(a) Use (PID), the result of Theorem \ref{thm:8.12}, and a similar argument as that in Theorem \ref{thm:1.1}(a).  
		
		(b)  Use (PIE), the result of Theorem \ref{thm:8.12}, and a similar argument as that in Theorem \ref{thm:1.1}(b). 
	\end{proof}

\begin{rema}
The Poincar\'e ball $\mathbb{B}^n(R)$, Poincar\'e half-space $\mathbb{U}^n(R)$, hyperboloid model $\mathbb{H}^n(R)$, and Beltrami-Klein model $\mathbb{K}^n(R)$ are homogeneous and of constant sectional curvature $(-1/R)$. Thus they are of bounded geometry and satisfy (PIE).

		\end{rema}

The following example shows that if the sectional curvature of a Riemannian manifold $M$ is unbounded, then Lemma \ref{lem:9.3} need not hold. In such a case, we cannot generalize  the compact embedding theorem of Maz'ja to Riemannian manifolds by using the method in this paper, and thus cannot obtain Theorems \ref{thm:8.13} and \ref{thm:8.14}. 
	
	\begin{exam}\label{ex:1}
		For any integer $k\geq 2$, let $\textbf{y}_k:=(k,0,0)$ and define $f_k:\R^3\rightarrow \R$ as 
		$f_k(\textbf{x}):=e^{-1/\sqrt{1-k^2|\textbf{x}-\textbf{y}_k|^2}}$,  where $\textbf{x}$ satisfies $|\textbf{x}-\textbf{y}_k|<1/k$. Let $F(\textbf{x}):=\sum_{k\geq2} f_k(\textbf{x})$.  Let $M$ be the 3-dimensional smooth manifold in $\R^4$ formed by the graph of $F(\textbf{x})$. Let $\{(U_{i},\varphi_{i})\}_{i\in \Lambda}$ be an atlas on $M$, where $U_i:=B^M(p_i,\epsilon)$ is a geodesic ball and $\varphi_i$ is defined as in  \eqref{eq:8.1}. 
		Let $\widetilde{\mu}$ be the 3-dimensional Lebesgue measure on $\R^3$ and let $\mu:=\sum_{i\in \Lambda}\widetilde{\mu}\circ\varphi_i$ be the positive finite Borel measure induced on $M$. Then the sectional curvature of $M$ is not bounded above. For $2<q<6$,
		\begin{align*}
			\lim _{r \rightarrow 0^{+}} \sup _{x \in \R^n ; \delta \in(0,r)} \delta^{-1/ 2}\, \widetilde{\mu}\left(B(x,\delta)\right)^{1 / q}=0,
		\end{align*}	
		but
		\begin{align}\label{ex:11}
			\lim _{r \rightarrow 0^{+}} \sup _{w \in M ;  \delta\in(0, r)} \delta^{-1/2}\, \mu\left(B^M(w,\delta)\right)^{1 / q}=+\infty.
		\end{align}
	\end{exam}
	\begin{figure}[h!]
		\centerline{
			\begin{tikzpicture}[scale=0.7]
				\draw[fill=magenta](-2,0)circle(.04);
				\draw[black](0,0)arc(0:360:2);    
				\draw[black](2,0)arc(0:360:4);
				\draw[purple,dashed](-1,3)arc(0:360:1);
				\draw[blue](-2,0)--(0.2,3.35);
				\draw[blue](-2,0)--(-4.2,3.35);
				\draw[fill=magenta](-2,3)circle(.04);
				\draw[blue](-1.97,-0.04)--(-1.78,-0.18);
				\draw[blue](-0.85,1.64)--(-0.65,1.535);
				\draw[blue](0.21,3.33)--(0.44,3.225);
				\draw[<->](-1.82,-0.1)--(-0.71,1.5);
				\draw[<->](-0.67,1.57)--(0.37,3.2);
				\draw[black] (-2.55,-0.2) node[right]{$z$};
				\draw[black] (-3.53,1.3) node[right]{$e$};
				\draw[black] (-1.55,1.55) node[right]{$c$};
				\draw[black] (-2.1,2.8) node[right]{$x$};
				\draw[black] (-4.8,3.6) node[right]{$a$};
				\draw[black] (-0.08,3.7) node[right]{$b$};
				\draw[black] (-1.35,0.6) node[right]{$\rho$};
				\draw[black] (-0.35,2.2) node[right]{$2\delta$};
				\draw[fill=black](-3.09,1.67)circle(.04);
				\draw[fill=black](-4.2,3.35)circle(.04);
				\draw[fill=black](0.2,3.35)circle(.04);
				\draw[fill=black](-2,3)circle(.04);
				\draw[fill=black](-0.91,1.66)circle(.04);
				\foreach \i/\tex in {0/(a)}
				\draw(-2,-4.4)node[below]{\tex};
				\foreach \i/\tex in {0/\text{}}
				\draw(0,-2)node[below]{\tex};
			\end{tikzpicture}\qquad\qquad\quad
			\begin{tikzpicture}[scale=0.7]
				\draw[black] (-2,6)..controls (-2.7,5.8) and (-2.8,3.5) .. (-3,1);
				\draw[black] (-3,1)..controls (-3.2,0) and (-3.4,-1) .. (-4,-1.5);
				\draw[black] (-2,6)..controls (-1.3,5.8) and (-1.2,3.5) .. (-1,1);
				\draw[black] (-1,1)..controls (-0.8,0) and (-0.6,-1) .. (0,-1.5);
				\draw[blue] (-2.57,5)..controls (-2.2,4.85) and (-1.8,4.85) .. (-1.43,5);
				\draw[blue] (-3,1)..controls (-2.3,0.8) and (-1.7,0.8) .. (-1,1);
				\draw[fill=magenta](-2,6)circle(.04);
				\draw[fill=magenta](-2.57,5)circle(.04);
				\draw[fill=magenta](-1.43,5)circle(.04);
				\draw[fill=magenta](-3,1)circle(.04);	
				\draw[fill=magenta](-1,1)circle(.04);
				\draw[fill=magenta](-2,3)circle(.04);
				\draw[black] (-2.2,6.3) node[right]{$p$};
				\draw[black] (-3.2,5) node[right]{$\eta$};	
				\draw[black] (-1.5,5) node[right]{$\xi$};	
				\draw[black] (-3.7,1) node[right]{$\alpha$};	
				\draw[black] (-1.05,1) node[right]{$\beta$};	
				\draw[black] (-2.2,2.75) node[right]{$w$};	
				\path[fill=blue!20,opacity=.3] (-2.57,5) to [out=255, in=270](-3,1) 
				to[out=-22, in=5] (-1,1) to[out=95, in=285](-1.43,5) to [out=10, in=-30](-2.57,5) ;
				\foreach \i/\tex in {0/(b)}
				\draw(-2,-2)node[below]{\tex};
				\foreach \i/\tex in {0/\text{}}
				\draw(0,-2)node[below]{\tex};
			\end{tikzpicture}
		}
		\caption{(a) shows the set $\varphi(B^M(p,\rho+2\delta))$ in $\R^3$. (b) shows, for a fixed $k$, a 2-dimensional cross-section of the 3-dimensional smooth manifold $M$. The shaded part of (b) is $R_{\delta}^M$.}\label{fig:j5}
	\end{figure}
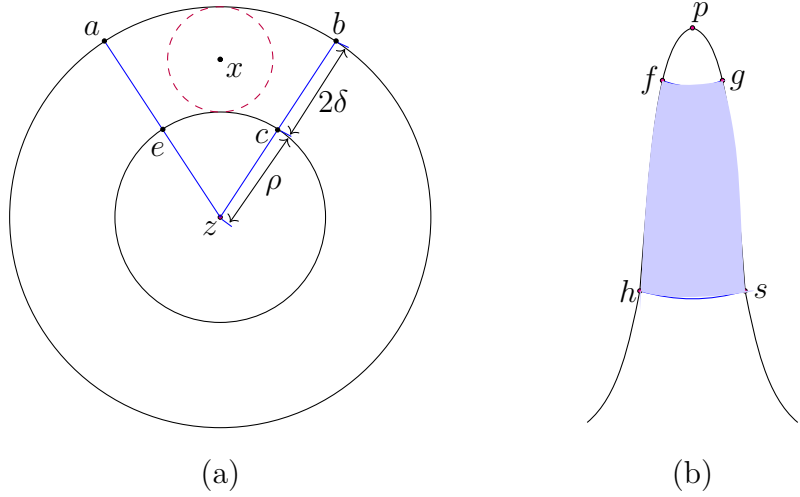
	\begin{proof}\,In $\R^4:=\{(x_1,x_2,x_3,x_4):x_1,x_2,x_3,x_4\in \R\}$, let $x_2=x_3=0$ and look at the $x_1x_4$-plane. 
		Let 
			\begin{align}\label{eq:inf}
				x_4=x_4(x_1):=e^{-1/\sqrt{1-k^2|x_1-k|^2}},
				\end{align}
		and let  $p_k:=(k,0,0,1/e)\in M$ satisfy \eqref{eq:inf}. Then $x_4'|_{p_k}=0$ and $x_4''|_{p_k}=-k^2/e$. Hence
		$$K|_{p_k}=\frac{|x_4''|}{(1+(x_4')^2)^{3/2}}=\frac{k^2}{e},$$
		where $K|_{p_k}$ is the curvature of the curve $x_4(x_1)$ at ${p_k}$.
		Then $K_M({p_k})=k^4/e^2$. As $k$ can be arbitrarily large, the sectional curvature of $M$ is not bounded above.
		
		Let $(U_i,\varphi_i)$ be a coordinate chart with  $p_i\in \supp(\mu)$.	For simplicity, we denote $(U_i,\varphi_i)$ by $(U,\varphi)$ and $p_i$ by $p$. Let $B(z,\epsilon):=\varphi(B^M(p,\epsilon))$. For each $ x\in B(z,\epsilon)\backslash \{z\}$, there exist positive constants $\rho$ and $\delta$ such that $B(x,\delta)\subseteq  B(z,\rho+2\delta)\setminus B(z,\rho)$ and $\rho+2\delta\leq \epsilon$.  Let $R_{\delta}:=B(z,\rho+2\delta)\backslash B(z,\rho)$. Let $a$ (resp. $e$) and $b$ (resp. $c$) be the intersections of  two rays with $\partial B(z,\rho+2\delta)$ (resp. $\partial B(z,\rho)$). Note that  $|ez|=|cz|=\rho$, $|ae|=|bc|=2\delta$ (see Figure \ref{fig:j5}(a)). Using the normal coordinate map $\varphi$, we define $R_{\delta}^M:=\varphi^{-1}(B(z,\rho+2\delta))\backslash \varphi^{-1}(B(z,\rho))$. Then there exist points $\alpha, \beta, \eta, \xi\in \overline{R_{\delta}^M}$ such that $\alpha$, $\beta$, $\eta$, $\xi$ satisfy \eqref{eq:inf}, $L(\gamma_{p\eta})=L(\gamma_{p\xi})=\rho$, and $L(\gamma_{\alpha\eta})=L(\gamma_{\beta\xi})=2\delta$ (see  Figure \ref{fig:j5}(b)). Since $x_4=e^{-1/\sqrt{1-k^2|\textbf{x}-\textbf{y}_k|^2}}$, we have $$|\textbf{x}-\textbf{y}_k|=\frac{\sqrt{(\ln x_4)^2-1}}{k\ln x_4}.$$
We define the function $r(x_4):=\sqrt{(\ln x_4)^2-1}/(|k\ln x_4|)$ on $M$. Now fix $x_4$. If $k$ tends to $\infty$, then $r(x_4)$ tends to $0$. Hence for each integer $k>0$, there exists $m>0$ such that 
		\begin{align}\label{eq:hs}
			L(\gamma_{\alpha\beta})=\delta^m,
			\end{align}
		where $m$ is proportional to $k$, and $\delta\in(0,1)$. Let $\gamma_{\alpha\eta}$ be the admissible curve from $\alpha$ to $\eta$ satisfying \eqref{eq:inf} and let 
		$$\sigma:=\lfloor \delta^{1-m}\rfloor+1, \quad\alpha:=(t_1^1,t_1^2,t_1^3,h_1),\quad \text{and}\quad \eta:=(t_\sigma^1,t_\sigma^2,t_\sigma^3,h_\sigma).$$
			 For $1\leq i\leq \sigma$, let $t_i:=(t_i^1,t_i^2,t_i^3,h_i)$ and $t_{i+1}:=(t_{i+1}^1,t_{i+1}^2,t_{i+1}^3,h_{i+1})$ such that $t_i, t_{i+1}\in\gamma_{\alpha\eta}$ and $L(\gamma_{t_it_{i+1}})=2\delta^{m}$. Note that $t_1=\alpha$ and $t_\sigma=\eta$.
		For $1\leq i\leq \sigma$, let $R_{\delta,i}^M:=(\{x_4\leq h_{i+1}\}\cap R_{\delta}^M)\backslash(\{x_4<h_{i}\}\cap R_{\delta}^M)$ and let $m\in [1,\sigma]$ such that 
		\begin{align*}
			{\rm Vol}(R^M_{\delta,m}):=\max\big\{{\rm Vol}(R_{\delta,i}^M): i\in [1,\sigma]\big\}.
		\end{align*}
		Let $w$ be the center of $R^M_{\delta,m}$. For any point $u\in R^M_{\delta,m}$, there exists a point $v\in R^M_{\delta,m}$ such that $v$ and $u$ are on the same horizontal line, and $v$ and $w$ are on the same vertical line.
		 By  using equations $L(\gamma_{t_it_{i+1}})=2\delta^{m}$ and \eqref{eq:hs}, we have
	$$d_M(v,w)\leq \delta^m\qquad \text{and}\qquad d_M(v,u)\leq  \delta^m.$$ 
		Hence $d_M(u,w)\leq 2\delta^m.$
	Thus there exists $B^M(w,2\delta^m)$ such that ${R}^M_{\delta,m}\subseteq B^M(w,2\delta^m)$.
		Hence 
		\begin{align}\label{eq:R1}
	\mu( R_{\delta}^M)/\delta^{1-m}\leq\mu( {R}^M_{\delta,m})\leq \mu(B^M(w,2\delta^m)).
		\end{align}
		We know that 
		\begin{align}\label{eq:eq}
			\widetilde{\mu}(R_{\delta})&=\widetilde{\mu}(B(z,\rho+2\delta))- \widetilde{\mu}(B(z,\rho))\nonumber\\
			&=\mu\circ\varphi^{-1}(B(z,\rho+2\delta))- \mu\circ\varphi^{-1}(B(z,\rho))\,\,\,\,(\text{ definition of}\,\,\widetilde{\mu} )\nonumber\\
			&=\mu(R_{\delta}^M)\qquad\qquad\qquad\qquad\qquad\qquad\quad\, \qquad(\text{ definition of}\,\, R_{\delta}^M)
			\end{align}
		and
			\begin{align*}
			\lim _{r \rightarrow 0^{+}} \sup _{x \in \R^n ; \delta \in(0,r)} \delta^{-1/ 2}\, \widetilde{\mu}\left(B(x,\delta)\right)^{1 / q}=0,
		\end{align*}	
	where $2<q<6$.
		Thus
	\begin{align}\label{eq:545}
			(2\delta^m)^{1-3/ 2} \mu\left(B^M(w,2\delta^m)\right)^{1 / q}
			&\geq (2\delta^m)^{-1/ 2} (\mu( R_{\delta}^M )/\delta^{1-m})^{1 / q} \qquad\qquad\qquad(\text{by \eqref{eq:R1}})\nonumber\\
			&=c_1\delta^{-m/2+(m-1)/q}(\widetilde{\mu}(R_{\delta}))^{1/q}\qquad\qquad\quad\,\,\, \quad(\text{by \eqref{eq:eq}})\nonumber\\
			&=c_1\delta^{-m/2+(m-1)/q}(4\pi/3(\rho+2\delta)^3-\rho^3)^{1 / q}\nonumber\\
			&=c_2(\delta^{-mq/2+m-1}(6\rho^2\delta+12\rho\delta^2+8\delta^3))^{1 / q}\nonumber\\
			&=c_2(6\rho^2\delta^{m(1-q/2)}+12\rho\delta^{1+m(1-q/2)}+8\delta^{2+m(1-q/2)})^{1 / q}.
		\end{align}
		If $k$ tends to $\infty$, i.e., $m$ tends to $\infty$, then $\delta^m$ tends to $0$. Since $2<q<6$, we have $1-q/2<0$. It follows from \eqref{eq:545} that
		\begin{align*}
			\lim _{r \rightarrow 0^{+}} \sup _{w \in M ;  2\delta^m\in(0, r)} (2\delta^m)^{1-3/ 2}\, \mu\left(B^M(w,2\delta^m)\right)^{1 / q}=+\infty,
		\end{align*}
		and thus \eqref{ex:11} holds.
		\end{proof}
	
If  $\Omega$ is an unbounded open set of a Riemannian manifold $M$, then (PID), (MPID), (PIE), and (MPIE) may not hold. In this case, we cannot apply these four important inequalities to prove Theorem \ref{thm:8.13}, Theorem \ref{thm:8.14}, and Proposition \ref{prop:9.1}. We give the following counterexamples for (PIE) and (MPIE). 	
	
\begin{exam}\label{ex:2}
Let $M=\R$.
	\begin{enumerate}
		\item [(a)] (PID) fails on $\R$. 
		\item [(b)]  (MPID) fails on $\R$.
\end{enumerate}
\end{exam}
\begin{proof}(a)	Let  $u_k\in  W_0^{1,2}(\R)$ be defined as 
$$u_k:=\left\{
\begin{aligned}
	&1,\qquad\qquad\qquad\qquad\qquad\,	[-1,1], \\
	&x/(1-k)+k/(k-1),\quad(1,k), \\ 
	&x/(k-1)+k/(k-1),\quad(-k,-1), \\ 
	&0,\qquad\qquad\qquad\qquad\qquad\,\text{otherwise}.
\end{aligned}
\right.$$
Then
\begin{align*}
	&\int_\R|\nabla u_k|^2dx\leq  2\int_{-k}^{-1}\Big|\frac{1}{k-1}\Big|^2dx=\frac{2}{k-1}\quad\text{and}\quad
		\int_\R|u_k|^2dx\geq \int_{-1}^11^2dx=2.
\end{align*}
Hence \eqref{eq:PI} would imply
$$\int_\R|u_k|^2dx\leq C\int_\R|\nabla u_k|^2dx\leq \frac{2C}{k-1}.$$
Letting $k\rightarrow\infty$ yields $2\leq 0$, a contradiction.

(b)	Next, let $\mu(\R)=1$ and let $u_k\in  W_0^{1,2}(\R)$ be defined as 
$$u_k:=\left\{
\begin{aligned}
	&1,\qquad \quad\qquad\,  [-k,k], \\
	&x/k+2,\quad\quad \, (-2k,-k),\\ 
	&-x/k+2,\quad (k,2k), \\ 
	&0,\qquad\qquad \quad \, \text{otherwise}.
\end{aligned}
\right.$$
Then 
\begin{align*}
	\int_\R|\nabla u_k|^2dx\leq  2\int_{-2k}^{-k}\Big|\frac{1}{k}\Big|^2dx=\frac{2}{k}\quad\text{and}\quad
	\int_\R|u_k|^2d\mu\geq\int_{-k}^k|u_k|^2d\mu=\mu(-k,k).
\end{align*}
Hence \eqref{eq:PIN} would imply
$$\int_\R|u_k|^2d\mu\leq C\int_\R|\nabla u_k|^2d\nu\leq \frac{2C}{k}.$$
Letting $k\rightarrow\infty$ yields $1\leq 0$, a contradiction.
\end{proof}	
	
		\section{Hodge theorem for Laplacians on space of $k$-forms on compact Riemannian manifolds}\label{S:Hodge}
\setcounter{equation}{0}
We first summarize some notations and definitions that will be used throughout this section. Then, by assuming that $\mu$ satisfies the Poincar\'e inequality, we define the Laplacian.  We generalize the compact embedding theorem of Maz'ja\cite{Maz'ja_1985} to $k$-forms on a Riemannian manifold. Moreover, we use $\underline{\dim}_{\infty}(\mu)$ to study the compactness of the embeddings $\operatorname{dom}(\mathcal{E}_D^k)\hookrightarrow L^{2}(\bigwedge^kT^*\Omega, \mu)$ and $\operatorname{dom}(\mathcal{E}_E^k)\hookrightarrow L^{2}(\bigwedge^kT^*M, \mu)$. We show that the classical Hodge theorem holds when $\Omega=M$ and the measure $\mu$ is absolutely continuous with a positive and bounded density.
\subsection{Notation in this section}\label{S:No}
We summarize some notations needed in the paper; details can be found in \cite{Tu_2011,Petersen_2016,Warner_1983}.  Let $(M,g)$ be a Riemannian $n$-manifold with Riemannian metric $g$.  Denote the {\em cotangent space} of $M$ at $p$ by $T^*_pM$. Let $${\bigwedge}^k T^*_pM:=\underbrace{T^*_pM\wedge\cdots\wedge T^*_pM}_{k\,\,\text{times}}$$
and let ${\bigwedge}^k T^*M:=\bigcup_{p\in M}{\bigwedge}^k T^*_pM$ denote the {\em $k$-th exterior power of the cotangent bundle}. The space of sections (resp. $C^\infty$ sections) of $\bigwedge^kT^*M$ is denoted by $\Gamma(\bigwedge^kT^*M)$ (resp. $\Gamma^\infty(\bigwedge^kT^*M)$).  Elements in  $\Gamma(\bigwedge^kT^*M)$ (resp. $\Gamma^\infty(\bigwedge^kT^*M)$) are called {\em  $k$-forms} (resp. {\em smooth $k$-forms}) on $M$.  For simplicity, we let
$$\mathcal{I}_{k,n} := \{I:=(i_1,\ldots,i_k)|1\leq i_1<i_2<\cdots<i_k\leq n\}$$
be the set of all strictly ascending multi-indices between 1 and $n$ of length $k$. Let $(U,(x^i))$ be a coordinate chart on $M$. Then on $U$, every $k$-form $\omega\in \Gamma(\bigwedge^kT^*U)$ can be written as a linear combination  
\begin{align}\label{eq:1.1*}
	\omega=\sum_{I\in \mathcal{I}_{k,n}} \omega_I\,dx^I,
\end{align}
where the coefficients $\omega_I:U\rightarrow \R$ are the components of $\omega$ in the coordinate chart and  $\{dx^I\}$ is an orthonormal frame for $\bigwedge^kT^*U$. A $k$-form $\omega=\sum_{I\in \mathcal{I}_{k,n}} \omega_I\,dx^I$ on $U$ is smooth if and only if the coefficient
functions $\omega_I$ are smooth on $U$; a $k$-form $\omega$ on $M$ is smooth if and only if its restriction to any chart $(U,(x^i))$ is  smooth (see, e.g., \cite[Proposition 18.8]{Tu_2011}). Let $(U,(x^i))$ be a coordinate chart on $M$. The {\em exterior derivative} $d:\Gamma^\infty(\bigwedge^kT^*M)\rightarrow \Gamma^\infty(\bigwedge^{k+1}T^*M)$ is defined locally on $U$ by the formula
\begin{align*}
	d\omega:=\sum_{r=1}^n\sum_{I\in \mathcal{I}_{k,n}}\frac{\partial \omega_I}{\partial x^r}dx^r\wedge dx^I.
\end{align*}
If $k\geq n$, we let $d\omega=0$. Note that $(d\omega)_p$ is independent of the choice of $U$
containing $p$ (see, e.g., \cite[Section 19]{Tu_2011}).

Define the {\em Hodge star operator} $\star:\bigwedge^kT_p^*M\rightarrow \bigwedge^{n-k}T_p^*M$ as 
$$\star(e^{i_1}\wedge\cdots\wedge e^{i_k})=e^{j_1}\wedge\cdots\wedge e^{j_{n-k}},\qquad 0\leq k\leq n,$$
where $e^{i_1}\wedge\cdots\wedge e^{i_k}\wedge e^{j_1}\wedge\cdots\wedge e^{j_{n-k}}=e^{1}\wedge\cdots\wedge e^{n}$ and $\{e^{1}\wedge\cdots\wedge e^{n}\}$ is a positively oriented orthonormal basis of $T_p^*M$. 
For $\omega,\eta\in \Gamma^\infty(\bigwedge^kT^*M)$, define the {\em pointwise inner product} of $\omega$ and $\eta$ as
$$\langle \omega, \eta\rangle :=\star(\omega\wedge\star\eta),\,\,\text{i.e.,}\,\,\langle \omega(p), \eta(p)\rangle :=\star(\omega(p)\wedge\star\eta(p))\quad\text{for all}\,\,p\in M.$$
For $\omega\in \Gamma^\infty(\bigwedge^kT^*M)$, let $|\omega|:=\langle \omega, \omega\rangle ^{1/2}$ be {\em the pointwise  norm} of $\omega$. Define the {\em inner product on $\Gamma^\infty(\bigwedge^kT^*M)$} as
\begin{align*}
	\left\langle \omega, \eta\right\rangle_M:=\int_M \left\langle \omega, \eta\right\rangle \,d\nu,\qquad {\magenta}\text{for all}\,\, \omega,\eta\in \Gamma^\infty({\bigwedge}^kT^*M).
\end{align*}
Define the {\em adjoint} $d^*:\Gamma^\infty(\bigwedge^{k+1}T^*M)\rightarrow \Gamma^\infty(\bigwedge^kT^*M)$
of $d$ via the formula
\begin{align*}
	\langle d^*\omega, \eta\rangle_M=\langle \omega, d\eta\rangle_M
\end{align*}
for all $\eta\in \Gamma^\infty(\bigwedge^kT^*M)$, where $0\leq k\leq n$.
Define the {\em support} of $\omega\in \Gamma^\infty(\bigwedge^kT^*M)$ as the closure of the set $\{p\in M: \omega(p)\neq 0\}$.

Let $\Omega\subseteq M$ be an open subset.  Note that if $\Omega=M$, then $\partial \Omega=\emptyset$. Let $\Gamma^C(\bigwedge^kT^*\Omega)$, $\Gamma^{\infty}(\bigwedge^kT^*\Omega)$, and $\Gamma_c^{\infty}(\bigwedge^kT^*\Omega)$ denote, respectively,  continuous $k$-forms,  $C^{\infty}$ $k$-forms, and $C^{\infty}$ $k$-forms with compact support.

Let $L^2(\bigwedge^kT^*\Omega)$ denote the space of measurable $k$-forms on $\Omega$ satisfying $\int_{\Omega}|\omega|^2\,d\nu<\infty$, with the inner product $\langle\cdot,\cdot\rangle_{L^2(\bigwedge^kT^*\Omega)}$ and associated norm $\|\cdot\|_{L^2(\bigwedge^kT^*\Omega)}$ defined respectively as 
\begin{align*}
	\left\langle \omega, \eta\right\rangle_{L^2(\bigwedge^kT^*\Omega)}:=\int_\Omega \left\langle \omega, \eta\right\rangle \,d\nu\quad\text{and}\quad \|\omega\|_{L^2(\bigwedge^kT^*\Omega)}:=\Big(\int_{\Omega}|\omega|^2\,d\nu\Big)^{1/2},
\end{align*}
where $\omega$ is measurable with respect to the Riemannian volume form and $|\omega|$ is defined a.e. in $\Omega$. Let $\mu$ be a positive finite Borel measure on $M$ with  ${\rm supp}(\mu)\subseteq\overline{\Omega}$. Let $L^2\big(\bigwedge^kT^*\Omega,\mu\big)$ denote the space of $k$-forms  measurable with respect to $\mu$ on $\Omega$ satisfying $\int_{\Omega}|\omega|^2\,d\mu<\infty$, with the inner product $\langle\cdot,\cdot\rangle_\mu$ and associated norm $\|\cdot\|_{\mu}$ defined respectively as
\begin{align*}
	\left\langle \omega, \eta\right\rangle_{\mu}:=\int_\Omega \left\langle \omega, \eta\right\rangle \,d\mu\quad\text{and}\quad \|\omega\|_{\mu}:=\Big(\int_{\Omega}|\omega|^2\,d\mu\Big)^{1/2}.
\end{align*}
Then $L^2\big(\bigwedge^kT^*\Omega,\mu\big)$ is a Hilbert space.
We refer the reader to \cite{Morrey_1966,Scott_1995} for the following  definitions. We write $\Omega'\subset\subset \Omega$ if $\Omega'$ is an open subset of $\Omega$ and its closure $\overline{\Omega'}$ is contained on $\Omega$.  Define
\begin{align*}
	L^1_{\rm loc}\Big({\bigwedge}^kT^*\Omega\Big):=\Big\{u\in \Gamma\Big({\bigwedge}^kT^*\Omega\Big):u\,\, \text{is measurable on}\,\,\Omega \,\, \text{and for any}&\\ \Omega'\subset \subset\Omega, u\in L^1\Big({\bigwedge}^kT^*\Omega'\Big)&\Big\}.
\end{align*}
Given a $k$-form $\omega=\sum_{I\in \mathcal{I}_{k,n}}\omega_Idx^I\in L^1_{\rm loc}(\bigwedge^kT^*M)$,  we say that $\omega$ has a {\em generalized gradient} if, for each coordinate system, the pullbacks of the coordinate functions $\omega_I$ have generalized gradients in the sense that for all $\varphi\in C^\infty_c(\R^n)$, 
\begin{align}\label{eq:g1*}
	\int_{\R^n}\omega_I(x)\frac{\partial^\alpha\varphi}{\partial x^\alpha}(x)dx=(-1)^{|\alpha|}\int_{\R^n} \frac{\partial^\alpha \omega_I}{\partial x^\alpha}(x)\varphi(x)dx.
\end{align}
Let
\begin{align}\label{eq:so1*}
	W\Big({\bigwedge}^kT^*\Omega\Big):=\Big\{\omega\in L^1_{\rm loc}\Big({\bigwedge}^kT^*\Omega\Big):\omega\,\,\text{has a generalized gradient}\Big\}.
\end{align}
Now choose an
atlas $\mathcal{A}$ for $\Omega$. For $(U, \varphi):=(U,(x^1,\ldots,x^n))\in \mathcal{A}$, write $\omega=\sum_{I\in \mathcal{I}_{k,n}}\omega_Idx^I\in L^1_{\rm loc}(\bigwedge^kT^*\Omega)$. Define the
{\em local gradient modulus} as
\begin{align*}
	|\nabla_U\omega(x)|^2:=\sum_{I,l}\bigg|\frac{\partial \omega_I}{\partial x^l}(x)\bigg|^2
\end{align*}
and the {\em global gradient modulus} as
\begin{align*}
	|\nabla\omega(x)|^2:=\sum_{U\in \mathcal{A}}\big|\nabla_U\omega(x)\big|^2.
\end{align*}
If  $\mathcal{A}$ is a locally finite
cover of $\Omega$, then we may define the {\em (classical) Sobolev space} as
\begin{align*}
	W^{1,2}_{\mathcal{A}}\Big({\bigwedge}^kT^*\Omega\Big):=\Big\{\omega\in \Gamma\Big({\bigwedge}^kT^*\Omega\Big):|\omega|,\,|\nabla\omega|\in L^2(\Omega)\Big\},
\end{align*}
with norm $\|\omega\|_{L^2(\bigwedge^kT^*\Omega)}+\|\nabla\omega\|_{L^2(\bigwedge^kT^*\Omega)}$.

We now specify a class of atlases that yield equivalent Sobolev spaces. We say that 
a coordinate chart $(U, \varphi)$ is {\em regular}, if  $\overline{U}$ is compact and there is another
chart $(V, \psi)$ with $\overline{U}\subseteq V$ and $\psi|_U=\varphi$.

Let $M$ be a compact Riemannian $n$-manifold. We may select finitely
many systems $\{(V_i,\varphi_i)\}_{i=1}^N$ satisfying $M\subseteq \cup_{i=1}^NV_i$.  By relabeling the $V_i$, we may choose a regular atlas $\{(U_i,\varphi_i|_{U_i})\}_{i=1}^N$ on $M$ such that $\overline{U}_{i}\subseteq V_{i}$ and $M\subseteq\cup_{i=1}^N U_i$.	

For $\omega=\sum_{I\in \mathcal{I}_{k,n}}\omega_Idx^I\in W({\bigwedge}^kT^*\Omega)$, where $0\leq k<n$, we define its 
{\em exterior derivative $d\omega$} as the $(k+1)$-form defined locally as
\begin{align}\label{eq:d*}
	d\omega:=\sum_{r=1}^n\sum_{I\in \mathcal{I}_{k,n}}\frac{\partial \omega_I}{\partial x^r}dx^r\wedge dx^I,
\end{align}
where $\partial \omega_I/\partial x^r$ is defined as in \eqref{eq:g1*}. For $k\geq n$, we let $d\omega=0$. 
For  $\omega=\sum_{I\in \mathcal{I}_{k,n}}\omega_Idx^I\in  W({\bigwedge}^kT^*\Omega)$ and $k\geq 1$, we 
define its adjoint $d^*\omega$ by the condition that 
\begin{align}\label{eq:ad*}
	\langle d^*\omega, \eta\rangle_{L^2(\bigwedge^kT^*\Omega)}=\langle \omega, d\eta\rangle_{L^2(\bigwedge^kT^*\Omega)}\qquad \text{for all}\,\,\eta\in W\Big({\bigwedge}^kT^*\Omega\Big).
\end{align}
If $\omega$ is a $0$-form, we define $d^*\omega =0$.
The {\em Sobolev space of $k$-forms} on $M$ is defined as
\begin{align*}
	W^{1,2}\Big({\bigwedge}^kT^*\Omega\Big):=\Big\{\omega\in W\Big({\bigwedge}^kT^*\Omega\Big)\bigcap L^2\Big({\bigwedge}^kT^*\Omega\Big):d\omega\in L^2\Big({\bigwedge}^{k+1}T^* \Omega\Big)\\\text{and}\,\,d^*\omega\in L^2\Big({\bigwedge}^{k-1}T^* \Omega\Big)\Big\}
\end{align*}
equipped with the inner product $\langle\cdot,\cdot\rangle_{W^{1,2}(\bigwedge^kT^*\Omega)}$ defined as
$$ \left\langle \omega, \eta\right\rangle_{W^{1,2}(\bigwedge^kT^*\Omega)} :=\left\langle \omega, \eta\right\rangle_{L^2(\bigwedge^kT^*\Omega)}+\left\langle d\omega, d\eta\right\rangle_{L^2(\bigwedge^kT^*\Omega)}+\left\langle d^*\omega, d^*\eta\right\rangle_{L^2(\bigwedge^kT^*\Omega)}.$$
The norm associated to $\langle\cdot,\cdot\rangle_{W^{1,2}(\bigwedge^kT^*\Omega)}$ is defined as
\begin{align*}
	\|u\|_{W^{1,2}(\bigwedge^kT^*\Omega)}:=\Big(\int_{\Omega}|u|^2\,d\nu+\int_{\Omega}|d u|^2\,d\nu+\int_{\Omega}|d^* u|^2\,d\nu\Big)^{\frac{1}{2}}.
\end{align*}
Let $W^{1,2}_0(\bigwedge^kT^*\Omega)$ denote the closure of $\Gamma^\infty_c(\bigwedge^kT^*\Omega)$ in the $W^{1,2}(\bigwedge^kT^*\Omega)$ norm.

It is known that regular atlases yield classical Sobolev spaces $W^{1,2}_{\mathcal{A}}(\bigwedge^kT^*\Omega)$  equivalent to $W^{1,2}(\bigwedge^kT^*\Omega)$ (see, e.g, \cite{Scott_1995, Morrey_1966}). 

\subsection{Kre\u{\i}n-Feller operators on  space of $k$-forms} \label{S:frac}
Let $M$ be a compact Riemannian $n$-manifold and let $\Omega \subseteq M$ be an open set. Let $\mu$ be a positive bounded regular Borel measure on $M$ with ${\rm supp}(\mu)\subseteq\overline{\Omega}$.  Our method of defining a   Laplacian on $L^2\big(\bigwedge^kT^*\Omega,\mu\big)$ associated with $\mu$ is similar to that in Section \ref{S:Pre*}. We only indicate the modifications. We recall the following definitions of {\em Poincar\'e inequalities} depending on  boundary conditions (see, e.g., \cite{Arnold-Falk-Winther_2006}):\\
	\noindent ($\partial\Omega\neq \emptyset$)   If there exists a constant $C>0$ such that for all $u\in \Gamma^{\infty}_c(\bigwedge^kT^*\Omega)$,
	\begin{align*}
		\int_\Omega |u|^2\,d\nu\leq C\int_\Omega \big(|d u|^2+ |d^*u|^2\big)\,d\nu,
	\end{align*}
	then we say that the {\em Poincar\'e inequality holds for the case $\partial\Omega\neq\emptyset$ (PID*)}.
	
	\noindent($\partial \Omega=\emptyset$)  If there exists a constant $C>0$ such that for all  $u\in \Gamma^{\infty}_c(\bigwedge^kT^*M)\bigcap (\widetilde{\mathcal{H}}^k(M))^{\perp}$,
	\begin{align*}
		\int_M |u|^2\,d\nu\leq C\int_M \big(|d u|^2+ |d^*u|^2\big)\,d\nu,
	\end{align*}
	then we say that the {\em Poincar\'e inequality holds for the case $\partial\Omega=\emptyset$ (PIE*)}.

Next we introduce the following {\em Poincar\'e inequalities for the measure $\mu$}, depending on the boundary conditions:
	
\noindent($\partial\Omega \neq \emptyset$) If there exists a constant $C>0$ such that for all $u\in \Gamma^{\infty}_c(\bigwedge^kT^*\Omega)$,
	\begin{align}\label{eq:PI*}
		\int_\Omega |u|^2\,d\mu\leq C\int_\Omega \big(|d u|^2+ |d^*u|^2\big)\,d\nu,
	\end{align}
	then we say that the {\em Poincar\'e inequality holds for the measure $\mu$ on $k$-forms and the case $\partial\Omega\neq\emptyset$ (MPID*)}.\\
\noindent($\partial\Omega=\emptyset$) If there exists a constant $C>0$ such that for all $u\in \Gamma^{\infty}_c(\bigwedge^kT^*M)\bigcap (\widetilde{\mathcal{H}}^k(M))^{\perp}$,
	\begin{align}\label{eq:PIN*}
		\int_M |u|^2\,d\mu\leq C\int_M \big(|d u|^2+ |d^*u|^2\big)\,d\nu,
	\end{align}
		then we say that the {\em Poincar\'e inequality holds for the measure $\mu$ on $k$-forms and the case $\partial\Omega=\emptyset$ (MPIE*)}.

(MPID*) (resp. (MPIE*))  implies that each equivalence class $u \in W^{1,2}_0(\bigwedge^k\Omega)$ (resp. $u \in W^{1,2}(\bigwedge^kM)\bigcap (\widetilde{\mathcal{H}}^k(M))^{\perp}$) contains a unique (in the $L^2\big(\bigwedge^k\Omega,\mu\big)$ sense) member $\overline{u}$ that belongs to $L^{2}\big(\bigwedge^k\Omega, \mu\big)$ (resp. $L^{2}\big(\bigwedge^kM, \mu\big)$) and satisfies both conditions below:
\begin{enumerate}
	\item[(1)] There exists a sequence $\left\{u_m\right\}$ in $\Gamma_{c}^{\infty}(\bigwedge^k\Omega)$ (resp. $\Gamma_{c}^{\infty}(\bigwedge^kM)\bigcap (\widetilde{\mathcal{H}}^k(M))^{\perp}$) such that $u_m \rightarrow \overline{u}$ in $W^{1,2}_0(\bigwedge^k\Omega)$ (resp. $W^{1,2}(\bigwedge^kM)\bigcap (\widetilde{\mathcal{H}}^k(M))^{\perp}$) and $u_m \rightarrow \overline{u}$ in $L^{2}(\bigwedge^k\Omega, \mu)$ (resp. $L^{2}(\bigwedge^kM, \mu)$);
	\item[(2)] $\overline{u}$ satisfies the inequality in \eqref{eq:PI*} (resp. \eqref{eq:PIN*}).
\end{enumerate}

	We call $\overline{u}$ the {\em $L^2\big(\bigwedge^k\Omega,\mu\big)$-representative} of $u$ (resp. {\em $L^2\big(\bigwedge^kM,\mu\big)$-representative} of $u$).  Assume  $\mu$ satisfies (MPID*) (resp. (MPIE*)) and define a mapping $\iota_D: W^{1,2}_0(\bigwedge^kT^*\Omega) \rightarrow L^2\big(\bigwedge^kT^*\Omega,\mu\big)$ (resp. $\iota_W: W^{1,2}(\bigwedge^kT^*M) W^{1,2}_0(\bigwedge^k\Omega)\rightarrow L^2\big(\bigwedge^kT^*M,\mu\big)$) by
$$
\iota_D(u)=\overline{u}\qquad (\text{resp.}\,\,\iota_W(u)=\overline{u}).
$$
We consider the subspace $\mathcal{N}_D^k$ of $W^{1,2}_0(\bigwedge^kT^*\Omega)$ defined as
$$
\mathcal{N}_D^k:=\left\{u \in W^{1,2}_0\Big({\bigwedge}^kT^*\Omega\Big):\|\iota_D(u)\|_{\mu}=0\right\}.$$
Let $({\mathcal{N}_D^k})^{\perp}$ be the orthogonal complement of $\mathcal{N}_D^k$ in $W^{1,2}_0(\bigwedge^kT^*\Omega)$.  Similarly, we can define $\mathcal{N}_W^k$ and $({\mathcal{N}_W^k})^{\perp}$. 

We consider a nonnegative bilinear form $\mathcal{E}_D^k(\cdot, \cdot)$ (resp. $\mathcal{E}_W^k(\cdot, \cdot)$) on $L^2\big(\bigwedge^kT^*\Omega,\mu\big)$ (resp. $L^2\big(\bigwedge^kT^*M,\mu\big)$ ) defined as
\begin{align}\label{eq(1.1*)}
	\mathcal{E}_D^k(u, v):=\int_\Omega \langle d u,d v\rangle + \langle d^* u,d^* v\rangle \,d\nu\quad\Big(\text{resp. } \mathcal{E}_W^k(u, v):=\int_M \langle d u,d v\rangle + \langle d^* u,d^* v\rangle \,d\nu\Big )\nonumber\\
\end{align}
with \textit{domain} $\operatorname{dom}(\mathcal{E}_D^k)=({\mathcal{N}_D^k})^{\perp}$ (resp. $\operatorname{dom}(\mathcal{E}_W^k)=({\mathcal{N}_W^k})^{\perp}$). Let 
 $\mathcal{E}_E^k(\cdot, \cdot)$ be a nonnegative bilinear form on  $L^2\big(\bigwedge^kT^*M,\mu\big)$ defined as
\begin{align}\label{eq:new1}
	\mathcal{E}_E^k(u, v):=\int_M \langle d u,d v\rangle + \langle d^* u,d^* v\rangle \,d\nu
\end{align}
and let $\operatorname{dom}(\mathcal{E}_E^k):=  	\widetilde{\mathcal{H}}^k(M) \oplus \operatorname{dom}(\mathcal{E}_W^k)$.
Let 
\begin{align}\label{eq(2.1*)}
	\mathcal{E}_*^{D,k}(u, v):=&\int_{\Omega} \big(\langle d u, d v\rangle + \langle d^* u, d^* v\rangle  \big)\,d\nu+\int_{\Omega} \langle u, v\rangle  \,d \mu\quad\text{and}\nonumber\\
	\mathcal{E}_*^{E,k}(u, v):=&\int_{M} \big(\langle d u, d v\rangle + \langle d^* u, d^* v\rangle  \big)\,d\nu+\int_{M} \langle u, v\rangle  \,d \mu
\end{align}
be nonnegative bilinear forms in $L^2\big(\bigwedge^kT^*\Omega,\mu\big)$ and $L^2\big(\bigwedge^kT^*M,\mu\big)$, respectively.
Then $\mathcal{E}_*^{D,k}(\cdot, \cdot)$ (resp. $\mathcal{E}_*^{E,k}(\cdot, \cdot)$) is an inner product on $\operatorname{dom}(\mathcal{E}_D^k)$ (resp. $\operatorname{dom}(\mathcal{E}_E^k)$). 

To prove Proposition \ref{prop:2.3*}, we need the following lemma. It can be proved by using Lemma \ref{lem:2.2} and partition of unity; we omit the proof.

\begin{lem}\label{lem:2.2*} Let $M$ be a compact Riemannian $n$-manifold and let $\Omega\subseteq M$ be an open subset. Let $\mu$ be a positive finite Borel measure on $M$ such that ${\rm supp}(\mu) \subseteq \overline{\Omega}$ and $\mu(\Omega)>0$. Then $\Gamma^\infty_c(\bigwedge^kT^*\Omega)$ is dense in $L^2\big(\bigwedge^kT^*\Omega,\mu\big)$.
\end{lem}

\begin{prop}\label{prop:2.3*}
	Let $M$, $\Omega$, and $\mu$ be as in Lemma \ref{lem:2.2*}. Let $\mathcal{E}_D^k$, $\mathcal{E}_E^k$, $\mathcal{E}_*^{D,k}$, and $\mathcal{E}_*^{E,k}$ be the quadratic forms defined as in \eqref{eq(1.1*)} and (\ref{eq(2.1*)}). 
	\begin{enumerate}
		\item[(a)] Assume  that $\partial\Omega\neq \emptyset$ and (MPID*) holds. Then $\operatorname{dom}(\mathcal{E}_D^k)$ is dense in $L^2\big(\bigwedge^kT^*\Omega,\mu\big)$. Moreover,  $\left(\mathcal{E}_*^{D,k}, \operatorname{dom}(\mathcal{E}_D^k)\right)$ is a Hilbert space.
		\item[(b)] Assume  that $\partial\Omega= \emptyset$, and (MPIE*) holds. Then $\operatorname{dom}(\mathcal{E}_E^k)$ is dense in $L^2\big(\bigwedge^kT^*M,\mu\big)$. Moreover,  $\left(\mathcal{E}_*^{E,k}, \operatorname{dom}(\mathcal{E}_E^k)\right)$ is a Hilbert space.
		\end{enumerate}
\end{prop}
\begin{proof} The proof of this proposition is similar to that of Proposition \ref{prop:2.3} and is omitted.
\end{proof}

If $\mu$ satisfies (MPID*), Proposition \ref{prop:2.3*} implies that the quadratic form $(\mathcal{E}_D^k, \operatorname{dom}(\mathcal{E}_D^k))$ is closed on $L^2\big(\bigwedge^kT^*\Omega,\mu\big)$.  Hence it follows from standard theory that there exists a nonnegative self-adjoint operator $-\Delta^{D,k}_\mu$ on $L^2\big(\bigwedge^kT^*\Omega,\mu\big)$ with $\operatorname{dom}(\Delta^{D,k}_\mu) \subseteq \operatorname{dom}\Big({(\Delta^{D,k}_\mu)}^{1 / 2}\Big)=\operatorname{dom}(\mathcal{E}_D^k)$, such that
$$
\mathcal{E}_D^k(u, v)=\left\langle-{(\Delta^{D,k}_\mu)}^{1 / 2} u, -{(\Delta^{D,k}_\mu)}^{1 / 2} v\right\rangle_{\mu} \quad \text { for all } u, v \in \operatorname{dom}(\mathcal{E}_D^k).
$$
Moreover, $u \in \operatorname{dom}(\Delta^{D,k}_\mu)$ if and only if $u \in \operatorname{dom}(\mathcal{E}_D^k)$ and there exists $f \in L^2\big(\bigwedge^kT^*\Omega,\mu\big)$ such that 
\begin{align}\label{no1*}
	\mathcal{E}_D^k(u, v)=\langle f, v\rangle_{\mu} \quad \text{for all}\,\, v \in \operatorname{dom}(\mathcal{E}_D^k)
\end{align}
(see, e.g., \cite{Kigami_2001}). 	We call $\Delta^{D,k}_\mu$ the {\em Dirichlet Laplacian or Kre\u{\i}n-Feller operator on $k$-forms with respect to} $\mu$. Similarly, we can define $\Delta^{E,k}_\mu$.  If no confusion is possible, we will denote  $\Delta_{\mu}^{D,k}$ and $\Delta_{\mu}^{E,k}$ simply by $\Delta_{\mu}^k$;  we also denote  $\operatorname{dom}(\mathcal{E}_D^k)$ and $\operatorname{dom}(\mathcal{E}_E^k)$ simply by $\operatorname{dom}(\mathcal{E}^k)$. Note that for all $u \in \operatorname{dom}(\Delta^{k}_\mu)$ and $v \in \operatorname{dom}(\mathcal{E}^k)$,
\begin{align}\label{eq(2.2*)}
	\int_{\Omega} \big(\langle du, dv\rangle +\langle d^*u, d^*v\rangle  \big)\, d\nu=\mathcal{E}^k(u, v)=\langle -\Delta^{k}_\mu u, v\rangle_{\mu}.
\end{align}

The proofs of Proposition \ref{prop:2.4*} and Theorem \ref{thm:2.5*} are similar to those of \cite[Proposition 2.2]{Hu-Lau-Ngai_2006} and \cite[Theorem 2.3]{Hu-Lau-Ngai_2006}, respectively, and are omitted.
\begin{prop}\label{prop:2.4*}
	Let $M$, $\Omega$, and $\mu$ be defined as in Lemma \ref{lem:2.2*}.
	\begin{enumerate}
		\item[(a)] 	Assume  that $\partial\Omega\neq \emptyset$ and $\mu$ satisfies (MPID*). For $u \in \operatorname{dom}(\mathcal{E}_D^k)$ and $f \in L^{2}(\bigwedge^kT^*\Omega, \mu)$, the following conditions are equivalent:
		\begin{enumerate}
			\item[(i)] $u \in \operatorname{dom}(-\Delta^{D,k}_\mu)$ and $-\Delta^{D,k}_\mu u=f$;
			\item[(ii)] $-\Delta^k u=f d \mu$ in the sense of distribution; that is,  for any $v \in \Gamma^\infty_c(\bigwedge^kT^*\Omega)$,
			\begin{align}\label{eq(2.3*)}
				\int_{\Omega}\big( \langle du, dv\rangle +\langle d^*u, d^*v\rangle \big) \, d\nu=\int_{\Omega} \langle v, f\rangle \, d \mu.
			\end{align}	
		\end{enumerate}
		\item[(b)]	Assume that  $\partial\Omega= \emptyset$ and $\mu$ satisfies (MPIE*). For $u \in \operatorname{dom}(\mathcal{E}_E^k)$ and $f \in L^{2}(\bigwedge^kT^*M, \mu)$, the following conditions are equivalent:
		\begin{enumerate}
			\item[(i)] $u \in \operatorname{dom}(-\Delta^{E,k}_\mu)$ and $-\Delta^{E,k}_\mu u=f$;
		\item[(ii)] $-\Delta^k u=f d \mu$ in the sense of distribution; that is,  for any $v \in \Gamma^\infty_c(\bigwedge^kT^*M)$,
		\begin{align}\label{eq(2.3*)}
			\int_{M}\big( \langle du, dv\rangle +\langle d^*u, d^*v\rangle \big) \, d\nu=\int_{M} \langle v, f\rangle \, d \mu.
		\end{align}	
		\end{enumerate}
		\end{enumerate}
\end{prop}

\begin{thm}\label{thm:2.5*}
	Let $n$, $M$, $\Omega$, and $\mu$ be defined as in Proposition \ref{prop:2.3*}.
\begin{enumerate}
	\item[(a)] Assume that  $\partial\Omega\neq \emptyset$ and $\mu$ satisfies {\rm(MPID*)}.
	Then for any $f \in L^{2}(\bigwedge^kT^*\Omega, \mu)$, there exists a
	unique $u \in \operatorname{dom}\left(\Delta_{\mu}^{D,k}\right)$  such that $\Delta_{\mu}^{D,k} u=f.$ The operator
	$$(\Delta_{\mu}^{D,k})^{-1}: L^{2}({\bigwedge}^kT^*\Omega, \mu) \rightarrow \operatorname{dom}(\Delta_{\mu}^{D,k}),\quad f\mapsto u, $$
	is bounded and has norm at most $C$, the constant in \eqref{eq:PI*} .
	\item[(b)] Assume that  $\partial\Omega= \emptyset$ and $\mu$ satisfies {\rm(MPIE*)}.
		Then for any $f \in L^{2}(\bigwedge^kT^*M, \mu)$, there exists a
		unique $u \in \operatorname{dom}\left(\Delta_{\mu}^{E,k}\right)$  such that $\Delta_{\mu}^{E,k} u=f.$ The operator
		$$(\Delta_{\mu}^{E,k})^{-1}: L^{2}({\bigwedge}^kT^*M, \mu) \rightarrow \operatorname{dom}(\Delta_{\mu}^{E,k}),\quad f\mapsto u, $$
		is bounded and has norm at most $C$, the constant in  \eqref{eq:PIN*}.
\end{enumerate}
\end{thm}

\subsection{Proof of Hodge theorem on space of  $k$-forms} \label{S:L*}
Let $M$ be a compact Riemannian $n$-manifold, and $\mu$ be a positive finite regular Borel measure on $M$ with compact support.

Let $\mathcal{B}^k:=\big\{u \in \Gamma^\infty_c(\bigwedge^kT^*M):\|u\|_{W^{1,2}_0(\bigwedge^kT^*M)} \leq 1\big\}$. Let $U\subseteq M$ be an open set. Let $\mathcal{B}^k_U:=\{u \in \Gamma^\infty_c({\bigwedge}^kT^*U):\|u\|_{W^{1,2}_0({\bigwedge}^kT^*U)} \leq 1\}$ and $\widetilde{\mathcal{B}}_U:=\{u \in C^\infty_c(U):\|u\|_{W^{1,2}_0(U)} \leq 1\}$.   The following theorem generalizes the compact embedding theorem of Maz'ja.
\begin{thm}\label{thm:3.11*}
	Let $M$ be a compact Riemannian $n$-manifold, $\Omega\subseteq M$ be an open subset, and $\mu$ be a positive finite Borel measure on $M$ with ${\rm supp}(\mu)\subseteq \overline{\Omega}$ and $\mu(\Omega)>0$.  For $q>2$, the unit ball $\mathcal{B}^k$
	is relatively compact in $L^{q}(\bigwedge^kT^*M, \mu)$ if and only if
	\begin{align}
		&\lim _{\delta \rightarrow 0^{+}} \sup _{w\in M ; r \in(0, \delta)} r^{1-n / 2} \mu\left(B^M(w,r)\right)^{1 / q}=0 \quad \text {\rm for } n>2,  \label{eq:3.18*} 
	\end{align}
	and
	\begin{align}
		&\lim _{\delta \rightarrow 0^{+}} \sup _{w \in M ; r \in(0, \delta)}|\ln r|^{1 / 2} \mu\left(B^M(w,r)\right)^{1 / q}=0 \quad \text {\rm for } n=2\label{eq:3.19*}.
	\end{align}
\end{thm}

\begin{proof}\,Assume that (\ref{eq:3.18*}) and (\ref{eq:3.19*}) hold. We will prove that for $q>2$, the unit ball $\mathcal{B}^k$
	is relatively compact in $L^{q}(\bigwedge^kT^*M, \mu)$. 
	
	\noindent{\em Step 1.} Let $(U,\varphi)$ be a regular coordinate chart on $\supp(\mu)$. Let $(\omega_j)_{j=1}^{\infty}$ be a bounded sequence in $\mathcal{B}^k_U$. 
	By \eqref{eq:1.1*}, for each $j$, we write
	\begin{align*}
		\omega_j=\sum_{I\in  \mathcal{I}_{k,n} } \omega_j^Idx^I,
	\end{align*}
	where the coefficients $\omega_j^I$ are smooth real-valued functions on $U$. Hence
	\begin{align*}
		\|\omega_j\|_{W_0^{1,2}({\bigwedge}^kT^*U)}^2&=\int_U|\omega_j|^2\,d\nu+\int_U|\nabla_U\omega_j|^2\,d\nu 
		=\int_U\Big(\sum_{I\in \mathcal{I}_{k,n}}|\omega_j^I|^2\,d\nu+\int_U\sum_{I,l}\bigg|\frac{\partial\omega_j^I}{\partial x^l}\bigg|^2\Big)\,d\nu \nonumber\\
		&=\sum_{I\in \mathcal{I}_{k,n}}\int_U\Big(|\omega_j^I|^2+\sum_{l}\Big|\frac{\partial\omega_j^I}{\partial x^l}\Big|^2\Big)\,d\nu =\sum_{I\in \mathcal{I}_{k,n}}\|\omega_j^I\|_{W_0^{1,2}(U)}^2.
	\end{align*}
	As $\|\omega_j\|_{W_0^{1,2}({\bigwedge}^kT^*U)}\leq 1$, we have $\sum_{I\in \mathcal{I}_{k,n}}\|\omega_j^I\|_{W_0^{1,2}(U)}^2\leq 1$. Hence for each $I$, $\|\omega_j^I\|_{W_0^{1,2}(U)}^2\leq 1$.  By \cite[Lemma 18.6]{Tu_2011}, $\omega_j^I$ is smooth, and by definition, $\omega_j^I$ has compact support. Thus $\omega_j^I\in C^\infty_c(U)$ for each $I$.  For each $I$, $(\omega_j^I)_{j=1}^{\infty}$ is a bounded sequence in  $\widetilde{\mathcal{B}}_U$.  If $M$ is replaced by $U$, then \eqref{eq:3.18*} and \eqref{eq:3.19*} continue to hold. 
  Hence $\widetilde{\mathcal{B}}_U$ is relatively compact in $L^{q}(U, \mu)$ by using Theorem \ref{thm:3.11}.  Thus there exists $\omega_I\in L^q(U,\mu)$ such that for each $I$,
	$
	\lim_{j\rightarrow\infty}\|\omega_j^I-\omega_I\|_{L^{q}(U, \mu)}=0.
	$
	Let $\omega:=\sum_{I\in  \mathcal{I}_{k,n} } \omega_Idx^I$. Then $\omega\in L^{q}({\bigwedge}^kT^*U, \mu)$ and
	\begin{align*}
		\lim_{j\rightarrow\infty}\|\omega_j-\omega\|_{\mu}=\lim_{j\rightarrow\infty}\sum_{I\in \mathcal{I}_{k,n}}\|\omega_j^I-\omega_I\|_{L^{q}(U, \mu)}=0.
	\end{align*}
	Hence $\mathcal{B}^k_U$ is relatively compact in $L^{q}({\bigwedge}^kT^*U, \mu)$.

	\noindent{\em Step 2}. By the compactness of $\supp(\mu)$, we can choose a $C^\infty$ atlas $\{(W_\alpha,\varphi_\alpha)\}_{\alpha=1}^N$ satisfying $\supp(\mu)\subseteq \cup_{\alpha=1}^N W_\alpha$.  By relabeling the $W_{\alpha}$, we can
	choose geodesic balls $F_{\alpha}:=B^M(p_{\alpha},\epsilon_{\alpha})\subseteq W_{\alpha}$ satisfying
	$\supp(\mu)\subseteq\bigcup_{\alpha=1}^N F_\alpha$ and $\overline{F}_{\alpha}\subseteq W_{\alpha}$.
	Then $\{(F_\alpha,\varphi_\alpha|_{F_{\alpha}})\}_{\alpha=1}^N$ is a regular atlas on $\supp(\mu)$.  Next, by using a method similar to that in Step 5 of the proof of Theorem \ref{thm:3.11}, 
	one can show that  $\mathcal{B}^k$ is relatively compact in $L^q(\bigwedge^kT^*M, \mu)$; we omit the details.

	Conversely, assume that for $q>2,$ the unit ball $\mathcal{B}^k$ is relatively compact in $L^q(\bigwedge^kT^*M, \mu)$. We will show that \eqref{eq:3.18*} and \eqref{eq:3.19*} hold.		Using the compactness of $M$, we may select a finite
	system $\{(V_i,\varphi_i)\}_{i=1}^N$ satisfying $M\subseteq \cup_{i=1}^NV_i$.  By relabeling the $V_i$, we can choose a regular atlas $\{(U_i,\varphi_i)\}_{i=1}^N$ satisfying $\overline{U}_i\subseteq V_i$ and $M\subseteq \cup_{i=1}^N U_i$. Fix $i$. Let $(u_m)$ be a bounded sequence in $\mathcal{B}^k_{U_i}$. Since $\mathcal{B}^k$ is relatively compact in $L^q(\bigwedge^kT^*M, \mu)$, there exists $u\in L^q(\bigwedge^kT^*M, \mu)$ such that 
	\begin{align}\label{eq:cont*}
		\lim_{m \rightarrow \infty}\|u_m-u\|_{\mu}=0.
	\end{align}
	Since $L^q({\bigwedge}^kT^*U_i, \mu)$ is complete, we can write $u=v_1+v_2$, where ${\rm supp}(v_1)\subseteq U_i$ and ${\rm supp}(v_2)\subseteq \partial U_i$. Suppose 
	$\int_{\partial U_i}|v_2|^qd\mu\neq 0.$
	Then, since $\int_{\partial U_i}|u_m|^qd\mu= 0$, we have
	$\lim_{m \rightarrow \infty}\|u_m\\-v_2\|_{\mu}\neq 0.$
	This contradicts \eqref{eq:cont*}. Hence $\int_{\partial U_i}|v_2|^qd\mu= 0.$ Thus $u=v_1$ $\mu$-a.e. in $\overline{U}_i$. Hence $\mathcal{B}^k_{U_i}$ is relatively compact in $L^q({\bigwedge}^kT^*U_i, \mu)$. For each $I$, let $\omega_j^I$ be a bounded sequence in $\widetilde{\mathcal{B}}_{U_i}$, i.e., $\omega_j^I\in C^\infty_c(U_i)$, and $\|\omega_j^I\|_{W_0^{1,2}(U_i)}\leq 1$.  For each $j$, we let $\omega_j=\sum_{I\in \mathcal{I}_{k,n}} \omega_j^Idx^I$. Then  $\omega_j\in \Gamma^\infty_c({\bigwedge}^kT^*U_i)$ and
	\begin{align*}
		\|\omega_j\|_{W_0^{1,2}({\bigwedge}^kT^*U_i)}=\sum_{I\in \mathcal{I}_{k,n}}\|\omega_j^I\|_{W_0^{1,2}(U_i)}\leq \sum_{I\in \mathcal{I}_{k,n}}1\leq C \quad\text{for each}\,\, j.
	\end{align*}
	Let $$\overline{\mathcal{B}^k_{i}}:=\Big\{u \in \Gamma^\infty_c\Big({\bigwedge}^kT^*U_i\Big):\|u\|_{W^{1,2}_0({\bigwedge}^kT^*U_i)} \leq C\Big\}.$$ Then $(\omega_j)$ is a bounded sequence in $\overline{\mathcal{B}^k_{i}}$. Since $\mathcal{B}^k_{U_i}$ is relatively compact in $L^q({\bigwedge}^kT^*U_i , \mu)$, $\overline{\mathcal{B}^k_{i}}$ is relatively compact in $L^q({\bigwedge}^kT^*U_i, \mu)$. Hence there exists $\omega\in L^q({\bigwedge}^kT^*U_i, \mu)$ such that 
	\begin{align*}
		\lim_{j\rightarrow\infty}\|\omega_j-\omega\|_{\mu}=0.
	\end{align*}
	Write $\omega:=\sum_{I\in \mathcal{I}_{k,n}}\omega_Idx^I$. Then $\omega_I\in L^{q}(U_i, \mu)$ for each $I$. Hence for each $I$,
	\begin{align*}
		\lim_{j\rightarrow\infty}\|\omega_j^I-\omega_I\|_{L^{q}(U_i, \mu)}=0.
	\end{align*}
	It follows that $\widetilde{\mathcal{B}}_{U_i}$ is relatively compact in $L^q(U_i, \mu)$, and thus
	\begin{align}
		&\lim _{\delta \rightarrow 0^{+}} \sup _{w\in U_i ; r \in(0, \delta)} r^{1-n / 2} \mu\left(B^M(w,r)\right)^{1 / q}=0 \quad \text {\rm for } n>2,\qquad\text{and}\label{1*}\\
		&\lim _{\delta \rightarrow 0^{+}} \sup _{w \in U_i ; r \in(0, \delta)}|\ln r|^{1 / 2} \mu\left(B^M(w,r)\right)^{1 / q}=0 \quad \text {\rm for } n=2.\label{2*}
	\end{align}
	Combining \eqref{1*}, \eqref{2*}, and the fact that $\{U_i\}_{i=1}^N$ is a finite open cover of $\supp(\mu)$,  we see that \eqref{eq:3.18*} and \eqref{eq:3.19*} hold.
\end{proof}

The proof of Theorem \ref{thm:3.12*} uses Theorem \ref{thm:3.11*} and Lemma \ref{lem:3.1}(b); it is similar to that of \cite[Theorem 3.2]{Hu-Lau-Ngai_2006} and so is omitted.
\begin{thm}\label{thm:3.12*}Let $M$ be a compact Riemannian $n$-manifold. Let $n \geq 2$ and $2<q<\infty$, and let $\mu$ be a finite positive Borel measure on $M$ with compact support.  
	\begin{enumerate}
		\item[(a)] If $\underline{\operatorname{dim}}_{\infty}(\mu)>q(n-2) / 2$, then $\mathcal{B}^k$ is relatively compact in $L^{q}(\bigwedge^kT^*M, \mu)$.
		\item[(b)] If $\underline{\operatorname{dim}}_{\infty}(\mu)<q(n-2) / 2$, then $\mathcal{B}^k$ is not relatively compact in $L^{q}(\bigwedge^kT^*M, \mu)$.
	\end{enumerate}
\end{thm}

We let $\Gamma^C(\bigwedge^kT^*\overline{\Omega})$ be the subspace of $\Gamma^C(\bigwedge^kT^*M)$ consisting of all  $k$-forms in $\Gamma^C(\bigwedge^kT^*\Omega)$ that are bounded and uniformly continuous on $\Omega$. 
To prove Theorem \ref{thm:1.1*}, we need the following theorem. 
\begin{thm}\label{lem:3.13*}
	Let $M$ be a compact Riemannian $1$-manifold, and let $\Omega \subseteq M$ be an open set. Then $W^{1,2}_0(\bigwedge^kT^*\Omega)$ is compactly embedded in $\Gamma^C(\bigwedge^kT^*\overline{\Omega})$. In particular, if $\partial\Omega=\emptyset$, then $W^{1,2}(\bigwedge^kT^*M)$ is compactly embedded in $\Gamma^C(\bigwedge^kT^*M)$.
\end{thm}
The proof of this theorem is similar to that of Theorem \ref{thm:3.11*} and is omitted. It uses Theorem \ref{lem:3.13} and partition of unity.

\begin{proof}[Proof of Theorem \ref{thm:1.1*}]\,
	(a) For the case $n=1$, we have by Theorem \ref{lem:3.13*} that  $W^{1,2}_0(\bigwedge^kT^*\Omega)$ is compactly embedded in $L^{2}(\bigwedge^kT^*\Omega, \mu)$, and thus (MPID*) holds. Moreover, the embedding $\operatorname{dom}(\mathcal{E}_D^k) \hookrightarrow L^{2}(\bigwedge^kT^*\Omega, \mu)$ is compact.  
	Using Theorem \ref{thm:3.12*}, we can prove that the hypotheses of Theorem \ref{thm:1.1*} are satisfied if $n\geq 2$. The proof is similar to that of \cite[Theorem 1.1]{Hu-Lau-Ngai_2006} and is omitted.
	
		(b) For the case $n=1$, by Theorem \ref{lem:3.13*}, we can show that (MPIE*) holds and that  the embedding $\operatorname{dom}(\mathcal{E}_E^k) \hookrightarrow L^{2}(\bigwedge^kT^*M, \mu)$ is compact.	Similarly, using Theorem \ref{thm:3.12*}, we can prove that the hypotheses of Theorem \ref{thm:1.1*} are satisfied if $n\geq 2$. The proof is similar to that of \cite[Theorem 1.1]{Hu-Lau-Ngai_2006} and is omitted.
\end{proof}

\begin{proof}[Proof of Theorem \ref{thm:1.2*}]\, This is a direct consequence of Theorem \ref{thm:1.1*} and \cite[Theorem B.1.13]{Kigami_2001}; we omit the proof.\end{proof}

\subsection{A generalization of the classical Hodge theorem on $k$-forms} \label{S:H}

In this subsection, we show that the classical Hodge theorem holds when $\Omega=M$ and the measure $\mu$ is absolutely continuous with a positive and bounded density.

\begin{lem}\label{prop:5.1*}
	Assume the same hypotheses of Theorem \ref{thm:1.3*}. Then $\mathcal{N}_W^k=\{0\}$ and hence $({\mathcal{N}_W^k})^\perp=W^{1,2}\big({\bigwedge}^kT^*M\big)\bigcap (\widetilde{\mathcal{H}}^k(M))^{\perp}$.
\end{lem} 
\begin{proof}
	By the definitions of $\iota_W$ and $\mathcal{N}_W^k$, for any $u\in \mathcal{N}_W^k$,
	$
	\int_M \langle \iota_W(u),\iota_E(u)\rangle \,d\mu=0.
	$
	By assumption,	$d\mu=\rho\, d\nu,$ where $\rho>0$ is a density function. Thus $\langle \iota_W(u),\iota_W(u)\rangle =0$ $\nu$-a.e., and hence $\iota_W(u)=0$ $\nu$-a.e. As $\iota_W(u)=u$ $\nu$-a.e., we have
	$u=0$ $\nu$-a.e. This completes the proof.	
\end{proof}
\begin{prop}\label{prop:1*}
	Assume the same hypotheses of Theorem \ref{thm:1.3*}. Then $\mathcal{H}^k_{\mu}(M)=\mathcal{H}^k(M)=\widetilde{\mathcal{H}}^k(M)$.
\end{prop}
\begin{proof} By Lemma \ref{prop:5.1*}, we have $\mathcal{N}_W^k=\{0\}$ and  $({\mathcal{N}_W^k})^{\perp}=W^{1,2}(\bigwedge^kT^*M)\bigcap (\widetilde{\mathcal{H}}^k(M))^{\perp}$. Hence $\operatorname{dom}(\mathcal{E}_E^k)=W^{1,2}(\bigwedge^kT^*M)$. By \eqref{eq(2.2*)}, for $\omega\in \operatorname{dom}(\Delta_{\mu}^{E,k})$ and for any $u\in \operatorname{dom}(\mathcal{E}_E^k)$, we have
	\begin{align}\label{eq:5.2*}
		\langle -\Delta_{\mu}^{E,k}\omega,u\rangle_{\mu}=\int_M \langle d\omega,du\rangle \,d\nu+\int_M \langle d^*\omega,d^*u\rangle \,d\nu
		=\langle -\Delta^k\omega,u\rangle_{L^2(\bigwedge^kT^*M)}.
	\end{align}
	\noindent{\em Step 1.} We first prove $\mathcal{H}^k_{\mu}(M)\subseteq \mathcal{H}^k(M)$.
	Let $\omega\in \mathcal{H}^k_{\mu}(M)$, i.e., $\omega\in \operatorname{dom}(\Delta_{\mu}^{E,k})$ and  $\Delta_{\mu}^{E,k}\omega=0$. Then $\omega\in \operatorname{dom}(\Delta^k)$. For any  $u\in \operatorname{dom}(\mathcal{E}_E^k)$, by \eqref{eq:5.2*} and the fact that $\Delta_{\mu}^{E,k}\omega=0$,   we have
	\begin{align*}
		\langle \Delta^k\omega,u\rangle_{L^2(\bigwedge^kT^*M)}=0.
	\end{align*}
	In particular,
	$
	\langle \Delta^k\omega,\omega\rangle_{L^2(\bigwedge^kT^*M)}= 0.
	$
	Let $\omega=\sum_{m=1}^\infty c_m\psi_m$, where $c_m\in\R$ and $\left\{\psi_{m}\right\}_{m=1}^{\infty}$ is an orthonormal basis of $L^2(\bigwedge^kT^*M)$. Then for all $u\in  \operatorname{dom}(\mathcal{E}_E^k)$, 
	\begin{align*}
		\langle -\Delta^k\omega,u\rangle_{L^2(\bigwedge^kT^*M)}= \Big\langle-\sum_{m=1}^\infty \lambda_mc_m\psi_m ,u\Big\rangle_{L^2(\bigwedge^kT^*M)}.
	\end{align*}
	Hence $\Delta^k\omega=\sum_{m=1}^\infty \lambda_mc_m\psi_m$ and therefore, as $\lambda_m\geq 0$ for all $m$, we have
	\begin{align*}
		0&=	\langle \Delta^k\omega,\omega\rangle_{L^2(\bigwedge^kT^*M)}=	\sum_{m=1}^\infty\lambda_mc_m^2\langle \psi_m,\psi_m\rangle_{L^2(\bigwedge^kT^*M)}=\sum_{m=1}^\infty \lambda_mc_m^2.
	\end{align*}
	Assume that the 0-eigenvalue space is $d$-dimensional. For $m\geq d+1$, we have $\sum_{m=d+1}^\infty  \lambda_mc_m^2\\=0$ and  $\lambda_m>0$ for all $m\geq d+1$. Hence $c_m=0$ for all $m\geq d+1$. For $m=1,\ldots,d$, we have $\omega=\sum_{m=1}^{d}c_m\psi_m$. Hence $\Delta^k\omega=0$, and thus $\omega\in \mathcal{H}^k(M)$. This proves that $\mathcal{H}^{k}_{\mu}(M)\subseteq\mathcal{H}^k(M)$.
	
	\noindent{\em Step 2.} Applying a similar argument as that in  Step 1 show that $\mathcal{H}^k(M)\subseteq\mathcal{H}^k_{\mu}(M)$, $\mathcal{H}^k(M)\subseteq \widetilde{\mathcal{H}}^k(M)$, and $\widetilde{\mathcal{H}}^k(M)\subseteq \mathcal{H}^k(M)$. 
	
	Combining the two steps above,  we have $\mathcal{H}^k_{\mu}(M)=\mathcal{H}^k(M)=\widetilde{\mathcal{H}}^k(M).$
\end{proof}

\begin{proof}[Proof of the Theorem \ref{thm:1.3*}] Combining Proposition \ref{prop:1*} and the Hodge theorem (see \cite[Theorem 7.4.4]{Morrey_1966}) completes the proof.
\end{proof}

The following example shows that  Theorem \ref{thm:1.3*} need not hold if $\mu$ is not absolutely continuous.

\begin{exam}\label{exam:1*}
	Let $\mathbb{T}^2=\mathbb{S}^1\times\mathbb{S}^1$ be the 2-torus, where $\mathbb{S}^1$ is the 1-sphere. Let $\mu$ be the Dirac measure on $\mathbb{T}^2$. Then $\mathcal{H}^1(\mathbb{T}^2)\neq\mathcal{H}^1_{\mu}(\mathbb{T}^2)$.
\end{exam}
\begin{proof}Let $\underline{\mathcal{H}}^1(\mathbb{T}^2):=\{\omega\in\Gamma^\infty(\bigwedge^1T^*\mathbb{T}^2):\Delta^k\omega=0\}$. Since $\Gamma^\infty(\bigwedge^1T^*\mathbb{T}^2)\subseteq W^{1,2}(\bigwedge^1T^*\mathbb{T}^2)$, we have
	\begin{align}\label{eq:h2}
		\underline{\mathcal{H}}^1(\mathbb{T}^2)\subseteq \mathcal{H}^1(\mathbb{T}^2).
	\end{align}
	By the classical Hodge theorem (see, e.g., \cite[Theorem 47]{Petersen_2016}), we have $\underline{\mathcal{H}}^1(\mathbb{T}^2)\cong H^1_{\rm dR}(\mathbb{T}^2;\R)$, where $\cong$ denotes vector space isomorphism and $H^1_{\rm dR}(\mathbb{T}^2;\R)$ is the first de Rham cohomology of $\mathbb{T}^2$. Since $H^1_{\rm dR}(\mathbb{T}^2;\R)\cong \R^2$ (see, e.g., \cite[Section 28]{Tu_2011}), we have $\dim (\underline{\mathcal{H}}^1(\mathbb{T}^2))=2$.
	Combining this with \eqref{eq:h2}, we have
	\begin{align}\label{eq:h1}
		\dim (\mathcal{H}^1(\mathbb{T}^2))\geq2.
	\end{align}
	Note that if $\omega=\eta$ $\mu$-a.e., then $\|\omega-\eta\|_\mu=0$. It follows that  $\dim (L^2(\bigwedge^1T^*\mathbb{T}^2,\mu))=1$. Since $\mathcal{H}^1_{\mu}(\mathbb{T}^2)$ is a subspace of $L^2(\bigwedge^1T^*\mathbb{T}^2,\mu)$, 	we have, by \eqref{eq:h1}, $\mathcal{H}^1(\mathbb{T}^2)\neq\mathcal{H}^1_{\mu}(\mathbb{T}^2)$.
\end{proof}

	\section{Self-similar and self-conformal measures}\label{S:self}
	\setcounter{equation}{0}
	Let $M$ be a complete Riemannian $n$-manifold. In this section, for an invariant measure $\mu$ defined by an iterated function system $\{S_{i}\}_{i=1}^{m}$ of contractions on $M$, we can further strengthen Theorems \ref{thm:1.1} and \ref{thm:1.2}, . For $\tau=(i_1,\ldots,i_k)$ and the invariant set $K$, we let $K_{\tau}:=S_{\tau}(K)$.
	
	We call $\left\{S_{i}\right\}_{i=1}^{m}:M\rightarrow M$ an iterated function system of {\em $bi$-Lipschitz contractions} if for each $i\in\{1, \ldots, m\}$, there exist $c_{i}, r_{i}$ with $0<c_{i} \leq r_{i}<1$ such that
	\begin{align}\label{eq:4.1}
		c_{i}d_M(x,y) \leq d_M(S_{i}(x),S_{i}(y)) \leq r_{i}d_M(x,y) \quad \text { for all } x, y \in M.
	\end{align}
	
	\begin{lem}\label{lem:4.1}
		Let $\mu$ be an invariant measure of an {\rm IFS} $\left\{S_{i}\right\}_{i=1}^{m}$ of bi-Lipschitz contractions on $M$. Suppose the attractor $K$ is not a singleton. Then $\mu$ is upper $s$-regular for some $s>0$, and hence $\underline{\operatorname{dim}}_{\infty}(\mu)>0$.
	\end{lem}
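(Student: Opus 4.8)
The plan is to exhibit an explicit exponent $s>0$ for which $\mu$ is upper $s$-regular; the inequality $\underline{\operatorname{dim}}_\infty(\mu)>0$ is then immediate from Lemma \ref{lem:3.1}(a) (equivalently Lemma \ref{lem:f4}(a)). First I would record the elementary reductions. Since $K$ is not a singleton, $D:=\operatorname{diam}(K)\in(0,\infty)$ and $\operatorname{supp}(\mu)=K$, so only the behaviour of $\mu(B^M(x,r))$ for $x\in K$ and $0<r\le D$ is at issue. With $c_i,r_i$ as in \eqref{eq:4.1}, put
$$ s:=\min_{1\le i\le m}\frac{\log p_i}{\log c_i}>0 . $$
Then $p_i\le c_i^{\,s}\le r_i^{\,s}$ for each $i$, so writing $c_\tau:=c_{i_1}\cdots c_{i_k}$ and $r_\tau:=r_{i_1}\cdots r_{i_k}$ for $\tau=(i_1,\dots,i_k)$, one gets $p_\tau\le c_\tau^{\,s}\le r_\tau^{\,s}$ for every $\tau\in\Sigma_*$. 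The goal is thus to prove $\mu(B^M(x,r))\le C r^{s'}$ for some $s'>0$ and all such $x,r$.

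Next I would set up the self-similar decomposition. Fix $x$ and $r$, and let $\Sigma(r):=\{\tau\in\Sigma_*:\ r_\tau D<r\le r_{\tau^-}D\}$, where $\tau^-$ denotes the parent word and $r_\emptyset=1$. Because $r_\tau\le(\max_i r_i)^{|\tau|}\to 0$, the set $\Sigma(r)$ is finite and is a maximal antichain: every infinite word has a unique prefix in it. Iterating the invariance identity $\mu=\sum_{i=1}^{m}p_i\,\mu\circ S_i^{-1}$ over $\Sigma(r)$ therefore gives $\mu=\sum_{\tau\in\Sigma(r)}p_\tau\,\mu\circ S_\tau^{-1}$; moreover $\operatorname{diam}(K_\tau)\le r_\tau D<r$ for each $\tau\in\Sigma(r)$. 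Hence
$$ \mu\bigl(B^M(x,r)\bigr)=\sum_{\tau\in\Sigma(r)}p_\tau\,\mu\bigl(S_\tau^{-1}(B^M(x,r))\bigr)\ \le\!\!\sum_{\substack{\tau\in\Sigma(r)\\ K_\tau\cap B^M(x,r)\ne\emptyset}}\!\!p_\tau , $$
using $\mu\circ S_\tau^{-1}\le\mu(K)=1$ and the vanishing of the $\tau$-term when $K_\tau$ misses the ball; note that each surviving $K_\tau$ then lies in $B^M(x,2r)$. Here it is convenient to group the words by the cylinder set they determine: since $S_\sigma^{-1}(K_\sigma)=K$ whenever $S_\sigma$ is injective (which holds by the bi-Lipschitz lower bound), the invariance relation yields $\mu(K_\tau)\ge\sum_{\sigma\in\Sigma(r):\,K_\sigma=K_\tau}p_\sigma$, so the last sum is at most $\sum_{V}\mu(V)$, the sum over the distinct generation-$\Sigma(r)$ cylinders $V\subseteq B^M(x,2r)$.

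The hard part — and the place where the bi-Lipschitz \emph{lower} bound and the hypothesis that $K$ is not a singleton are genuinely needed — is to turn this into a power bound $C r^{s'}$: one must control how many cylinders of a single generation can pile up near a point, with no separation assumed on the IFS. This is exactly the combinatorial core of the corresponding estimate of Hu \emph{et al.}\ \cite{Hu-Lau-Ngai_2006}: the inequality $\operatorname{diam}(K_\tau)\ge c_\tau D$ keeps the cylinders from collapsing, while the fact that $K$ is not a singleton forces the generating maps not all to coincide, which propagates a definite geometric gap through every scale and bounds the relevant accumulation; combined with $p_\tau\le r_\tau^{\,s}$ this delivers the desired power of $r$. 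I would carry out this step following \cite{Hu-Lau-Ngai_2006}, the only new feature on a Riemannian manifold being that geodesic balls and the distance $d_M$ replace the Euclidean ones, which affects none of the combinatorics. Once $\mu$ is shown to be upper $s'$-regular, Lemma \ref{lem:3.1}(a) gives $\underline{\operatorname{dim}}_\infty(\mu)\ge s'>0$, completing the proof.
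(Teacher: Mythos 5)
Your proposal has two genuine gaps, and they are connected: the exponent you write down is not a valid one, and the mechanism you describe for obtaining the power law is not the one that works without a separation condition.

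First, $s:=\min_i \log p_i/\log c_i$ need not be an admissible upper-regularity exponent. Take the IFS $S_1(x)=\phi^{-1}x$, $S_2(x)=\phi^{-1}x+1-\phi^{-1}$ on $\mathbb R$ with $\phi$ the golden ratio and $p_1=p_2=1/2$, so $c_i=r_i=\phi^{-1}$ and $K=[0,1]$ is not a singleton. Then your $s=\log 2/\log\phi>1$; but a finite measure giving positive mass to a subset of $\mathbb R$ cannot be upper $s$-regular for any $s>1$, since covering $K$ by $O(r^{-1})$ balls of radius $r$ would force $\mu(K)\le Cr^{s-1}\to 0$. The lemma only asserts that \emph{some} $s>0$ works, and in general it is much smaller than $\min_i\log p_i/\log c_i$. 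Second, the step you flag as ``the hard part'' — bounding the number of $\tau\in\Sigma(r)$ with $K_\tau\cap B^M(x,2r)\ne\emptyset$ by a constant — is false without (OSC) or a similar assumption, which this lemma deliberately does not impose. Words in $\Sigma(r)$ can have lengths anywhere between roughly $\log r/\log r_{\max}$ and $\log r/\log r_{\min}$, and for overlapping IFSs (e.g.\ the example above, which has exact overlaps) the number of such cylinders meeting a fixed ball can grow without bound as $r\to 0$; neither $\operatorname{diam}(K_\tau)\ge c_\tau D$ nor ``the maps are not all equal'' prevents this. So $\sum_\tau p_\tau\le N(r/D)^s$ with $N$ bounded is not available, and the proof cannot be completed along the route you sketch.

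The argument in \cite{Hu-Lau-Ngai_2006} (Lemma 5.1) is a scaling self-improvement that uses no counting and no separation. Let $D:=\operatorname{diam}(K)>0$, $\rho:=\min_i c_i$, $r_{\max}:=\max_i r_i$, and fix $n_0$ with $r_{\max}^{n_0}<1/3$. Put $h(d):=\sup\{\mu(A):\operatorname{diam}(A)\le d\}$. For $d<D/3$ and any $A$ with $\operatorname{diam}(A)\le d$ meeting $K$, some $n_0$-cylinder $K_{\tau_1}$ misses $A$ (otherwise every $n_0$-cylinder, of diameter $\le Dr_{\max}^{n_0}$, would lie in the $Dr_{\max}^{n_0}$-neighbourhood of $A$, forcing $\operatorname{diam}(K)\le d+2Dr_{\max}^{n_0}<D$). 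Since $\mu(S_{\tau_1}^{-1}(A))=0$ and $\operatorname{diam}(S_\tau^{-1}(A))\le d/c_\tau\le d/\rho^{n_0}$ by the lower bi-Lipschitz bound, the invariance $\mu=\sum_{|\tau|=n_0}p_\tau\,\mu\circ S_\tau^{-1}$ gives
\begin{align*}
h(d)\ \le\ \Big(1-p_{\min}^{n_0}\Big)\,h\big(d/\rho^{n_0}\big)\qquad\text{for all }0<d<D/3.
\end{align*}
Iterating until $d\rho^{-kn_0}$ reaches $D/3$ yields $h(d)\le C\,d^{\,s}$ with $s=\log\big(1/(1-p_{\min}^{n_0})\big)/(n_0\log(1/\rho))>0$, which is upper $s$-regularity; Lemma~\ref{lem:3.1}(a) then gives $\underline{\operatorname{dim}}_\infty(\mu)>0$. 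This argument transfers to $(M,d_M)$ exactly as you anticipated, but the gain comes from the \emph{missed} cylinder at each scale, not from controlling how many cylinders meet a ball.
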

	The proof of this lemma is similar to that of \cite[Lemma 5.1]{Hu-Lau-Ngai_2006}.

	It follows from Lemmas \ref{lem:3.1} and \ref{lem:4.1} that on a complete Riemannian $2$-manifold $M$, a measure $\mu$ in Lemma \ref{lem:4.1} satisfies $\underline{\operatorname{dim}}_{\infty}(\mu)>0=n-2$. Hence by Theorem \ref{thm:1.1}, we have the following corollary.
	\begin{cor}\label{cor:4.2}
			Let $M$ be a complete Riemannian $2$-manifold. Let $\left\{S_{i}\right\}_{i=1}^{m}$ be an {\rm IFS} of bi-Lipschitz contractions on M defined as in (\ref{eq:4.1}). Let $\mu$ be an invariant measure, and let $\Omega$ be a bounded open subset of $M$ such that ${\rm supp}(\mu) \subseteq$ $\overline{\Omega}$. Then the embedding $\operatorname{dom}(\mathcal{E}_D) \hookrightarrow L^{2}(\Omega, \mu)$ is compact. Consequently, the conclusions of Theorem \ref{thm:1.2} hold for such a measure $\mu$.
	\end{cor}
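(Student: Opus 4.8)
The plan is to obtain Corollary \ref{cor:4.2} as an immediate specialization of Theorems \ref{thm:1.1} and \ref{thm:1.2} to dimension $n=2$, using the regularity of invariant measures supplied by Lemma \ref{lem:4.1}. First I would record that, by Lemma \ref{lem:4.1} applied to the IFS $\{S_i\}_{i=1}^m$ of bi-Lipschitz contractions on $M$ (whose attractor $K$ is not a singleton), the invariant measure $\mu$ is upper $s$-regular for some $s>0$; Lemma \ref{lem:3.1}(a) then gives $\underline{\operatorname{dim}}_\infty(\mu)\ge s>0$. Since $M$ is a $2$-manifold we have $n-2=0$, and therefore $\underline{\operatorname{dim}}_\infty(\mu)>0=n-2$.

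With this inequality established, the hypotheses of Theorem \ref{thm:1.1} are met: $\operatorname{supp}(\mu)=K\subseteq\overline{\Omega}$ by assumption, $\mu(\Omega)>0$ is in force as a standing hypothesis of the paper, and $\underline{\operatorname{dim}}_\infty(\mu)>n-2$ was just verified. Theorem \ref{thm:1.1} thus yields that (PI) holds on $\Omega$ and that the embedding $\operatorname{dom}(\mathcal{E})\hookrightarrow L^2(\Omega,\mu)$ is compact, which is the first assertion. Feeding the same hypotheses into Theorem \ref{thm:1.2} gives the rest: there is an orthonormal basis $\{u_n\}_{n=1}^\infty$ of $L^2(\Omega,\mu)$ consisting of Dirichlet eigenfunctions of $-\Delta_\mu$, the eigenvalues satisfy $0<\lambda_1\le\lambda_2\le\cdots$ with $\lambda_n\to\infty$, and each eigenspace is finite-dimensional.

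The argument is a short composition of results already proved, so there is no genuine obstacle; the one point worth flagging is the degenerate case where $K$ is a single point, in which $\mu$ is a point mass, $\underline{\operatorname{dim}}_\infty(\mu)=0$ is not strictly larger than $n-2=0$, and (PI) in fact fails in dimension two since points have vanishing capacity. This case is precisely the one excluded in the hypothesis of Lemma \ref{lem:4.1}, so it lies outside the scope of the statement; equivalently, one assumes throughout that $K$ is not a singleton.
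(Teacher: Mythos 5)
Your argument is exactly the paper's: combine Lemma \ref{lem:4.1} with Lemma \ref{lem:3.1}(a) to get $\underline{\dim}_\infty(\mu)>0=n-2$, then invoke Theorems \ref{thm:1.1} and \ref{thm:1.2}. Your remark about the degenerate singleton case is a reasonable clarification that the paper handles by the hypothesis of Lemma \ref{lem:4.1}.
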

	
	

	The following proposition is needed in Propositions \ref{prop:4.4} and \ref{prop:4.5}.
	\begin{prop}\label{thm:4.3}
		Let $M$ be a complete Riemannian $n$-manifold. Let $\left\{S_{i}\right\}_{i=1}^{m}$ be a {\rm CIFS}  on $M$ satisfying {\rm (OSC)}, and let $\mu$ be an associated self-conformal measure with probability weights $\left\{p_{i}\right\}_{i=1}^{m}$. Then $\mu(K_i\cap K_j)=0$ for any $i\neq j$. Moreover, $\mu(K_{\tau})=p_{\tau}\mu(K)=p_{\tau}$ for any word $\tau$.
	\end{prop}
	\begin{proof}\,
The first assertion follows by using  results in \cite{Ngai-Xu_2021,Patzschke_1997} and a  method in \cite{Fan-Lau_1999,Lau-Rao-Ye_2001}. 
			For the second assertion, we use the result that $\mu(K_i\cap K_j)=0$ for any $i\neq j$ to conclude that
			$
			\sum_{i} \mu(K_{i})=\mu\big(\bigcup_{i} K_{i}\big)=\mu(K)=1.
			$
			By the self-conformality of $\mu$, we have
			\begin{align*}
				\mu(K_{i})=p_{i}\mu\circ S_i^{-1}(K_i)+\sum_{j \neq i} p_{j}\mu\circ S_j^{-1}(K_i)=p_{i}+\sum_{j \neq i} p_{j} \mu\left(S_{j}^{-1}\left(K_{i}\right)\right)\geq p_{i}.
			\end{align*}
			Consequently,  $\mu(K_{i})=p_{i}$ for each $i$. Repeating the above procedure, we have
			$\mu\left(K_{\tau}\right)=p_{\tau}$ for any word $\tau$.
	\end{proof}
	
	
	\begin{prop}\label{prop:4.4}
	Let $M$ be a complete Riemannian $n$-manifold with ${\rm Ric}\geq (n-1)\xi$ for some $\xi\in\mathbb{R}$. Let $\left\{S_{i}\right\}_{i=1}^{m}$ be a {\rm CIFS}  on $M$ satisfying {\rm (OSC)}, and let $\mu$ be an associated self-conformal measure with probability weights $\left\{p_{i}\right\}_{i=1}^{m}$.  Then
		$$
		\underline{\operatorname{dim}}_{\infty}(\mu)\geq \min _{1 \leq i \leq m}\left\{\frac{\ln p_{i}}{\ln \|S'_i\|}\right\}.
		$$
	\end{prop}
	\begin{proof}Define
			\begin{align*}
				\Gamma_t:=\left\{\tau\in\Sigma_{*}: \|S'_{\tau}\| <t \leq \|S'_{\tau^-}\|\right\},\quad t\in(0, 1).
			\end{align*}
		Then $K=\bigcup_{\tau \in \Gamma_t} K_{\tau}$.
			Let $s:=\min _{1 \leq i \leq m}\left\{\ln p_{i} / \ln \|S'_{i}\|\right\}$. Then 
	$	s \leq (\ln p_{i})/(\ln \|S'_{i}\|)$ for all $i=1,\ldots,m$.
			Hence, for all $i=1,\ldots,m$, $\|S'_{i}\|^s\geq \ p_{i}$.
			Therefore, by Proposition \ref{thm:4.3},
			\begin{align}\label{eq:4.3}
				\mu\left(K_{\tau}\right)=p_{\tau}  \leq \|S'_\tau\|^{s}<t^{s}.
			\end{align}
			On the other hand, let $L:=\max_{i}\|S'_{i}\|\in (0, 1)$ and $\tau=(i_1,\ldots,i_k)$. Then
			\begin{align*}
				\|S'_{\tau^-}\|\leq \|S'_{i_1}\|\|S'_{i_2}\|\cdots \|S'_{i_{k-1}}\|
				\leq (\max_{i}\|S'_{i}\|)^{k-1}=L^{k-1}.
			\end{align*}
		Let $k$ be the largest integer such that $t\leq L^{k-1}$. Then
			\begin{align*}
				k\leq \frac{\ln t}{\ln L}+1=:C'.
			\end{align*}
			Hence the length of the word $\tau\in\Gamma_t$ has an upper bound $C'$ that depends on $t$. By (BDP) and the definition of $\|S'_{\tau}\|$, there exists a positive constant $\widetilde L$ such that 
				\begin{align}\label{eq:pp}
				\|S'_{\tau}\|\geq \widetilde L\,\|S'_{\tau^-}\|\geq \widetilde L\,t.
				\end{align}
				 Let $\Omega$ be defined as in \eqref{eq:j1.7}. 
			For each $p\in \Omega$, let $q$ be any point on $\partial B^M(p,\rho)$, where $0<\rho\leq {\rm diam}(\Omega)$ is a positive constant. By \eqref{eq:pp}, \cite[Lemma 2.2]{Patzschke_1997}, and the definition of $\Gamma_t$, there exist constants $a_2>a_1>0$ (independent of $\tau$) such that 
			\begin{align}
				&d_M\left(S_{\tau}(p), S_{\tau}(q)\right) \leq \widetilde{C}\left\|S_{\tau}'\right\| d_M(p, q)\leq a_2t,\label{eq:4.4}\\
				&d_M\left(S_{\tau}(p), S_{\tau}(q)\right) \geq \widetilde{C}^{-1}\left\|S_{\tau}'\right\| d_M(p, q)\geq a_1t,\label{eq:4.5}
			\end{align}
			where $ d(p, q)=\rho$ and $a_2t\leq c_1$, where $c_1$ is a positive constant.
			Let $\{\Omega_i\}$ be a family of disjoint open subsets of $M$ satisfying \eqref{eq:j1.7}. Using (\ref{eq:4.4}) and (\ref{eq:4.5}), we see that each $\Omega_{i}$ contains a ball of radius $a_1t$ and is contained in a ball of radius $a_2t$.
		 By the Bishop-Gromov volume comparison theorem (see, e.g., \cite[Lemma 36]{Petersen_2016}), for any $p\in M$ and $0<r<R\leq c_2$, we have 
		\begin{align}\label{eq:dou}
		{\rm Vol}(B^M(p,R))\leq c_3\Big(\frac{R}{r}\Big)^n{\rm Vol}(B^M(p,r)),
		\end{align}
	where $c_3$ is a positive constant.
	Hence $B^M(p,t)$ intersects at most $c_3(1+2a_2)^n/a_1^n=:C$ of the $\overline{\Omega}_i$.
			Since $K$ satisfies (OSC),  there exists a constant $C>0$ such that for each $p\in K$, the ball $B^M(p,t)$ intersects at most $C$ sets of the form $K_\tau$, where $\tau\in \Gamma_t$.
			Therefore, it follows from $(\ref{eq:4.3})$ that
			$$\mu(B^M(p,t))\leq C t^{s}, $$
		proving that $\mu$ is upper $s$-regular. Hence $\underline{\operatorname{dim}}_{\infty}(\mu) \geq s$ by Lemma \ref{lem:3.1}.

	\end{proof}
	
	\begin{prop}\label{prop:4.5}
			Let $M$ be a complete Riemannian $n$-manifold with ${\rm Ric}\geq (n-1)\xi$ for some $\xi\in\mathbb{R}$. Let $\left\{S_{i}\right\}_{i=1}^{m}$ be an {\rm IFS} of  contractive similitudes on $M$ satisfying {\rm (OSC)}, and let $\mu$ be the associated self-similar measure with probability weights $\left\{p_{i}\right\}_{i=1}^{m}.$ Then
		$$
		\underline{\operatorname{dim}}_{\infty}(\mu)\geq \min _{1 \leq i \leq m}\left\{\frac{\ln p_{i}}{\ln r_i}\right\},
		$$
		where $r_i$ is the contraction ratio of $S_i$.
	\end{prop}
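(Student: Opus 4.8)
The plan is to follow the proof of Proposition \ref{prop:4.4} almost verbatim, with one essential change: since the $S_i$ are now \emph{similitudes}, one works directly with the metric contraction ratios $r_\tau:=r_{i_1}\cdots r_{i_n}$ for $\tau=(i_1,\dots,i_n)\in\Sigma_*$ rather than with the Jacobians $\|S'_\tau\|$. The point of this substitution is that $S_\tau$ is a similitude of ratio $r_\tau$, so $d_M(S_\tau x,S_\tau y)=r_\tau\,d_M(x,y)$ and $\operatorname{diam}(K_\tau)=r_\tau\operatorname{diam}(K)$; consequently $S_\tau$ scales Riemannian volume by the factor $r_\tau^{\,n}$. (Note also that, since the IFS contains proper similitudes, $M$ is locally isometric to $\mathbb R^n$ by \cite[Lemma 2 of Theorem 3.6]{Kobayashi-Nomizu_1963}, as recalled in Section \ref{S:Pre}.)

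First I would record the structural consequences of (OSC). Each $S_i$ is a conformal $C^{1+\gamma}$ diffeomorphism with $0<|\det S'_i|<1$, so $\{S_i\}_{i=1}^m$ is a CIFS and Theorem \ref{thm:4.3} yields $\mu(K_i\cap K_j)=0$ for $i\neq j$ and $\mu(K_\tau)=p_\tau$ for every word $\tau$. Next, set $s:=\min_{1\le i\le m}\{\ln p_i/\ln r_i\}$. Since $0<p_i,r_i<1$, this is a positive number, and $s\le\ln p_i/\ln r_i$ together with $\ln r_i<0$ gives $r_i^{\,s}\ge p_i$ for each $i$, hence $r_\tau^{\,s}\ge p_\tau$ for every word $\tau$. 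For $t\in(0,1)$ define
\[
\Gamma_t:=\{\tau=(i_1,\dots,i_n):\ r_{i_1\cdots i_n}<t\le r_{i_1\cdots i_{n-1}}\},
\]
so that $K=\bigcup_{\tau\in\Gamma_t}K_\tau$. For $\tau\in\Gamma_t$ we then have $\mu(K_\tau)=p_\tau\le r_\tau^{\,s}<t^s$; moreover, with $\ell:=\min_i r_i$ and $L:=\max_i r_i\in(0,1)$, the estimate $t\le r_{i_1\cdots i_{n-1}}\le L^{\,n-1}$ bounds the length $n$ of $\tau$ by $\ln t/\ln L+1$, and $r_\tau\ge\ell\,r_{i_1\cdots i_{n-1}}\ge\ell t$, so all the pieces $K_\tau$, $\tau\in\Gamma_t$, have diameter comparable to $t$.

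The main step, and the one where (OSC) is genuinely used, is the bounded-overlap estimate: there is a constant $C>0$ depending only on the IFS such that, for every $x\in K$ and every $t\in(0,1)$, the ball $B^M(x,t)$ meets at most $C$ of the sets $K_\tau$ with $\tau\in\Gamma_t$. This is proved by a standard packing argument: an OSC-set $U$ produces pairwise disjoint open sets $S_\tau(U)$ ($\tau\in\Gamma_t$), each of Riemannian volume $r_\tau^{\,n}\operatorname{Vol}(U)\asymp t^{\,n}$, and those indexed by $\tau$ with $K_\tau\cap B^M(x,t)\neq\emptyset$ all lie in a ball of radius comparable to $t$ about $x$; comparing total volume with that of the enlarged ball bounds their number. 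This is the manifold analogue of \cite[Lemma 2.4]{Patzschke_1997}, and is exactly the argument already carried out in the proof of Proposition \ref{prop:4.4}. Granting it, and using $\operatorname{supp}(\mu)=K$, we obtain for all sufficiently small $t$
\[
\mu\big(B^M(x,t)\big)=\mu\big(B^M(x,t)\cap K\big)\le\sum_{\substack{\tau\in\Gamma_t\\ K_\tau\cap B^M(x,t)\neq\emptyset}}\mu(K_\tau)\ \le\ C\,t^s .
\]
Thus $\mu$ is upper $s$-regular, and Lemma \ref{lem:3.1}(a) gives $\underline{\operatorname{dim}}_\infty(\mu)\ge s=\min_{1\le i\le m}\{\ln p_i/\ln r_i\}$. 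I expect the only real work to be the careful verification of the overlap constant $C$ on the manifold (where one uses the exact volume-scaling of a similitude); every other step is the verbatim analogue of the corresponding step in Proposition \ref{prop:4.4}.
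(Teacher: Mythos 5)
Your proof is correct and follows the paper's intended route exactly: the paper's own proof of Proposition~\ref{prop:4.5} consists of the single remark that one should repeat the argument of Proposition~\ref{prop:4.4} with $r_i$ in place of $\|S'_i\|$ and invoke Theorem~\ref{thm:4.3}, which is precisely what you do. Your added observations (that $M$ is locally flat, that a similitude scales volume by $r^n$, and the sketch of the packing bound replacing the citation to Patzschke) are consistent with and just fill in the details the paper leaves implicit.
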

	\begin{proof}\,Use a similar argument as that in Proposition \ref{prop:4.4}, and apply Proposition \ref{thm:4.3}.
	\end{proof}

The following proposition can be proved easily by modifying \cite[Proposition 5.6]{Hu-Lau-Ngai_2006}; we omit the proof.	
	\begin{prop}\label{prop:4.7}
		Let $M$ be a complete Riemannian $n$-manifold. Let $\left\{S_{i}\right\}_{i=1}^{m}$ be a {\rm CIFS} on $M$  and for all $x$, $y\in M$,
		\begin{align*}
			d_M(S_{i}(x), S_{i}(y))\leq \|S'_{i}\|d_M(x,y),
		\end{align*}
		and let $\mu$ be the associated self-conformal measure with probability weights $\left\{p_{i}\right\}_{i=1}^{m}.$ Then
		$$
		\overline{\operatorname{dim}}_{\infty}(\mu) \leq \max _{1 \leq i \leq m}\left\{\frac{\ln p_{i}}{\ln \|S'_{i}\|}\right\}.
		$$
	\end{prop}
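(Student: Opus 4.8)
The plan is to mirror the argument of Proposition~\ref{prop:4.4}, but now deriving an \emph{upper} bound for $\overline{\operatorname{dim}}_\infty(\mu)$ by establishing a \emph{lower} $s$-regularity estimate for $\mu$ and invoking Lemma~\ref{lem:3.1}(a). Set $s:=\max_{1\le i\le m}\{\ln p_i/\ln\|S'_i\|\}$; since $0<p_i<1$ and $0<\|S'_i\|<1$, each ratio $\ln p_i/\ln\|S'_i\|$ is positive, so $s>0$ and moreover $p_i\le\|S'_i\|^{s}$ for every $i$, hence $p_\tau\le\|S'_\tau\|^{s}$ for every word $\tau$ (using the submultiplicativity $\|S'_{\tau\omega}\|\le\|S'_\tau\|\,\|S'_\omega\|$, which follows from the chain rule and the definition $\|S'_\tau\|=\sup_{x\in F}|\det S'_\tau(x)|$ together with $0<|\det S'_i|<1$).

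The key steps, in order: First, apply Theorem~\ref{thm:4.3} (which requires (OSC); note the statement of Proposition~\ref{prop:4.7} as written omits (OSC), so I would either add that hypothesis or observe that the cited modification of \cite[Proposition 5.6]{Hu-Lau-Ngai_2006} already carries it) to get $\mu(K_\tau)=p_\tau$ for every word $\tau$ and $\mu(K_i\cap K_j)=0$ for $i\ne j$. Second, for $t\in(0,1)$ form the stopping set $\Gamma_t:=\{\tau\in\Sigma_*:\|S'_\tau\|<t\le\|S'_{\tau\mid|\tau|-1}\|\}$, so that $K=\bigcup_{\tau\in\Gamma_t}K_\tau$ with the pieces overlapping on a $\mu$-null set. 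For any $x\in K$ there is some $\tau\in\Gamma_t$ with $x\in K_\tau$; I would then show $K_\tau\subseteq B^M(x,C t)$ for a constant $C$ independent of $t$ and $x$. This uses $\operatorname{diam}K_\tau\le C_2\|S'_\tau\|$ — the bounded-distortion estimate $C_2^{-1}\|S'_\tau\|\le r_\tau\le C_2\|S'_\tau\|$ established inside the proof of Theorem~\ref{thm:4.3} — combined with $\|S'_\tau\|\ge \ell\, t$ for $\ell:=\min_i\|S'_i\|>0$, which holds because $\|S'_\tau\|=\|S'_{\tau\mid|\tau|-1}\|\cdot\|S'_{\tau_{|\tau|}}\|\ge t\cdot\ell$. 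Third, conclude
\begin{align*}
\mu(B^M(x,Ct))\ge \mu(K_\tau)=p_\tau\ge\|S'_\tau\|^{s}\ge (\ell t)^{s},
\end{align*}
so that, relabeling $r=Ct$, $\mu(B^M(x,r))\ge c\,r^{s}$ for all sufficiently small $r$ and all $x\in\operatorname{supp}(\mu)=K$, i.e.\ $\mu$ is lower $s$-regular. Fourth, apply Lemma~\ref{lem:3.1}(a) to obtain $\overline{\operatorname{dim}}_\infty(\mu)\le s$, which is the assertion.

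I expect the main obstacle to be the geometric step showing $K_\tau\subseteq B^M(x,Ct)$ with a uniform constant: one must control $\operatorname{diam}K_\tau$ from above in terms of $\|S'_\tau\|$ (bounded distortion, from Theorem~\ref{thm:4.3}) and also bound $\|S'_\tau\|$ from below in terms of the threshold $t$ (so that $t$ and $\|S'_\tau\|$ are comparable on $\Gamma_t$), both of which are available but must be assembled carefully on a Riemannian manifold where the maps $S_i$ need not be similitudes. A secondary subtlety is that $F$ (on which $\|S'_\tau\|$ is defined via the sup) must contain $K$ and be preserved by the $S_i$, which is guaranteed by the CIFS conditions $(b)$ and $\overline{F^\circ}=F$ together with $K\subseteq F$. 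Everything else is a direct transcription of the Euclidean argument from \cite[Proposition 5.6]{Hu-Lau-Ngai_2006}, with $r_i$ replaced by $\|S'_i\|$ and Euclidean balls replaced by geodesic balls, so the proof can reasonably be left as ``similar to'' the cited one once these two comparability estimates are recorded.
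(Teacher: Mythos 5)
Your overall strategy --- prove lower $s$-regularity via a stopping-time decomposition and then invoke Lemma~\ref{lem:3.1}(a) --- is the right one and is indeed the mirror image of Proposition~\ref{prop:4.4}, which is what the paper intends by citing \cite[Proposition 5.6]{Hu-Lau-Ngai_2006}. Two corrections, one cosmetic and one substantive. Cosmetic: with $s:=\max_i\{\ln p_i/\ln\|S'_i\|\}$ and $\ln\|S'_i\|<0$, the inequality $\ln p_i/\ln\|S'_i\|\le s$ flips on multiplying by $\ln\|S'_i\|$ and gives $p_i\ge\|S'_i\|^{s}$, not $p_i\le\|S'_i\|^{s}$; your display later uses the correct direction, so this is just a slip in the preamble, but it should be fixed. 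Substantive: the line ``$\|S'_\tau\|=\|S'_{\tau\mid|\tau|-1}\|\cdot\|S'_{\tau_{|\tau|}}\|$'' is false for a CIFS --- $\|S'_\tau\|=\sup_{x\in F}|\det S'_\tau(x)|$ is only \emph{sub}multiplicative, so you cannot get $\|S'_\tau\|\ge \ell t$ on $\Gamma_t$ this way without also invoking the bounded-distortion estimate \eqref{eq:j4.7} to bound the sup by the inf. The cleaner route, and the reason the proposition carries the explicit Lipschitz hypothesis $d_M(S_i(x),S_i(y))\le\|S'_i\|d_M(x,y)$, is to work throughout with the genuinely multiplicative quantity $R_\tau:=\prod_j\|S'_{i_j}\|$: the hypothesis gives $\operatorname{diam}K_\tau\le R_\tau\operatorname{diam}K$ directly, the stopping set $\Gamma_t:=\{\tau:R_\tau<t\le R_{\tau\mid|\tau|-1}\}$ satisfies $\ell t\le R_\tau<t$ exactly, and $p_\tau=\prod p_{i_j}\ge\prod\|S'_{i_j}\|^{s}=R_\tau^{s}$; since $\|S'_\tau\|\le R_\tau$, this subsumes the version you wrote while avoiding the distortion constant entirely.

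On the (OSC) concern you raise: the proposition is not missing a hypothesis, and you do not need Theorem~\ref{thm:4.3}. For a \emph{lower} bound on $\mu(K_\tau)$, the self-conformal identity $\mu=\sum_i p_i\,\mu\circ S_i^{-1}$ by itself, together with injectivity of the $S_i$ on $W$, gives
\begin{align*}
\mu(K_\tau)\;\ge\;p_{\tau_1}\,\mu\bigl(S_{\tau_1}^{-1}(K_\tau)\bigr)\;\ge\;p_{\tau_1}\,\mu\bigl(K_{\tau_2\cdots\tau_n}\bigr)\;\ge\;\cdots\;\ge\;p_\tau\,\mu(K)\;=\;p_\tau,
\end{align*}
with no separation condition at all. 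It is Proposition~\ref{prop:4.4} that genuinely needs (OSC), because there one needs the reverse bound $\mu(K_\tau)\le p_\tau$ (via $\mu(K_i\cap K_j)=0$), which is where overlaps become fatal. So your instinct to flag the asymmetry was right, but the resolution is that the one-sided estimate suffices here and is free; appealing to Theorem~\ref{thm:4.3} would needlessly narrow the scope of the proposition.
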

		
	\begin{proof}[Proof of Theorem \ref{thm:4.9}]\,  For (a), the equivalence between $({\rm C3})$ and $({\rm C4})$ is stated in Lemma \ref{lem:3.1}.  Next we show that condition $({\rm C2})$ implies $({\rm C3})$. Since $\overline{A}:=\max _{1 \leq i \leq m}\{p_{i} {\|S'_i\|}^{2-n}\}<1$, it follows that
		\begin{align*}
			\frac{\ln p_{i}}{\ln \|S'_i\|}>n-2 \quad\text{for all}\,\, i=1,\ldots,m.
		\end{align*}
	Using Proposition \ref{prop:4.4}, we have
		$\underline{\operatorname{dim}}_{\infty}(\mu)\geq \min _{1 \leq i \leq m}\left\{\ln p_{i}/\ln \|S'_i\|\right\}>n-2$.
	For (c), the implication $({\rm C3})\Rightarrow({\rm C1})$ is shown in Theorem \ref{thm:1.1}. 
	\end{proof}

	\begin{proof}[Proof of Theorem \ref{thm:4.8}]\,By combining the methods in the proofs of \cite[Proposition 2]{Naimark-Solomyak_1995} and \cite[Theorem 1.4]{Hu-Lau-Ngai_2006}, and using Proposition \ref{thm:4.3}, we can show that condition $({\rm C1})$ implies $({\rm C2})$. 	Using Proposition \ref{prop:4.5}, the implication $({\rm C2})\Rightarrow({\rm C3})$ is proved.
		The rest of the proof of this theorem is similar to that of Theorem \ref{thm:4.9}. Note that we use $r_i$ instead of $\|S'_{i}\|$.	
\end{proof}

		\section{Appendix} \label{S:App}

		\setcounter{equation}{0}
			\renewcommand\theequation{A.\arabic{equation}}

		We use Maclaurin series to prove Lemma \ref{lem:3.4}.
		\begin{proof}[Proof of Lemma \ref{lem:3.4}]\, (a)
				Let
				\begin{align}\label{eq:j3.2}
					\cosh y:=\cosh^2\Big(\frac{\rho+\lambda}{R_1}\Big)-\sinh^2\Big(\frac{\rho+\lambda}{R_1}\Big)\cos\Big(\frac{4\delta}{\rho}\Big).
				\end{align}
				Using the Maclaurin series of $\cosh y$, we write
				\begin{align*}
					\cosh y=1+\frac{y^2}{2}+\frac{y^4}{24}\Big(1+\cdots+\frac{24}{(2n)!}y^{2n-4}+\cdots\Big)=:1+\frac{y^2}{2}+\frac{y^4}{24}\eta(y).
				\end{align*}
				Then
				$$\eta(y)=\frac{24(\cosh y-y^2/2-1)}{y^4},\quad y\neq 0.$$
				It is straightforward to verify by using differentiation that $\eta(y)$ is an increasing function on $(0, \infty)$. Define $\eta(0):=1$. It follows from direct verification that $\lim_{y\rightarrow 0}\eta(y)=1=\eta(0)$. Hence $\eta(y)$ is continuous on $[0,\infty)$, and thus there exist positive constants $c_1$ and $c_2$ such that $c_{1}\leq \eta(y)\leq c_{2}$ for $0\leq y \leq L<\infty$.
				
				We know that the following Maclaurin series converge on the indicated intervals:
				\begin{align*}
					\cosh\Big(\frac{\rho+\lambda}{R_1}\Big),\,\,	\sinh\Big(\frac{\rho+\lambda}{R_1}\Big)\quad \text{for} \,\,\lambda\in[0,\infty ),\qquad
				\cos\Big(\frac{4\delta}{\rho}\Big)\quad \text{for} \,\,\delta\in[0,\infty).
				\end{align*}
				Hence we can expand the right side of \eqref{eq:j3.2} as a Maclaurin series in the variable $\delta$ as
				\begin{align*}
					&\cosh^2\Big(\frac{\rho+\lambda}{R_1}\Big)-\sinh^2\Big(\frac{\rho+\lambda}{R_1}\Big)\cos\Big(\frac{4\delta}{\rho}\Big)\nonumber\\=&	\cosh^2\Big(\frac{\rho+\lambda}{R_1}\Big)-\sinh^2\Big(\frac{\rho+\lambda}{R_1}\Big)+\frac{8\sinh^2\big(\frac{\rho+\lambda}{R_1}\big)}{\rho^2}\delta^2-\delta^4\xi(\lambda, \delta)\nonumber\\
				=:&s(\lambda,\delta)-\delta^4\xi(\lambda, \delta).
				\end{align*}	
				It is straightforward to verify by using differentiation that $\xi(\lambda, \delta)$ is a decreasing function of $\lambda$ on $[0,\infty)$, and a decreasing function of $\delta$ on $(0,\rho\pi/4]$. Thus $\xi(\lambda, \delta)>0$ on $[0,\infty)\times(0,\rho\pi/4]$.
				Define $\xi(\lambda,0):=32\sinh^2(\frac{\rho+\lambda}{R_1})/3\rho^4.$
				It follows from direct verification that $\lim_{\delta\rightarrow 0}\xi(\lambda,\delta)=\xi(\lambda,0)$. Hence $\xi(\lambda,\delta)$ is continuous on $[0,\rho\pi/2]\times [0,\rho\pi/4]$.
				Therefore, for $\delta\in[0,\rho\pi/4]$ and $\lambda\in [0,\rho\pi/2]$, there exist positive constants $c_3$ and $c_4$ depending on $\rho$ such that
				$c_3\leq \xi(\lambda,\delta)\leq c_4.$
		Since
				\begin{align*}
					1+\frac{y^2}{2}+\frac{y^4}{24}\eta(y)=s(\lambda,\delta)-\delta^4\xi(\lambda,\delta),
				\end{align*}
				we have
				\begin{align}\label{eq:a2}
					y^2+\frac{y^4}{12}\eta(y)=&2(-1+s(\lambda,\delta)-\delta^4\xi(\lambda,\delta))
					=\frac{16\sinh^2(\frac{\rho+\lambda}{R_1})}{\rho^2}\delta^2-2\delta^4\xi(\lambda, \delta).
				\end{align}
				Hence for $\lambda\in[0,2\delta]$,
			\begin{align*}
					y^2\leq\frac{16\sinh^2(\frac{\rho+\lambda}{R_1})}{\rho^2}\delta^2-2\delta^4\xi(\lambda, \delta)\leq\frac{16\sinh^2(\frac{\rho+2\delta}{R_1})}{\rho^2}\delta^2. 
				\end{align*}
				We can expand $\sinh^2(\frac{\rho+2\delta}{R_1})$ as a Maclaurin series in the variable $\delta$ as
				\begin{align*}
					\sinh^2\Big(\frac{\rho+2\delta}{R_1}\Big)=\sinh^2\Big(\frac{\rho}{R_1}\Big)+\frac{4\cosh\big(\frac{\rho}{R_1}\big)\sinh\big(\frac{\rho}{R_1}\big)}{R_1}\delta+\delta^2\phi(\delta).
				\end{align*}
			Hence
				\begin{align*}
					\phi(\delta)=\frac{-R_1\cosh(\frac{2\rho}{R_1})+R_1\cosh(\frac{2(2\delta+\rho)}{R_1})-4\delta\sinh(\frac{2\rho}{R_1})}{2R_1\delta^2}.
				\end{align*}
			By direct calculation, we have $\phi(\delta)>0$ for $\delta>0$. Define $\phi(0):= 4\cosh\big(\frac{2\rho}{R_1}\big)/R_1^2.$
				It follows from direct verification that $\lim_{\delta\rightarrow 0}\phi(\delta)=\phi(0)$. Hence $\phi(\delta)$ is continuous on $[0,\infty)$. Therefore, for $\delta\in [0,\rho\pi/4] $, there exist positive  constants $c_5$ and $c_6$ such that
				$c_5\leq \phi(\delta)\leq c_6.$
				It follows that for $\delta\in [0,\rho\pi/4]$,
				\begin{align*}
					y^2\leq&\frac{16\big(\sinh^2\big(\frac{\rho}{R_1}\big)+4\delta\cosh\big(\frac{\rho}{R_1}\big)\sinh\big(\frac{\rho}{R_1}\big)/R_1+\delta^2\phi(\delta)\big)}{\rho^2}\delta^2\\
					\leq&c_7\delta^2+c_8\delta^3+c_9\delta^4
					\leq c_{10}\delta^2.
				\end{align*}
				Thus
			$y\leq\sqrt{c_{10}}\delta=:C_2\delta$.
				By \eqref{eq:a2}, for $\delta\in [0,\rho\pi/4]$, we have
				\begin{align*}
					y^2+\frac{y^4}{12}c_2&\geq\frac{16\sinh^2(\frac{\rho+\lambda}{R_1})}{\rho^2}\delta^2-2\delta^4\xi(\lambda, \delta)\geq 	\frac{16\sinh^2(\frac{\rho}{R_1})}{\rho^2}\delta^2-2c_3\delta^4\\
					&=:c_{11}\delta^2(1-c_{12}\delta^2)\geq (c_{11}/2)\delta^2.
				\end{align*}	
				Hence for $y\in [0,L]$,
				\begin{align*}
					y^2&\geq (c_{11}/2)\delta^2/\Big(1+\frac{y^4}{12}c_2\Big)\geq (c_{11}/2)\delta^2/\Big(1+\frac{L^4}{12}c_2\Big)=:c_{12}\delta^2
				\end{align*}
				Thus for $y\in [0,L]$,
					$y\geq C_1\delta$.
		
	The proofs of (b) and (c) are similar; we omit the details.	
				\end{proof}

\begin{proof}[Proof of Proposition \ref{prop:3.0}]\, 
	 Note that for each $p\in M$ and linearly independent $X(p),Y(p)\in T_p M$, we have  $X(p)/|X(p)|$, $Y(p)/|Y(p)|\in S^{T_p M}(0,1)$ and $K(X(p)/|X(p)|,Y(p)/|Y(p)|)=K(X(p),Y(p))$. By the definition of $\underline{K}_M(p)$, we have
\begin{align*}
		\underline{K}_M(p)=\inf\{K(X(p),Y(p)):X(p),Y(p)\in S^{T_p M}(0,1)\,\text{are orthogonal}\}.	
	\end{align*}
Let $(E_1,\ldots,E_n)$ be a smooth local frame on a neighborhood $U$ of $p$ and assume that $|E_i(p)|=1$ for $i=1,\ldots,n$. Then there exist $X_0(p),Y_0(p)\in S^{T_p M}(0,1) $ such that 
	\begin{align*}
		\underline{K}_M(p)=K(X_0(p),Y_0(p))=\sum_{i,j,l,w=1}^na_ib_jb_la_w{\rm Rm}(E_i(p),E_j(p),E_l(p),E_w(p)),
	\end{align*}	
	where ${\rm Rm}$ is the Riemann curvature tensor. ${\rm Rm}(E_i(p),E_j(p),E_l(p),E_w(p))$ is a continuous function of $p$ implies that for any $\epsilon_1>0$, there exists $\delta>0$ such that for any $q\in B^M(p,\delta)$,
	\begin{align}\label{eq:j551}
		|{\rm Rm}(E_i(q),E_j(q),E_l(q),E_w(q))&-{\rm Rm}(E_i(p),E_j(p),E_l(p),E_w(p))|<\epsilon_1.
	\end{align}
	By using \eqref{eq:j551}, we have
	\begin{align*}
		\Big|K\Big(\sum_{i=1}^na_iE_i(q),\sum_{j=1}^nb_jE_j(q)\Big)-K\Big(\sum_{i=1}^na_iE_i(p),\sum_{j=1}^nb_jE_j(p)\Big)\Big|
		<\sum_{i,j,l,w=1}^n|a_ib_jb_la_w|\epsilon_1=:\epsilon. 
	\end{align*}
	Since $\underline{K}_M(q)\leq K(\sum_{i=1}^na_iE_i(q),\sum_{j=1}^nb_jE_j(q))$, it follows that 
	\begin{align}\label{eq:1.110}
		\underline{K}_M(q)<K_M(p)+\epsilon.
	\end{align}
	Suppose $\displaystyle{\varlimsup_{m\rightarrow\infty}}\underline{K}_M(p_m)>\underline{K}_M(p)$. Then there would exist $p_m\in B_{\delta}(p)$ such that $\underline{K}_M(p_m)>\underline{K}_M(p)$. 
	This contradicts \eqref{eq:1.110}. Hence 
	\begin{align}\label{eq:1.111}
		\displaystyle{\varlimsup_{m\rightarrow\infty}}\underline{K}_M(p_m)\leq \underline{K}_M(p).
	\end{align}
	
	We now prove the reverse inequality. For each $X(p_m)\in S^{T_{p_m}M}(0,1)$, we  can write $X(p_m)=\sum_{i=1}^nc_i^{m}E_i(p_m)$. For $m=1,2,\ldots$, define maps  $h_m:S^{T_{p_m}M}(0,1)\rightarrow S^{T_{p}M}(0,1)$ by $h_m(X(p_m)):=\sum_{i=1}^nc_i^{m}E_i(p).$
	The definition of $K_M(p_m)$ implies that 
 there exist $X_m(p_m)$, $Y_m(p_m)\in S^{T_{p_m}M}(0,1)$ such that 
	$$\underline{K}_M(p_m)=K(X_m(p_m), Y_m(p_m))=K\Big(\sum_{i=1}^na_i^mE_i(p_m),\sum_{j=1}^nb_j^mE_j(p_m)\Big).$$
	Thus $h_m(X_m(p_m))=\sum_{i=1}^na_i^{m}E_i(p)\in S^{T_{p}M}(0,1)$.
	By the compactness of $S^{T_{p}M}(0,1)$, there exists a subsequence $\{h_{m_k}(X_{m_k}(p_{m_k}))\}=\{\sum_{i=1}^na_{i}^{m_k}E_i(p)\}$ of  $\{h_m(X_m(p_m))\}$ such that 
	\begin{align}\label{eq:1.1100}
		\lim_{k\rightarrow\infty} h_{m_k}(X_{m_k}(p_{m_k}))=\widetilde{X}(p)\in S^{T_{p}M}(0,1). 
	\end{align}
	Write $\widetilde{X}(p):=\sum_{i=1}^n \alpha_iE_i(p)$. By \eqref{eq:1.1100} and  the continuity of the $E_i$, we have
	\begin{align}\label{eq:1.112}
		\lim_{k\rightarrow\infty}a_{i}^{m_k}= \alpha_i\quad\text{and}\quad \lim_{k\rightarrow\infty}E_i(p_{m_k})= E_i(p),\,\, i=1,\ldots,n.
	\end{align}
	Similarly, for $Y_m(p_m)\in  S^{T_{p_m}M}(0,1)$, we have
	$h_m(Y_m(p_m))=\sum_{j=1}^nb_j^mE_j(p)\in  S^{T_{p}M}(0,1)$, and thus
there exists a subsequence $\{h_{m_k}(Y_{m_k}(p_{m_k}))\}=\{\sum_{j=1}^nb_{j}^{m_k}E_j(p)\}$ of $\{h_m(Y_m(p_m))\}$ such that $\displaystyle{\lim_{k\rightarrow\infty}} h_{m_k}(Y_{m_k}(p_{m_k}))=\widetilde{Y}(p)\in S^{T_{p}M}(0,1)$. 
	Write $\widetilde{Y}(p):=\sum_{j=1}^n\beta_jE_j(p)$. Then
	\begin{align}\label{eq:1.117}
		\lim_{k\rightarrow\infty}b_{j}^{m_k}= \beta_j,\qquad j=1,\ldots,n.
	\end{align}
	Now suppose $\displaystyle{\varliminf_{k\rightarrow\infty}}\underline{K}_M (p_{m_k})<\underline{K}_M(p)$. Since
	\begin{align*}
		\displaystyle{\varliminf_{k\rightarrow\infty}}\underline{K}_M (p_{m_k})
		&=\displaystyle{\varliminf_{k\rightarrow\infty}}K\Big(\sum_{i=1}^na_{i}^{m_k}E_i(p_{m_k}),\sum_{j=1}^nb_{j}^{m_k}E_j(p_{m_k})\Big)\\
		&=K\Big(\sum_{i=1}^n \alpha_i E_i(p),\sum_{j=1}^n\beta_jE_j(p)\Big), \qquad	(\text{by  \eqref{eq:1.112} and \eqref{eq:1.117}})
			\end{align*}
 we have $K(\widetilde{X}(p),\widetilde{Y}(p))<K(X_0(p),X_0(p))$. This contradicts the definition of $\underline{K}_M(p)$. Hence
	\begin{align}\label{eq:1.113}
	\displaystyle{\varliminf_{k\rightarrow\infty}}\underline{K}_M (p_{m_k})\geq \underline{K}_M(p).
	\end{align}
	Combining \eqref{eq:1.111} and \eqref{eq:1.113}, we have $\displaystyle{\lim_{k\rightarrow\infty}}\underline{K}_M (p_{m_k})= \underline{K}_M(p)$. Hence $\underline{K}_M(p)$ is a continuous function of $p$.
	Similarly, 
	$\overline{K}_M(p)$ is a continuous function of $p$. 
	We know that for $X(p),Y(p)\in S^{T_pM}(0,1)$, $K(X(p),Y(p))$ is continuous, i.e., there exist $X_1(p),Y_1(p)$ and $X_2(p),Y_2(p)$ such that 
	$K(X_1(p),Y_1(p))\leq K(X(p),Y(p))\leq K(X_2(p),Y_2(p)).$
Hence for $p\in \overline{\Omega}$, \eqref{sec} holds
\end{proof}	
	
		The proof of Theorem \ref{lem:3.13} makes use of normal coordinate charts.  We give an outline for completeness.
	\begin{proof}[Proof of Theorem \ref{lem:3.13}]\, For $p\in \overline{\Omega}$, let $(U,\varphi)$ be a coordinate chart with $p\in U$, where $U:=B^M(p,\epsilon)$ and $\varphi$ is defined as in \eqref{eq:j3.8}. Let $\widetilde{U}:=\varphi(U)$. First,  notice that $f\in C(\overline{U})$ if and only if $f\circ \varphi^{-1}\in C(\overline{\widetilde{U}})$, and in this case,  $\|f\|_{C(\overline{U})}=\|f\circ\varphi^{-1}\|_{C(\overline{\widetilde{U}})}$. Second, {one can show} that $f\in W^{1,2}_0(U)$ if and only if $f\circ \varphi^{-1}\in W^{1,2}_0(\widetilde{U})$, and establish the equality  $\|f\|_{W^{1,2}_0(U)}=\|f\circ\varphi^{-1}\|_{W^{1,2}_0(\widetilde{U})}$. Third, combining these results with the fact that  $W^{1,2}_0(\widetilde{U})$ is compactly embedded in $C(\overline{\widetilde{U}})$, we see that $W^{1,2}_0(U)$ is compactly embedded in $C(\overline{U})$. Finally, we use partition of unity to show that $W^{1,2}_0(\Omega)$ is compactly embedded in $C(\overline{\Omega})$.
	\end{proof}

	\section{Comments and open problems} \label{S:que}
	\setcounter{equation}{0}
	
	1. Theorem \ref{thm:1.3*} proves that Hodge's theorem holds for an absolutely continuous measure with a positive density. For measures that are not absolutely continuous, it is interesting to know to what extend Hodge's theorem still holds.

	2.	Under the framework of the present paper and the assumption on Green's function in reference \cite{Ngai-Zhao_2024}, the eigenfunctions are continuous. It is interesting to	determine whether the generalization of Yau's conjecture as stated in \eqref{eq:Yau_general} hold.

	\section{Acknowledgements}\label{S:ak} The first author thanks Professor Shing-Tung Yau for conversations regarding the eigenfunctions of Kre\u{\i}n-Feller operators and for point out to him Yau's conjecture  on the nodal sets of eigenfunctions of the Laplacian. The authors thank Shuihong Zhou for some helpful discussions.

	\setcounter{equation}{0}	
		
	\end{document}